%
%
%
\documentclass[a4paper,10 pt, reqno]{amsart}
\usepackage{amsmath, amssymb, amscd,amsbsy,amsfonts,bbm}
\usepackage[dvips]{graphicx} %
\usepackage{enumerate, verbatim}
\usepackage{color}
\usepackage{xspace}
\usepackage[applemac]{inputenc}
\usepackage[T1]{fontenc}
\usepackage{lmodern}
\usepackage{textcomp}
\usepackage[all, cmtip]{xy} 
\usepackage{url}
%
%
\theoremstyle{plain}
\newtheorem{theorem}{Theorem}[section]
\newtheorem{prp}[theorem]{Proposition}
\newtheorem{lem}[theorem]{Lemma}
\newtheorem{crl}[theorem]{Corollary}

\newtheorem{criterion}[theorem]{Criterion}
\theoremstyle{definition}
\newtheorem{definition}[theorem]{Definition}

\theoremstyle{remark}
\newtheorem{remark}[theorem]{Remark}
\newtheorem{remarks}[theorem]{Remarks}
\newtheorem{example}[theorem]{Example}

\numberwithin{equation}{section}
\numberwithin{figure}{section}
%
\input{Definitions.tex}
%
%
\begin{document}
%
\title[Groups of PL-homeomorphisms admitting invariant characters] 
{Groups of PL-homeomorphisms  admitting \\non-trivial invariant characters}
\author[D. L. Gon\c{c}alves] {Daciberg L. Gon\c{c}alves}
\address{Departametno de Matem\'atica - IME, Universidade de S\~ao Paulo\\
Caixa Postal 66.281 - CEP 05314-970, S\~ao Paulo - SP, Brasil}
\email{dlgoncal@ime.usp.br}
%
\author[P. Sankaran]{Parameswaran Sankaran}
\address{The Institute of Mathematical Sciences,
CIT Campus, Taramani, Chennai 600113, India}
\email{sankaran@imsc.res.in}
%
\author[R. Strebel]{Ralph Strebel}
\address{D\'{e}partement de Math\'{e}matiques,
Chemin du Mus\'{e}e 23,
Universit\'{e} de Fribourg,
1700 Fribourg, Switzerland}
\email{ralph.strebel@unifr.ch}
\subjclass[2010]{20E45, 20E36, 20F28.\\
Keywords and phrases: Groups of PL-homeomorphisms of the real line, 
Bieri-Neumann-Strebel invariant,
twisted conjugacy classes.}

\begin{abstract}
We show 
that several classes of groups $G$ of PL-homeomorphisms of the real line
admit non-trivial homomorphisms  $\chi \colon G \to \R$ 
that are fixed by every automorphism of $G$.
The classes enjoying the stated property include the generalizations of Thompson's group $F$ 
studied by K. S. Brown, M. Stein, S. Cleary and Bieri-Strebel in \cite{Bro87a, Ste92, Cle95, Cle00, BiSt14},
but also the class of groups investigated by Bieri et al.\,in \cite[Theorem 8.1]{BNS}.
It follows that every automorphism of a group in one of these classes 
has infinitely many associated twisted conjugacy classes. 
\end{abstract}

\maketitle

%
\setcounter{tocdepth}{1}
\tableofcontents

%
\section{Introduction}
\label{sec:Intro}
%
%
This paper has its origin in two articles \cite{BFG08} and \cite{GoKo10}
about twisted conjugacy classes of Thompson's group $F$.
In order to describe the aim of the cited papers, we recall some terminology.
Let $G$ be a group and $\alpha$ an automorphism of $G$.
Then $\alpha$ gives rise to an action $\mu_\alpha \colon G \times |G| \to |G|$
of $G$ on its underlying set $|G|$, 
defined by
\begin{equation}
\label{eq:alpha-action}
 \mu_\alpha (g, x) = g \cdot x \cdot \alpha(g)^{-1}.
\end{equation}
The orbits of this action are called \emph{twisted conjugacy classes},
or  \emph{Reidemeister} classes, of $\alpha$.
The twisted conjugacy classes of the identity automorphism, for instance, 
are nothing but the conjugacy classes.

Two questions now arise, firstly,  
whether a given automorphism $\alpha$ has infinitely many orbits
and, secondly, whether every automorphism of $G$ has infinitely many orbits.
As the latter property will be central to this paper,
we recall the definition of property $R_\infty$:
\begin{definition}
\label{definition:Property-Rinfty}
A group $G$ is said to have \emph{property} $R_\infty$ 
if the action $\mu_\alpha$ has infinitely many orbits for every automorphism $\alpha \colon G \iso G$.
\end{definition}

The problem of determining 
whether a given group, or a class of groups, satisfies property $R_\infty$ 
has attracted the attention of several researchers.
The  problem is rendered particularly interesting  by the fact 
there does not exist a uniform method of solution.
Indeed, a variety of techniques and ad hoc arguments from several branches of mathematics 
have been used to tackle the problem,
notably combinatorial group theory in \cite{GoWo09},
geometric group theory in \cite{LeLu00}),
$C^*$-algebras in \cite{FeTr12}
 and algebraic geometry in \cite{MuSa14b}.

 In \cite{BFG08} Thompson's group $F$ is shown to enjoy property $R_\infty$,
while \cite{GoKo10} establishes that property for Thompson's group $F$,
but also for many other groups $G$ having  the peculiarity 
that the complement of their BNS-invariant $\Sigma^1(G)$ is made up of finitely many rank 1 points.
In this paper,
we generalize both approaches and prove in this way
that many classes of groups of PL-homeomorphisms have property $R_\infty$.
\subsection{A useful fact}
\label{ssec:Crucial-fact}
%
The papers by C. Bleak et al. and by Gonçalves-Kochloukova both exploit the following observation:
let $\alpha$ be an automorphism of a group $G$, 
let $\psi \colon G \to B$ be a homomorphism into an \emph{abelian} group
and assume $\psi$ is fixed by $\alpha$.
Then $\psi$ is constant on twisted conjugacy classes of $\alpha$;
indeed,
if the elements $x$ and $y$ lie in the same twisted conjugacy class 
there exists $z \in G$ so that $y = z \cdot x \cdot \alpha(z)^{-1}$;
the computation
\begin{equation*}
\psi(y) = \psi(z \cdot x \cdot \alpha(z)^{-1})=
\psi(x ) \cdot \psi(z) \cdot \left((\psi \circ \alpha)(z)\right) ^{-1} = \psi(x)
\end{equation*}
then establishes the claim.
\emph{A group $G$ has therefore property $R_\infty$ 
if it admits a homomorphism onto an \emph{infinite, abelian} group}
that is fixed by every automorphism of $G$.
%
\subsection{Approach used by C. Bleak et al.}
\label{ssec:Approach-in-BFG08}
%
In \cite{BFG08} the authors establish 
that Thompson's group $F$ has property $R_\infty$ by using the mentioned fact.
To find the homomorphism $\psi$,
they use a representation of $F$ by piecewise linear homeomorphisms of the real line:
$F$ is isomorphic to the group of all piecewise linear homeomorphisms $f$
with supports in the unit interval $I = [0,1]$,
slopes a power of 2, and \emph{break points}, 
\ie{} points where the left and right derivatives differ,
in the group $\Z[1/2]$ of dyadic rationals;
see, \eg{} \cite[p.\,216, §\,1]{CFP96}.
This representation affords them with two homomorphisms $\sigma_\ell$ and $\sigma_r$,
given by the right derivative in the \emph{left} end point $0$ 
and the left derivative in \emph{right} end point 1 of $I$;
respectively;
in formulae
\begin{equation}
\label{eq:definition-sigma-ell-sigma-r}
\left\{
\begin{aligned}
\sigma_\ell (f) &= \lim\nolimits_{\,t \searrow 0} f'(t),\\
\sigma_r(f) &= \lim\nolimits_{\,t \nearrow 1} f'(t),\\
\end{aligned}
\right.
\end{equation}
The images of $\sigma_\ell$ and $\sigma_r$ are both equal to $\gp(2)$,
the (multiplicative) cyclic group generated by the natural number $2$.
Theorem 3.3,
the main result of \cite{BFG08}, 
can be rephrased by saying that the homomorphism
\[
\psi \colon F \to \gp(2), \quad f \mapsto \sigma_\ell(f) \cdot \sigma_r(f)
\]
is fixed by every automorphism $ \alpha$ of $F$.
Its proof uses the very detailed information about $\Aut F$
established by M. Brin in \cite{Bri96}.
%
\subsection{A generalization}
\label{ssec:Generalization-approach-BFG08}
%
The stated description of Thompson's group $F$ 
invites one to introduce generalized groups of type $F$ in the following manner.
\begin{definition}
\label{definition:G(I;A,P)}
Let $I \subseteq \R$ be a closed interval,
$P$ be a subgroup of the multiplicative group of positive reals $\R^\times_{>0}$,
and $A$ be a subgroup of the additive group $\R_{\add}$ of the reals 
that is stable under multiplication by $P$.
Let $G(I;A,P)$ denote  the subset of $\PL_o(\R)$ made up of all PL-homeomorphisms $g$ satisfying 
the conditions:
\begin{enumerate}[a)]
\item the support $\supp g = \{t \in \R \mid g(t) \neq t \}$ of $f$ is contained in $I$,
\item the slopes of the finitely many line segments forming the graph of $g$ lie in $P$,
\item the break points of $g$ lie in $A$, and
\item $g$ maps $A$ onto $A$.
\end{enumerate}
\end{definition}
\begin{remarks}
\label{remarks:G(I;A,P)}
a) The subset $G(I;A,P)$ is closed under composition
\footnote{In this article we use \emph{left} actions
and the composition of functions  familiar to analysts;
thus $g_2 \circ g_1$ denotes the function $t \mapsto g_2(g_1(t))$ 
and $\act{g_1}{-0.5}{g_2}$ the homeomorphism $g_1 \circ g_2 \circ g_1^{-1}$.}
and inversion.
The set $G(I;A,P)$ equipped with these operations is a group; 
by abuse of notation, it will also be denoted by $G(I;A,P)$.

b) We shall always require that neither $P$ nor $A$ be reduced to the neutral element.
These requirements imply
that $A$ contains arbitrary small positive elements 
and thus $A$ is a dense subgroup of $\R$.
As concerns the interval $I$ we shall restrict attention to three types of intervals:
compact intervals with endpoints 0 and $b \in A_{>0}$,
the half line $[0, \infty[$ and the line $\R$;
we refer the reader to \cite[Sections 2.4 and 16.4]{BiSt14}
for a discussion of the groups associated to other intervals. 

c) The idea of introducing and studying the groups $G(I;A,P)$
goes back to the papers \cite{BrSq85} and \cite{BiSt85}.
\end{remarks}

\subsubsection{The homomorphisms $\sigma_\ell$, $\sigma_r$ and $\psi$}
\label{sssec:Homomorphisms-sigma-ell-etc}
%
The definitions of $\sigma_\ell$ and $\sigma_r$,
given in formula \eqref{eq:definition-sigma-ell-sigma-r},
admit straightforward extensions  to the groups $G(I;A,P)$;
note, however, that in case of the half line $[0, \infty[$,
the number $\sigma_r(f)$ will denote the slope of $f$ near $+\infty$,
and similarly for $I = \R$ and $\sigma_\ell$,  $\sigma_r$.  
The homomorphisms $\sigma_\ell$ and $\sigma_r$ 
allow one then to introduce an analogue of  $\psi \colon F \to \gp(2)$,
namely
\begin{equation}
\label{eq:Definition-psi}
\psi \colon G = G(I;A,P)  \to P, \quad g \mapsto \sigma_\ell(g) \cdot \sigma_r(g).
\end{equation}

There remains the question 
whether this homomorphism $\psi$ is fixed by every automorphism of $G$.
In the case of Thompson's group $F$ the question has been answered in the affirmative
by exploiting the detailed information about $\Aut F$ obtained by M. Brin in \cite{Bri96}.
Such a detailed description is not to be expected for every group of the form $G(I;A,P)$;
indeed, the results in \cite{BrGu98} show 
that the structure of the automorphism group gets considerably more involved
if one passes from the group $G([0,1];\Z[1/2], \gp(2))$,
the group isomorphic to $F$,
to the groups $G([0,1];\Z[1/n], \gp(n))$ with $n$ an integer greater than 2.

\subsubsection{The first main results}
\label{sssec:First-main-result}
%
It turns out
that one does not need very detailed information about $\Aut G(I;A,P)$ 
in order to construct a non-trivial homomorphism $\psi \colon G(I;A,P) \to \R^\times_{>0}$
that is fixed by every automorphism of the group $G(I;A,P)$;
it suffices to go back to the findings in the memoir \cite{BiSt85}
and to supplement them by some auxiliary results based upon them.
\footnote{The memoir \cite{BiSt85} has recently been reedited by R. Strebel 
and published in the repository of electronic preprints \emph{arXiv}; 
see \cite{BiSt14} for the precise reference.}
A first  outcome is
\begin{theorem}
\label{thm:Generalization-of-BFG08}
Assume the interval $I$, the group of slopes $P$ 
and the $\Z[P]$-module $A$ are as in Definition \ref{definition:G(I;A,P)}
and in Remark \ref{remarks:G(I;A,P)}b.
Then there exists an epimorphism $\psi \colon G(I;A,P) \epi P$ 
that is fixed by every automorphism of $G$.

The group $G(I;A,P) $ has therefore property $R_\infty$.
\end{theorem}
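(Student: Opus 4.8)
The plan is to deduce the property-$R_\infty$ assertion from the construction of $\psi$ by invoking the elementary observation of Subsection~\ref{ssec:Crucial-fact}. Indeed, a nontrivial subgroup $P \le \R^\times_{>0}$ is infinite and abelian, so an epimorphism $\psi \colon G \epi P$ that is fixed by every automorphism is exactly the kind of map required there; the whole content therefore lies in producing such a $\psi$. I would take the natural candidate $\psi = \sigma_\ell \cdot \sigma_r$ of formula~\eqref{eq:Definition-psi} and first check surjectivity: given $p \in P$ I would exhibit a single $g \in G(I;A,P)$ having slope $p$ near one end of $I$ and slope $1$ near the other end, so that $\psi(g) = \sigma_\ell(g)\cdot\sigma_r(g) = p$. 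Such a $g$ exists because $A$ is dense and stable under multiplication by $P$, so the requisite break points can be placed in $A$; this is the standard one-bump construction of \cite{BiSt14}.

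The real work is to show $\psi \circ \alpha = \psi$ for every $\alpha \in \Aut G$, and here I would argue through the Bieri--Neumann--Strebel invariant. Writing $S(G) = (\Hom(G,\R)\setminus\{0\})/\R^\times_{>0}$ for the character sphere, each automorphism $\alpha$ acts on $S(G)$ by $[\chi] \mapsto [\chi\circ\alpha]$, and this action preserves the complement of $\Sigma^1(G)$. The decisive input, which I would quote from \cite{BiSt14}, is that for the three interval types allowed in Remark~\ref{remarks:G(I;A,P)}b this complement is the finite set of endpoint characters, namely the classes of $\log\sigma_\ell$ and $\log\sigma_r$ (together with their antipodes in the cases of the half-line and the line). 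Consequently $\alpha$ can only permute this finite set, so $\{[\log\sigma_\ell],[\log\sigma_r]\}$ is carried to itself and $\alpha$ either fixes each of the two rays or interchanges them.

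The main obstacle is to upgrade this statement about \emph{rays} to the exact identity $\psi\circ\alpha = \psi$, \ie{} to rule out an unwanted positive rescaling. To this end I would use that $\sigma_\ell$ and $\sigma_r$ are both surjective onto $P$, so that each of $\log\sigma_\ell$ and $\log\sigma_r$ has image the fixed subgroup $\log P \le \R$. If $\alpha$ sends, say, $[\log\sigma_\ell]$ to $[\log\sigma_r]$, then $\log\sigma_\ell\circ\alpha = \lambda\cdot\log\sigma_r$ for some $\lambda > 0$; comparing images and using that $\alpha$ is bijective forces $\lambda\cdot\log P = \log P$, and the rigidity of the multiplier group of $\log P$ pins $\lambda$ down to $1$. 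Combined with the symmetry of $\psi$ under exchanging the two endpoints, this yields $\psi\circ\alpha = (\sigma_\ell\circ\alpha)\cdot(\sigma_r\circ\alpha) = \sigma_r\cdot\sigma_\ell = \psi$ in the swapping case, and the fixing case is identical. I expect the pinning of the scalar $\lambda$ --- that is, the verification that no automorphism can rescale the endpoint characters --- to be the genuinely delicate step, and the one where the auxiliary results drawn from \cite{BiSt85, BiSt14} are indispensable.
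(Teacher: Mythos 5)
Your reduction to the observation of Subsection~\ref{ssec:Crucial-fact} and your surjectivity argument are fine, but the core of your argument has a genuine gap: the ``rigidity of the multiplier group of $\log P$'' that you invoke to pin the scalar $\lambda$ down to $1$ is simply false for a general non-trivial subgroup $P \subseteq \R^\times_{>0}$. Theorem~\ref{thm:Generalization-of-BFG08} makes \emph{no} hypothesis on the group of units $U(\ln P)$, and that group can be large: if $P = \exp(\Q)$ then $U(\ln P) = U(\Q) = \Q^\times$, and if $\ln P$ is a scaled copy of $\Z + \Z\sqrt{2}$ then $U(\ln P)$ contains the unit $1+\sqrt{2}$ of infinite order (see the discussion in section~\ref{sssec:Group-of-units-elementary-examples}, where it is noted that for $B$ the additive group of a ring such as $\Z[P]$ the unit group contains $P \cup -P$ and may be larger still). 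This is precisely why the BNS-based argument in the paper appears only as Theorem~\ref{thm:Generalization-GoKo10}, with the extra hypothesis (iv) that $U(\im \chi_\ell)$ or $U(\im \chi_r)$ be reduced to $\{1,-1\}$; without that hypothesis your step ``comparing images \ldots forces $\lambda = 1$'' cannot be completed, and the invariant ray $\R_{>0}\cdot\chi$ could a priori be an eigenline with eigenvalue $\neq 1$ (exactly the difficulty described in section~\ref{ssec:Generalization-approach-GoKo10}).

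There is a secondary gap in your ``decisive input'': the computation $\Sigma^1(G)^c = \{[\chi_\ell],[\chi_r]\}$ (up to antipodes) is available in the paper only for \emph{compact} intervals, via Theorem~\ref{thm:Generalization-BNS}; for the half-line and the line no such computation of $\Sigma^1$ of the full group $G(I;A,P)$ is asserted anywhere, and for half-line groups the natural second boundary character is the translation amplitude $\tau_r$ rather than $\log \sigma_r$ (section~\ref{ssec:Eigenlines} quotes such a result only for the subgroup $\ker \sigma_r$). The paper's actual proof of Theorem~\ref{thm:Generalization-of-BFG08} avoids $\Sigma^1$ entirely: it uses the representation theorem (Theorem~\ref{thm:TheoremE16.4}) to realize every automorphism as conjugation by a homeomorphism $\varphi_\alpha$ of $\Int(I)$, proves that $\varphi_\alpha$ is affine near the endpoints (Proposition~\ref{prp:phi-is-linear-I-compact} when $\im\sigma_\ell$ is not cyclic, a separate dynamical argument with attracting fixed points when it is cyclic, and Proposition~\ref{prp:Affine-near-infty-I-half-line} near $\infty$), and then applies the chain rule to conclude that $\sigma_\ell$ and $\sigma_r$ are fixed or swapped, so that $\psi = \sigma_\ell\cdot\sigma_r$ is fixed; the three interval types are treated in Corollaries~\ref{crl:G=G(I;A,P)}, \ref{crl:Existence-psi-I-halfline-fixed-by-AutG} and~\ref{crl:G=G(R;A,P)-I-line}. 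Some such input about how automorphisms are induced is indispensable here, since the purely character-theoretic route you propose cannot control the rescaling factor in this generality.
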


\begin{remark}
\label{remark:Subgroup-B}
Let $I$, $A$ and $P$ as before
and let $B = B(I;A,P)$ be the subgroup of $G(I;A,P)$ 
made up of all elements $g$ that are the identity near the endpoints.
Then $B$ is a characteristic subgroup of $G(I;A,P)$
and variations of Theorem \ref{thm:Generalization-of-BFG08}
hold  for many subgroups $G$ of $G(I;A,P)$ with  $B \subset G$;
for details, 
see Theorems 
\ref{thm:Existence-psi-I-compact-all-autos}, 
\ref{thm:Existence-psi-I-halfline-image-rho-non-abelian}
and  \ref{thm:Existence-psi-I-line-image-rho-non-abelian}.
\end{remark}
%
%
\subsection{Route taken by Gonçalves-Kochloukova in \cite{GoKo10} }
\label{ssec:Approach-in-GoKo10}
%
The proof of Theorem \ref{thm:Generalization-of-BFG08} 
does not exploit information about $\Aut G(I;A,P)$
that is as precise as that going into the proof of the  main result of \cite{BFG08}.
It uses, however, non-trivial features of the automorphisms of $G(I;A,P)$.
In \cite{GoKo10} Gonçalves and Kochloukova put forward the novel idea
of replacing detailed information about $\Aut G$ 
by information about the form of the BNS-invariant of the group $G$;
they carry out this program for the generalized Thompson group $F_{n,0}$ with $n \geq 2$, 
a group isomorphic to $G([0,1];\Z[1/n], \gp(n))$,
 and for many other groups.

In a nutshell, their idea is this.
Suppose $G$ is a finitely generated group
for which the complement of $\Sigma^1(G)$ is \emph{finite}.
\footnote{Recall that $\Sigma^1(G)$ is a certain subset of the space of all half lines 
$\R_{>0} \cdot \chi$ 
emanating from the origin of the real vector space $\Hom(G, \R)$, 
and that $\Aut(G)$ acts canonically on this subset, as well on as its complement.}
Then every automorphism of $G$ permutes the finitely many rays in $\Sigma^1(G)^c$.
This suggests that it might be possible to construct  a new ray $\R_{>0}\cdot \chi_0$ 
that is fixed by $\Aut G$.
If one succeeds in doing so,
then $\R \cdot \chi_0$ will be a 1-dimensional sub-representation 
of the finite dimensional real vector space $\Hom(G, \R)$,
acted on by $\Aut G$ via 
\[
(\alpha, \chi) \mapsto \chi \circ \alpha^{-1}.
\] 
\emph{A priori},  this invariant line need not be fixed pointwise.

Gonçalves and Kochloukova detected 
that the line $\R \cdot \chi_0$ is fixed pointwise by $\Aut G$ 
if the homomorphism $\chi_0\colon G \to \R$ has \emph{rank} 1,
\ie{} if its image  is infinite cyclic.
Using this fact they were then able to prove 
that Thompson's group $F$, but also many other groups $G$,
admit a rank 1 homomorphism that is fixed by $\Aut G$
and thus satisfy property $R_\infty$.
%

\subsection{A generalization}
\label{ssec:Generalization-approach-GoKo10}
%
In the second part of this paper 
we generalize the approach of Gon\c{c}alves and Kochloukova 
to PL-homeomorphism groups $G$  
for which $\Sigma^1(G)^c$ may contain points of rank greater than $1$. 
In pursuing this goal,
one runs into the following difficulty.
Suppose  $\R_{>0} \cdot \chi_0$ is a ray
that is fixed  by $\Aut G$ as a \emph{set}.
There may then exist an automorphism $\alpha$ 
which acts on the ray by multiplication by a positive real number $s \neq 1$;
if so,
the 1-dimensional subspace $\R \cdot \chi_0$ in the real vector space $\Hom(G,\R)$ 
is an eigenline with eigenvalue $s \neq 1$ of the linear transformation $\alpha^*$ 
induced by $\alpha$ on $\Hom(G,\R)$.
The existence of such an eigenvalue $s \neq 1$ would ruin our plan,
but, as we shall see,
it can be ruled out 
if the image of the character $\chi_0$ has only $1$ and $-1$ as units,
where the groups of units is defined as follows:
\begin{definition}
\label{definition:Group-of-units}
Given a  subgroup $B$ of the additive group $\R_{\add}$
we set  
\begin{equation}
\label{eq:Units-B}
U(B) = \{s \in \R^\times \mid s \cdot B = B \}
\end{equation}
and call $U(B)$ the \emph{group of units} of $B$ (inside the multiplicative group of $\R$).
\end{definition}

We shall establish the following 
\begin{theorem}
\label{thm:Generalization-GoKo10}
Suppose $G$ is a subgroup of $PL_o(I)$ 
that satisfies the following conditions: 
\begin{enumerate}[(i)]
\item no interior point of the interval $I = [0, b]$ is fixed by $G$;
\item the characters $\chi_\ell$ and $\chi_r$ are both non-zero;
\item the quotient group $G/(\ker \chi_\ell \cdot \ker \chi_r )$ is a torsion-group, and
\item at least one of the groups  $U(\im \chi_\ell)$ and   $U(\im \chi_r)$ is reduced  to $\{1, -1\}$.  
\end{enumerate}
Then there exists a non-zero homomorphism $\psi \colon G \to \R^\times_{>0}$
that is fixed by every automorphism of $G$.
The group  $G$ has therefore property $R_\infty$.
\end{theorem}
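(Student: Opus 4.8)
The plan is to follow the strategy of Gon\c{c}alves and Kochloukova sketched in the introduction, but to accommodate the possibility that $\Sigma^1(G)^c$ contains points of rank greater than $1$. The two characters naturally associated to $G \leq \PL_o(I)$ are $\chi_\ell = \log \circ\, \sigma_\ell$ and $\chi_r = \log \circ\, \sigma_r$, viewed as elements of $\Hom(G,\R)$; these are the objects appearing in conditions (ii) and (iii). My first step would be to understand which rays of $\Hom(G,\R)$ lie in the complement $\Sigma^1(G)^c$. For PL-homeomorphism groups of an interval the $\Sigma^1$-invariant is computed (or at least constrained) by the behaviour of homomorphisms near the two endpoints of $I$, and the expected outcome is that $\Sigma^1(G)^c$ is governed by the two rays $\R_{>0}\cdot\chi_\ell$ and $\R_{>0}\cdot\chi_r$: intuitively, a character $\chi$ fails to be in $\Sigma^1(G)$ precisely when it is ``non-positive at an end'', which forces it to be a non-negative combination of $\chi_\ell$ and $\chi_r$ (up to the subgroup vanishing at both ends). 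I would isolate this as a preliminary lemma, using condition (i) to guarantee that $G$ acts with no interior fixed point so that the endpoint data is the only obstruction, and condition (iii) to ensure that $\chi_\ell$ and $\chi_r$ already span $\Hom(G,\R)\otimes\Q$ up to torsion, so that $\Hom(G,\R)$ is spanned by $\chi_\ell$ and $\chi_r$.

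The second step is to exploit the invariance of $\Sigma^1(G)^c$ under the induced action $\alpha^* \colon \chi \mapsto \chi\circ\alpha^{-1}$ of $\Aut G$ on $\Hom(G,\R)$. Since $\Sigma^1(G)^c$ is $\Aut G$-invariant and (by the first step) is described by the two rays $\R_{>0}\cdot\chi_\ell$ and $\R_{>0}\cdot\chi_r$, every automorphism must permute these two extremal rays; hence the set $\{\R_{>0}\cdot\chi_\ell,\ \R_{>0}\cdot\chi_r\}$ is $\Aut G$-invariant. Consequently the \emph{sum} of the two characters, or more precisely the ray $\R_{>0}\cdot(\chi_\ell+\chi_r)$ together with the line $\R\cdot(\chi_\ell+\chi_r)$, is carried to itself by every $\alpha^*$ whenever the two endpoint rays are interchanged; and the unordered pair of lines $\{\R\chi_\ell,\R\chi_r\}$ is in any case invariant. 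This produces a candidate invariant line in $\Hom(G,\R)$, which upon exponentiating corresponds to a candidate homomorphism $\psi\colon G \to \R^\times_{>0}$ analogous to $\sigma_\ell\cdot\sigma_r$. At this stage $\psi$ is fixed by $\Aut G$ only up to scaling: an automorphism might act on this invariant line by multiplication by some positive real $s$.

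The third, and decisive, step is to rule out eigenvalues $s \neq 1$, and this is where condition (iv) enters; I expect this to be the main obstacle. The point is the one flagged in \S\ref{ssec:Generalization-approach-GoKo10}: if $\alpha^*$ fixes the line $\R\cdot\chi_0$ setwise but scales it by $s$, then comparing images shows $s\cdot\im\chi_0 = \im\chi_0$, so $s$ must be a unit of $\im\chi_0$ in the sense of Definition~\ref{definition:Group-of-units}. If $U(\im\chi_\ell) = \{1,-1\}$ (or the analogue for $\chi_r$), then the only possible scalars are $\pm 1$, and since $\alpha^*$ must preserve the half of $\Sigma^1(G)^c$ that is a genuine ray (not a full line) the scalar $-1$ is excluded by positivity, forcing $s = 1$. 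The delicate part is bookkeeping the two cases — whether $\alpha$ fixes each endpoint ray or swaps them — and checking that condition (iv), which is assumed for at least \emph{one} of the two images, propagates to the relevant composite character; when the rays are swapped one must verify that the unit groups $U(\im\chi_\ell)$ and $U(\im\chi_r)$ are forced to coincide, so that the hypothesis on one of them suffices.

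Once $s = 1$ is established, $\psi$ (the exponential of a suitable positive integer combination of $\chi_\ell$ and $\chi_r$ spanning the invariant line) is genuinely fixed by every automorphism of $G$ and is non-zero by condition (ii). The useful fact recalled in \S\ref{ssec:Crucial-fact} then gives property $R_\infty$ immediately, since $\psi$ is a homomorphism onto an infinite abelian subgroup of $\R^\times_{>0}$ that is constant on twisted conjugacy classes of every automorphism.
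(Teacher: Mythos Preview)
Your overall strategy --- compute $\Sigma^1(G)^c$, use its invariance under $\Aut G$, and invoke condition (iv) to pin down the scalar --- matches the paper's. But Step~2 contains a genuine gap. You assert that when an automorphism swaps the two rays, the line $\R\cdot(\chi_\ell+\chi_r)$ is carried to itself. This is false in general. Conditions (i)--(iii) force $[\chi_\ell]\neq[\chi_r]$ and in fact $\chi_\ell,\chi_r$ to be $\R$-linearly independent (their kernels differ by (iii)). A swap $\alpha_-$ satisfies $\chi_\ell\circ\alpha_- = a\,\chi_r$ and $\chi_r\circ\alpha_- = b\,\chi_\ell$ with $a,b>0$; applying $\alpha_-^2\in\Aut_+G$ and condition (iv) gives only $ab=1$, not $a=b$. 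Hence $(\chi_\ell+\chi_r)\circ\alpha_- = a\chi_r + (1/a)\chi_\ell$, which lies on $\R(\chi_\ell+\chi_r)$ only when $a=1$. Nothing in the hypotheses forces $a=1$: for instance $\im\chi_\ell$ and $\im\chi_r$ can be distinct cyclic groups related by a non-integer scalar. Your later remark that $\psi$ is a ``positive integer combination'' of $\chi_\ell$ and $\chi_r$ inherits the same error.

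The paper repairs this by reversing the order of Steps~2 and~3. First it uses (iv) directly on $\chi_\ell$ (or $\chi_r$): since any $\alpha\in\Aut_+G$ fixes $[\chi_\ell]$, one has $\chi_\ell\circ\alpha=s\chi_\ell$ with $s\in U(\im\chi_\ell)_{>0}=\{1\}$, so $\chi_\ell$ itself is $\Aut_+G$-fixed. If no swap exists, $\psi=\sigma_\ell$ already works. If a swap $\alpha_-$ exists, the paper takes $\psi=\sigma_\ell\cdot(\sigma_\ell\circ\alpha_-)$, i.e.\ the character $\chi_\ell+\chi_\ell\circ\alpha_-=\chi_\ell+a\chi_r$ with the \emph{specific} weight $a$ determined by $\alpha_-$; invariance under $\alpha_-$ is then automatic by construction (an index-two averaging), and invariance under $\Aut_+G$ follows from the first step. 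A minor point: condition (iii) is not a spanning condition on $\Hom(G,\R)$ --- that space may have dimension larger than~$2$ --- but is exactly the ``almost independence'' hypothesis needed in Theorem~\ref{thm:Generalization-BNS} to conclude that $\Sigma^1(G)^c$ consists of precisely the two points $[\chi_\ell],[\chi_r]$.
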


There remains the problem of finding subgroups $B \subset \R_{\add}$ 
that have only trivial units.
This problem is addressed in section \ref{ssec:Group of units}.
We shall show that a subgroup $B = \ln P$ has this property 
if the multiplicative group $P \subset \R^\times_{>0}$
is free abelian and generated by algebraic numbers.
In addition, we shall construct in section \ref{ssec:Uncountable-class}
a collection $\GG$ of pairwise non-isomorphic 3-generator groups $G_s$ 
enjoying the properties
that each group $G_s$ satisfies the assumptions of Theorem \ref{thm:Generalization-GoKo10}
and that the cardinality of $\GG$ is that of the continuum.

\subsection*{Acknowledgments} 
The first  author has been partially supported by the Fapesp project
``Topologia Alg\'ebrica, Geom\'etrica e Diferencial-2012/24454-8''.
The second author acknowledges financial support from the  
Department of Atomic Energy, Government of India, under a XII plan project.
The present work was initiated during the visit of the second author to the
University of S\~ao Paulo in August 2012. He thanks the first author for
the invitation and the warm hospitality. 
The third author thanks M. Brin and M. Zaremsky for numerous, helpful discussions.

%
\section{Preliminaries on automorphisms of the groups $G(I;A,P)$}
\label{sec:Preliminaries-autos-G(I;A,P)}
The groups $G(I;A, P)$ form a class of subgroups of the group $\PL_o(\R)$,
the group of all orientation preserving, 
piecewise linear homeomorphisms of the real line.
They enjoy some special properties,
in particular the following two:
each group acts approximately
\footnote{See \cite[Chapter A]{BiSt14} for details.} 
highly transitively on the interior of $I$
and all its automorphisms are induced by conjugation by homeomorphisms.
It is above all this second property that will be exploited in the sequel.

In this section,
we recall the basic representation theorem for automorphisms of the groups $G(I;A,P)$
and deduce then some consequences.
%
\subsection{Representation of isomorphisms}
\label{ssec:Representation-automorphisms-G(I;A,P)}
We begin by fixing the set-up of this section:
$P$ is a non-trivial subgroup of $\R^\times_{>0}$ 
and $A$ a non-zero subgroup of $\R_{\add}$ that is stable under multiplication by $P$.
Next,
$I$ is a closed interval of positive length;
we assume the left end point of $I$ is in $A$ if $I$ is bounded from below 
and similarly for the right end point.

\begin{remark}
\label{remark:Distinct-intervals}
Distinct intervals $I_1$, $I_2$ may give rise to isomorphic groups $G(I_1;A,P)$ and $G(I_2;A, P)$.
In  particular, it is true 
that every group $G(I_1;A,P)$ is isomorphic to one 
whose interval $I_2$  has one of the following three forms
\begin{equation}
\label{eq:Types-of-intervals}
[0,b] \text{ with } b \in A, 
\quad
[0, \infty[
\quad \text{and}\quad
\R.
\end{equation}
(see Sections 2.4 and 16.4 of \cite{BiSt14} for proofs).
\end{remark}

We come now to the announced result about isomorphisms of groups 
$G(I;A,P)$ and $G(\bar{I}; \bar{A}, \bar{P}$.
It asserts that each isomorphism of the first group onto the second one 
is induced by conjugation 
by a homeomorphism of the interior $\Int(I)$ of $I$ onto the onterior of $\bar{I}$.
This claim holds even for suitably restricted subgroups of 
$G(I;A,P)$ and of $G(\bar{I};\bar{A}, \bar{P})$.
In order to state the generalized assertion 
we need the subgroup of ``bounded elements''.
\begin{definition}
\label{definition:B}
Let $B(I;A,P)$ be the subgroup of $G(I;A,P)$ consisting of all PL-homeomorphisms $f$
that are the identity near the end points or, more formally, that satisfy the inequalities
$\inf I < \inf \supp f$ and $\sup \supp f < \sup I$.
\end{definition}

We are now in a position to state the representation theorem.
\begin{theorem}
\label{thm:TheoremE16.4}
Assume $G$ is a subgroup of $G(I;A,P)$ 
that contains the derived subgroup of $B(I;A,P)$,
and $\bar{G}$ is a subgroup of $G(\bar{I};\bar{A}, \bar{P})$ 
containing the derived group of $B(\bar{I};\bar{A},\bar{P})$.
Then every isomorphism $\alpha \colon G \iso \bar{G}$ is induced by conjugation 
by a unique homeomorphism $\varphi_\alpha$ of the interior $\Int(I)$ of $I$ 
onto the interior of  $\bar{I}$;
more precisely, the equation
\begin{equation}
\label{eq;Inducing-alpha}
\alpha(g) \restriction{\Int(\bar{I})}  = \varphi_\alpha \circ (g \restriction{\Int(I)}) \circ \varphi_\alpha ^{-1}
\end{equation}
holds for every $g \in G$. 
Moreover,  $\varphi_\alpha$ maps $A \cap \Int(I)$ onto  $\bar{A} \cap \Int(\bar{I})$
\end{theorem}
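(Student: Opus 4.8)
The plan is to recover the topological $G$-space $\Int(I)$ — the open interval together with the $G$-action on it — from the abstract group $G$ alone, so that the isomorphism $\alpha$ transports this recovered structure from $\Int(I)$ to $\Int(\bar I)$ and thereby furnishes the homeomorphism $\varphi_\alpha$. The structural input that makes this possible is the hypothesis that $G$ contains the derived subgroup of $B(I;A,P)$: it guarantees that $G$ is \emph{locally rich}, in the sense that for every $x \in \Int(I)$ and every neighbourhood $U$ of $x$ there is a non-trivial element of $G$ whose support lies in $U$. Concretely, $B(I;A,P)$ contains ``bumps'' of arbitrarily small support, and one verifies that suitable commutators of such bumps are non-trivial and still supported in a prescribed subinterval. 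Combined with the faithfulness of the action on $\Int(I)$ and the approximate high transitivity recalled at the beginning of this section, this is precisely the hypothesis required to reconstruct the space from the group.

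First I would recover the points. For an open subinterval $J \subseteq \Int(I)$ let $G_J \leq G$ denote the subgroup of elements supported in $J$; the assignment $J \mapsto G_J$ is monotone, and $G_{J_1}$ and $G_{J_2}$ centralise one another exactly when $J_1$ and $J_2$ are disjoint. Local richness shows that the ordered family of the $G_J$ faithfully mirrors the ordered family of open subintervals, so that a point of $\Int(I)$ may be read off the group as an appropriate limit (a maximal filter, equivalently a germ) of such subgroups. Being an isomorphism of abstract groups, $\alpha$ carries this intrinsically defined data on $G$ to the analogous data on $\bar G$ and hence induces a homeomorphism $\varphi_\alpha \colon \Int(I) \iso \Int(\bar I)$. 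By the very construction, $\alpha(g)$ and $\varphi_\alpha \circ g \circ \varphi_\alpha^{-1}$ have the same support and the same germ at each point of $\Int(\bar I)$, so they coincide; this is the asserted conjugation formula. Uniqueness is then immediate: if homeomorphisms $\varphi_1$ and $\varphi_2$ both satisfy it, the composite $\varphi_2^{-1} \circ \varphi_1$ commutes with every $g \in G$ acting on $\Int(I)$, and approximate high transitivity leaves only the identity with this property.

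There remains the finer claim that $\varphi_\alpha$ maps $A \cap \Int(I)$ onto $\bar A \cap \Int(\bar I)$; its point is to detect $A$ group-theoretically. The characterisation I would use is: a point $x \in \Int(I)$ lies in $A$ if and only if there is an element of $G$ that is the identity on one side of $x$ but non-trivial on every neighbourhood of the other side — equivalently, $x$ is an endpoint of the support of some element of $G$. The implication ``such an element forces $x \in A$'' is immediate, since the element then has a break point at $x$ and break points lie in $A$; the converse, that every point of $A$ is so realised by an element of the possibly small group $G \supseteq [B,B]$ and not merely of the ambient $G(I;A,P)$, is where local richness is used once more. The decisive feature is that this condition refers only to the topological $G$-space (one-sidedness being visible from the topology, triviality from the action) and is therefore preserved by $\varphi_\alpha$; applying it to $\alpha$ and to $\alpha^{-1}$ gives $\varphi_\alpha(A \cap \Int(I)) = \bar A \cap \Int(\bar I)$.

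I expect the main obstacle to lie not in the formal reconstruction — which, granted local richness, is routine — but in the two places where one must show that the small subgroup $G$ still sees everything: that commutators of bumps already exhaust the local structure finely enough to recover the space, and that every point of $A$ occurs as a support endpoint of an element of $[B,B]$. A secondary point demanding care is that $\varphi_\alpha$ is a priori only a homeomorphism, so conjugation by it need not respect the affine (slope) structure; this is exactly why the characterisation of $A$ above is framed through one-sided triviality of group elements rather than through slopes, the former being invariant under arbitrary homeomorphisms and the latter not.
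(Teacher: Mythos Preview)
The paper does not prove this theorem: its entire proof reads ``The result is a restatement of \cite[Theorem E16.4]{BiSt14}.'' So there is nothing to compare your argument against within the paper itself; the content lives in the Bieri--Strebel monograph.

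Your sketch is in the right spirit: the proof in \cite{BiSt14} does proceed by reconstructing the ordered set $\Int(I)$ (and hence its topology) from the abstract group, exploiting that $G \supseteq [B(I;A,P),B(I;A,P)]$ acts with enough ``local moves'' to encode points and their order group-theoretically, and then reading off $A\cap\Int(I)$ as the set of one-sided support endpoints. What you flag as the two delicate points --- that commutators of bumps already suffice to see the local structure, and that every $a\in A\cap\Int(I)$ is a support endpoint of some element of $[B,B]$ --- are indeed where the work lies, and your sketch does not carry them out. As written, your proposal is an honest outline rather than a proof; it would be acceptable as motivation but not as a replacement for the citation, since filling those gaps is nontrivial and is exactly what the cited source does.
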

\begin{proof}
The result is a restatement of  \cite[Theorem E16.4]{BiSt14}. 
\end{proof}

\begin{remarks}
\label{remarks:varphi}
a)
Theorem \ref{thm:TheoremE16.4} has two simple, but important consequences.
First of all, every homeomorphism of intervals is either increasing or decreasing;
since the homeomorphism  $\varphi_\alpha$ 
inducing an isomorphism $\alpha \colon G \iso \bar{G}$ is uniquely determined by $\alpha$, 
there exist therefore two types of isomorphisms:
the \emph{increasing} isomorphisms induced by conjugation by an increasing homeomorphism
and the decreasing ones.

Assume now that $\bar{I} = I$.
It the homeomorphism $\varphi_\alpha \colon \Int(I) \iso \Int(I)$ is increasing 
it extends uniquely 
to a homeomorphism of $I$, 
but this may not be so if it is decreasing.
Indeed, $\varphi_\alpha$ extends if $I$ is a compact interval or the real line,
but not if $I$ is a half line.
If the extension exists, it will be denoted by $\tilde{\varphi}_\alpha$.

b) The increasing automorphisms of a group $G$
form a subgroup $\Aut_+ G $ of $\Aut G$ of index at most 2.
It will turn out that is often easier to find a non-zero homomorphism $\psi \colon G \to B$
that is fixed by the subgroup $\Aut_+ G$ 
than a non-zero homomorphism fixed by $\Aut G \smallsetminus \Aut_+ G$
(in case this set is non-empty).
For this reason, it is useful to dispose of criteria
guaranteeing that $\Aut G = \Aut_+ G$.

c) The derived group of $B(I;A,P)$ is a simple, infinite group 
(see \cite[Theorem C10.2]{BiSt14}), but $B(I;A,P)$ itself may not be perfect.
To date,
no characterization of the parameters $(I, A,P)$ corresponding to perfect groups $B(I;A,P)$ is known.
The quotient group $G(I;A,P)/B(I;A,P)$, on the other hand,
is a metabelian group that can be described explicitly in terms of the triple $(I, A, P)$ 
(see Section 12 and 5.2 in \cite{BiSt14}).
In the sequel,
we shall therefore restrict attention to subgroups $G$ containing $B(I;A,P)$.

d) The second important consequence of Theorem \ref{thm:TheoremE16.4} 
is the fact that $B(I;A,P)$ is a characteristic subgroup of every subgroup $G$
with $B(I;A,P) \subseteq G \subseteq G(I;A,P)$ 
(the proof is easy; 
see \cite[Corollary E16.5]{BiSt14}
or Corollary \ref{crl:alpha-and-ker-lambda} below).
\end{remarks}

In part a) of the previous remarks
the term \emph{increasing isomorphism} has been introduced.
In the sequel, this parlance will be used often and so we declare: 
\begin{definition}
\label{definition:Increasing-isomorphism}
Let $\alpha \colon G \iso \bar{G}$ be an isomorphism induced by the (uniquely determined) homeomorphism 
$\varphi_\alpha \colon \Int(I) \iso\Int(\bar{I})$.
If $\varphi$ is \emph{increasing} (respectively \emph{decreasing}) 
then $\alpha$ will be called increasing (respectively decreasing).
\end{definition}
%
\subsection{The homomorphisms $\lambda$ and $\rho$ }
\label{ssec:lambda-rho}
%
By Remark \ref{remarks:varphi}d the group $B = B(I;A,P)$ is a characteristic subgroup of every group $G$ containing it.
Now $G$ has, in addition,  subgroups containing $B$ 
that are invariant under the subgroup $\Aut_+ G$,
namely the kernels of the homomorphisms $\lambda$ and $\rho$.
To set these homomorphisms into perspective,
we go back to the homomorphisms 
\[
\sigma_\ell \colon G(I;A,P) \to P \quad \text{and} \quad  \sigma_r\colon G(I;A,P) \to P,
\]
introduced in section \ref{sssec:Homomorphisms-sigma-ell-etc}.
Their images are abelian and coincide with the group of slopes $P$.
If $I$ is not bounded from below,
there exist a homomorphism $\lambda$, related to $\sigma_\ell$,
whose image is contained in $\Aff(A,P)$,
the group of all affine maps of $\R$ with slopes in $P$ and displacements $f(0) \in A$.
The definition of $\lambda$ is this:
\begin{equation}
\label{eq:Definition-lambda}
\lambda \colon G(I;A,P) \to \Aff(A,P),
\quad
 g \mapsto \text{affine map coinciding with $g$ near $-\infty$}.
 \end{equation}
 If the interval $I$ is not bounded from above,
 there exists a similarly defined homomorphism $\rho \colon G(I;A,P) \to \Aff(A,P)$,
 given by
 \begin{equation}
\label{eq:Definition-rho}
\rho(g) =\text{ affine map coinciding with $g$ near $+\infty$}.
 \end{equation}
 The images of $\lambda$ and $\rho$ are, in general, smaller than $\Aff(A,P)$.
 They are equal to the entire group $\Aff(A,P)$ if $I = \R$;
 if $I$ is not bounded from below, but bounded from above, 
 the image of $\lambda$ is $\Aff(IP \cdot A, P)$ and the analogous statement holds for $\rho$.
 In the above, $IP \cdot A$ denotes the submodule of $A$ 
 generated by the products $(p-1) \cdot a$ with $p \in P$ and $a \in A$
 (see Section 4 and Corollary A5.3 in \cite{BiSt14}).

For uniformity of notation, we extend the definition of $\lambda$ and $\rho$ 
to compact intervals:
if $I = [0,b]$ and $f \in G(I;A,P)$ then $\lambda(g)$
is the linear map $t \mapsto \sigma_\ell(g) \cdot t$
and $\rho(g)$ is the affine map $ t \mapsto \sigma_r(f) \cdot (t-b)  + b$.
Similarly one defines $\lambda (g)$ if $I$ is the half line $[0, \infty[$.

The homomorphisms $\lambda$ and  $\rho$ allow one to restate the definition of $B(I;A,P)$;
one has
\begin{equation}
\label{eq:Reexpressing-B}
B(I;A,P) = \ker \lambda \cap \ker \rho.
\end{equation}
\begin{remark}
\label{remark:Restrictions-lambda-rho}
In the sequel, 
we shall often deal with subgroups, denoted $G$, of a group $G(I;A,P)$ that contain $B(I;A,P)$.
For ease of notation, 
we shall then denote the restrictions of 
$\lambda$ and $\rho$ to $G$ again by $\lambda$ and $\rho$.
\end{remark} 
 %
\subsection{First consequences of the representation theorem}
\label{ssec:Representation-first-consequences}
%
Let $G$ be a subgroup of $G(I;A,P)$ 
that contains the derived subgroup of $B(I;A,P)$ 
and let $\bar{G}$ be a subgroup of $G(\bar{I};\bar{A},\bar{P})$ 
containing the derived subgroup of $B(\bar{I};\bar{A},\bar{P})$.
Suppose $\varphi_\alpha$ is a homeomorphism of $\Int(I)$ onto $\Int(\bar{I})$
that induces an isomorphism $\alpha  \colon G  \iso \bar{G}$.
The map  $\varphi_\alpha$ need not be piecewise linear.
Theorem \ref{thm:TheoremE16.4}, however, has useful consequences even in such a case.
One implication is recorded in 
\begin{crl}
\label{crl:alpha-and-ker-lambda}
Assume $G$ and $\bar{G}$ are subgroups of $G(I;A,P)$ both of which contain $B(I;A,P)$,
and let $\lambda,\rho  \colon G \to \Aff(A,P)$ and $\bar{\lambda},\bar{\rho} \colon \bar{G} \to \Aff(A,P)$
be the obvious restrictions of the homomorphisms $\lambda, \rho$ introduced in section \ref{ssec:lambda-rho}.
Consider now an isomorphism $\alpha \colon G \iso \bar{G}$
that is induced by the homeomorphism $\varphi_\alpha \colon \Int(I) \iso \Int(I)$.
If $\varphi_\alpha$ is increasing then 
\begin{enumerate}[(i)]
\item $\alpha$ maps $\ker \lambda$ onto $\ker \bar{\lambda}$ 
and induces an isomorphism $\alpha_\ell$ of $G/\ker \lambda$  onto $\bar{G}/\ker \bar{\lambda}$;
\item $\alpha$ maps $\ker \rho$ onto $\ker \bar{\rho}$ and 
induces an isomorphism $\alpha_r$ of $G/\ker \rho$ onto $\bar{G}/\ker \bar{\rho}$.
\end{enumerate}
\end{crl}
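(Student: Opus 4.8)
The plan is to reduce both assertions to a single germ-theoretic description of the kernels and then to exploit the fact that an \emph{increasing} homeomorphism respects the orientation of $I$.

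First I would record what $\ker\lambda$ and $\ker\rho$ mean at the level of germs. By the construction of $\lambda$ in section \ref{ssec:lambda-rho}, the affine map $\lambda(g)$ is, in each of the three interval types, the map that agrees with $g$ on a neighbourhood of the left endpoint of $I$ inside $\Int(I)$ (for $I=[0,b]$ and $I=[0,\infty[$ this is $t\mapsto \sigma_\ell(g)\cdot t$, which coincides with $g$ near $0$ because $g(0)=0$; for $I=\R$ it is the germ of $g$ at $-\infty$ by definition). Since $g$ is piecewise linear with finitely many breakpoints, it is genuinely affine near that endpoint, so $g\in\ker\lambda$ holds if and only if $\lambda(g)=\id$, i.e. if and only if $g$ is the identity on some left-end neighbourhood $(\inf I,c)\subseteq\Int(I)$. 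The analogous statement for $\rho$ reads: $g\in\ker\rho$ if and only if $g$ is the identity on some right-end neighbourhood $(c',\sup I)\subseteq\Int(I)$. The same descriptions hold for $\bar\lambda$ and $\bar\rho$ on $\bar G$, recalling that $\bar I=I$ here since $G$ and $\bar G$ both sit in the same $G(I;A,P)$.

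Next I would translate conjugation by $\varphi_\alpha$ into these neighbourhoods. Because $\varphi_\alpha\colon\Int(I)\iso\Int(I)$ is an increasing homeomorphism, it preserves the orientation of $I$ and hence carries every left-end neighbourhood onto a left-end neighbourhood and every right-end neighbourhood onto a right-end neighbourhood. This is precisely the point at which the hypothesis ``$\varphi_\alpha$ increasing'' is used: a decreasing $\varphi_\alpha$ would interchange the two ends and, with them, $\ker\lambda$ and $\ker\rho$. Now take $g\in\ker\lambda$, so that $g$ is the identity on some left-end neighbourhood $U$, and put $V=\varphi_\alpha(U)$, again a left-end neighbourhood. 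By Theorem \ref{thm:TheoremE16.4} we have $\alpha(g)=\varphi_\alpha\circ g\circ\varphi_\alpha^{-1}$ on $\Int(I)$; for $x\in V$ we get $\varphi_\alpha^{-1}(x)\in U$, whence $g(\varphi_\alpha^{-1}(x))=\varphi_\alpha^{-1}(x)$ and therefore $\alpha(g)(x)=x$. Thus $\alpha(g)$ is the identity on $V$, i.e. $\alpha(g)\in\ker\bar\lambda$, which proves $\alpha(\ker\lambda)\subseteq\ker\bar\lambda$.

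For the reverse inclusion I would observe that $\alpha^{-1}\colon\bar G\iso G$ is induced by $\varphi_\alpha^{-1}$, which is again increasing; applying the inclusion just established with the roles of $G$ and $\bar G$ exchanged yields $\alpha^{-1}(\ker\bar\lambda)\subseteq\ker\lambda$, that is $\ker\bar\lambda\subseteq\alpha(\ker\lambda)$. Hence $\alpha(\ker\lambda)=\ker\bar\lambda$, and since $\alpha$ is an isomorphism carrying $\ker\lambda$ onto $\ker\bar\lambda$ it passes to the quotients, giving the induced isomorphism $\alpha_\ell\colon G/\ker\lambda\iso\bar G/\ker\bar\lambda$; this is (i). Repeating the argument verbatim with ``right-end'' in place of ``left-end'' gives $\alpha(\ker\rho)=\ker\bar\rho$ and the induced isomorphism $\alpha_r$, which is (ii). The only substantive ingredient is the germ description in the first step together with the orientation-preserving property of an increasing $\varphi_\alpha$; the surjectivity argument and the passage to quotients are then purely formal, so I do not expect any serious obstacle.
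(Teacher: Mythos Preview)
Your proof is correct and follows essentially the same approach as the paper's: characterize $\ker\lambda$ (resp.\ $\ker\rho$) as the elements that are the identity near the left (resp.\ right) end of $I$, use that an increasing $\varphi_\alpha$ preserves ``near the left end'' and ``near the right end'', obtain the inclusion $\alpha(\ker\lambda)\subseteq\ker\bar\lambda$, and then apply the same reasoning to $\alpha^{-1}$ (induced by the increasing $\varphi_\alpha^{-1}$) for the reverse inclusion. Your version simply spells out in more detail the germ description and the conjugation computation that the paper compresses into a single sentence.
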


\begin{proof}
(i) If  $g \in \ker \lambda$ then $g$ is the identity near $\inf I$.
As $\varphi_\alpha$ is \emph{increasing},
the image  $\alpha (g) = \varphi_\alpha \circ g \circ \varphi_\alpha^{-1}$ of $g$
is therefore also the identity near $\inf I$.
It follows that $\alpha(\ker \lambda) \subseteq \ker \bar{\lambda}$.
This inclusion is actually an equality, for $\alpha^{-1}:\bar{G}\to G$ is an 
isomorphism and so $\alpha^{-1}(\ker \bar{\lambda})\subseteq \ker \lambda$. 
Claim (ii) can be proved similarly.
 \end{proof}
 %
\subsection{Automorphisms induced by finitary PL-homeomorphisms}
\label{ssec:Autos-induced-by-PL-homeomorphisms}
%
Let $G \subseteq G(I;A,P)$ be as before,
and let $\alpha$ be an automorphism of $G$.
According to Theorem \ref{thm:TheoremE16.4},
$\alpha$ is induced by conjugation by a unique auto-homeomorphism $\varphi_\alpha$.
This auto-homeomorphism may not be piecewise linear,
but the situation improves if $P$, the group of slopes,
is not cyclic (and hence dense in $\R^\times_{>0}$):
\begin{theorem}
\label{thm:TheoremE17.1}
Suppose $P$ is \emph{not cyclic}.
For every automorphism $\alpha$ of $G$
there exists then a non-zero real number $s$ 
such that $A = s \cdot A$
and that the auto-homeomorphism $\varphi_\alpha  \colon \Int(I) \iso \Int(I)$ is piecewise linear
with slopes in the coset $s \cdot P$ of $P$.
Moreover, $\varphi_\alpha$ maps the subset $A \cap \Int(I)$ onto itself
and has only finitely many breakpoints in every compact subinterval of $\Int(I)$. 
\end{theorem}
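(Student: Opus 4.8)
\emph{Set-up and strategy.} The plan is to read off the local shape of the conjugating homeomorphism directly from the conjugacy relation, using the density of $P$. By Theorem \ref{thm:TheoremE16.4} the automorphism $\alpha$ is induced by a unique auto-homeomorphism $\varphi = \varphi_\alpha$ of $\Int(I)$ which already carries $A \cap \Int(I)$ onto itself; what remains is to produce the scalar $s$ and to establish that $\varphi$ is piecewise linear with slopes in $s\cdot P$ and locally finitely many breakpoints. For every $g \in G$ one has $\alpha(g) = \varphi \circ g \circ \varphi^{-1} \in G$, so this composite is again piecewise linear with slopes in $P$. Since $P$ is non-cyclic it is dense, and in particular $\log P$ contains two elements $\log p_1, \log p_2$ with $\log p_1/\log p_2 \notin \Q$; these incommensurable slopes are the crucial tool.

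\emph{Local power law.} First I would fix $a \in A \cap \Int(I)$ and study the right germ $F(u) = \varphi(a+u) - \varphi(a)$. Because $B(I;A,P) \subseteq G$ and $a \in A$, for $i=1,2$ there is an element $g_i \in G$ fixing $a$ and affine with slope $p_i$ on a right-neighbourhood of $a$. Its image $\alpha(g_i)$ fixes $\varphi(a)$, lies in $G$, and is affine there with some right-slope $q_i \in P$, so the relation $\varphi \circ g_i = \alpha(g_i) \circ \varphi$ yields the functional equation $F(p_i u) = q_i F(u)$ on an interval $(0,\delta)$. The two equations force $\log q_1/\log p_1 = \log q_2/\log p_2 =: \beta$, and continuity of $F$ together with the incommensurability of $\log p_1, \log p_2$ kills the log-periodic ambiguity, so that $F(u) = c\,u^{\beta}$ on a genuine interval $(0,\delta)$; the same holds on the left.

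\emph{Pinning $\beta = 1$.} I expect this to be the decisive step. If $g' \in G$ is affine near $a$ and carries $a$ to $a'$, comparing the right germs of $\varphi$ at $a$ and at $a'$ through the element $\alpha(g') \in G$ shows that the exponent $\beta$ is constant along the $G$-orbit of $a$. On the other hand, the explicit form $F(u) = c\,u^{\beta}$ shows that at every interior point $x_1 \in (a, a+\delta)$ the map $\varphi$ is differentiable with non-zero derivative, so its germ there has exponent $1$. Since $G$ acts with dense orbits on $\Int(I)$, the orbit of $a$ meets $(a, a+\delta)$ in such a point $x_1$; orbit-constancy assigns it exponent $\beta$ while the explicit form assigns it exponent $1$, whence $\beta = 1$. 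Thus $\varphi$ is affine on a one-sided neighbourhood of each point of the dense set $A \cap \Int(I)$, on both sides, and is therefore piecewise linear.

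\emph{Slopes, the scalar $s$, and finiteness.} With $\beta = 1$ the germ comparison through an element $g' \in G$ of slope $p$ carrying $a$ to $a'$ reduces to the chain rule: the slope of $\alpha(g')$ at $\varphi(a)$ equals $p$ times the ratio of the slopes of $\varphi$ at $a$ and $a'$. As that slope and $p$ both lie in $P$, the ratio of any two slopes of $\varphi$ lies in $P$, so fixing the slope $s$ of $\varphi$ at one base point puts every slope of $\varphi$ into the coset $s\cdot P$. On an affine piece $\varphi$ has slope $s p_0$ with $p_0 \in P$, so for $A$-points $a_1, a_2$ in that piece $\varphi(a_1) - \varphi(a_2) = s p_0 (a_1 - a_2) \in A$; since the differences $a_1-a_2$ exhaust $A \cap (-\ell,\ell)$, which generates $A$, and $p_0 \cdot A = A$, this gives $s\cdot A \subseteq A$, and applying the same to $\varphi^{-1}$ (slopes in $s^{-1}P$) yields $A = s\cdot A$. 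Finally, local affineness at every point of the dense set $A$ makes the breakpoints of $\varphi$ isolated, and their accumulation on a compact subinterval is ruled out by comparison with a fixed conjugate $\alpha(g) \in G$, which has only finitely many breakpoints there; this last point needs a little care but is routine once the rigidity above is in hand.
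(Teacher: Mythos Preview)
The paper does not prove this theorem; its proof is the single sentence citing Theorem~E17.1 of \cite{BiSt14}. There is therefore no argument in the present paper to compare yours against, and your sketch must be judged on its own. Through the step $\beta=1$ it is essentially correct: with both $p_i$ on the same side of $1$ the functional equations $F(p_iu)=q_iF(u)$ and the incommensurability of $\log p_1,\log p_2$ give the local power law; the exponent is orbit-constant by your chain-rule comparison; and confronting the explicit form on $(a,a+\delta)$ with the germ at an orbit point $x_1\in A\cap(a,a+\delta)$ forces $\beta=1$. The slope-coset statement and $s\cdot A=A$ then follow as you indicate.

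The genuine gap is your final clause. The assertion ``local affineness at every point of the dense set $A$ makes the breakpoints of $\varphi$ isolated'' is false as a general principle: an increasing homeomorphism of an interval can be affine on one-sided neighbourhoods of every rational and still have breakpoints accumulating at a prescribed irrational point. What is needed to exclude this for $\varphi_\alpha$ is a \emph{global} use of the conjugacy relation, not merely its germ at points of $A$. One route is to fix $a_0,d\in A$ with $[a_0,d]\subset\Int(I)$ and elements $g,g'\in B(I;A,P)$ affine on all of $[a_0,d]$, fixing $a_0$, with incommensurable slopes $p,p'<1$; the identities $\varphi\circ g=\alpha(g)\circ\varphi$ and $\varphi\circ g'=\alpha(g')\circ\varphi$ then propagate the known affineness of $\varphi$ near $a_0$ across $[a_0,d]$ piece by piece, each new piece ending either at the $\varphi$-preimage of a breakpoint of $\alpha(g)$ or $\alpha(g')$ (finitely many, all in $A$) or at a $g$- or $g'$-preimage of an earlier endpoint. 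Making this iteration precise and checking it terminates is where the remaining content lies; it is not the routine remark your sketch suggests, and in \cite{BiSt14} it is a genuine argument.
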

\begin{proof}
The result is a special case of \cite[Theorem E17.1]{BiSt14}.
\end{proof}

Theorem \ref{thm:TheoremE17.1} indicates 
that automorphisms of groups with a non-cyclic group of slopes $P$ 
are easier to analyze than those of the groups with cyclic $P$.
Note, however, 
that the conclusion of Theorem \ref{thm:TheoremE17.1} does not rule out
that $\varphi_\alpha$ has infinitely many breakpoints 
which accumulate in one or both end points
\footnote{The notion of end point is to be interpreted suitably if $I$ is not bounded.}
and so $\varphi_\alpha$ may not be differentiable in the end points.
In section \ref{ssec:Differentiability-criterion}
we shall therefore be interested in differentiability criteria.
%
\section{Characters fixed by $\Aut G([0,b];A,P)$}
\label{sec:Homomorphisms-fixed-by-Aut-I-compact}
In this section,
we prove Theorem \ref{thm:Generalization-of-BFG08} for the case of a compact interval
and various extensions of it.
An important ingredient in the proofs of these results is a criterion 
that allows one to deduce 
that an auto-homeomorphism $\varphi_\alpha$ 
inducing an automorphism $\alpha$ of the group 
is differentiable near one or both of its end points.
%
\subsection{A differentiability criterion}
\label{ssec:Differentiability-criterion}
%
The proof of the criterion is rather involved.
Prior to stating the criterion and giving its proof,
we discuss therefore a result 
that explains the interest in the criterion.
\begin{prp}
\label{prp:Importance-of-differentiability-I-compact}
Let $G$ be a subgroup of $G([a,b];A,P)$ 
that contains the derived subgroup of $B(I;A,P)$.
Suppose $\tilde{\varphi} \colon [0,b] \iso [0,b]$ is an auto-homeomorphism 
that induces, by conjugation, an automorphism $\alpha$ of $G$.
Then the following statements hold:
\begin{enumerate}[(i)]
\item if $\tilde{\varphi}$ is increasing, differentiable in 0 and $\tilde{\varphi}'(0) > 0$  
then $\alpha$ fixes $\sigma_\ell$;
\item if $\tilde{\varphi}$ is increasing, differentiable in $b$ with $\tilde{\varphi}'(b) > 0$ 
then $\alpha$ fixes $\sigma_r$;
\item if $\tilde{\varphi}$ is differentiable both in 0 and in $b$,
with non-zero derivatives,
then $\alpha$ fixes the homomorphism 
$\psi \colon g \mapsto \sigma_\ell (g) \cdot \sigma_r(g)$.
\end{enumerate}
\end{prp}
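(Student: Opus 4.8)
The plan is to exploit the explicit formula $\alpha(g) = \tilde{\varphi} \circ g \circ \tilde{\varphi}^{-1}$ furnished by the hypothesis that $\alpha$ is induced by conjugation by $\tilde{\varphi}$, and to read off the boundary slopes of $\alpha(g)$ by a direct difference-quotient computation at the relevant endpoint. For (i), since every $g \in G$ is piecewise linear with only finitely many break points, there is an $\varepsilon > 0$ on which $g$ is linear with slope $p := \sigma_\ell(g)$, so $g(s) = p\,s$ for $0 \le s \le \varepsilon$ (using $g(0)=0$). As $\tilde{\varphi}$ is increasing it fixes $0$, whence $\alpha(g)(0)=0$ and $\sigma_\ell(\alpha(g))$ equals the right derivative $\lim_{t \searrow 0}\alpha(g)(t)/t$. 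Substituting $s=\tilde{\varphi}^{-1}(t)$, so that $s \searrow 0$ as $t \searrow 0$, turns this into a one-variable limit in which the composition disappears.

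The first obstacle I need to dispose of is that $\tilde{\varphi}$ is assumed differentiable only \emph{at} the endpoint, not on a neighbourhood, so the chain rule cannot be applied to the composite $\tilde{\varphi}\circ g\circ\tilde{\varphi}^{-1}$. The linearity of $g$ near $0$ is exactly what circumvents this: after the substitution the difference quotient becomes $\tilde{\varphi}(p\,s)/\tilde{\varphi}(s)$, and the heart of the matter is the elementary limit
\[ \lim_{s \searrow 0} \frac{\tilde{\varphi}(p\,s)}{\tilde{\varphi}(s)} = p. \]
This I would verify by writing $\tilde{\varphi}(x) = \tilde{\varphi}'(0)\,x + o(x)$ as $x \searrow 0$ and dividing; the hypothesis $\tilde{\varphi}'(0) \neq 0$ guarantees that the denominator does not degenerate and that the factor $\tilde{\varphi}'(0)$ cancels, leaving $p = \sigma_\ell(g)$. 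This proves $\sigma_\ell(\alpha(g)) = \sigma_\ell(g)$, that is, (i). Statement (ii) is the mirror computation at the right endpoint $b$: here $\tilde{\varphi}(b)=b$, the function $g$ is linear near $b$ with slope $q := \sigma_r(g)$, and the same first-order expansion of $\tilde{\varphi}$ at $b$ yields $\sigma_r(\alpha(g)) = q$.

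For (iii) the point to notice is that $\tilde{\varphi}$ is no longer assumed increasing, so I would split according to its orientation. If $\tilde{\varphi}$ is increasing, (i) and (ii) apply verbatim and give $\sigma_\ell(\alpha(g))=\sigma_\ell(g)$ and $\sigma_r(\alpha(g))=\sigma_r(g)$, hence $\psi(\alpha(g))=\psi(g)$. If $\tilde{\varphi}$ is decreasing it interchanges the endpoints, $\tilde{\varphi}(0)=b$ and $\tilde{\varphi}(b)=0$, so conjugation transports the behaviour of $g$ near $b$ to the behaviour of $\alpha(g)$ near $0$, and conversely. Repeating the same difference-quotient computation, now with $s=\tilde{\varphi}^{-1}(t) \nearrow b$ as $t \searrow 0$, gives $\sigma_\ell(\alpha(g)) = \sigma_r(g)$ and, symmetrically, $\sigma_r(\alpha(g)) = \sigma_\ell(g)$; note that only $\tilde{\varphi}'(0)\neq 0$ and $\tilde{\varphi}'(b)\neq 0$ are needed here, since the now negative value of the derivative cancels in the ratio. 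In either orientation the symmetry of $\psi = \sigma_\ell\cdot\sigma_r$ under the swap $\sigma_\ell \leftrightarrow \sigma_r$ then yields $\psi(\alpha(g))=\sigma_\ell(g)\cdot\sigma_r(g)=\psi(g)$, which is (iii).

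I would close with a remark on where the difficulty really lies: the only substantive analytic input is the displayed limit, and the conceptual content of (iii) is the observation that, although a decreasing $\tilde{\varphi}$ fails to fix $\sigma_\ell$ and $\sigma_r$ separately, it fixes their product precisely because it merely permutes the two endpoint slopes.
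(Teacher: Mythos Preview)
Your argument is correct and tracks the paper's proof closely: both compute the one-sided derivative of $\tilde\varphi\circ g\circ\tilde\varphi^{-1}$ at the endpoint and watch the $\tilde\varphi'$-factors cancel, and both handle (iii) by splitting on orientation and noting that a decreasing $\tilde\varphi$ swaps $\sigma_\ell$ and $\sigma_r$; the only difference is that the paper writes down the chain rule directly, whereas you unpack it into the explicit limit $\tilde\varphi(ps)/\tilde\varphi(s)\to p$. One small quibble: the chain rule \emph{does} hold at a single point (and one-sidedly, since all the intermediate maps here are increasing and fix the endpoint), so the ``obstacle'' you flag is not genuinely present---your difference-quotient computation is in effect a proof of that pointwise chain rule in this special situation rather than a way around it.
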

\begin{proof}
\text{(i) and (ii) } 
Suppose the extended auto-homeomorphism  
$\tilde{\varphi}= \tilde{\varphi}_\alpha$ is increasing
and fix $g \in G$.
If $\tilde{\varphi}$ is differentiable in 0 and $\tilde{\varphi}'(0) > 0$,
the chain rule justifies the following computation
\begin{equation}
\label{eq:Calculation-alpha-increasing}
\sigma_\ell(\alpha(g)) 
=
(\tilde{\varphi} \circ g \circ \tilde{\varphi}^{-1})(0)
=
\tilde{\varphi}'(0) \cdot g'(0) \cdot (\tilde{\varphi}^{-1})'(0)
=
\sigma_\ell(g)
\end{equation}
It follows that $\sigma_\ell$ is fixed by $\alpha$.
If $\tilde{\varphi}$ admits a left derivative in $b$ and if $\tilde{\varphi}'(b) > 0$,
one sees similarly, that $\sigma_r$ is fixed by $\alpha$.
\smallskip

\text{(iii) } 
Assume now that $\tilde{\varphi}= \tilde{\varphi}_\alpha$ is differentiable, both in 0 and in $b$,
and that both derivatives are different from 0.
If $\tilde{\varphi}$ is \emph{increasing} parts (i) and (ii) guarantee
that $\sigma_\ell$ and $\sigma_r$ are fixed by $\alpha$, 
whence so is their product $\psi$.
If, on the other hand, $\tilde{\varphi}$ is \emph{decreasing}
the calculation
\begin{equation}
\label{eq:Calculation-alpha-decreasing}
\sigma_r(\alpha(g)) 
=
\left(\tilde{\varphi} \circ g \circ \tilde{\varphi}^{-1}\right)'(b)
=
\tilde{\varphi}'(0) \cdot g'(0) \cdot (\tilde{\varphi}^{-1})'(b)
=
\sigma_\ell(g)
\end{equation}
holds for every $g \in G$
and establishes the relation $\sigma_r \circ \alpha = \sigma_\ell$.

A similar calculation shows that the relation $\sigma_\ell \circ \alpha = \sigma_r$ is valid.
The claim for $\psi$ is then a consequence of the following computation:
\begin{align*}
\left(\psi \circ \alpha\right) (g) 
&=
\sigma_\ell(\alpha(g)) \cdot \sigma_r(\alpha(g))
=
\sigma_r(g) \cdot \sigma_\ell(g)
= \psi (g).\qedhere
\end{align*}
\end{proof}
%
\subsubsection{Statement and proof of the criterion}
\label{ssec:Differentiability-criterion-Statement-and-proof}

We come now to the criterion;
we choose a formulation that is slightly more general than 
what is needed for the case at hand;
the extended version will be used in  Section \ref{sec:Homomorphisms-fixed-by-Aut-I-half-line}.
\begin{prp}
\label{prp:phi-is-linear-I-compact}
Suppose $I$ is an interval of one of the forms $[0, b]$ or $[0, \infty[$,
and $G$ as well as $\bar{G}$ are subgroups of $G(I;A,P)$ that contain $B(I;A,P)$.
Assume $\tilde{\varphi} \colon I \iso I$ is an \emph{increasing} auto-homeomorphism
that induces, by conjugation, an isomorphism $\alpha$ of the group $G$ onto the group $\bar{G}$.

If the image of $\sigma_\ell \colon G \to P$ is \emph{not cyclic},
then $\tilde{\varphi}$ is linear on a small interval of the form $[0,\delta]$
and so $\tilde{\varphi}$ is differentiable in 0 with positive derivative.
\end{prp}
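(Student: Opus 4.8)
The plan is to combine the representation results of Section \ref{sec:Preliminaries-autos-G(I;A,P)} with a self-similarity/density argument near the endpoint $0$. Since $\im\sigma_\ell$ is non-cyclic it is in particular non-trivial, and a non-cyclic subgroup of $P\subseteq\R^\times_{>0}$ forces $P$ itself to be non-cyclic; hence Theorem \ref{thm:TheoremE17.1} applies and tells us that $\tilde\varphi$ is piecewise linear on $\Int(I)$, with slopes in a coset $s\cdot P$ and with only finitely many break points in every compact subinterval of $\Int(I)$. Thus $\tilde\varphi$ is linear near $0$ precisely when its break points do not accumulate at $0$, and this is what I would prove. Throughout, let $S$ denote the set of break points of $\tilde\varphi$ in $\Int(I)$; by the cited theorem $S$ is locally finite in $\Int(I)$.

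First I would record a functional equation for $\tilde\varphi$ near $0$. Every $g\in G$ is linear with slope $\sigma_\ell(g)$ on a right-neighbourhood of $0$, and by Corollary \ref{crl:alpha-and-ker-lambda} the isomorphism $\alpha$ maps $\ker\lambda=\ker\sigma_\ell$ onto $\ker\bar\lambda$ and so induces an isomorphism $\alpha_\ell\colon\im\sigma_\ell\iso\im\bar\sigma_\ell$ with $\sigma_\ell(\alpha(g))=\alpha_\ell(\sigma_\ell(g))$. Feeding $g(u)=p\,u$ (for $u$ small, where $p=\sigma_\ell(g)$) and $\alpha(g)(v)=\alpha_\ell(p)\,v$ (for $v$ small) into the conjugacy relation $\tilde\varphi\circ g=\alpha(g)\circ\tilde\varphi$ of Theorem \ref{thm:TheoremE16.4} yields, for each $p\in\im\sigma_\ell$,
\[
\tilde\varphi(p\,u)=\alpha_\ell(p)\,\tilde\varphi(u)\qquad\text{for }u\in(0,\delta(p)),
\]
with some $\delta(p)>0$ that a priori depends on $p$.

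The crucial step is to make this domain uniform in $p$. Fixing one contraction $p_0\in\im\sigma_\ell$ with $0<p_0<1$ (which exists as $\im\sigma_\ell$ is non-trivial), the equation for $p_0$ iterates to $\tilde\varphi(p_0^{\,n}u)=\alpha_\ell(p_0)^n\tilde\varphi(u)$ on $(0,\delta_0)$, where $\delta_0:=\delta(p_0)$; applying this self-similarity on both sides of the equation for an arbitrary $p$ (after pushing the argument deep enough into $(0,\delta(p))$ by a power of $p_0$) upgrades the latter to hold on the single interval $(0,\delta_0)$, for every $p\in\im\sigma_\ell$ with $p\,u<\delta_0$. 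Because two piecewise linear functions that agree on an interval have the same break points there, and since $u\mapsto\tilde\varphi(p\,u)$ and $u\mapsto\alpha_\ell(p)\tilde\varphi(u)$ have break-point sets $p^{-1}S$ and $S$, the equation now says
\[
u\in S\iff p\,u\in S\qquad\text{whenever }u,\ p\,u\in(0,\delta_0),
\]
for all $p\in\im\sigma_\ell$; that is, $S\cap(0,\delta_0)$ is invariant under multiplication by every element of $\im\sigma_\ell$ keeping points inside $(0,\delta_0)$. Now I would derive a contradiction from the assumption that break points accumulate at $0$. A non-cyclic subgroup of $\R^\times_{>0}$ is dense, so $\im\sigma_\ell$ is dense. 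Picking any break point $u_0\in S\cap(0,\delta_0)$, the displayed invariance gives $p\,u_0\in S$ for every $p\in\im\sigma_\ell$ with $p\,u_0<\delta_0$; by density the set $\{p\,u_0:p\in\im\sigma_\ell,\ p\,u_0<\delta_0\}$ is dense in $(0,\delta_0)$, whence $S$ is dense in the compact subinterval $[\delta_0/e^2,\delta_0/e]\subset\Int(I)$, contradicting the local finiteness of $S$. Hence the break points stay away from $0$, so $\tilde\varphi$ is linear on some $[0,\delta]$; being an increasing homeomorphism fixing $0$, it equals $t\mapsto c\,t$ there with $c>0$, which is the asserted differentiability at $0$ with positive derivative.

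The main obstacle is exactly the uniformisation of the domain: the relation $\tilde\varphi(p\,u)=\alpha_\ell(p)\,\tilde\varphi(u)$ is intrinsically a germ statement at $0$, valid only on a right-neighbourhood whose size is not controlled as $p$ varies, and the expanding elements $p>1$ push points out of any fixed neighbourhood, so no naive iteration stays in range. The self-similarity trick built from a single contraction $p_0$ is what converts these $p$-dependent germs into a statement on one fixed interval $(0,\delta_0)$, and only then does the density of $\im\sigma_\ell$ bite against the local finiteness of the break-point set supplied by Theorem \ref{thm:TheoremE17.1}.
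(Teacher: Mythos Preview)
Your proof is correct and shares its backbone with the paper's: both establish the functional equation $\tilde\varphi(pu)=\alpha_\ell(p)\,\tilde\varphi(u)$ near $0$, uniformise the domain via a fixed contraction $p_0<1$ (by exactly the computation you describe), and then exploit the density of the non-cyclic group $\im\sigma_\ell$ together with the piecewise linearity of $\tilde\varphi$ supplied by Theorem~\ref{thm:TheoremE17.1}. The difference lies only in the endgame. The paper first shows that $\alpha_\ell$ is order-preserving and hence extends, via $\ln$, to a continuous endomorphism of $\R_{\add}$, which must be multiplication by some $r>0$; it then derives the explicit formula $\tilde\varphi(x\delta)=x^{r}\tilde\varphi(\delta)$ for all $x\in(0,1)$ by continuity, and observes that a power law is incompatible with piecewise linearity unless $r=1$. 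You bypass the identification of $\alpha_\ell$ altogether: the functional equation alone forces the break-point set of $\tilde\varphi$ inside $(0,\delta_0)$ to be invariant under multiplication by $\im\sigma_\ell$, and density immediately contradicts local finiteness. Your route is a shade shorter; the paper's route yields, as a by-product, that $\alpha_\ell$ is the identity on $\im\sigma_\ell$, though this of course also follows a posteriori once $\tilde\varphi$ is known to be linear near~$0$.
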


\begin{proof}
The following argument uses ideas from the proofs 
of Proposition E16.8  and Supplement E17.3 in \cite{BiSt14}.
The proof will be divided into three parts.
In the first one,
we show that $\alpha \colon G \iso \bar{G}$ induces an isomorphism 
$\alpha_\ell \colon \im \sigma_\ell \iso \im \bar{\sigma}_\ell$,
that takes $p \in \im \sigma_\ell$ to $p^r = \e^{r\cdot \log p}$ 
for some positive real number $r$ that does \emph{not} depend on $p$.
In the second part,
we establish that $\tilde{\varphi}$ satisfies the relation
\begin{equation}
\label{eq:Represention-of-phi}
\tilde{\varphi}(p \cdot t) =  p^r \cdot \tilde{\varphi}(t)
\end{equation}
for every $p \in (\im \sigma_\ell \,\cap \,]0,1[\,)$
and $t$ varying in some small interval $[0, \delta]$.
In the last part,
we deduce from this relation that $\tilde{\varphi}$ is linear near 0.

We embark now on the first part.
Since $\varphi$ is increasing 
Corollary \ref{crl:alpha-and-ker-lambda} applies and shows
that $\alpha$ maps the kernel of $\sigma_\ell \colon G \to P$ 
onto the kernel of the homomorphism $\bar{\sigma}_\ell \colon \bar{G} \to P$,
and induces thus an isomorphism $\alpha_\ell \colon \im \sigma_\ell \iso \im \bar{\sigma}_\ell$
that renders the square
\begin{equation*}
\xymatrix{G \ar@{->}[r]^-{\alpha} \ar@{->>}[d]^-{\sigma_\ell} 
& \bar{G} \ar@{->>}[d]^-{\bar{\sigma}_\ell}\\
\im \sigma_\ell \ar@{->}[r]^-{\alpha_\ell} &\im \bar{\sigma}_\ell}
\end{equation*}
\noindent
commutative.
We claim
$\alpha_\ell$ maps the set $(\im \sigma_\ell) \, \cap \,]0, 1[$ 
onto $(\im \bar{\sigma}_\ell) \, \cap \,]0, 1[$.
Indeed,
let $p \in \im \sigma_\ell$ be a slope with $p < 1$ 
and let $f_p \in G$ be a preimage of $p$.  
Then $\alpha(f_p)$ is linear on some interval $[0, \varepsilon_p]$ 
and has there slope $\bar{\sigma}_\ell(\alpha(f_p)) = \alpha_\ell(p)$.  
Since $\tilde{\varphi}$ is continuous in 0,
there exists $\delta_p > 0$ 
so that $f_p$ is linear on $[0, \delta_p]$ 
and that $\tilde{\varphi}([0,\delta_p]) \subseteq [0,\varepsilon_p]$.
Fix $t \in [0,  \delta_p]$.
The hypothesis that $\alpha$ is induced by conjugation by $\tilde{\varphi}$ 
then leads to the chain of equalities 
\begin{align}
\label{eq:Representation-of-phi-1}
\tilde{\varphi} (p \cdot t)  
&=
(\tilde{\varphi}\circ f_p)(t) 
=
(\alpha(f_p) \circ \tilde{\varphi})( t)
=
\alpha(f_p) (\tilde{\varphi}( t))
=
\alpha_\ell(p) \cdot \tilde{\varphi}(t).
\end{align}
Since $\tilde{\varphi}$ is increasing and as $p < 1$,
the chain of equalities implies that $\alpha_\ell(p) < 1$.
It follows that $\alpha_\ell$ maps $(\im \sigma_\ell) \, \cap \, ]0,1[$ 
into $\im \bar{\sigma}_\ell \, \cap \, ]0,1[$
and then, by applying the preceding argument to $\varphi^{-1}$,
that 
\[
\alpha_\ell \left(\im \sigma_\ell \, \cap \, ]0,1[\,\right) = \im \bar{\sigma}_\ell\, \cap \, ]0,1[\;.
\]

We show next
that $\alpha_\ell(p) = p^r$ for all $p \in \im \sigma_\ell$ and some positive real number $r$. 
We begin by passing from the multiplicative subgroup  $\im \sigma_\ell \subset \R^\times_{>0}$  
to a subgroup of $\R_{\add}$; 
to that end, we introduce the homomorphism
\[
L_0 =  \ln \circ \, \alpha_\ell \circ \exp \colon \ln(\im \sigma_\ell) \iso \ln(\im \bar{\sigma}_\ell).
\]
The previous verification implies 
that $L_0$ is an order preserving isomorphism;
by the assumption on $\im \sigma_\ell$
the domain of $L_0$ is a dense subgroup of $\R_{\add}$. 
It follows 
that $L_0$ extends uniquely to an order preserving automorphism  
$L \colon \R_{\add} \to \R_{\add}$.
The homomorphism $L$ is continuous,
hence linear, and so given by multiplication by some positive real number $r$.
The isomorphism $\alpha_\ell $ has therefore the form  
\[
p \mapsto p^r = \exp(r \cdot \ln p ) \quad \text{with}\quad  r > 0.
\]

We come now to the second part of the proof.
Fix a slope $p_1 < 1$ in $\im \sigma_\ell$.
Formula \eqref{eq:Representation-of-phi-1} 
and the previously found formula for $\alpha_\ell$ then imply 
that there exists a small positive number $\delta_{p_1}$ such 
that the equation
\begin{equation}
\label{eq:Representation-of-phi-2}
\tilde{\varphi}(p_1 \cdot t)= p_1^r  \cdot  \tilde{\varphi} (t) 
\end{equation}
holds for every $t \in [0, \delta_{p_1}]$.
Consider next another slope $p < 1$.
There exists then, as before, a real number $\delta_p > 0$ 
so that $\tilde{\varphi}(p\cdot t) = p^r\cdot \tilde{\varphi}(t)$ for $t\in [0,\delta_p]$.  
Choose now $m\in \N$ so large that $p_1^m \cdot \delta_{p_1} \leq \delta_p$.  
The following chain of equalities then holds for each $t \in [0,\delta_{p_1}]$:
\begin{equation*}
p^{m\cdot r}_1 \cdot \tilde{\varphi}(p\cdot t) 
= 
\tilde{\varphi}(p_1^m\cdot p \cdot t) 
= 
\tilde{\varphi}( p \cdot p_1^m \cdot t) 
= 
p^r \cdot \tilde{\varphi}(p^m_1\cdot t)
= 
 p^r \cdot p^{m\cdot r}_1 \cdot \tilde{\varphi}(t).
\end{equation*}
The calculation shows
that $\tilde{\varphi}(p\cdot t) = p^r\cdot \tilde{\varphi}(t)$ for every $t\in [0,\delta_1]$. 
Upon setting $\delta = \delta_{p_1}$
one arrives at formula \eqref{eq:Represention-of-phi}. 

The proof is now quickly completed.
By assumption,
$\im \sigma_\ell$ is not cyclic and so formula \eqref{eq:Represention-of-phi} holds 
for a dense set of slopes $p$ and a fixed argument $t$, say $t = \delta$.
Since $\varphi$ is continuous and increasing,
formula \eqref{eq:Represention-of-phi} continues to hold for every real $x \in\; ]0, 1[$.
The formula
\[
\varphi (x\cdot \delta ) = \exp(r \cdot \ln x) \cdot \varphi(\delta) = x^r \cdot \varphi(\delta) 
\] 
is therefore valid for every $x \in \;]0, \delta]$. 
By Theorem \ref{thm:TheoremE17.1},  on the other hand,
$\varphi$ is piecewise linear on $]0,\delta]$.
So the exponent $r$ must be equal to 1,
whence $\varphi$ is linear on $[0,\delta]$ with slope $\varphi(\delta) /\delta> 0 $ 
and so, in particular, differentiable in 0.
\end{proof}

\begin{remark}
\label{remark:Criterion-for-differentiability}
Assume $I$ is a compact interval of the form $[0,b]$ with $b \in A_{>0}$
and the images of $\sigma_\ell$ and $\sigma_r$ are both not cyclic.
It follows then from Proposition \ref{prp:phi-is-linear-I-compact} 
that every \emph{increasing} automorphism $\alpha \colon G \iso G$
is induced by an auto-homeomorphism $\tilde{\varphi}$ 
that is affine near both end points. 
By \cite[Proposition E16.9]{BiSt14}  
the homeomorphism $\tilde{\varphi}$ is thus \emph{finitary} piecewise linear. 
\end{remark}

\subsubsection{First application}
\label{ssec:Differentiability-criterion-First-application}
%
As a further step towards the main results 
we give a corollary
that combines Propositions
\ref{prp:Importance-of-differentiability-I-compact}
and
\ref{prp:phi-is-linear-I-compact}.
\begin{crl}
\label{crl:I-compact-summary-differentiability}
Let $G$ be a subgroup of $G(I;A,P)$ that contains $B(I;A,P)$.
Assume $I = [0, b]$ and let $\alpha$ be an automorphism of $G$ 
that is induced  by the auto-homeomorphism $\tilde{\varphi}\colon I \iso I$.
Then the following statements hold: 
\begin{enumerate}[(i)]
\item if $\alpha$ is increasing
\footnote{See Definition \ref{definition:Increasing-isomorphism}.}
and $\im \sigma_\ell$ not cyclic,
then $\sigma_\ell$ is fixed by $\alpha$;
\item if $\alpha$ is increasing and $\im \sigma_r$ not cyclic,
then $\sigma_r$ is fixed by $\alpha$;
\item 
if $\tilde{\varphi}$ is decreasing and $\im \sigma_\ell$ is not cyclic,
then $\tilde{\varphi}$ is affine near both end points 
and the homomorphism $\psi \colon g \mapsto \sigma_\ell (g) \cdot \sigma_r(g)$ is fixed by $\alpha$.
\end{enumerate}
\end{crl}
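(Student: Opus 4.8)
The plan is to obtain all three assertions by feeding the conclusions of Proposition~\ref{prp:phi-is-linear-I-compact} into Proposition~\ref{prp:Importance-of-differentiability-I-compact}, together with one symmetry observation. That observation is that the reflection $\tau\colon t\mapsto b-t$ of $I=[0,b]$ preserves $B(I;A,P)$ and the conditions defining $G(I;A,P)$ (it fixes the slopes of an element and reflects its break points, which stay in $A$ since $b\in A$), that it interchanges $\sigma_\ell$ and $\sigma_r$ in the sense $\sigma_\ell(\tau g\tau)=\sigma_r(g)$ and $\sigma_r(\tau g\tau)=\sigma_\ell(g)$, and that composing with $\tau$ turns a decreasing auto-homeomorphism into an increasing one. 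In particular, reflecting the whole set-up gives an \emph{evident right-endpoint analogue} of Proposition~\ref{prp:phi-is-linear-I-compact}: if $\tilde{\varphi}$ is increasing and $\im\sigma_r$ is not cyclic, then $\tilde{\varphi}$ is affine near $b$ with positive derivative there. For part~(i), the inducing auto-homeomorphism $\tilde{\varphi}$ is increasing, so Proposition~\ref{prp:phi-is-linear-I-compact} applied with $\bar{G}=G$ and $\im\sigma_\ell$ non-cyclic shows $\tilde{\varphi}$ is linear on some $[0,\delta]$, hence differentiable in $0$ with $\tilde{\varphi}'(0)>0$; Proposition~\ref{prp:Importance-of-differentiability-I-compact}(i) then fixes $\sigma_\ell$. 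Part~(ii) is the same argument run through the right-endpoint analogue, followed by Proposition~\ref{prp:Importance-of-differentiability-I-compact}(ii).

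Part~(iii) is the substantial one, and it needs two preliminary points. The first is that, although only $\im\sigma_\ell$ is assumed non-cyclic, non-cyclicity of $\im\sigma_r$ comes for free. Indeed, arguing as in Corollary~\ref{crl:alpha-and-ker-lambda} but with the now \emph{decreasing} $\tilde{\varphi}$, which sends a neighbourhood of each endpoint to a neighbourhood of the other, one sees that $\alpha$ carries $\ker\sigma_\ell$ onto $\ker\sigma_r$ and $\ker\sigma_r$ onto $\ker\sigma_\ell$; hence $\alpha$ induces an isomorphism $\im\sigma_\ell\iso\im\sigma_r$, and the two images are cyclic or non-cyclic simultaneously. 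The second point sets up the reduction to the increasing case: put $\bar{G}=\tau G\tau$. By the first paragraph $\bar{G}$ is again a subgroup of $G(I;A,P)$ containing $B(I;A,P)$, and $\im(\sigma_\ell\restriction\bar{G})=\im\sigma_r$, $\im(\sigma_r\restriction\bar{G})=\im\sigma_\ell$.

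With these in hand the endgame is symmetric and uses only the stated (left-endpoint) Proposition~\ref{prp:phi-is-linear-I-compact}. The map $\tau\circ\tilde{\varphi}$ is increasing and induces an isomorphism $G\iso\bar{G}$; since its left-endpoint image is $\im\sigma_\ell$, which is non-cyclic, $\tau\circ\tilde{\varphi}$ is affine near $0$, and because $\tau$ is affine this forces $\tilde{\varphi}$ to be affine near $0$. Likewise $\tilde{\varphi}\circ\tau$ is increasing and induces $\bar{G}\iso G$; its left-endpoint image is $\im(\sigma_\ell\restriction\bar{G})=\im\sigma_r$, non-cyclic by the first point, so $\tilde{\varphi}\circ\tau$ is affine near $0$ and hence $\tilde{\varphi}$ is affine near $b$. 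Thus $\tilde{\varphi}$ is affine near both endpoints; being a decreasing homeomorphism its one-sided derivatives there are negative, in particular non-zero, so Proposition~\ref{prp:Importance-of-differentiability-I-compact}(iii) applies and $\psi=\sigma_\ell\cdot\sigma_r$ is fixed by $\alpha$.

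The main obstacle is exactly the mismatch in part~(iii) between the single hypothesis on $\im\sigma_\ell$ and the fact that controlling a decreasing $\tilde{\varphi}$ near one endpoint forces an analysis of group elements near the \emph{other} endpoint, so that both images $\im\sigma_\ell$ and $\im\sigma_r$ inevitably enter. The decreasing version of Corollary~\ref{crl:alpha-and-ker-lambda} is what closes this gap by supplying the isomorphism $\im\sigma_\ell\iso\im\sigma_r$; the remaining work is the bookkeeping of checking that the reflected group $\bar{G}$ is admissible and that conjugation by $\tau$ swaps $\sigma_\ell$ and $\sigma_r$, and of confirming that ``affine near an endpoint'' really delivers the non-zero one-sided derivative demanded by Proposition~\ref{prp:Importance-of-differentiability-I-compact}(iii).
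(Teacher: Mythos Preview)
Your proof is correct and follows essentially the same route as the paper: parts (i) and (ii) combine Proposition~\ref{prp:phi-is-linear-I-compact} (plus its reflected analogue) with Proposition~\ref{prp:Importance-of-differentiability-I-compact}, and part (iii) uses the reflection $\tau$ to replace the decreasing $\tilde{\varphi}$ by the increasing maps $\tau\circ\tilde{\varphi}$ and $\tilde{\varphi}\circ\tau$ between $G$ and $\bar G=\tau G\tau$, exactly as the paper does with its $\vartheta$. The only cosmetic difference is that you supply the isomorphism $\im\sigma_\ell\iso\im\sigma_r$ by adapting Corollary~\ref{crl:alpha-and-ker-lambda} to the decreasing case inline, whereas the paper invokes the forward reference to Lemma~\ref{lem:Consequence-decreasing-auto-I-compact}.
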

\begin{proof}
(i) is a direct consequence of Proposition \ref{prp:phi-is-linear-I-compact}
and part (i) of Proposition \ref{prp:Importance-of-differentiability-I-compact}.

(ii) We invoke Proposition \ref{prp:phi-is-linear-I-compact} 
for an auxiliary group $G_1$.
Let $\vartheta \colon I \iso I$ be the reflection in the mid-point of $I$; 
set $G_1 = \vartheta \circ G \circ  \vartheta^{-1}$ 
and $\varphi_1 =  \vartheta \circ \tilde{\varphi}_\alpha \circ \vartheta^{-1}$.
Since $G(I;A,P)$ and  $B(I;A,P)$ are both invariant under conjugation by $\vartheta$,
and as the image of $\sigma_r$ is not cyclic,
Proposition \ref{prp:phi-is-linear-I-compact} applies to the couple $(G_1,\varphi_1)$  
and shows 
that $\varphi_1$ is linear in a small interval $[0, \delta_1]$ of positive length.
But if so $\varphi_\alpha$ is affine in the interval $[b-\delta_1, b]$.
Use now part (ii) in Proposition \ref{prp:Importance-of-differentiability-I-compact}.

(iii) Since $\tilde{\varphi}$ is \emph{decreasing},
the subgroups $\im \sigma_\ell$ and $ \im \sigma_r$ are isomorphic
by Lemma \ref{lem:Consequence-decreasing-auto-I-compact} below;
the hypothesis on $\im \sigma_\ell$ implies therefore
the image of $\sigma_r$ is not cyclic, either.
Let $\vartheta \colon  \Int(I) \iso \Int(I)$ be the reflection in the midpoint of the interval $I$
and set $\tilde{\varphi}_1 = \vartheta \circ \tilde{\varphi}$ 
and $\bar{G} = \vartheta \circ G \circ \vartheta^{-1}$.
Conjugation by $\tilde{\varphi}_1$ induces then an increasing isomorphism 
$\alpha_1 \colon G \iso \tilde{G}$.
Since $G(I;A,P)$ and $B(I;A,P)$ are both invariant under conjugation by $\vartheta$,
Proposition \ref{prp:phi-is-linear-I-compact} 
applies to $\tilde{\varphi}_1$ in the rôle of $\tilde{\varphi}$ 
and  shows that $\tilde{\varphi}_1$ is linear near $0$.
But if so,
$\tilde{\varphi}$ is linear near $0$.
Consider now the auto-homeomorphism 
$\tilde{\varphi}_2 = \tilde{\varphi} \circ  \vartheta$  of $I$.
It induces an isomorphism $\alpha_2 \colon \bar{G} \iso G$  by conjugation;
an argument similar to the preceding one then reveals 
that  $\tilde{\varphi}$ is affine near $b$.
The remainder of the claim follows from part (iii) in Proposition 
\ref{prp:Importance-of-differentiability-I-compact}.
\end{proof}
%
\subsection{Construction of homomorphisms fixed by $\Aut_+ G$}
\label{ssec:Construction-psi-fixed-by-Aut-plus(G)-I-compact}
The first main result holds for all groups $G$ with $B(I;A,P) \subsetneq G \subseteq G(I;A,P)$,
but the exhibited homomorphisms may only be fixed by $\Aut_+ G$.
\begin{theorem}
\label{thm:Existence-psi-I-compact-increasing-autos}
Suppose  $I = [0, b]$ with $b \in A_{>0}$ 
and let $G$ be a subgroup of $G(I;A,P)$ that contains $B(I;A,P)$ properly.
Then the homomorphisms $\sigma_\ell$ and $\sigma_r$ are fixed by $\Aut_+ G$,
and at least one of them is non-trivial.
\end{theorem}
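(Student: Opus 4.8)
The plan is to show that the homomorphisms $\sigma_\ell$ and $\sigma_r$ are fixed by every increasing automorphism, and then to exhibit an element witnessing that at least one of them is non-trivial. The fixing statement splits according to whether the relevant image is cyclic or not, so the first task is to organize a case analysis on $\im\sigma_\ell$ and $\im\sigma_r$. The key observation is that we have already done most of the work: Corollary \ref{crl:I-compact-summary-differentiability}(i)--(ii) says that if $\alpha$ is increasing and $\im\sigma_\ell$ (respectively $\im\sigma_r$) is \emph{not} cyclic, then $\sigma_\ell$ (respectively $\sigma_r$) is fixed by $\alpha$. So the only remaining situation to handle is when one of these images is \emph{cyclic}, and this is exactly where I expect the main obstacle to lie.

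For the cyclic case, I would argue as follows. Let $\alpha\in\Aut_+ G$ be induced by an increasing auto-homeomorphism $\tilde\varphi\colon[0,b]\iso[0,b]$ (such a $\tilde\varphi$ exists and is unique by Theorem \ref{thm:TheoremE16.4} together with Remark \ref{remarks:varphi}a, since an increasing homeomorphism of the open interval extends to the compact one). Because $\tilde\varphi$ is increasing and fixes the endpoints $0$ and $b$, it fixes each end separately; the point is to show it is differentiable there with positive derivative, so that Proposition \ref{prp:Importance-of-differentiability-I-compact}(i)--(ii) applies. When $\im\sigma_\ell$ is not cyclic the differentiability at $0$ comes from Proposition \ref{prp:phi-is-linear-I-compact}. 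When $\im\sigma_\ell$ \emph{is} cyclic, the delicate part is to see that $\sigma_\ell$ is nonetheless preserved: here one should exploit that $\alpha_\ell$ is the isomorphism $\im\sigma_\ell\iso\im\bar\sigma_\ell=\im\sigma_\ell$ induced on the quotient $G/\ker\lambda$ by Corollary \ref{crl:alpha-and-ker-lambda}(i), and that an order-preserving automorphism of an infinite cyclic subgroup of $\R^\times_{>0}$ must be the identity (it sends the unique generator in $\,]0,1[\,$ to itself). The argument in the first part of the proof of Proposition \ref{prp:phi-is-linear-I-compact} showing $\alpha_\ell$ preserves the ordering of slopes does not use non-cyclicity, so it applies verbatim and pins down $\alpha_\ell=\id$, i.e.\ $\sigma_\ell\circ\alpha=\sigma_\ell$. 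The same reasoning on the right end gives $\sigma_r\circ\alpha=\sigma_r$ when $\im\sigma_r$ is cyclic.

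Combining the two cases, whether each image is cyclic or not, I obtain in every case that $\sigma_\ell\circ\alpha=\sigma_\ell$ and $\sigma_r\circ\alpha=\sigma_r$ for every increasing $\alpha$; that is, both homomorphisms are fixed by $\Aut_+ G$.

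It remains to verify that at least one of $\sigma_\ell,\sigma_r$ is non-trivial. This is where the hypothesis $B(I;A,P)\subsetneq G$ enters: if both $\sigma_\ell$ and $\sigma_r$ were trivial on $G$, then by the reexpression $B(I;A,P)=\ker\lambda\cap\ker\rho$ of equation \eqref{eq:Reexpressing-B} (and the fact that on the compact interval $\ker\lambda=\ker\sigma_\ell$ and $\ker\rho=\ker\sigma_r$) we would have $G\subseteq\ker\sigma_\ell\cap\ker\sigma_r=B(I;A,P)$, contradicting the strict inclusion. Hence at least one image is non-trivial, which completes the proof. The main obstacle, as noted, is the cyclic case: one must be careful that the order-preservation argument is genuinely independent of the non-cyclicity assumption, and that $\tilde\varphi$ really does extend to and fix the endpoints so that $\sigma_\ell$ and $\sigma_r$ are well-defined transported quantities under $\alpha$.
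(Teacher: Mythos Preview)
Your proposal is correct and follows essentially the same route as the paper's proof. Both split into the non-cyclic case (handled by Corollary \ref{crl:I-compact-summary-differentiability}) and the cyclic case, where the key point is that the induced automorphism $\alpha_\ell$ of $\im\sigma_\ell$ preserves the subset $\,]0,1[\,\cap\,\im\sigma_\ell$ and hence must fix the unique generator $<1$. The only cosmetic difference is that the paper phrases this order-preservation dynamically (``$g_p$ attracts points near $0$ towards $0$, hence so does $\alpha(g_p)=\tilde\varphi\circ g_p\circ\tilde\varphi^{-1}$'') rather than by pointing back into the proof of Proposition \ref{prp:phi-is-linear-I-compact}; your observation that that portion of the argument does not use non-cyclicity is correct. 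The non-triviality argument via $B(I;A,P)=\ker\sigma_\ell\cap\ker\sigma_r$ is likewise the same as the paper's.
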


\begin{proof}
Let $\alpha$ be an increasing automorphism of $G$ 
and  let $\tilde{\varphi}$ be the auto-homeo\-mor\-phism of $I$ 
that induces $\alpha$.
(The map $\tilde{\varphi}$ exists by Theorem \ref{thm:TheoremE16.4} 
and Remark \ref{remarks:varphi}a.)
Since the quotient group $G(I;A,P)/B(I;A,P)$ is isomorphic to 
the image of $ \sigma_\ell \times \sigma_r \colon G(I;A,P)\to P \times P$
and as $G$ contains $B(I;A,P)$ properly,
at least one of the homomorphisms $\sigma_\ell$ and $\sigma_r$ is non-zero.

Assume first that $\psi = \sigma_\ell$ is non zero.
Two cases then arise, 
depending on whether the image of $\psi$ is cyclic or not.
If the image of $\psi$ is \emph{not cyclic} 
then part (i) in Corollary \ref{crl:I-compact-summary-differentiability} shows 
that  $\alpha$ fixes $\psi$.
If, on the other hand,
$\psi$ is cyclic,
consider the generator $p \in \im \psi$ with $p < 1$
and pick a preimage $g_p \in G$ of $p$.
Then $g_p$ attracts points in every sufficiently small interval of the form $[0, \delta]$
towards 0;
hence so does $\alpha(g_p) = \tilde{\varphi} \circ g_p \circ \tilde{\varphi}^{-1}$ 
and thus $p' = (\alpha(g_p))'(0) <1$.
Now $p'$ generates also $\im \sigma_\ell$; being smaller than 1,
it coincides therefore with $ p= \psi(g_p)$ and so $\psi = \psi\circ \alpha$.

Assume next that $\psi = \sigma_r$ is not zero.
If its image is not cyclic
part (ii) of Corollary \ref{crl:I-compact-summary-differentiability}
alllows us to conclude that $\alpha$ fixes $\psi$.
If $\im \psi$ is cyclic, 
consider the generator $p \in \im \psi$ with $p < 1$ and pick a preimage $g_p \in G$.
Then $g_p$ attracts points in every sufficiently small interval $[b-\delta, b]$ towards $b$.
It then follows, as before, that $\psi (\alpha(g_p)) =p =  \psi(g_p)$,
whence $\psi \circ \alpha = \alpha$.
\end{proof}

\subsection{Existence of decreasing automorphisms}
\label{ssec:Existence-decreasing-auto-I-compact)}
%
Theorem \ref{thm:Existence-psi-I-compact-increasing-autos} is very satisfactory in 
that it produces a non-zero homomorphism $\psi$ onto an infinite abelian group 
whenever such a homomorphism is likely to exist, \ie{} if $G$ contains $B(I;A,P)$ properly.
This homomorphism is, however, only guaranteed to be fixed 
by the subgroup $\Aut_+ G$ of $\Aut G$ which has  index 1 or 2 in $\Aut G$.
If the index is 1,
the conclusion of Theorem \ref{thm:Existence-psi-I-compact-increasing-autos} is as good 
as we can hope for.
So the question arises whether there are useful criteria that force the index to be 1.
Here is a very simple observation that leads to such a criterion:
\begin{lem}
\label{lem:Consequence-decreasing-auto-I-compact}
Assume $I = [0, b]$ with $b \in A_{>0}$ and
let $G$  be a subgroup of $G(I;A,P)$ that contains $B(I;A,P)$.
Then every decreasing automorphism $\alpha$ induces an isomorphism 
$\alpha_* \colon \im \sigma_\ell \iso \im \sigma_r$.
\end{lem}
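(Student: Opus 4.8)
The plan is to exploit the fact that a \emph{decreasing} auto-homeomorphism of the compact interval $I=[0,b]$ interchanges the two end points, and hence interchanges the two ``end'' behaviours recorded by $\sigma_\ell$ and $\sigma_r$. First I would invoke Theorem~\ref{thm:TheoremE16.4} together with Remark~\ref{remarks:varphi}a: since $\alpha$ is decreasing and $I$ is compact, the inducing homeomorphism extends to a decreasing auto-homeomorphism $\tilde\varphi\colon I \iso I$, which therefore satisfies $\tilde\varphi(0)=b$ and $\tilde\varphi(b)=0$. I would also record the elementary identification of the relevant kernels: because every $g\in G$ fixes the end points, one has $\sigma_\ell(g)=1$ exactly when $g$ is the identity on a neighbourhood of $0$, so $\ker\sigma_\ell=\ker\lambda$ consists precisely of the elements that are the identity near $0$, and likewise $\ker\sigma_r=\ker\rho$ consists of the elements that are the identity near $b$.

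The heart of the argument is to show that $\alpha$ \emph{swaps} these two kernels. Suppose $g\in\ker\sigma_\ell$, say $g(t)=t$ for $t\in[0,\varepsilon]$. For $s$ close to $b$ the point $\tilde\varphi^{-1}(s)$ is close to $\tilde\varphi^{-1}(b)=0$ (here I use that $\tilde\varphi^{-1}$ is again decreasing and sends $b$ to $0$), hence lies in $[0,\varepsilon]$; the computation $\alpha(g)(s)=\tilde\varphi\bigl(g(\tilde\varphi^{-1}(s))\bigr)=\tilde\varphi(\tilde\varphi^{-1}(s))=s$ then shows that $\alpha(g)$ is the identity near $b$, \ie $\alpha(g)\in\ker\sigma_r$. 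Thus $\alpha(\ker\sigma_\ell)\subseteq\ker\sigma_r$, and the symmetric computation gives $\alpha(\ker\sigma_r)\subseteq\ker\sigma_\ell$. To upgrade these inclusions to equalities I would apply the same reasoning to $\alpha^{-1}$, whose inducing homeomorphism $\tilde\varphi^{-1}$ is also decreasing; this yields $\alpha^{-1}(\ker\sigma_r)\subseteq\ker\sigma_\ell$, whence $\ker\sigma_r\subseteq\alpha(\ker\sigma_\ell)$ and therefore $\alpha(\ker\sigma_\ell)=\ker\sigma_r$.

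Finally I would pass to quotients. Since $\alpha$ is an automorphism of $G$ carrying $\ker\sigma_\ell$ onto $\ker\sigma_r$, it descends to an isomorphism $G/\ker\sigma_\ell\iso G/\ker\sigma_r$; composing with the canonical identifications $\im\sigma_\ell\cong G/\ker\sigma_\ell$ and $G/\ker\sigma_r\cong\im\sigma_r$ produces the desired isomorphism $\alpha_*\colon\im\sigma_\ell\iso\im\sigma_r$, given explicitly by $\sigma_\ell(g)\mapsto\sigma_r(\alpha(g))$. I expect the only delicate point to be the first paragraph, namely making sure the inducing homeomorphism really extends across the end points (which is where compactness of $I$ enters, and where the argument would fail for a half line) and checking that conjugation by a decreasing map genuinely trades a neighbourhood of $0$ for a neighbourhood of $b$; this is the decreasing analogue of Corollary~\ref{crl:alpha-and-ker-lambda}, and once it is in place the remainder is the routine induced-isomorphism-on-quotients argument. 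An alternative packaging would be to pre-compose $\alpha$ with conjugation by the midpoint reflection $\vartheta\colon t\mapsto b-t$, which turns $\alpha$ into an increasing isomorphism and interchanges $\sigma_\ell$ with $\sigma_r$, but the direct computation above seems cleanest.
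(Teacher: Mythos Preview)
Your argument is correct and follows essentially the same route as the paper: identify $\ker\sigma_\ell$ (resp.\ $\ker\sigma_r$) as the elements that are the identity near $0$ (resp.\ near $b$), observe that conjugation by the decreasing homeomorphism $\tilde\varphi$ sends one kernel into the other, upgrade to equality via $\alpha^{-1}$, and pass to quotients. The paper's proof is simply a terser version of what you wrote; your additional care about the extension of $\varphi_\alpha$ to the endpoints and the explicit computation near $b$ are details the paper leaves implicit.
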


\begin{proof}
The kernel of $\sigma_\ell$ consists of all elements in $G$ that are the identity near 0.
Since $\alpha$ is induced by conjugation by a homeomorphism of $I$ that maps 0 onto $b$,
the image of $\ker \sigma_\ell$ consists  of elements that are the identity near $b$,
and so $\alpha(\ker \sigma_\ell) \subseteq  \ker \sigma_r$.
Since $\alpha^{-1}$ is also a decreasing automorphism, 
the preceding inclusion is actually an equality.
So $\alpha$ induces an isomorphism $\alpha_* \colon \im \sigma_\ell \iso \im \sigma_r$ 
that renders the square
\begin{equation}
\label{eq:square-minus-plus}
\xymatrix{G \ar@{->}[r]^-{\alpha} \ar@{->>}[d]^-{\sigma_\ell} & G \ar@{->>}[d]^-{\sigma_r}\\
\im \sigma_\ell\ar@{->}[r]^-{\alpha_{*}} &\im \sigma_r}
\end{equation}
commutative.
\end{proof}

\begin{example}
\label{example:Existence-decreasing-auto-I-compact}
Suppose the slope group $P$ is finitely generated 
and hence free abelian of finite rank $r$, say.
Choose subgroups $Q_\ell$ and $Q_r$ of $P$ and set
\begin{equation}
\label{eq:Definition-G-Qell-Qr}
G(Q_\ell, Q_r) = \{g \in G(I;A,P) \mid (\sigma_\ell(g), \sigma_r(g) \in Q_\ell \times Q_r \}.
\end{equation}
Then $\im \sigma_\ell = Q_\ell$ and $\im \sigma_r = Q_r$,
and 
the image of $(\sigma_\ell, \sigma_r) \colon G \to P \times P$ coincides with $Q_\ell \times Q_r$
(these claims follow from Corollary A5.5 in \cite{BiSt14}).

Assume now that $G(Q_\ell, Q_r)$ admits a decreasing automorphism,
say $\alpha$.
By Lemma \ref{lem:Consequence-decreasing-auto-I-compact} 
the groups $Q_\ell$ and $Q_r$ are then isomorphic, and thus have the same rank.
But more is true:
if $Q_\ell = \im \sigma_\ell$ is \emph{not} cyclic,
then part (iii) of Corollary \ref{crl:I-compact-summary-differentiability} applies 
and shows that $\sigma_r = \sigma_\ell \circ \alpha$,
whence $Q_r$, the image of $\sigma_r$, 
coincides with $Q_\ell$, the image of $\sigma_\ell$.
The same conclusion holds if  $Q_r$ is not cyclic.
Conversely, 
if $Q_\ell = Q_r$
then $G(Q_\ell, Q_r)$ admits  decreasing automorphisms,
for instance the automorphism induced by conjugation by the reflection about the mid point of $I$.

So the only case 
where the existence of a decreasing automorphism is neither obvious 
nor easy to rule out by the preceding arguments
is that where $Q_\ell$ and $Q_r$ are both cyclic, but distinct.
We shall come back to this exceptional case in Example \ref{Example2-for-main-result-I-compact}.
\end{example}
%
\subsection{Construction of a homomorphism fixed by $\Aut G$}
\label{ssec:Construction-psi-fixed-by-Aut}
We move on to the construction of a homomorphism fixed by all of $\Aut G$.
The following result is our main result.
\begin{theorem}
\label{thm:Existence-psi-I-compact-all-autos}
Suppose $I$ is a compact interval of the form $[0,b]$ with $b \in A_{>0}$.
Let $G$ be a subgroup of $G(I;A,P)$ containing $B(I;A,P)$
and let $\psi \colon G \to P$ be the homomorphism $g \mapsto \sigma_\ell(g) \cdot \sigma_r(g)$.
Then $\psi$ is fixed by $\Aut G$,  
\emph{except possibly} when $G$ satisfies the following three conditions:
\begin{enumerate}[a)]
\item $\im (\sigma_\ell \colon G \to P)$ is cyclic,
\item $G$ admits a decreasing automorphism, 
\item $G$ does not admit a decreasing automorphism induced 
by an auto-homeomorphism $\vartheta \colon I \iso I$ 
that is differentiable  in both end points with non-zero values.
\end{enumerate}
\end{theorem}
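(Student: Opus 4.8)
The plan is to reduce the whole statement to the behaviour of $\psi$ on \emph{decreasing} automorphisms and then to dispose of each of the three exceptional conditions a), b), c) separately. The starting observation is Theorem~\ref{thm:Existence-psi-I-compact-increasing-autos}: it gives that $\sigma_\ell$ and $\sigma_r$, and therefore their product $\psi$, are fixed by the subgroup $\Aut_+ G$ of increasing automorphisms. (Should $G$ equal $B(I;A,P)$, the homomorphism $\psi$ is trivial and there is nothing to prove; so we may assume $G \supsetneq B(I;A,P)$, which is exactly the setting of Theorem~\ref{thm:Existence-psi-I-compact-increasing-autos}.) Since $\Aut_+ G$ has index $1$ or $2$ in $\Aut G$, I would first record the reduction that $\psi$ is fixed by all of $\Aut G$ as soon as it is fixed by $\Aut_+ G$ together with a \emph{single} decreasing automorphism $\alpha_0$: indeed, for any decreasing automorphism $\alpha$ the composite $\alpha_0^{-1}\circ\alpha$ is induced, via Theorem~\ref{thm:TheoremE16.4}, by a product of two decreasing homeomorphisms and is hence increasing, so writing $\alpha=\alpha_0\circ\beta$ with $\beta=\alpha_0^{-1}\circ\alpha\in\Aut_+ G$ yields $\psi\circ\alpha=(\psi\circ\alpha_0)\circ\beta=\psi\circ\beta=\psi$.

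With this reduction available, I would verify that the failure of any one of a), b), c) already forces $\psi$ to be fixed by $\Aut G$. If b) fails, then $G$ has no decreasing automorphism at all, so $\Aut G=\Aut_+ G$ and the claim is immediate. If a) fails, that is, $\im\sigma_\ell$ is not cyclic, then part (iii) of Corollary~\ref{crl:I-compact-summary-differentiability} applies verbatim to \emph{every} decreasing automorphism and shows that each one fixes $\psi$; combined with the fixing by $\Aut_+ G$ this settles the case without even needing the single-automorphism reduction. If c) fails, then by the very meaning of c) there exists a decreasing automorphism $\alpha_0$ induced by an auto-homeomorphism $\vartheta\colon I\iso I$ that is differentiable at both endpoints with non-zero derivatives; part (iii) of Proposition~\ref{prp:Importance-of-differentiability-I-compact} then guarantees that this particular $\alpha_0$ fixes $\psi$, and the reduction of the first paragraph propagates the invariance to all of $\Aut G$.

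Assembling the three cases, whenever at least one of a), b), c) fails we obtain that $\psi$ is fixed by $\Aut G$; contrapositively, $\psi$ can escape being fixed only when a), b) and c) hold simultaneously, which is the asserted exception. The one point I would take care over is the composition bookkeeping in the reduction step, namely that the parity of an automorphism is multiplicative, so that the product of two decreasing automorphisms is increasing, since this is precisely what licenses passing from a single well-chosen decreasing automorphism to the whole coset $\Aut G\smallsetminus\Aut_+ G$. This multiplicativity is nothing more than the functoriality $\varphi_{\alpha\circ\beta}=\varphi_\alpha\circ\varphi_\beta$ of the inducing homeomorphism, itself a consequence of its uniqueness in Theorem~\ref{thm:TheoremE16.4}; all the remaining input is simply quotation of the three earlier results.
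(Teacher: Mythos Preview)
Your proof is correct and follows essentially the same approach as the paper: both arguments first invoke Theorem~\ref{thm:Existence-psi-I-compact-increasing-autos} to handle $\Aut_+ G$, then treat decreasing automorphisms via Corollary~\ref{crl:I-compact-summary-differentiability}(iii) when $\im\sigma_\ell$ is not cyclic, and via Proposition~\ref{prp:Importance-of-differentiability-I-compact}(iii) combined with the index-$2$ coset reduction when a differentiable decreasing automorphism exists. Your version is slightly more explicit about the coset reduction and the trivial edge case $G=B(I;A,P)$, but the logical structure is the same.
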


\begin{proof}
Let $\alpha$ be an automorphism of $G$ 
and let $\varphi$ be the auto-homeomorphism of $\Int(I)$ 
that induces $\alpha$ by conjugation.
If $\varphi$ is \emph{increasing} both $\sigma_\ell$ and $\sigma_r$ are fixed by $\alpha$
(see Theorem \ref{thm:Existence-psi-I-compact-increasing-autos}) 
and hence so is $\psi$.
If, on the other hand, $\alpha$ is \emph{decreasing} 
and the image of $\sigma_\ell$ is \emph{not cyclic} 
then part (iii) of Corollary \ref{crl:I-compact-summary-differentiability}
yields the desired conclusion.

Suppose now that $G$ admits an automorphism $\beta$ 
that is induced by a decreasing auto-homeomorphism $\tilde{\varphi}_\beta$ of $I$
that is differentiable in 0, as well as in $b$, and has there non-zero derivatives.
Then part (iii) of Proposition \ref{prp:Importance-of-differentiability-I-compact} 
allows us to conclude that $\psi$ is fixed by $\beta$.
Since $\beta$ represents the coset $\Aut G \smallsetminus \Aut_+ G$
and as $\psi$ is fixed by $\Aut_+ G$, 
it follows that $\psi$ is fixed by every decreasing automorphism.

All taken together we have proved 
that the automorphism $\alpha$ fixes $\psi$
except, possibly,
if $\im \sigma_\ell$ is cyclic, $\alpha$ is decreasing and 
if there does not exists a decreasing automorphism $\beta$
that is differentiable in the end points and has there non-zero derivatives.
\end{proof}

We state next some consequences of Theorems 
\ref{thm:Existence-psi-I-compact-increasing-autos}
and 
\ref{thm:Existence-psi-I-compact-all-autos}.
We begin with the special case where $G$ is all of $G(I;A,P)$.
Then $G$ is normalized by the reflection in the mid-point of $I$ 
and so Theorem \ref{thm:Existence-psi-I-compact-all-autos} leads to
\begin{crl}
\label{crl:G=G(I;A,P)}
If $G$ coincides with $G([0,b];A,P)$ the homomorphism $\psi \colon G \to P$ 
taking $g \in G$ to $\sigma_\ell(g) \cdot \sigma_r(g)$ is surjective, hence non-zero,
and fixed by $\Aut G$.  
\end{crl}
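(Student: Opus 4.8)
The plan is to read off surjectivity from the known image of the pair $(\sigma_\ell,\sigma_r)$, and then to deduce the invariance of $\psi$ from Theorem \ref{thm:Existence-psi-I-compact-all-autos} by exhibiting the mid-point reflection as a decreasing automorphism good enough to rule out the exceptional case of that theorem.

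First I would settle surjectivity. For the full group $G = G([0,b];A,P)$ one has $\im\sigma_\ell = \im\sigma_r = P$, and in fact the homomorphism $(\sigma_\ell,\sigma_r)\colon G \to P\times P$ is onto; this is precisely the case $Q_\ell = Q_r = P$ of the computation recorded in Example \ref{example:Existence-decreasing-auto-I-compact}, which rests on Corollary A5.5 of \cite{BiSt14}. Hence for every $p\in P$ there is a $g\in G$ with $\sigma_\ell(g)=p$ and $\sigma_r(g)=1$, so $\psi(g)=p$; thus $\psi$ is surjective. Since $P$ is non-trivial by the standing hypothesis of Remark \ref{remarks:G(I;A,P)}b, the homomorphism $\psi$ is non-zero.

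To show $\psi$ is fixed by all of $\Aut G$, I invoke Theorem \ref{thm:Existence-psi-I-compact-all-autos}, which grants this conclusion unless its conditions a), b) and c) hold simultaneously; it therefore suffices to show that condition c) fails. Let $\vartheta\colon I \iso I$ be the reflection $t\mapsto b-t$. I would first check that $\vartheta$ normalizes $G$: for $g\in G$ the conjugate $\vartheta\circ g\circ\vartheta^{-1}$ equals $t\mapsto b-g(b-t)$, whose slope at $t$ is $g'(b-t)\in P$ (the two sign changes cancel), whose break points are the reflections $b-a$ of the break points $a\in A$ of $g$, hence again in $A$ because $b\in A$, and which maps $A$ onto $b-g(A)=b-A=A$. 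Since $G$ is the \emph{full} group $G([0,b];A,P)$, it is invariant under conjugation by $\vartheta$, and the induced automorphism is decreasing. Being affine, $\vartheta$ is differentiable in both end points with derivative $-1\neq 0$.

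Consequently $G$ admits a decreasing automorphism induced by an auto-homeomorphism that is differentiable with non-zero values at both end points, so condition c) of Theorem \ref{thm:Existence-psi-I-compact-all-autos} is violated and its exceptional case cannot occur; therefore $\psi$ is fixed by $\Aut G$. (Equivalently, one may argue directly: part (iii) of Proposition \ref{prp:Importance-of-differentiability-I-compact} shows that $\psi$ is fixed by conjugation by $\vartheta$, and since this represents the coset $\Aut G\smallsetminus\Aut_+G$ while $\psi$ is already fixed by $\Aut_+G$ by Theorem \ref{thm:Existence-psi-I-compact-increasing-autos}, the claim follows.) There is no real obstacle here; the only point deserving a line of care is the verification that the reflection genuinely normalizes $G([0,b];A,P)$, that is, that slopes stay in $P$ and break points stay in $A$, which I have indicated above and which hinges on $b\in A$ together with the invariance of $P$ and $A$ under $t\mapsto b-t$.
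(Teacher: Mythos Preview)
Your proof is correct and follows essentially the same route as the paper: the paper simply observes that $G=G([0,b];A,P)$ is normalized by the reflection in the mid-point of $I$ and then invokes Theorem~\ref{thm:Existence-psi-I-compact-all-autos}, leaving the surjectivity of $\psi$ and the verification that the reflection normalizes $G$ as evident. You have merely spelled out these implicit steps in detail.
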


The second result is a consequence of the proof of Theorem 
\ref{thm:Existence-psi-I-compact-increasing-autos}.
\begin{crl}
\label{crl:sigma-ell-fixed-y-AutG-for-G(half-line;A,P)}
Suppose $I$ is the half line $[0, \infty[$
and $G$ is a subgroup of $G(I;A,P)$ containing $B(I; A,P)$.
If $G$ does not admit a decreasing automorphism 
 then $\psi = \sigma_\ell$ is fixed by $\Aut G$.  
\end{crl}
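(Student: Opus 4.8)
The plan is to reduce to the increasing case and then transplant, almost verbatim, the argument that established the fixity of $\sigma_\ell$ in the proof of Theorem~\ref{thm:Existence-psi-I-compact-increasing-autos}. The point to keep in mind throughout is that that argument only ever inspects the behaviour of the elements of $G$ near the \emph{left} end point $0$ of $I$, and $0$ is a genuine finite end point of the half line $I = [0,\infty[$ exactly as it is of a compact interval; the right end $+\infty$ is never touched.

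First I would observe that the hypothesis that $G$ admits no decreasing automorphism says precisely that $\Aut G = \Aut_+ G$, so it suffices to prove $\sigma_\ell \circ \alpha = \sigma_\ell$ for an arbitrary \emph{increasing} automorphism $\alpha$ of $G$. By Theorem~\ref{thm:TheoremE16.4} and Remark~\ref{remarks:varphi}a such an $\alpha$ is induced by conjugation by an increasing auto-homeomorphism $\tilde\varphi$ of $\Int(I)$ that extends to an increasing auto-homeomorphism of $I$ fixing $0$. I would then split into two cases according to whether $\im(\sigma_\ell \colon G \to P)$ is cyclic.

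If $\im \sigma_\ell$ is \emph{not} cyclic, I would apply Proposition~\ref{prp:phi-is-linear-I-compact} — whose statement already covers the half line $[0,\infty[$ — with $\bar G = G$; it shows that $\tilde\varphi$ is linear on some interval $[0,\delta]$ and hence differentiable in $0$ with positive derivative. The purely local chain-rule computation $\sigma_\ell(\alpha(g)) = (\tilde\varphi \circ g \circ \tilde\varphi^{-1})'(0) = \tilde\varphi'(0) \cdot g'(0) \cdot (\tilde\varphi^{-1})'(0) = \sigma_\ell(g)$, which reproduces part~(i) of Proposition~\ref{prp:Importance-of-differentiability-I-compact} at the single point $0$, then gives $\sigma_\ell \circ \alpha = \sigma_\ell$. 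If $\im \sigma_\ell$ is cyclic it is either trivial, in which case $\sigma_\ell = 0$ is fixed for free, or infinite cyclic; in the latter case let $p < 1$ generate it and choose $g_p \in G$ with $\sigma_\ell(g_p) = p$. Having slope $p < 1$ at $0$, the element $g_p$ attracts the points of a small interval $[0,\delta]$ towards $0$; since $\tilde\varphi$ is an increasing homeomorphism fixing $0$, the conjugate $\alpha(g_p) = \tilde\varphi \circ g_p \circ \tilde\varphi^{-1}$ does the same, whence $\sigma_\ell(\alpha(g_p)) < 1$. By Corollary~\ref{crl:alpha-and-ker-lambda}(i), which applies because $\ker \lambda = \ker \sigma_\ell$ for the half line, $\alpha$ carries $\ker \sigma_\ell$ onto itself and so induces an automorphism $\alpha_\ell$ of the infinite cyclic group $\im \sigma_\ell$. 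Its only automorphisms are the identity and inversion, and inversion is impossible because it would force $\sigma_\ell(\alpha(g_p)) = p^{-1} > 1$; hence $\alpha_\ell = \id$, that is, $\sigma_\ell \circ \alpha = \sigma_\ell$.

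The step I expect to require the most care is the importation of the two auxiliary results to the non-compact interval. Proposition~\ref{prp:phi-is-linear-I-compact} is already phrased for $[0,\infty[$, so it transfers painlessly; by contrast Proposition~\ref{prp:Importance-of-differentiability-I-compact} and the cyclic-case dynamical argument are stated for a compact $I$, and I would have to make explicit that each of them is a purely local statement at the end point $0$ in which the compactness of $I$ — and in particular anything happening near $+\infty$ — plays no role, so that the proofs carry over unchanged.
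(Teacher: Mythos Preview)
Your proposal is correct and follows the same approach as the paper: reduce to increasing automorphisms and then transplant the local argument at $0$ from the compact-interval case. The paper's own proof is extremely terse --- it cites only Proposition~\ref{prp:phi-is-linear-I-compact} and the proof of Proposition~\ref{prp:Importance-of-differentiability-I-compact}(i), noting that neither uses boundedness from above --- and thus explicitly names only the ingredients for the non-cyclic case; you have filled in the cyclic case as well, by importing the attracting-fixed-point argument from the proof of Theorem~\ref{thm:Existence-psi-I-compact-increasing-autos}, which is exactly what is needed to make the statement hold in full generality (as it must, since Proposition~\ref{prp:Existence-psi-I-halfline-increasing-autos-1} later relies on it without any hypothesis on $\im\sigma_\ell$).
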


\begin{proof}
The claim follows from Proposition \ref{prp:phi-is-linear-I-compact}
and from the proof of part (i) in Proposition \ref{prp:Importance-of-differentiability-I-compact}
upon noting that the cited proof does not presuppose that the interval $I$ be bounded from above.
\end{proof}
%
\subsection{Some examples}
\label{ssec:Some-examples-I-compact}
We exhibit some specimens of groups $G$ 
that possess a homomorphism $\psi \colon G \to P$ fixed by $\Aut G$.
The existence of $\psi$ will be established by recourse to 
Theorems \ref{thm:Existence-psi-I-compact-increasing-autos} 
and \ref{thm:Existence-psi-I-compact-all-autos}
and to Corollary \ref{crl:G=G(I;A,P)}.
%
\begin{example}
\label{Example1-for-main-result-I-compact}
We begin with variations on Thompson's group $F$.
Assume $P$ is infinite cyclic and $A$ is a (non-trivial) $\Z[P]$-submodule of $\R$. 
Set $G_0 = G([0,b]; A, P)$ with $b \in A_{>0}$
and consider the following subgroups of $G_0$:
\begin{align}
G_1 &= \{g \in G_0 \mid \sigma_\ell(g) = 1 \},
\label{eq:Subgroup1-with-infinite-cyclic-quotient}\\
G_2 &=\{g \in G_0 \mid \sigma_\ell(g) = \sigma_r(g) \},
\label{eq:Subgroup2-with-infinite-cyclic-quotient}\\
G_3 &=\{g \in G_0 \mid \sigma_\ell(g) = \sigma_r(g)^{-1} \}.
\label{eq:Subgroup3-with-infinite-cyclic-quotient}
\end{align}

The group $G_0$ is the entire group $G(I;A,P)$ 
and so Corollary \ref{crl:G=G(I;A,P)} tells us 
that the homomorphism $\psi \colon g \mapsto \sigma_\ell(g) \cdot \sigma_r(g)$ 
is non-zero and and fixed by $\Aut G_0$.

The group $G_1$ is an ascending union of subgroups $H_n = G([a_n, b];A,P)$
given by a strictly decreasing sequence $n \mapsto a_n$ of elements in $A$ 
that converges to 0,
and so the group $G_1$ is infinitely generated.
It does not admit a decreasing automorphism
(for instance because of Lemma \ref{lem:Consequence-decreasing-auto-I-compact})
and so Theorem \ref{thm:Existence-psi-I-compact-increasing-autos} allows us to infer
that the epimorphism $\sigma_r \colon G_1 \epi P$ is fixed by all of $\Aut G_1$.

The group $G_2$ is an ascending HNN-extension with a base group 
that is isomorphic to $G_0$ (see \cite[Lemma E18.8]{BiSt14}).
If $G$ is finitely generated or finitely presented,
so is therefore $G_2$. 
The group is normalized by the reflection in the mid-point of $I$
and so Theorem \ref{thm:Existence-psi-I-compact-all-autos} implies
that $\psi \colon g \mapsto \sigma_\ell(g) \cdot \sigma_r(g)$ is fixed by $\Aut G_2$.
This homomorphism $\psi$ is non-zero, for it coincides with $\sigma_\ell^2$. 
(Actually, $\sigma_\ell$ and $\sigma_r$ are also fixed by $\Aut G_2$.)

Now to the group $G_3$.
It differs from $G_2$ in several respects:
it cannot be written as an ascending HNN-extension 
with a finitely generated base group contained in $B(I;A,P)$;
it is finitely generated if $G_0$ is so,
but, if finitely generated, it does not admit a finite presentation
(see part (ii) of Lemma E18.8 and Remark E18.10 in \cite{BiSt14}).
The group $G_3$ is normalized by the reflection in the mid point of $I$
and so $\psi \colon G_3 \to P$ is fixed by $\Aut G_3$;
this conclusion, however, is of no interest as $\psi$ is the zero map.
Actually, more is true: 
every homomorphism $\psi' \colon G_3 \to P$ fixed by $\rho$ and vanishing on the bounded subgroup
$B_3$ of $G_3$ is the zero-map: 
by definition \eqref{eq:Subgroup3-with-infinite-cyclic-quotient}
the group $G_3/B_3$ is infinite cyclic 
and so $\psi'$ must be a multiple of $\sigma_\ell$.
\end{example}

\begin{remark}
\label{remark-by-Paramesh}
The previous discussion shows 
that $G_0$, $G_1$ and $G_2$ admit non-trivial homomorphisms into $P$ 
that are fixed by the corresponding automorphism groups.  
This fact and the observation made in section \ref{ssec:Crucial-fact} imply
that every automorphism of one of these groups 
has infinitely many corresponding twisted conjugacy classes.
This reasoning does not hold for  $G_3$,
for $\psi \colon G_3 \to P$ is the zero homomorphism.

So the question whether or not an automorphism $\alpha$ of $G_3$ has infinitely many twisted conjugacy classes has to be tackled by another approach.
Note first that the homomorphisms $\sigma_\ell$ and $\sigma_r$ are both non-zero;
as $G_3$ satisfies the assumptions of Theorem 
\ref{thm:Existence-psi-I-compact-increasing-autos}
these homomorphisms are therefore fixed by $\Aut_+G_3$.
It follows that every increasing automorphism $\alpha$ of $G_3$
has infinitely many $\alpha$-twisted conjugacy classes.
We are thus left with the coset of decreasing automorphisms of $G_3$.

Consider, for example, the automorphism $\beta$ induced by conjugation 
by the reflection $\vartheta$ in the mid-point of the interval $I$. 
Our aim is to construct an infinite collection of elements $g_n \in G_3$
and to verify then 
that they represent pairwise distinct $\beta$-twisted conjugacy classes.
This verification will be based on the fact $\beta$ has order $2$
and a connection between twisted and ordinary conjugacy classes,
available for automorphisms of finite order.
\footnote{Cf.\;Lemma 2.3 in \cite{GoSa14}.}

Let $f$ and $g$ be elements of $G_3$ 
that lie in the same $\beta$-twisted conjugacy class.
By definition, there exists then $h \in G_3$
that satisfies the equation $g = h \circ f \circ \beta(h^{-1})$. 
The calculation
\begin{align*}
g \circ \beta(g) 
&= 
\left(h \circ f \circ \beta(h^{-1})\right) \circ \beta\left( h \circ f \circ \beta(h^{-1}) \right)\\
&=
h \circ (f \circ \beta(f) ) \circ \beta^2(h^{-1}) = \act{h}{1} {(f \circ \beta(f) )}
\end{align*}
shows then
that the elements $f \circ \beta(f)$ and  $g \circ \beta(g)$ are conjugate.
It suffices therefore to find a sequence of elements $n \mapsto f_n$ 
with the property
that the compositions $f_{n_1} \circ \beta(f_{n_1})$ and $f_{n_2} \circ \beta(f_{n_2})$ 
represent distinct conjugacy classes whenever $n_1 \neq n_2$.

To obtain such a sequence,
we use the fact that $G$ contains $B(I;A,P)$ 
and that $B(I;A,P)$ consists of all PL-homeomorphisms with slopes in $P$,
breakpoints in the dense subgroup $A$,
and which are the identity near the end points. 
For every positive integer $n$
there exists therefore a non-trivial element $f_n \in B(I;A,P)$ 
whose support has $n$ connected components, all contained in the interval $]0, b/2[$.
Then $h_n = f_n \circ \beta(f_n) = f_n \circ (\vartheta \circ f_n \circ \vartheta^{-1})$
has $2n$ connected components,
and so $h_{n_1}$ is not conjugate to $h_{n_2}$ for $n_1 \neq n_2$.
It follows that $G_3$ has infinitely many $\beta$-twisted conjugacy classes.

The previous reasoning allows of some improvements,
but it does not seem powerful enough to establish that $G_3$ has infinitely many 
$\alpha$-twisted conjugacy classes for every decreasing automorphism $\alpha$ of $G_3$. 
\end{remark}

\begin{example}
\label{Example2-for-main-result-I-compact}
%
Example \ref{Example1-for-main-result-I-compact} admits a generalization 
that is worth being brought to attention of the reader.
Assume $P$ is a non-trivial subgroup of the positive reals,
$A$ is a (non-trivial) $P$-submodule of $\R$ 
and $\nu$ is an endomorphism of $P$.
Fix $b \in A_{>0}$, set $I = [0, b]$ and define
\begin{equation}
\label{eq:definition-G(nu)}
G_\nu = \{g \in G([0,b];A,P) \mid \sigma_r(g) = \nu(\sigma_\ell(g)) \}.
\end{equation}
We are interested in finding a non-zero homomorphism $\psi \colon G_\nu \to P$
that is fixed by $\Aut G_\nu$.
Theorem \ref{thm:Existence-psi-I-compact-all-autos} implies
that the homomorphism $\psi \colon g \mapsto \sigma_\ell (g) \cdot \sigma_r(g)$ is fixed  
by $\Aut G_\nu$ whenever $P$ is not cyclic;
this homomorphism is non-zero unless $\nu$ is the map 
that sends $p \in P$ to its inverse $p^{-1}$.

\emph{Assume now that $P$ is cyclic}.
Then $G_\nu$ is isomorphic to one of the groups $G_1$, $G_2$ or $G_3$ 
discussed in Example \ref{Example1-for-main-result-I-compact}.
This claim is clear if $\nu$ is the zero map,
for $G_\nu$ coincides then with $\ker \sigma_r$ and is therefore  isomorphic to $G_1$.
Assume now that $\nu$ is not zero.
The quotient group $G_\nu/ B(I;A,P)$  is then an infinite cyclic subgroup 
of the quotient group $G(I;A,P)/B(I;A, P)$
which is free abelian group of rank 2.
By the classification in Section 18.4b of \cite{BiSt14},
the group $G_\nu$ is therefore isomorphic, either to $G_2$,
or to $G_3$. 
Since the isomorphism $G_\nu \iso G_2$, respectively $G_\nu \iso G_3$,
is induced by conjugation by an auto-homeomorphism of $]0,b[$
and as conjugation by the reflection in $b/2$ 
induces decreasing automorphisms in $G_2$ and in $G_3$,
\emph{the group $G_\nu$ admits  a decreasing automorphism, say $\beta$};
it induces an isomorphism $\beta_{*} \colon \im \sigma_\ell \iso \im \sigma_r$
(see Lemma \ref{lem:Consequence-decreasing-auto-I-compact}). 
Our next aim is to obtain a formula for $\beta_*$.

The definition of $G_\nu$ shows, first of all,
that $\im \sigma_\ell = P$ and that $\im \sigma_r = \nu(P)$.
Let $p$ be the generator of $P$ with $p < 1$.
Then $\nu(p) = p^m$ for some non-zero integer $m$ (recall that $\nu$ is not the zero map).
Pick an element $g_p \in G_\nu$ with $\sigma_\ell(g_p) = p$.
Then 0 is an attracting fixed point of $g_p$ 
restricted to a sufficiently small interval of the form $[0, \delta]$,
and hence $b$ is an attracting fixed point for the restriction of $\beta(g_p)$ to a sufficiently small interval of the form $[b-\varepsilon,b]$.
Thus $\beta(g_p) < 1$.
Since $\beta(g_p)$ generates $\im \sigma_r = \nu(P) = \gp(p^m)$
it follows that $\beta_*$ is given by the formula
\begin{equation}
\label{eq:Identification-beta-star}
\beta_* \colon P \to P, \quad p \mapsto p^{|m|}.
\end{equation}

Consider now the commutative square \eqref{eq:square-minus-plus},
but with $\alpha$ replaced by $\beta$.
It shows that
\begin{equation}
\label{eq:Relation-beta-digma-ell-sigma-r}
(\sigma_r \circ \beta)(g_p) 
=
\beta_*(\sigma_\ell(g_p)) = \beta_*(p) =  p^{|m|} = \left(\sigma_\ell(g_p)\right)^{|m|}
\end{equation}
and so $\sigma_r \circ \beta = \sigma_\ell^{|m|}$.
The preceding reasoning is also valid with $\beta^{-1}$ in place of  $\beta$,
for $\beta^{-1}$ is also a decreasing automorphism of $G_\nu$,
and so the relation $\sigma_r \circ \beta^{-1} = (\sigma_\ell)^{|m|}$ holds,
hence also the relation $\sigma_\ell^{|m|} \circ \beta = \sigma_r$.

Consider next the homomorphism $\psi \colon G_\nu \to P$
that takes $g$ to $\sigma_\ell(g)^{|m|} \cdot  \sigma_r(g)$.
The calculation
\[
(\psi \circ \beta)(g) 
=
\sigma_\ell^{|m|} (\beta (g)) \cdot \sigma_r(\beta(g))
=
\sigma_r(g) \cdot \sigma_\ell^{|m|}(g) = \psi(g)
\]
shows then that $\psi$ is fixed by $\beta$. 
Note, however, that $\psi$ is the zero homomorphism whenever $m$ is negative,
for in this case the definition of $G_\nu$ implies 
that
\[
\psi(g) 
= 
(\sigma_\ell(g))^{|m|} \cdot \sigma_r(g)
=
(\sigma_\ell(g))^{|m|} \cdot (\sigma_\ell(g))^m = 1
\]
for every $g \in G_\nu$, just as it happens with $G_3$ 
in Example \ref{Example1-for-main-result-I-compact}.

\begin{remark}
\label{remark:Exceptions-in-thm-Existence-psi-I-compact-all-autos}
Suppose $P$ is cyclic and $\nu \colon P \to P$ is neither the identity 
nor the passage to the inverse.
Then $G_\nu$ admits decreasing automorphisms  $\beta$, 
but none of them can be induced by an auto-homeomorphism $\tilde{\varphi} \colon I \iso I$
that is differentiable in the end points;
indeed, 
formula \ref{eq:Relation-beta-digma-ell-sigma-r} shows 
that $\sigma_r \circ \beta \neq \sigma_\ell$,
in contrast to what happens if the chain rule can be applied
(see Proposition \ref{prp:Importance-of-differentiability-I-compact}).
It follows, 
in particular,
that the three conditions a), b) and c) stated in Theorem
\ref{thm:Existence-psi-I-compact-all-autos}
can occur simultaneously. 
\end{remark}
\end{example}

%
\section{Characters fixed by $\Aut G([0,\infty[\,;A,P)$}
\label{sec:Homomorphisms-fixed-by-Aut-I-half-line}
%
%
The results in this section differ
from those of Section \ref{sec:Homomorphisms-fixed-by-Aut-I-compact} 
in two important respects:
in many situations several candidates for $\psi \colon G \to P$ are available
and one of these candidates need not fixed by $\Aut G_+$.
%
\subsection{Existence of decreasing automorphisms}
\label{ssec:I-half-line-Existence-decreasing-autos}
%
Every compact interval of the form $[0,b]$, and also the line, is invariant under a reflection.
It follows that the groups $G(I;A,P)$ with $I$ one of these intervals,
but also many of their subgroups, admit decreasing automorphisms.
The case where $I$ is a half line, say $[0, \infty[$,  is different:
then $G([0,\infty[\,;A,P)$ does not admit a decreasing automorphism.

In this section, 
we first justify this claim 
and discuss then the extent to which it continues to be valid for subgroups of $G([0,\infty[\,;A,P)$.
We begin with an analogue of Lemma \ref{lem:Consequence-decreasing-auto-I-compact}.
\begin{lem}
\label{lem:Consequence-decreasing-auto-I-half-line}
Assume $I$ is the half line $[0, \infty[$ and
$G$  is a subgroup of $G(I;A,P)$ that contains $B(I;A,P)$.
Then every decreasing automorphism $\alpha$ induces an isomorphism 
$\alpha_* \colon \im \sigma_\ell \iso \im \rho $.
\end{lem}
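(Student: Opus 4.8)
The plan is to mimic the proof of Lemma \ref{lem:Consequence-decreasing-auto-I-compact} as closely as possible, replacing the right-hand homomorphism $\sigma_r$ by $\rho$, since on a half line the relevant ``behaviour near the right end'' is governed not by a slope at a finite endpoint but by the affine germ of $g$ at $+\infty$. The whole argument hinges on the fact, guaranteed by Theorem \ref{thm:TheoremE16.4}, that a decreasing automorphism $\alpha$ is induced by conjugation by a decreasing homeomorphism $\varphi_\alpha \colon \Int(I) \iso \Int(I)$. Since $\alpha$ is decreasing and $I = [0,\infty[$, the homeomorphism $\varphi_\alpha$ reverses the order on $]0,\infty[$ and hence interchanges the two ends: it carries a neighbourhood of $0$ to a neighbourhood of $+\infty$ and vice versa. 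This is the geometric heart of the matter, and everything else is bookkeeping.

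First I would identify the kernels in terms of the geometry of the end points. The kernel of $\sigma_\ell \colon G \to P$ consists precisely of those $g \in G$ that are the identity near $0$, while the kernel of $\rho \colon G \to \Aff(A,P)$ consists of those $g$ that are the identity near $+\infty$ (this is exactly the content of formula \eqref{eq:Reexpressing-B} together with the definitions \eqref{eq:Definition-rho} of $\rho$; recall $B(I;A,P) = \ker\lambda \cap \ker\rho$ and that on the half line $\lambda$ is replaced by $\sigma_\ell$). Now if $g$ is the identity near $0$, then $\alpha(g) = \varphi_\alpha \circ g \circ \varphi_\alpha^{-1}$ is the identity near $+\infty$, because $\varphi_\alpha$ sends a neighbourhood of $0$ to a neighbourhood of $+\infty$; hence $\alpha(\ker \sigma_\ell) \subseteq \ker \rho$. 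Applying the same reasoning to $\alpha^{-1}$, which is again a decreasing automorphism (its inducing homeomorphism is $\varphi_\alpha^{-1}$, still order-reversing), gives the reverse inclusion, so that $\alpha(\ker \sigma_\ell) = \ker \rho$.

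From this equality of kernels the induced isomorphism follows formally. Since $\im \sigma_\ell \cong G/\ker\sigma_\ell$ and $\im \rho \cong G/\ker\rho$, the automorphism $\alpha$ descends to an isomorphism $\alpha_* \colon \im \sigma_\ell \iso \im \rho$ making the square
\begin{equation*}
\xymatrix{G \ar@{->}[r]^-{\alpha} \ar@{->>}[d]^-{\sigma_\ell} & G \ar@{->>}[d]^-{\rho}\\
\im \sigma_\ell\ar@{->}[r]^-{\alpha_{*}} &\im \rho}
\end{equation*}
commute, in exact parallel with \eqref{eq:square-minus-plus}.

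The one point requiring genuine care — and the step I expect to be the main obstacle — is the precise justification that $\varphi_\alpha$ interchanges the two ends, since on a half line the right end is $+\infty$ rather than a genuine point of the interval. A decreasing self-homeomorphism of the open interval $]0,\infty[$ need not extend to the two-point compactification in a literally ``endpoint-swapping'' way, so I would phrase the argument purely in terms of the behaviour near $0$ and near $+\infty$: I must check that ``$g$ is the identity on some $]0,\varepsilon[$'' translates, under conjugation by the order-reversing $\varphi_\alpha$, into ``$\alpha(g)$ is the identity on some $]M,\infty[$,'' and that this is detected correctly by $\rho$ (which reads off the affine germ at $+\infty$, so that being the identity near $+\infty$ is exactly membership in $\ker\rho$). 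The subtlety is that one must argue at the level of germs rather than extended endpoints, using that $\varphi_\alpha$ is a decreasing bijection of $]0,\infty[$ onto itself and hence maps small right-neighbourhoods of $0$ cofinally into $]M,\infty[$. Once this translation is pinned down, the symmetry between $\alpha$ and $\alpha^{-1}$ closes the argument exactly as in Lemma \ref{lem:Consequence-decreasing-auto-I-compact}.
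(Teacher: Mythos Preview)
Your proof is correct and follows essentially the same route as the paper's: identify $\ker\sigma_\ell$ and $\ker\rho$ geometrically as the elements that are the identity near $0$ and near $+\infty$ respectively, observe that conjugation by the decreasing $\varphi_\alpha$ carries one into the other, and upgrade the inclusion to an equality by applying the same argument to $\alpha^{-1}$. Your explicit discussion of the ``germ at $+\infty$'' subtlety is, if anything, more careful than the paper's terse treatment of the same point.
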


\begin{proof}
The proof is very similar to that of  Lemma \ref{lem:Consequence-decreasing-auto-I-compact}.
The kernel of $\sigma_\ell$ consists of all elements in $G$ that are the identity near 0,
while that of $\rho$ is made up of the elements in $G$ that are the identity near $\infty$.
Since $\alpha$ is induced by conjugation by a decreasing homeomorphism of $]0, \infty[$,
the image of $\ker \sigma_\ell$ consists  of elements $\alpha(g)$
that are the identity on a half line of the form $[t(g), \infty[$,
and so $\alpha(\ker \sigma_\ell) \subseteq  \ker \rho$.
Since $\alpha^{-1}$ is also a decreasing automorphism, 
the preceding inclusion is actually an equality.
It follows that $\alpha$ induces an isomorphism $\alpha_* \colon \im \sigma_\ell \iso \im \rho$ 
that renders the square
\begin{equation}
\label{eq:square-sigma-ell-rho}
\xymatrix{G \ar@{->}[r]^-{\alpha} \ar@{->>}[d]^-{\sigma_\ell} & G \ar@{->>}[d]^-{\rho}\\
\im \sigma_\ell\ar@{->}[r]^-{\alpha_{*}} &\im \rho}
\end{equation}
commutative.
\end{proof}

The preceding lemma leads directly to a criterion 
for the non-existence of decreasing automorphisms.
Indeed,
the image of $\sigma_\ell$ is abelian,
while that of $\rho$ is often a non-abelian,  metabelian group,
and so we obtain

\begin{criterion}
\label{criterion:Non-existence-decreasing-auto-I-half-line}
Assume $I$ is the half line $[0, \infty[$ and
$G$ is a subgroup of $G(I;A,P)$ that contains $B(I;A,P)$.
If $\im \rho$ is \emph{not} abelian then $\Aut G = \Aut_+ G$.
\end{criterion}
%
\subsection{Construction of homomorphisms: part I}
\label{ssec:I-half-line-Construction-homomorphisms-I}
%
We turn now to the construction of homomorphisms 
fixed by $\Aut_+ G$, or even by $\Aut G$.
Several homomorphisms are at our disposal.
The first of them is $\sigma_\ell$.
Corollary 
\ref{crl:sigma-ell-fixed-y-AutG-for-G(half-line;A,P)}
tells us then:
\begin{prp}
\label{prp:Existence-psi-I-halfline-increasing-autos-1}
Assume $G$ is a subgroup of $G([0,\infty[\,;A,P)$ 
that contains $B(I;A,P)$.
Then the homomorphisms $\sigma_\ell$ is fixed by $\Aut_+ G$.
\end{prp}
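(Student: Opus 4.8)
The plan is to reduce the statement to the already-proven Corollary \ref{crl:sigma-ell-fixed-y-AutG-for-G(half-line;A,P)} restricted to the subgroup $\Aut_+ G$ of increasing automorphisms. Let $\alpha$ be an \emph{increasing} automorphism of $G$, and let $\varphi_\alpha \colon \Int(I) \iso \Int(I)$ be the homeomorphism inducing it by conjugation, which exists and is unique by Theorem \ref{thm:TheoremE16.4}. Since $\alpha$ is increasing, $\varphi_\alpha$ extends to an auto-homeomorphism $\tilde{\varphi} \colon [0,\infty[ \,\iso\, [0,\infty[$ fixing the endpoint $0$ (see Remark \ref{remarks:varphi}a; for the half line only the increasing case admits such an extension, which is exactly why we restrict to $\Aut_+ G$). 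The goal is then to show that $\tilde{\varphi}$ is differentiable at $0$ with positive derivative, so that the chain-rule computation \eqref{eq:Calculation-alpha-increasing} in part (i) of Proposition \ref{prp:Importance-of-differentiability-I-compact} applies and yields $\sigma_\ell \circ \alpha = \sigma_\ell$.

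The key step is establishing differentiability of $\tilde{\varphi}$ at $0$, and this splits according to whether $\im \sigma_\ell$ is cyclic. If $\im \sigma_\ell$ is \emph{not} cyclic, Proposition \ref{prp:phi-is-linear-I-compact}, which is stated precisely for intervals of the form $[0,\infty[$ as well as $[0,b]$, shows directly that $\tilde{\varphi}$ is linear on a small interval $[0,\delta]$ and hence differentiable at $0$ with positive derivative; then part (i) of Proposition \ref{prp:Importance-of-differentiability-I-compact} finishes the case. If $\im \sigma_\ell$ \emph{is} cyclic, differentiability need not hold, so instead I would mimic the elementary attracting-fixed-point argument from the proof of Theorem \ref{thm:Existence-psi-I-compact-increasing-autos}: choosing the generator $p \in \im \sigma_\ell$ with $p < 1$ and a preimage $g_p \in G$, the element $g_p$ attracts a small neighbourhood of $0$ towards $0$, hence so does its conjugate $\alpha(g_p) = \tilde{\varphi} \circ g_p \circ \tilde{\varphi}^{-1}$, forcing the slope $\sigma_\ell(\alpha(g_p))$ to be less than $1$; since this slope must also generate $\im \sigma_\ell$ and lie below $1$, it equals $p = \sigma_\ell(g_p)$, giving $\sigma_\ell \circ \alpha = \sigma_\ell$ on the generator and hence everywhere.

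The main obstacle I anticipate is purely bookkeeping rather than conceptual: the cited Corollary \ref{crl:sigma-ell-fixed-y-AutG-for-G(half-line;A,P)} is phrased under the hypothesis that $G$ admits no decreasing automorphism, whereas the present proposition only claims invariance under $\Aut_+ G$ and imposes no such restriction. I therefore want to invoke not the corollary verbatim but the reasoning behind it, namely the combination of Proposition \ref{prp:phi-is-linear-I-compact} with part (i) of Proposition \ref{prp:Importance-of-differentiability-I-compact}, both of which hold for every increasing isomorphism regardless of the existence of decreasing ones. The proof of the corollary itself notes that its underlying argument does not presuppose $I$ to be bounded from above, so the only adjustment is to observe that the same argument applies to each increasing automorphism individually, which is all that is needed to conclude fixedness under the subgroup $\Aut_+ G$. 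Handling the cyclic case by the attracting-fixed-point trick, rather than by differentiability, is what lets the argument go through uniformly without any cyclicity hypothesis on $\im \sigma_\ell$.
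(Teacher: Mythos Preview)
Your proposal is correct and follows essentially the same route as the paper: the paper simply points to Corollary \ref{crl:sigma-ell-fixed-y-AutG-for-G(half-line;A,P)} as the source of the proposition, and you have correctly unpacked the reasoning behind that corollary---Proposition \ref{prp:phi-is-linear-I-compact} together with part (i) of Proposition \ref{prp:Importance-of-differentiability-I-compact}---and observed that it applies to each increasing automorphism individually, so the extraneous hypothesis ``$G$ admits no decreasing automorphism'' is not needed when one only asks for invariance under $\Aut_+ G$. Your explicit treatment of the cyclic case via the attracting-fixed-point argument borrowed from Theorem \ref{thm:Existence-psi-I-compact-increasing-autos} is in fact more careful than the paper's one-line reference, since the proof of Corollary \ref{crl:sigma-ell-fixed-y-AutG-for-G(half-line;A,P)} cites only Proposition \ref{prp:phi-is-linear-I-compact}, which carries the non-cyclic hypothesis.
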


We move on to the homomorphism $\rho$.
Here two cases arise, depending on whether its image is abelian or non-abelian.
In the second case, a very satisfying conclusion holds.
It is enunciated in 
\begin{theorem}
\label{thm:Existence-psi-I-halfline-image-rho-non-abelian}
Assume $I = [0, \infty[$ and $G$ is a subgroup of $G(I;A,P)$ containing $B(I;A,P)$.
If $\im \rho$ is not abelian 
$\sigma_r$ is a non-zero homomorphism fixed by $\Aut G$.
\end{theorem}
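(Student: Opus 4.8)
The plan is to reduce the statement to \emph{increasing} automorphisms and then to control the automorphism that such an $\alpha$ induces on the metabelian group $\im\rho$. Since $\im\rho$ is not abelian, Criterion~\ref{criterion:Non-existence-decreasing-auto-I-half-line} gives $\Aut G=\Aut_+ G$, so every automorphism of $G$ is increasing and it suffices to show that $\sigma_r$ is fixed by $\Aut_+ G$. Moreover $\sigma_r$ is non-zero: if its image were trivial every element of $\im\rho$ would be a translation, making $\im\rho$ abelian. Write $H=\im\rho\subseteq\Aff(A,P)$, let $\sigma\colon\Aff(A,P)\to P$ be the slope homomorphism, so that $\sigma_r=\sigma\circ\rho$, and let $T\subseteq H$ be the subgroup of those elements of slope $1$, i.e.\ the translations in $H$; thus $T=\ker(\sigma|_H)$ and $H/T\cong\bar P:=\im\sigma_r$.

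Fix an increasing automorphism $\alpha$, induced by the increasing homeomorphism $\varphi=\varphi_\alpha$ of $\,]0,\infty[\,$ (Theorem~\ref{thm:TheoremE16.4}). By part~(ii) of Corollary~\ref{crl:alpha-and-ker-lambda} the map $\alpha$ carries $\ker\rho$ onto itself and hence induces an automorphism $\alpha_r$ of $H$ with $\rho\circ\alpha=\alpha_r\circ\rho$. The key algebraic point is that $T$ is \emph{characteristic} in $H$. Indeed, a direct computation in $\Aff(A,P)$ shows that a non-translation $(p,c)$ with $p\neq1$ has non-abelian normal closure: conjugating it by the translations in $T$ (which is non-trivial, as $H$ is non-abelian) yields two elements of equal slope $p$ but distinct displacements, whose commutator is a non-trivial translation that does not commute with $(p,c)$. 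Every translation, on the other hand, has abelian normal closure inside the abelian group $T$. Hence $T=\{h\in H:\langle\langle h\rangle\rangle\text{ is abelian}\}$ is preserved by every automorphism, so $\alpha_r(T)=T$ and $\alpha_r$ induces an automorphism $\bar\alpha$ of $\bar P=H/T$ with $\sigma\circ\alpha_r=\bar\alpha\circ\sigma$.

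It remains to prove $\bar\alpha=\id$, for then $\sigma_r\circ\alpha=\sigma\circ\rho\circ\alpha=\bar\alpha\circ\sigma\circ\rho=\sigma_r$. I record two properties of $\tau:=\alpha_r|_T$. Identifying each translation with its displacement identifies $T$ with a non-zero $\bar P$-submodule $C$ of $\R_{\add}$; since conjugation by a slope-$p$ element of $H$ multiplies a displacement by $p$, applying $\alpha_r$ yields the \emph{semilinear} relation $\tau(p\cdot c)=\bar\alpha(p)\cdot\tau(c)$ for all $p\in\bar P$, $c\in C$. Secondly, $\tau$ is \emph{order preserving}: if $g\in G$ is a translation by $c>0$ near $+\infty$, then, because $\varphi$ is an increasing self-homeomorphism of $\,]0,\infty[\,$, the conjugate $\alpha(g)=\varphi\circ g\circ\varphi^{-1}$ again satisfies $\alpha(g)(t)>t$ for all large $t$, and (being a translation near $+\infty$, as $\alpha_r(T)=T$) has positive displacement. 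If $\bar P$ is infinite cyclic, the only order-preserving automorphism of $\bar P$ is the identity, so $\bar\alpha=\id$. If $\bar P$ is not cyclic it is dense in $\R^\times_{>0}$, whence $C$ is dense in $\R_{\add}$; then, exactly as in the proof of Proposition~\ref{prp:phi-is-linear-I-compact}, the order-preserving additive automorphism $\tau$ extends to multiplication by a positive real $\kappa$, and $\kappa p c=\bar\alpha(p)\kappa c$ forces $\bar\alpha(p)=p$ for all $p$.

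The main obstacle is this final step. Knowing only that $T$ is characteristic still allows $\alpha_r$ to rescale slopes (in the non-cyclic case, $\bar\alpha(p)=p^\kappa$ with $\kappa\neq1$), and excluding this requires coupling the \emph{order-preservation} inherited from the increasing homeomorphism $\varphi$ with the \emph{semilinear} compatibility between $\tau$ and $\bar\alpha$; neither ingredient alone suffices. One must also verify the sign-preservation of displacements at $+\infty$ with some care, since $\varphi$ need not be differentiable — or even piecewise linear — near $+\infty$.
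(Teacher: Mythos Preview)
Your proof is correct, but it takes a genuinely different route from the paper's. The paper invokes Proposition~\ref{prp:Affine-near-infty-I-half-line}, which shows that the inducing homeomorphism $\varphi_\alpha$ is \emph{affine near $+\infty$}; once this geometric fact is in hand, a direct chain-rule computation gives $\sigma_r\circ\alpha=\sigma_r$. You bypass this entirely and work on the algebraic side: you show that the translation subgroup $T$ is characteristic in $H=\im\rho$, and then use the order-preservation of $\tau=\alpha_r|_T$ together with the semilinearity relation $\tau(p\cdot c)=\bar\alpha(p)\cdot\tau(c)$ to force $\bar\alpha=\id$ on $H/T\cong\im\sigma_r$. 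Both arguments share the crucial observation that conjugation by an increasing $\varphi$ preserves the sign of the displacement near $+\infty$; the paper pushes this further to recover affineness of $\varphi$, while you stop as soon as the algebraic conclusion is secured. Your approach is more economical for this theorem, but note that the paper's stronger geometric conclusion (Proposition~\ref{prp:Affine-near-infty-I-half-line}) is formulated for isomorphisms between possibly different groups and is reused verbatim in Section~\ref{sec:Homomorphisms-fixed-by-Aut-I-line} for the line case.

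Two small remarks on the write-up. First, in your cyclic case you assert ``the only order-preserving automorphism of $\bar P$ is the identity, so $\bar\alpha=\id$'', but you have only shown that $\tau$ is order-preserving, not $\bar\alpha$; the missing line is: for $p>1$ and $c>0$ in $C$ one has $\bar\alpha(p)\,\tau(c)=\tau(pc)>\tau(c)>0$, whence $\bar\alpha(p)>1$. Second, your case distinction is in fact unnecessary: since $\bar P\neq\{1\}$ and $C$ is a non-zero $\bar P$-submodule of $\R$, choosing $p\in\bar P$ with $0<p<1$ and $c>0$ in $C$ gives $p^nc\to 0$, so $C$ is always dense in $\R$ and your ``non-cyclic'' argument goes through uniformly.
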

\begin{proof}
Suppose $\im \rho$ is non-abelian.
Then Lemma \ref{lem:Consequence-decreasing-auto-I-half-line} forces $\alpha$ to be increasing.
Let $\varphi \colon ]0, \infty[\, \iso \, ]0, \infty[$  be the auto-homeomorphism
that induces $\alpha$ by conjugation.
As it is increasing, 
it is affine near $\infty$ by Proposition \ref{prp:Affine-near-infty-I-half-line} below,
and so the following calculation
\begin{align*}
\sigma_r(\alpha(g)) 
&=
\lim\nolimits_{t \to \infty} \left(\varphi \circ g \circ \varphi^{-1}\right)'(t)\\
&=
\lim\nolimits_{t \to \infty} 
\left(
\varphi'(g \circ \varphi^{-1}(t)) \cdot g'(\varphi^{-1}(t)) \cdot (\varphi^{-1})'(t) 
\right)\\
&=
\lim\nolimits_{t \to \infty} 
\left(
\varphi'(t) \cdot g'(t) \cdot (\varphi^{-1})'(t) 
\right)\\
&=
\lim\nolimits_{t \to \infty} g'(t) = \sigma_r(g)
\end{align*}
is valid  for every $g \in G$.
It shows
that $\alpha$ fixes the homomorphism $\sigma_r$.
This homomorphism is non-zero. 
Indeed, 
$G/\ker \rho \iso \im \rho$ is not abelian by hypothesis,
while $ \ker \sigma_r/\ker \rho$ is abelian and thus the third term of the extension
\[
\ker \sigma_r/\ker \rho \mono G/ \ker \rho \epi G/\ker \sigma_r 
\]
is not zero, whence $\ker \sigma_r \neq G$.
\end{proof}

We are left with proving an analogue of Proposition \ref{prp:phi-is-linear-I-compact}.
For later use, we state it in greater generality than needed at this point,
namely as
\begin{prp}
\label{prp:Affine-near-infty-I-half-line}
Assume $G$ and $\bar{G}$ are subgroups of $G(I;A,P)$, 
both containing the subgroup $B(I;A,P)$,
and that $I$ is either the half line $[0, \infty[$ or the line $\R$.
Let $\alpha \colon G \iso \bar{G}$ be an isomorphism
and let $\varphi_\alpha$ be an auto-homeomorphism of $\Int(I)$ 
that induces $\alpha$ by conjugation.

If $\im \rho$ is not abelian and $\varphi_\alpha$ is increasing 
then $\varphi_\alpha$ is affine near $\infty$.
\end{prp}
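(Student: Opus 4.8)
The plan is to convert the conjugacy relation into a functional equation for $\varphi = \varphi_\alpha$ near $+\infty$ and then to solve that equation, exploiting the non-abelianness of $\im\rho$ to supply two elements of $G$ with prescribed behaviour at infinity. Since $\varphi$ is increasing, Corollary \ref{crl:alpha-and-ker-lambda}(ii) yields an isomorphism $\alpha_r \colon \im\rho \iso \im \bar\rho$ with $\bar\rho(\alpha(g)) = \alpha_r(\rho(g))$ for all $g$. Because $g$ coincides near $+\infty$ with the affine map $\rho(g)$ and $\alpha(g)$ with $\alpha_r(\rho(g))$, substituting $s = \varphi^{-1}(t)$ into $\alpha(g) = \varphi \circ g \circ \varphi^{-1}$ would give the basic relation
\begin{equation*}
\varphi(p s + a) = \bar p\,\varphi(s) + \bar a \qquad (s \text{ large}),\tag{$\ast$}
\end{equation*}
with $(p,a) = \rho(g)$ and $(\bar p,\bar a) = \alpha_r(\rho(g))$.

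First I would extract a translation. A short computation in $\Aff(A,P)$ shows that every commutator has slope $1$, so $[\im\rho,\im\rho]$ consists of translations and is non-trivial as $\im\rho$ is non-abelian. Picking $g,g'$ with $[\rho(g),\rho(g')]\neq 1$ and setting $g_0=[g,g']$, both $\rho(g_0)$ and $\alpha_r(\rho(g_0))=[\alpha_r(\rho(g)),\alpha_r(\rho(g'))]$ are non-trivial translations, by amounts $a_0$ and $\bar a_0$; after possibly replacing $g_0$ by its inverse I may assume $a_0>0$, whence $\bar a_0>0$ since $\varphi$ increases. Then $(\ast)$ reads $\varphi(s+a_0)=\varphi(s)+\bar a_0$, so with $c=\bar a_0/a_0>0$ the function $h(s)=\varphi(s)-cs$ is periodic of period $a_0$ near $+\infty$; being continuous it is bounded there, and $\varphi(s)=cs+h(s)$.

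Next I would destroy the periodic part with a hyperbolic element. Non-abelianness also furnishes an element of slope $\neq 1$ (were every slope $1$, $\im\rho$ would consist of translations, hence be abelian), say $g_1$ with $p_1=\sigma_r(g_1)>1$ after passing to $g_1^{-1}$ if needed. Writing $\rho(g_1)=(p_1,a_1)$ and $\alpha_r(\rho(g_1))=(\bar p_1,\bar a_1)$, relation $(\ast)$ becomes $\varphi(p_1 s+a_1)=\bar p_1\varphi(s)+\bar a_1$; inserting $\varphi=cs+h$ and comparing the unbounded linear terms (legitimate since $h$ is bounded and $c\neq 0$) forces $\bar p_1=p_1$ and leaves $h(p_1 s+a_1)=p_1 h(s)+C$ for some constant $C$. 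Subtracting the constant $k_0=C/(1-p_1)$, the bounded function $u=h-k_0$ satisfies $u(p_1 s+a_1)=p_1 u(s)$, hence $u(T^n s)=p_1^n u(s)$ for the expanding map $T(s)=p_1 s+a_1$; since $p_1>1$ and $u$ stays bounded near $+\infty$, letting $n\to\infty$ gives $u\equiv 0$. Thus $h$ is constant near $+\infty$ and $\varphi(s)=cs+k_0$ there, which is the assertion.

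The hard part is that, unlike in Proposition \ref{prp:phi-is-linear-I-compact}, I cannot first reduce $\varphi$ to a piecewise-linear map: Theorem \ref{thm:TheoremE17.1} requires $P$ to be non-cyclic, whereas $\im\rho$ may well be non-abelian with $P$ cyclic (for example $\im\rho=\Aff(\Z[1/2],\gp(2))$), so affineness must be wrung directly out of $(\ast)$. The decomposition $\varphi=cs+h$ with $h$ periodic, obtained from the translation $g_0$, is the device that makes this feasible, and the expanding dynamics of $g_1$ then pins $h$ to a constant. I expect the only delicate bookkeeping to be the choice of thresholds ``$s$ large'' ensuring that both instances of $(\ast)$ hold simultaneously, and the verification that $h$ is genuinely bounded, which rests on the exact periodicity $h(s+a_0)=h(s)$ together with continuity of $\varphi$.
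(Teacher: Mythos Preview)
Your argument is correct and takes a genuinely different route from the paper's proof. The paper leans on the observation that the derived group of $\im\rho$ is a non-trivial $\Z[P]$-submodule $A_1\subseteq A$ and is therefore \emph{dense} in $\R$: it shows that the induced map $\alpha_1\colon A_1\to\bar A_1$ is order-preserving, extends it by density to multiplication by some $s_1>0$, then proves $\varphi(t+b)=\varphi(t)+s_1 b$ for every $b\in A_1$ and all $t$ beyond one fixed threshold (via a shifting trick), and finally invokes density plus continuity once more to conclude that $\varphi$ is affine with slope $s_1$.

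You instead get by with just two well-chosen elements. A single commutator yields the periodicity $h(s+a_0)=h(s)$ of $h(s)=\varphi(s)-cs$, hence its boundedness; a single hyperbolic element supplies the expanding map $T(s)=p_1 s+a_1$ and the relation $u(T(s))=p_1 u(s)$, whose iteration against the bound forces $u\equiv 0$. This sidesteps density entirely, and your comparison of bounded versus linear terms to pin down $\bar p_1=p_1$ is a pleasant shortcut that the paper obtains only implicitly through the structure of $\alpha_1$. The trade-offs: the paper's approach makes the slope of $\varphi$ transparently equal to the scaling factor on the whole translation subgroup (information you recover as $c=\bar a_0/a_0$), while your approach is more self-contained and would survive in settings where the translation subgroup is not dense. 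The threshold bookkeeping you flag is indeed routine: once $s$ exceeds the fixed point of $T$ and the thresholds for both instances of $(\ast)$, the iteration stays in the region where everything is valid.
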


\begin{proof}
We adapt the argument of Part 2 in the proof of \cite[Supplement E17.3]{BiSt14}
to the case at hand.
By assumption,
the image of $\rho_* \colon G \to \Aff(A,P) \iso A \rtimes P$ is not abelian;
its derived group is therefore (isomorphic to) a non-trivial submodule $A_1$ of $A$
which, being non-trivial,  contains arbitrary small positive elements
and so is dense in $\R$.
Let $\bar{\rho}_* \colon \bar{G} \to A \rtimes P$ be the similarly defined homomorphism;
the derived group of its image is then isomorphic to a non-trivial submodule $\bar{A}_1$ of $A$.

By part (ii) of Corollary \ref{crl:alpha-and-ker-lambda},
the isomorphism $\alpha $ induces an isomorphism 
$\alpha_*$ of  $G/\ker \rho$ onto $\bar{G}/\ker \bar{\rho}$;
hence an isomorphism of $\im \rho$ onto $\im \bar{\rho}$,
and, finally, an isomorphism $\alpha_1$ of $A_1$ onto $\bar{A}_1$.
They render commutative the following diagram
\begin{equation}
\label{eq:square-rho}
\xymatrix{
G \ar@{->>}[r]^-{\rho} \ar@{->}[d]^-{\alpha} 
& 
\im \rho \ar@{<-<}[r]^-{} \ar@{->}[d]^-{\alpha_*} 
&
A_1  \ar@{->}[d]^-{\alpha_1} 
\\%
\bar{G} \ar@{->>}[r]^-{\bar{\rho} }
& 
\im \bar{\rho} \ar@{<-<}[r]^-{}
&
\bar{A}_1 .
}
\end{equation}
We claim the automorphism $\alpha_1 \colon A_1 \iso \bar{A}_1$ is strictly \emph{increasing}.

Let $b \in A_1$ be an arbitrary positive element 
and  let $f_b \in G$ be a PL-homoe\-mor\-phisms 
that is a translation with amplitude $b$ near $\infty$,
say on $[t_{b,1}, \infty[$.
Then $\alpha(f_b)$ is a PL-homeomorphism 
which is a translation with amplitude $\alpha_1(b)$ near $\infty$, 
say for $t \geq \varphi(t_{b,2})$.
Since $\alpha$ is induced by conjugation by $\varphi$,
one has $\alpha(f_b) = \act{\varphi}{1}{ f_b}$; 
so $\varphi \circ f_b = \alpha(f_b) \circ  \varphi$.
By evaluating this equality at $t \geq \max\{t_{b,1}, t_{b, 2}\}$ 
one obtains the chain of equations
\[
\varphi(t + b) = (\varphi \circ f_b)(t) = (\alpha(f_b) \circ \varphi)(t) = \alpha_1(b) + \varphi(t).
\]
It implies that $\alpha_1(b)$ is positive, 
for $b$ is so by assumption and $\varphi$ is increasing.

We show next
that $\alpha_1$ is given by multiplication by a positive real number $s_1$.
As stated in the first paragraph of the proof,
$A_1$ is a dense subgroup of $\R_{\add}$.
Since $\alpha_1$ is strictly increasing  
it extends to a (unique) strictly increasing automorphism  
$\tilde{\alpha}_1 \colon \R \iso \R$.
This automorphism is continuous and hence an $\R$-linear map,
given by multiplication by some positive real number $s_1$.

We come now to the final stage of the analysis of $\varphi$.
In it we show 
that the restriction of $\varphi$ to a suitable interval of the form $[t_*, \infty[$ is \emph{affine}.
Choose a positive element $b_* \in A_1$ 
and let $f_{b_*} \in G$ be an element 
whose image under $\rho$ is a translation with amplitude $b_*$.
It then follows, as before,
that there is a positive number $t_*$ 
so that the equation
\begin{equation}
\label{eq;Describing-varphi}
\varphi(t + b_*) = \alpha_1(b_*) + \varphi(t) =  \varphi(t) + s_1 \cdot b_*
\end{equation} 
holds for every $t \geq t_*$.
Consider now an arbitrary positive element $b \in A_1$.
There exists then a positive number $t_{b}$ 
such that the calculation
\[
\varphi(t + b) = \alpha_1(b) + \varphi(t) = \varphi(t) +  s_1 \cdot b  
\]
is valid for $t \geq t_{b}$.
Choose a positive integer $m$ which is so large 
that $t_{b} \leq t_* + m \cdot b_*$.
For every $t \geq t_*$ the following calculation is then valid:
\begin{align*}
 \varphi(t + b) +s_1 \cdot m  b_*
&=
\varphi( t +b +  m \cdot b_* )\\
&=
\varphi( t +  m \cdot b_*) + s_1 \cdot b
= 
\varphi(t) + s_1 \cdot m b_* +s_1 \cdot b.
\end{align*}
It follows, in particular,
that the equation
\begin{equation}
\label{eq:Representation-.varphi-bis}
\varphi (t_* + b) = \varphi (t_*) + s_1 \cdot b
\end{equation}
holds for every positive element $b \in A_1$ and $t \geq t_*$.
Since $\varphi$ is continuous and increasing and as $A_1$ is dense in $\R$,
this equation allows us to deduce 
that $\varphi$ is affine with slope $s_1$ on the half line $[t_*, \infty[$,
and so the proof is complete.
\end{proof}


The hypotheses of the Theorem \ref{thm:Existence-psi-I-halfline-image-rho-non-abelian} are satisfied
if $G = G([0,\infty[;A,P)$;
the theorem, Lemma \ref{lem:Consequence-decreasing-auto-I-half-line} 
and Corollary \ref{crl:sigma-ell-fixed-y-AutG-for-G(half-line;A,P)}
thus yield the pleasant
\begin{crl}
\label{crl:Existence-psi-I-halfline-fixed-by-AutG}
If $G$ coincides with $G([0,\infty[\,;A,P)$
 then both $\sigma_\ell \colon G \to P$
 and $\sigma_r \colon G \to P$ are surjective homomorphisms fixed by $\Aut G$.
\end{crl}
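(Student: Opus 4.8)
The plan is to recognise the full group $G = G([0,\infty[\,;A,P)$ as a group to which Theorem \ref{thm:Existence-psi-I-halfline-image-rho-non-abelian} applies, i.e. to verify that $\im \rho$ is non-abelian; the two assertions then follow by assembling results already proved, together with a short surjectivity check.

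The heart of the matter is the computation of $\im \rho$. Since the interval $I = [0,\infty[$ is bounded from below but not from above, the structure theory recalled in Section \ref{ssec:lambda-rho} --- the statement for $\rho$ dual to the identity $\im \lambda = \Aff(IP \cdot A, P)$ valid for intervals bounded from above, which rests on \cite[Corollary A5.3]{BiSt14} --- gives $\im \rho = \Aff(IP \cdot A, P)$. I would then identify this group with the semidirect product $IP \cdot A \rtimes P$ and argue that it is non-abelian: the augmentation submodule $IP \cdot A$ is non-trivial because $P \neq \{1\}$ and $A \neq \{0\}$ make $(p-1)\cdot a \neq 0$ for suitable $p \in P$ and $a \in A$; and $P$ acts on this non-trivial, torsion-free module by multiplication, hence non-trivially, since $p \cdot b = b$ with $b \neq 0$ would force $p = 1$. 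A semidirect product with non-trivial action is non-abelian, so $\im \rho$ is non-abelian. I expect this identification of $\im \rho$ (and the verification that its defining action is non-trivial) to be the only real step; everything afterwards is a citation.

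With $\im \rho$ non-abelian, Theorem \ref{thm:Existence-psi-I-halfline-image-rho-non-abelian} at once provides that $\sigma_r$ is a homomorphism fixed by $\Aut G$. For $\sigma_\ell$ I would appeal to Criterion \ref{criterion:Non-existence-decreasing-auto-I-half-line}: as $\im \rho$ is not abelian, $G$ admits no decreasing automorphism, so $\Aut G = \Aut_+ G$. The underlying reason is Lemma \ref{lem:Consequence-decreasing-auto-I-half-line}, according to which a decreasing automorphism would induce an isomorphism $\im \sigma_\ell \iso \im \rho$ between the abelian group $\im \sigma_\ell \subseteq P$ and the non-abelian $\im \rho$, which is impossible. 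Since $G$ has no decreasing automorphism, Corollary \ref{crl:sigma-ell-fixed-y-AutG-for-G(half-line;A,P)} applies and shows $\sigma_\ell$ to be fixed by $\Aut G$ as well.

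Finally I would record surjectivity. Composing $\rho$ with the slope projection $\Aff(IP \cdot A, P) \epi P$ (which is onto, as $t \mapsto p\,t$ lies in the image for every $p$) recovers $\sigma_r$, so $\im \sigma_r = P$. For $\sigma_\ell$ I would exhibit, for each $p \in P$, a single explicit element of $G$ with slope $p$ at $0$: fixing any $a \in A_{>0}$, let $g$ equal $t \mapsto p\,t$ on $[0,a]$, have slope $p^{-1}$ on $[a, a(p+1)]$, and be the identity beyond $a(p+1)$. Its breakpoints $a$ and $a(p+1) = ap + a$ lie in $A$ and its slopes $p, p^{-1}, 1$ in $P$, so $g \in G$ and $\sigma_\ell(g) = p$; thus $\im \sigma_\ell = P$. (One could instead cite the half-line analogue of \cite[Corollary A5.5]{BiSt14}.) This completes the plan.
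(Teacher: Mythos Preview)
Your proposal is correct and follows essentially the same route as the paper: the paper derives the corollary in a single sentence by noting that the hypotheses of Theorem~\ref{thm:Existence-psi-I-halfline-image-rho-non-abelian} are satisfied for the full group and then invoking that theorem together with Lemma~\ref{lem:Consequence-decreasing-auto-I-half-line} and Corollary~\ref{crl:sigma-ell-fixed-y-AutG-for-G(half-line;A,P)}. You spell out the verification that $\im\rho = \Aff(IP\cdot A,P)$ is non-abelian and supply an explicit surjectivity argument, which the paper leaves implicit, but the logical structure is identical.
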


Corollary \ref{crl:Existence-psi-I-halfline-fixed-by-AutG}
 is the analogue of Corollary \ref{crl:G=G(I;A,P)},
but with the compact interval $I$ replaced by a half line.
Groups $G(I;A, P)$ with $I$ a half line have, so far, been investigated less often
than groups with $I$ a compact interval;
they have, however, their own merits, 
in particular the following one:
to date,
finitely generated groups of the form $G(I;A,P)$ with $I$ compact 
are only known for very special choices of the parameters $(A, P)$.
\footnote{See \cite[p.\;vii]{BiSt14} for the list of the groups known at the end of 2014.}
By contrast,
finitely generated groups with $I$ a half line are far more common,
as is shown by the following characterization:
\begin{prp}[Theorem B8.2 in \cite{BiSt14}]
\label{prp:Characterization-G-fg-I-half-line}
The group $G([0,\infty[\,;A,P)$ is finitely generated if, and only if, 
the following conditions are satisfied:
\begin{enumerate}[(i)]
\item $P$ is finitely generated,
\item $A$ is a finitely generated $\Z[P]$-module, and 
\item $A/(IP \cdot A)$ is finite.
\end{enumerate}
\end{prp}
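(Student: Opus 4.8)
This statement is Theorem~B8.2 of \cite{BiSt14}, so one legitimate route is simply to cite it; I sketch instead how I would prove it directly, since the mechanism explains the role of the three conditions. The plan is to organise everything around the homomorphism $\rho\colon G\to\Aff(A,P)$. For $I=[0,\infty[$ the image of $\rho$ is the subgroup $\Aff(IP\cdot A,P)=(IP\cdot A)\rtimes P$ recorded in Section~\ref{ssec:lambda-rho}, and composing $\rho$ with the projection to $P$ yields the slope-at-infinity homomorphism $\sigma_r\colon G\epi P$. Since $\rho$ simultaneously controls $P$ and the submodule $IP\cdot A$, it is the right tool for both implications.

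For \emph{necessity}, assume $G$ is finitely generated. Then its quotient $\im\rho=(IP\cdot A)\rtimes P$ is finitely generated; by the standard criterion for finite generation of a split metabelian group $M\rtimes P$ (due to P.~Hall) this forces $P$ to be finitely generated, giving (i), and forces $IP\cdot A$ to be finitely generated as a $\Z[P]$-module. It then remains to produce (iii), from which (ii) follows: once $A/(IP\cdot A)$ is finite, $A$ is an extension of the finitely generated module $IP\cdot A$ by a finite module, hence itself finitely generated. The delicate point is therefore the finiteness of the $P$-coinvariants $A/(IP\cdot A)$, which I would extract from the geometry of a finite generating set $g_1,\dots,g_n$ by tracking the finitely many break points and germs they contribute near the ends of $[0,\infty[$, and showing that the points of $A\cap\,]0,\infty[$ they can reach are exhausted only when the coinvariants are finite.

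For \emph{sufficiency}, assume (i)--(iii). First, $\Z[P]$ is Noetherian, being a Laurent polynomial ring over $\Z$ by (i), so the submodule $IP\cdot A$ of the finitely generated module $A$ is again finitely generated; hence $\im\rho=(IP\cdot A)\rtimes P$ is finitely generated, and I may lift a finite generating set to elements $x_1,\dots,x_k\in G$. It then suffices to generate $\ker\rho$, the elements that are the identity near $\infty$, by finitely many further elements. The cleanest way is to feed the action of $G$ on the contractible complex of admissible subdivisions of $[0,\infty[$, with vertices labelled by finite subdivisions having break points in $A$ and slopes in $P$, into K.~Brown's finiteness criterion \cite{Bro87a}: conditions (i), (ii) and (iii) translate precisely into the finiteness of the numbers of $G$-orbits of vertices and of edges, with (iii) bounding the combinatorial types of elementary subdivision, and the criterion then delivers finite generation.

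The formal parts are the passage through $\rho$, which yields (i) together with the module finiteness in one stroke, and the Noetherian remark. The genuine difficulty is condition (iii) in both directions, and it is genuine precisely because $A$ is realised as a \emph{faithful} submodule of $\R_{\add}$: for abstract $\Z[P]$-modules $A/(IP\cdot A)$ can be infinite even when $A$ is finitely generated, so finiteness of the coinvariants here is a real constraint that must be coupled to the dynamics of $G$ at the ends of the interval. Controlling this coupling---the finite-orbit count on the subdivision complex for sufficiency, and the coinvariants extraction for necessity---is the heart of the matter, while the remaining bookkeeping is routine and is carried out in \cite{BiSt14}.
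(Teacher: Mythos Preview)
The paper gives no proof of this proposition; it is stated as Theorem~B8.2 of \cite{BiSt14} and simply quoted. There is thus nothing in the paper to compare your sketch against, as you yourself note in your first sentence.

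As for the sketch itself: the scaffolding is sound. The passage through $\rho$, the extraction of (i) and of the finite generation of $IP\cdot A$ as a $\Z[P]$-module from finite generation of $(IP\cdot A)\rtimes P$, the Noetherianity of $\Z[P]$ when $P$ is finitely generated (hence free abelian of finite rank), and the deduction of (ii) from (iii) together with the finite generation of $IP\cdot A$ are all correct. But both hard steps are left as gestures. For necessity of (iii), ``tracking the finitely many break points and germs near the ends'' does not identify a homomorphism or invariant of $G$ that actually detects $A/(IP\cdot A)$; neither $\sigma_\ell$ nor $\rho$ does, and you have not exhibited the needed map. For sufficiency, Brown's criterion is a legitimate route but heavier than finite generation alone requires, and ``conditions (i), (ii) and (iii) translate precisely into the finiteness of the numbers of $G$-orbits'' is asserted rather than argued: you have not said which complex you mean, nor why (iii) is what controls the orbit count near $0$. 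These are exactly the two places where the real work lies, and your sketch does not yet supply it.
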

%
\subsection{Construction of homomorphisms: part II}
\label{ssec:I-half-line-Construction-homomorphisms-II}
Theorem \ref{thm:Existence-psi-I-halfline-image-rho-non-abelian} is very pleasing:
it shows that the homomorphism $\sigma_r$ is fixed by all automorphisms
provided merely \emph{the image of $\rho \colon G \to \Aff(IP \cdot A, P)$ is not abelian}.
In this section, we discuss the remaining case.

The image of $G([0,\infty[\, ; A, P)$ under $\rho$ is the affine group 
\[
\Aff(IP \cdot A,P) \iso (IP \cdot A) \rtimes P
\]
(see section \ref{ssec:lambda-rho}).
This group is metabelian and contains two obvious kinds of abelian subgroups:
those made up of translations, corresponding to the subgroups of $IP \cdot A$,
and the subgroups consisting of homotheties $t \mapsto q\cdot t$
with ratio $q$ varying in a subgroup $Q$ of $P$.
We begin by discussing the second type of abelian subgroups.

\subsubsection{Image of $\rho$ is made up of homotheties}
\label{sssec:I-half-line-Groups-with-im-rho-homotheties}
%
Given a subgroup $Q$ of $P$ let $G_{Q}$ be the the subgroup of $G =G([0, \infty[\,;A,P)$ 
consisting of the products $f \circ g$ with $g \in B = B([0 ,\infty[\,;A,P)$ 
and $f$ a homothety $t \mapsto q \cdot t$ with $q \in Q$;
since $B$ is normal in $G$ the set so defined is actually a subgroup of $G$.
We do not know which of these subgroups $G_Q$ admit decreasing automorphisms,
but those with $Q$ cyclic have this peculiarity,
as can be seen from
\begin{lem}
\label{lem:I-half-line-Construction-decreasing-autos}
Assume $I$ is the half line $[0, \infty[$ and $Q$ is a cyclic subgroup of $P$.
Then the subgroup
\begin{equation}
\label{eq:Definition-subgroup-G-sub-Q}
G_Q = \{ f \circ g \mid f = (t \mapsto q \cdot t) \text{ with } q \in Q \text{ and } g \in B\}
\end{equation}
of the group $G([0,\infty[\,;A,P)$ does admit a decreasing automorphism.
\end{lem}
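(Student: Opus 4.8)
\emph{Strategy.} By Theorem \ref{thm:TheoremE16.4} a decreasing automorphism of $G_Q$ must be induced by conjugation by a decreasing homeomorphism $\varphi$ of $\Int(I)=\,]0,\infty[$, and by Remark \ref{remarks:varphi}a such a $\varphi$ cannot extend to $[0,\infty[$; in particular it cannot be eventually linear, so it is forced to have breakpoints accumulating at both $0$ and $\infty$. My plan is to write down such a $\varphi$ \emph{explicitly} as a piecewise linear, orientation-reversing homeomorphism and then to check that conjugation by it carries $G_Q$ onto itself. The obvious candidate $t\mapsto c/t$ has the right large-scale multiplicative symmetry but is useless here, since it is not piecewise linear and turns the linear pieces of an element of $G_Q$ into genuine M\"obius pieces. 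The construction must retain that multiplicative symmetry while forcing piecewise linearity.

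\emph{Construction of $\varphi$.} Since $B = B([0,\infty[\,;A,P)$ is normal in $G([0,\infty[\,;A,P)$, writing $\eta_0\colon t\mapsto q_0\,t$ for the homothety attached to a generator $q_0>1$ of $Q$ gives the semidirect decomposition $G_Q = B\rtimes\langle\eta_0\rangle$ (if $Q$ is trivial, take any $q_0\in P$ with $q_0>1$, which exists as $P\neq\{1\}$). I would fix $a_0\in A_{>0}$ and set $a_k = q_0^k a_0$; stability of $A$ under $P$ gives $a_k\in A$ for all $k\in\Z$. Define $\varphi$ on $[a_0,a_1]$ to be the decreasing affine map with $\varphi(a_0)=a_1$ and $\varphi(a_1)=a_0$, and extend it to $]0,\infty[$ by the rule $\varphi(q_0\,t)=q_0^{-1}\,\varphi(t)$. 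A short check shows that the pieces match at the points $a_k$, that $\varphi$ is a decreasing piecewise linear homeomorphism of $]0,\infty[$ with breakpoints exactly the $a_k\in A$ and slope $-q_0^{-2k}$ on $[a_k,a_{k+1}]$, and that $\varphi$ maps $A\cap\,]0,\infty[$ onto itself (on the central interval $\varphi(t)=(a_0+a_1)-t$ with $a_0+a_1\in A$, and the extension rule preserves $A$). By construction $\varphi\circ\eta_0\circ\varphi^{-1}=\eta_0^{-1}$.

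\emph{Conjugation preserves $G_Q$.} Let $\alpha$ be conjugation by $\varphi$. As a composite of piecewise linear maps is piecewise linear, $\alpha$ sends PL homeomorphisms to PL homeomorphisms; a slope computation gives that each slope of $\varphi\circ h\circ\varphi^{-1}$ equals $q_0^{2m}$ times a slope of $h$ for some $m\in\Z$, hence lies in $P$ and is positive, while all breakpoints lie in $A$ because $\varphi$, $\varphi^{-1}$ and $h$ each send $A$ to $A$ and have breakpoints in $A$. It then suffices to treat the two generators of $G_Q = B\rtimes\langle\eta_0\rangle$. For $\eta_0$ the anti-equivariance gives $\alpha(\eta_0)=\eta_0^{-1}\in G_Q$. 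For $g\in B$, which is the identity near both ends, the fact that $\varphi$ interchanges neighbourhoods of $0$ and of $\infty$ shows at once that $\varphi\circ g\circ\varphi^{-1}$ is again the identity near both ends, so $\alpha(g)\in B$; the same reasoning applied to $\varphi^{-1}$ yields $\alpha(B)=B$. Hence $\alpha(G_Q)=G_Q$, so $\alpha$ is an automorphism, and it is decreasing because $\varphi$ is, which is the claim. As a consistency check, $\alpha$ induces the isomorphism $\alpha_*\colon\im\sigma_\ell\iso\im\rho$ of Lemma \ref{lem:Consequence-decreasing-auto-I-half-line}, here $q\mapsto q^{-1}$.

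\emph{Main obstacle.} The crux will be to arrange three properties of $\varphi$ simultaneously: that it be piecewise linear with slopes in $P$ up to sign, that it preserve the dense set $A$, and that it be anti-equivariant for $\eta_0$. It is this last property that forces the homothety behaviour near the two ends of an element of $G_Q$ to go to homothety behaviour with ratio back in $Q$ (namely $q\mapsto q^{-1}$), rather than to a non-homothetic affine map that would eject the image from $G_Q$. Reconciling the anti-equivariance with piecewise linearity — exactly what the symmetric but non-PL model $t\mapsto c/t$ fails to achieve — is where the work lies.
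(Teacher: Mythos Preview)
Your proof is correct and follows essentially the same route as the paper's. Both arguments write down an explicit decreasing, infinitary PL-homeomorphism of $]0,\infty[$ built on the $q_0$-adic grid $\{q_0^k a_0\}_{k\in\Z}$ and then check that conjugation by it sends $B$ to $B$ and the homothety $\eta_0$ to $\eta_0^{-1}$; the only cosmetic difference is the normalization (the paper interpolates $t_k\mapsto t_{-k}$, you interpolate $a_k\mapsto a_{1-k}$, yielding slopes $-q_0^{-2k-1}$ versus your $-q_0^{-2k}$), and you additionally cover the degenerate case $Q=\{1\}$.
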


\begin{proof}
Let $q_0$ be the generator of $Q$ with $q_0>1$ 
and choose a positive element $a_0 \in IP \cdot A$.
For each $k \in \Z$ set $t_k = q^k \cdot a_0$ 
and define $\varphi \colon \,]0, \infty[\; \iso \;]0, \infty[$ 
to be the affine interpolation of the assignment $(t_k \mapsto t_{-k})_{k \in \Z}$.
Then $\varphi$ is an infinitary PL-auto-homeomorphism of $]0, \infty[$
whose interpolation points lie in $(IP \cdot A) \times  (IP \cdot A)$.
The slopes of the segments forming the graph of $\varphi$ are the negatives of powers of $q_0$;
indeed,
\begin{align*}
t_{k+1} - t_k 
&= 
q^{k+1}_0 \cdot a_0 -q^{k}_0 \cdot a_0 
=
(q_0-1) \cdot q_0^k \cdot a_0\\
\varphi(t_{k+1}) - \varphi(t_k) 
&=
(1/q_0)^{k+1} \cdot a_0 - (1/q_0)^k \cdot a_0 
= 
(1 - q_0) \cdot q_0^{-k-1} \cdot a_0
\end{align*}
and so $\varphi$ has  slope $(-1) \cdot q_0^{-2k - 1}$ on the interval $[t_{k}, t_{k+1}]$.

It follows that $\varphi$ maps $IP \cdot A$ onto itself.
Consider now a conjugate $\act{\varphi}{1}{ h} = \varphi \circ h \circ \varphi^{-1}$ 
of an element $h \in G_Q$.
If $h \in B(I;A,P)$, then $h$ has support contained in some interval of the form 
$I_{k(h)} = [t_{-k(h)}, t_{k(h)}]$ for some $k(h) > 0$ 
and so $\act{\varphi}{0}{ h}$ has support in $\varphi(I_{k(h)}) = I_{k(h)}$,
slopes in $P$, break points in $IP \cdot A$ and is thus an element of $B \subset G_Q$.
If, on the other hand, $h$ is the homothety with ratio $q_0$,
then $h(t_k) = t_{k+1}$ for each index $k \in \Z$ and its conjugate $\act{\varphi}{0}{ h}$
is the PL-function  with interpolation points $(t_k, t_{k-1})$, 
hence the homothety with centre 0 and ratio  $q_0^{-1}$ 
and thus $\act{\varphi}{0}{ h }= h^{-1}$ lies in $G_Q$.
As $G_Q$ is generated by $B \cup \{ (t \mapsto q_0 \cdot t)\}$,
the previous reasoning shows 
that the decreasing auto-homeomorphism $\varphi$ induces 
by conjugation an automorphism of $G_Q$ and so the lemma is established.
\end{proof}

\begin{remark}
\label{remark:Existence-psi-for-homotheties}
Assume $A$, $P$ and $Q$ are as in the statement of the lemma.
Then the bounded group $B =B([0,\infty[\,;A,P)$ may be perfect and hence simple;
cf.\;\cite[Section 12.4]{BiSt14}.
In such a case,
$B$ is the only normal subgroup $N$ of $G_Q$ with $G/N$  \emph{infinite abelian}
and so the lemma implies
that no homomorphism of $G_Q$ onto an infinite abelian group is fixed by all of $\Aut G_Q$.
Note, however, that $\rho$ is fixed by every increasing automorphism of $G_Q$.
\end{remark}

\subsubsection{Image of $\rho$ consists of translations}
\label{sssec:I-half-line-Groups-with-im-rho-translations}
We turn now to the other type of abelian subgroups of $\Aff(IP \cdot A, P)$,
but concentrate on a special case.
Given a subgroup $Q$ of $P$ 
and a subgroup $A_0 \subseteq IP \cdot A$, 
we set
\begin{equation}
\label{eq:Definition-G-sub-Q-A0}
G_{Q, A_0}  
= \left\{g \in G([0, \infty[\,;A,P) \mid \sigma_\ell(g) \in Q \text{ and }
\rho(g) \in A_0 \rtimes \{1\} \right\}.
\end{equation}
The group $G_{Q, A_0}$ is an extension of $B([0,\infty[\,;A,P)$ by the abelian group $Q \times A_0$.

The class of groups having the form  $G_{Q, A_0}$ is of interest for several reasons.
Firstly, if $Q$ and $A_0$ are \emph{not} isomorphic,
every automorphism of $G_{Q,A_0}$ is increasing 
by Lemma \ref{lem:Consequence-decreasing-auto-I-half-line}.
This case occurs frequently, as is brought home by the following kind of examples.
Suppose $Q$ is finitely generated and contains an integer $p > 1$,
while $A_0$ is a non-zero submodule of $IP \cdot A$.
Then $A_0$ is divisible by $p$ and, in particular, not free abelian. 

Some groups of the form $G_{Q, A_0}$ admit decreasing automorphisms, 
in particular the following ones.
Let $P$ be a cyclic group generated by the real number $p>1$,
let $A$ be a $\Z[P]$-submodule of $\R_{\add}$ 
and choose a positive element $b \in A$.
The group $\bar{G} = G([0,b]; A, P)$ admits decreasing automorphisms,
for instance the automorphism induced by conjugation by the reflection $\bar{\varphi}$ 
at the midpoint of $I = [0, b]$. 

Consider now the group 
$G = G_{P, \Z\cdot (p-1)b} \subset G([0, \infty[\, ; P, A)$.
It is isomorphic to $\bar{G}$;
there exists actually an isomorphism induced by an increasing, infinitary PL-homeomorphism 
$\varphi_b \colon [0, \infty[\, \iso [0, b[$  (see \cite[Lemma E18.2]{BiSt14}).
The composition $\varphi_b^{-1} \circ \bar{\varphi} \circ \varphi_b$
induces then by conjugation a decreasing automorphism of $G$.

Thirdly,
let  $\tau_r \colon G_{Q,A_0} \to \R_{\add}$ be the homomorphism
that maps the PL-ho\-meo\-morphism $g \in G_{Q,A_0}$ 
to the amplitude of the translation $\rho(g)$.
This homomorphism seems to have a good chance 
of being fixed by $\Aut_+ G_{Q, A_0}$, 
but this impression is mistaken.
Indeed, let $\Aut_P A_0$ be the set of elements $p \in P$ with $p \cdot A_0 = A_0$;
this set is a subgroup of $P$
and the semi-direct product $A_0\rtimes \Aut_P A_0$ is a subgroup of $(IP \cdot A) \rtimes P$;
let $\tilde{G} $ denote the preimage of $A_0\rtimes \Aut_P A_0$
under the epimorphism 
\[
\bar{\rho} \colon G([0, \infty[\,;A,P) \overset{\rho}{\epi} \Aff(IP \cdot A, P) \iso  (IP \cdot A) \rtimes P.
\]
Then $G_{Q,A_0}$ is a normal subgroup of $\tilde{G}$. 
The group $\tilde{G}$ contains the homothety $\vartheta_p \colon t \mapsto p \cdot t$ 
for every $p \in \Aut_P A_0$,
and so conjugation by such a homothety induces an automorphism  $\alpha_p$ of $G_{Q, A_0}$.
The calculation
\begin{align*}
(\tau_r \circ \alpha_p)(g) 
&= 
\tau_r(\vartheta_p \circ g \circ \vartheta_p^{-1})
=
(\vartheta_p \circ g \circ \vartheta_p^{-1})(t) - t\\
&=
\vartheta_p ( g ( p^{-1}t )) - t
=
p \cdot (p^{-1} t + \tau_r(g)) - t 
=
(p \cdot \tau_r)(g),
\end{align*}
valid for every sufficiently large real number $t$,
then shows  
that the formula 
\begin{equation}
\label{eq:Transformation-tau}
\tau_r \circ \alpha_p = p \cdot \tau_r
\end{equation}
holds  for each $p \in  \Aut_P A_0$. 
We conclude 
that $\tau_r$ can only be fixed by all of $\Aut_+ G_{Q,A_0}$ 
if $\Aut_P A_0$ is reduced to $1 \in \R^\times_{>0}$. 
This condition is fulfilled, for instance, if $A_0$ is infinite cyclic.

\begin{example}
\label{example:character-not-fixed}
Given a real number $p > 1$,
set $P = \gp(p)$ and $A = \Z[P] = \Z[p, p^{-1}]$.
Choose $A_0 = A$ and set $G = G_{P,A_0}$.
Then $\Aut_P A_0 = P$.
Concrete examples are rational integers $p \in \N \smallsetminus \{0, 1\}$,
with $A_0= \Z[1/p]$, or quadratic integers like $\sqrt{2} + 1$ with
$A_0 = A = \Z[\sqrt{2}\,]$. 
We shall come back to the second of these examples 
in section \ref{sssec:Group-of-units-elementary-examples}.
\end{example}
%
%
\section{Characters fixed by $\Aut G(\R;A,P)$}
\label{sec:Homomorphisms-fixed-by-Aut-I-line}
%
%
Let $I$ denote one of the intervals $[0,b]$, $[0, \infty[$ or $\R$,
 and let $G$ be a subgroup of $G(I;A,P)$ containing $B(I;A,P)$.
In Sections  
\ref{sec:Homomorphisms-fixed-by-Aut-I-compact}
and
\ref{sec:Homomorphisms-fixed-by-Aut-I-half-line}
groups with $I$ a compact interval or a half line have been studied.
In this section we now turn to the line $I = \R$.
Finding non-zero homomorphisms $\psi \colon G \to \R^\times_{>0}$ fixed by $\Aut G$,
is then harder than in the previously investigated cases,
and this for two reasons.
Firstly, 
subgroups of $G(\R;A,P)$ often admit decreasing automorphisms $\alpha$, 
in contrast to what happens if $I$ is a half line;
in the case of a decreasing automorphism,
$\lambda$ (or $\rho$) is only fixed by $\alpha$ if $\lambda$ coincides with $\rho$.
Secondly,
if the image of $\lambda$ or that of $\rho$ consists of translations,
neither $\lambda$ nor $\rho$ need be fixed by $\Aut_+ G$.

The plan of our investigation will be similar to that adopted 
in Section \ref{sec:Homomorphisms-fixed-by-Aut-I-half-line}.
We begin by discussing the existence of decreasing automorphisms
(in section \ref{ssec:I-line-Existence-decreasing-autos}),
move on to the main results about the existence of homomorphisms fixed by $\Aut_+ G$ or $\Aut G$
(in section \ref{ssec:I-line-Construction-homomorphisms-I})
and complement these results with more special findings in section 
\ref{ssec:I-line-Construction-homomorphisms-II}.
The layout of the middle section \ref{ssec:I-line-Construction-homomorphisms-I}
will resemble that of section \ref{ssec:Differentiability-criterion}.
%
\subsection{Existence of decreasing automorphisms}
\label{ssec:I-line-Existence-decreasing-autos}
%
As in the cases of a compact interval or a half line,
the existence of a decreasing automorphism has an easily stated consequence,
namely
\begin{lem}
\label{lem:Consequence-decreasing-auto-I-line}
Assume $G$  is a subgroup of $G(\R;A,P)$ that contains $B(\R;A,P)$.
Then every decreasing automorphism $\alpha$ induces an isomorphism 
$\alpha_* \colon \im \lambda \iso \im \rho $ 
that renders commutative the following square.
\begin{equation}
\label{eq:square-lambda-rho}
\xymatrix{G \ar@{->}[r]^-{\alpha} \ar@{->>}[d]^-{\lambda} & G \ar@{->>}[d]^-{\rho}\\
\im \lambda\ar@{->}[r]^-{\alpha_{*}} &\im \rho}
\end{equation}
\end{lem}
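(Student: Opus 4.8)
The plan is to mimic, essentially verbatim, the proofs of Lemmas \ref{lem:Consequence-decreasing-auto-I-compact} and \ref{lem:Consequence-decreasing-auto-I-half-line}, the only change being that the pair of ``ends'' of the interval is now $-\infty$ and $+\infty$. First I would record the descriptions of the two kernels: by the definitions of $\lambda$ and $\rho$ in \eqref{eq:Definition-lambda}, \eqref{eq:Definition-rho} (together with \eqref{eq:Reexpressing-B}), $\ker \lambda$ consists of the elements of $G$ that are the identity near $-\infty$, while $\ker \rho$ consists of those that are the identity near $+\infty$. Next, by Theorem \ref{thm:TheoremE16.4} the automorphism $\alpha$ is induced by conjugation by a unique auto-homeomorphism $\varphi_\alpha$ of $\Int(\R) = \R$; since $\alpha$ is decreasing, $\varphi_\alpha$ is a \emph{decreasing} self-homeomorphism of $\R$ and therefore interchanges the two ends, carrying neighbourhoods of $-\infty$ to neighbourhoods of $+\infty$ and vice versa.

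The heart of the argument is the verification that $\alpha(\ker \lambda) \subseteq \ker \rho$. Fix $g \in \ker \lambda$, so $g(t) = t$ for all $t$ below some threshold $t_0$. For $s$ sufficiently large $\varphi_\alpha^{-1}(s)$ lies below $t_0$ (because $\varphi_\alpha^{-1}$ is decreasing with $\varphi_\alpha^{-1}(+\infty) = -\infty$), whence $(\varphi_\alpha \circ g \circ \varphi_\alpha^{-1})(s) = \varphi_\alpha(\varphi_\alpha^{-1}(s)) = s$; thus $\alpha(g) = \varphi_\alpha \circ g \circ \varphi_\alpha^{-1}$ is the identity near $+\infty$, i.e. $\alpha(g) \in \ker \rho$. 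Applying the same reasoning to $\alpha^{-1}$, which is again decreasing, yields $\alpha^{-1}(\ker \rho) \subseteq \ker \lambda$, so the inclusion is in fact an equality $\alpha(\ker \lambda) = \ker \rho$.

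Finally, I would pass to quotients. Since $\lambda$ and $\rho$ are the canonical projections of $G$ onto $\im \lambda \cong G/\ker \lambda$ and $\im \rho \cong G/\ker \rho$, the equality $\alpha(\ker \lambda) = \ker \rho$ shows that $\alpha$ descends to an isomorphism $\alpha_* \colon \im \lambda \iso \im \rho$ satisfying $\alpha_* \circ \lambda = \rho \circ \alpha$, which is exactly the asserted commutativity of the square \eqref{eq:square-lambda-rho}. There is no genuine obstacle here: the argument is a direct transcription of the two earlier lemmas, and the only point demanding (minor) care is the observation that a decreasing self-homeomorphism of $\R$ swaps the roles of the two ends, which is precisely what forces $\ker \lambda$ and $\ker \rho$ to be interchanged rather than preserved.
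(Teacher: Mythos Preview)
Your proposal is correct and is precisely what the paper intends: its own proof simply says that the claim can be established as in the proofs of Lemmata \ref{lem:Consequence-decreasing-auto-I-compact} and \ref{lem:Consequence-decreasing-auto-I-half-line}, and you have faithfully carried out that transcription to the case $I=\R$.
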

\begin{proof}
The claim can be established as in the proofs of Lemmata
\ref{lem:Consequence-decreasing-auto-I-compact}
and
\ref{lem:Consequence-decreasing-auto-I-half-line}.
\end{proof}
The images of $\lambda$ and $\rho$ 
are both subgroups of the affine group $Q= \Aff_o(IP \cdot A, P)$.
It is easy to describe some pairs of subgroups $(Q_1, Q_2)$ 
that are \emph{not} isomorphic for obvious reasons,
for instance if one is abelian, and the other is non-abelian.
We are, however, not aware of a classification of the isomorphism types of subgroups of
$\Aff_o(IP \cdot A, P)$ for parameters $A \neq \{ 0\} $ and $P \neq \{1\}$.
%
\subsection{Construction of homomorphisms: part I}
\label{ssec:I-line-Construction-homomorphisms-I}
%
We turn now to the construction of homomorphisms 
that are fixed by $\Aut_+ G$ or by $\Aut G$.
The next result is an analogue of Corollary \ref{crl:I-compact-summary-differentiability}.
The main ingredient in its proof is Proposition \ref{prp:Affine-near-infty-I-half-line}.
\begin{prp}
\label{prp:I-line-affine-near-infty}
Let $G$ be a subgroup of $G(\R;A,P)$ containing $B(\R;A,P)$
and let $\alpha$ be an automorphism of $G$ 
that is induced by conjugation by the auto-homeo\-mor\-phism 
$\varphi_\alpha \colon \R \iso \R$.
Then the following statements hold: 
\begin{enumerate}[(i)]
\item if $\alpha$ is increasing
\footnote{See Definition \ref{definition:Increasing-isomorphism}.}  
and $\im \rho$ is not  abelian,
$\varphi_\alpha$ is affine near $\infty$; 
\item if $\alpha$ is increasing and $\im \lambda$ is not abelian,
$\varphi_\alpha$ is affine near $-\infty$; 
\item 
if $\alpha$ is decreasing and $\im \rho$ is not abelian,
$\tilde{\varphi}_\alpha$ is affine, both near $-\infty$ and near $\infty$.
\end{enumerate}
\end{prp}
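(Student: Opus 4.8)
The plan is to reduce all three assertions to Proposition \ref{prp:Affine-near-infty-I-half-line}, which already covers an \emph{increasing} isomorphism and produces affineness near $+\infty$ under the assumption that the image of $\rho$ on the \emph{source} group is non-abelian. The only new devices needed are the reflection $\vartheta \colon \R \iso \R$, $t \mapsto -t$, which interchanges the behaviour of a homeomorphism near the two ends of the line and hence the roles of $\lambda$ and $\rho$, together with Lemma \ref{lem:Consequence-decreasing-auto-I-line} for the decreasing case. (Recall that for $I = \R$ the interior is all of $\R$, so $\tilde\varphi_\alpha = \varphi_\alpha$.) Statement (i) is then immediate: taking $\bar G = G$ and $I = \R$ in Proposition \ref{prp:Affine-near-infty-I-half-line}, the hypotheses that $\varphi_\alpha$ is increasing and $\im\rho$ is non-abelian are exactly those assumed, so $\varphi_\alpha$ is affine near $+\infty$.

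For (ii) I would exploit the reflection symmetry. First I would record that conjugation by $\vartheta$ preserves both $G(\R;A,P)$ and $B(\R;A,P)$ — slopes in $P$ are unchanged, breakpoints stay in $A = -A$, and supports are merely reflected — so that $G_1 := \vartheta \circ G \circ \vartheta^{-1}$ is again an admissible group containing $B(\R;A,P)$. Since $\vartheta$ swaps the two ends of the line, the image of $\rho$ on $G_1$ is the $\vartheta$-conjugate of the image of $\lambda$ on $G$, hence non-abelian by hypothesis. The auto-homeomorphism $\varphi_1 := \vartheta \circ \varphi_\alpha \circ \vartheta^{-1}$ is \emph{increasing} (a decreasing map sandwiched between two copies of $\vartheta$) and induces an automorphism of $G_1$ by conjugation, so part (i) applied to $G_1$ shows $\varphi_1$ is affine near $+\infty$. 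As $\varphi_1(t) = -\varphi_\alpha(-t)$, this transports back to $\varphi_\alpha$ being affine near $-\infty$.

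Statement (iii) combines the two ideas. Because $\alpha$ is now decreasing, Lemma \ref{lem:Consequence-decreasing-auto-I-line} supplies an isomorphism $\im\lambda \iso \im\rho$, so the assumption that $\im\rho$ is non-abelian automatically forces $\im\lambda$ to be non-abelian as well. I would then straighten the decreasing map $\varphi_\alpha$ in two ways. The composite $\vartheta \circ \varphi_\alpha$ is increasing and induces an isomorphism of $G$ onto $G_1$ by conjugation; since $\im\rho$ on the source $G$ is non-abelian, Proposition \ref{prp:Affine-near-infty-I-half-line} makes it affine near $+\infty$, and as $(\vartheta\circ\varphi_\alpha)(t) = -\varphi_\alpha(t)$ this gives $\varphi_\alpha$ affine near $+\infty$. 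Symmetrically, $\varphi_\alpha \circ \vartheta$ is increasing and induces an isomorphism of $G_1$ onto $G$; its source is $G_1$, on which $\im\rho$ is the $\vartheta$-conjugate of $\im\lambda$ on $G$ and hence non-abelian, so the same proposition yields affineness near $+\infty$, and substituting $t \mapsto -t$ in $(\varphi_\alpha\circ\vartheta)(t) = \varphi_\alpha(-t)$ delivers affineness of $\varphi_\alpha$ near $-\infty$.

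Once Proposition \ref{prp:Affine-near-infty-I-half-line} is available the whole argument is essentially bookkeeping, and I expect the only real obstacle to be organisational rather than conceptual: one must track at each step how pre- and post-composition with $\vartheta$ affects monotonicity, verify that the straightened map genuinely induces an isomorphism between two groups \emph{both} containing $B(\R;A,P)$ (so that the proposition applies), and correctly transport the conclusion ``affine near $+\infty$'' for the auxiliary map back to affineness of $\varphi_\alpha$ at the intended end of the line. This is also the most likely source of sign errors, so I would state the identities $\varphi_1(t)=-\varphi_\alpha(-t)$, $(\vartheta\circ\varphi_\alpha)(t)=-\varphi_\alpha(t)$ and $(\varphi_\alpha\circ\vartheta)(t)=\varphi_\alpha(-t)$ explicitly and read off affineness from each.
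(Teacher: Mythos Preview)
Your approach is essentially identical to the paper's: part (i) is Proposition \ref{prp:Affine-near-infty-I-half-line} verbatim, part (ii) conjugates by the reflection $\vartheta$ to swap the roles of $\lambda$ and $\rho$ (the paper isolates the identity $\rho(\vartheta g \vartheta^{-1}) = \vartheta\,\lambda(g)\,\vartheta^{-1}$ as a separate lemma, but you assert the same thing), and part (iii) uses Lemma \ref{lem:Consequence-decreasing-auto-I-line} together with the one-sided compositions $\vartheta\circ\varphi_\alpha$ and $\varphi_\alpha\circ\vartheta$ exactly as the paper does. One small slip to fix: in your treatment of (ii) the parenthetical ``a decreasing map sandwiched between two copies of $\vartheta$'' misdescribes $\varphi_\alpha$, which is \emph{increasing} there; the conclusion that $\varphi_1$ is increasing is nonetheless correct, since $\vartheta\circ(\text{increasing})\circ\vartheta^{-1}$ is increasing.
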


\begin{proof}
(i) is a restatement of the claim of Proposition \ref{prp:Affine-near-infty-I-half-line}.
To establish (ii), we show that (ii) can be reduced to (i).
Let $\vartheta \colon \R \iso \R$ be the reflection in the origin 0, 
set $G_1 = \vartheta \circ G \circ  \vartheta^{-1}$ 
and $\varphi_1 =  \vartheta \circ \tilde{\varphi}_\alpha \circ \vartheta^{-1}$.
We claim that Proposition \ref{prp:Affine-near-infty-I-half-line}
applies to the couple $(G_1, \varphi_1)$.
Indeed,
the groups $G(\R;A,P)$ and  $B(\R;A,P)$ are invariant under conjugation by $\vartheta$
and so $G_1$ is a subgroup of $G(\R;A,P)$ containing $B(\R;A,P)$.
Next, Lemma \ref{lem:Formula-involving-lambda-rho-theta} below
shows that
\[
\rho(G_1) = \rho\left( \vartheta \circ G \circ \vartheta^{-1}\right) 
= \vartheta \circ \lambda(G) \circ \vartheta^{-1}.
\]
The group $\vartheta \circ \lambda(G) \circ \vartheta^{-1}$ is isomorphic to $\im \lambda$, 
which is non-abelian by hypothesis,
and so $\rho(G_1)$ is non-abelian.
Proposition \ref{prp:Affine-near-infty-I-half-line} thus applies to  $G_1$ and to $\varphi_1$
and implies 
that $\varphi_1 = \vartheta \circ \varphi_\alpha \circ \vartheta^{-1}$ is affine near $+\infty$,
whence $\varphi$ itself is affine near $-\infty$.

(iii) Since $\alpha$ is \emph{decreasing},
the groups $\im \lambda$ and $ \im \rho$ are isomorphic
(see Lemma \ref{lem:Consequence-decreasing-auto-I-line});
the hypothesis on $\im \rho$ implies therefore
that the image of $\lambda$ is not abelian.
The idea now is to reduce (iii) to the previously treated cases (i) and (ii).
As before,
let $\vartheta \colon \R \iso \R$ denote the reflection in the origin 0,
and set $\varphi_2 = \vartheta \circ \varphi_\alpha$.
Then $\varphi_2$ is increasing 
and conjugation by $\varphi_2$ maps $G$ onto $\bar{G} = \vartheta \circ G \circ \vartheta^{-1}$. 
Proposition \ref{prp:Affine-near-infty-I-half-line} thus applies 
and guarantees that $\varphi_2$ is affine near $\infty$.
But $\varphi_2 = \vartheta \circ \varphi_\alpha$ 
and so $\varphi_\alpha$ itself is affine near $\infty$. 
Consider, secondly, $\varphi_3 = \varphi_\alpha \circ \vartheta$.
This map is again increasing, and conjugation by it maps 
$\bar{G} = \vartheta \circ G \circ \vartheta^{-1}$ onto $G$.
Invoking Proposition \ref{prp:Affine-near-infty-I-half-line} once more,
we learn that $\varphi_3$ is affine near $+\infty$,
and so $\varphi_\alpha$ itself is affine near $-\infty$.
All taken together, we have shown that $\varphi_\alpha$ is affine, 
both near $-\infty$ and $+\infty$, as asserted by claim (iii).
\end{proof}

We are left with proving
\begin{lem}
\label{lem:Formula-involving-lambda-rho-theta}
Let $\vartheta \colon \R \iso \R $ denote the reflection in 0.
Then the formula
\begin{equation}
\label{eq:Formula-involving-lambda-rho-theta}
\rho\left( \vartheta \circ g \circ \vartheta^{-1}\right) 
= \vartheta \circ \lambda(g) \circ \vartheta^{-1}
\end{equation}
holds for every $g \in \PL_o(\R)$.
\end{lem}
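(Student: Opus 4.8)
The plan is to use the explicit form of the reflection, namely $\vartheta(t) = -t$, together with the fact that $\vartheta$ is an involution, so that $\vartheta^{-1} = \vartheta$. The guiding idea is that conjugation by $\vartheta$ interchanges the two ends of the line: the germ at $+\infty$ of the conjugate $\vartheta \circ g \circ \vartheta^{-1}$ is governed by the germ at $-\infty$ of $g$, and the latter is, by the definition \eqref{eq:Definition-lambda}, the affine map $\lambda(g)$.

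First I would fix $g \in \PL_o(\R)$ and recall that there is a real number $N$ so that $g$ coincides with the affine map $\lambda(g)$ on the half line $]-\infty, -N]$. The main computation is then to evaluate the conjugate on the opposite half line: for every $t \geq N$ one has $\vartheta^{-1}(t) = -t \leq -N$, whence
\begin{equation*}
(\vartheta \circ g \circ \vartheta^{-1})(t)
= \vartheta\bigl(g(-t)\bigr)
= \vartheta\bigl(\lambda(g)(-t)\bigr)
= (\vartheta \circ \lambda(g) \circ \vartheta^{-1})(t).
\end{equation*}
Thus $\vartheta \circ g \circ \vartheta^{-1}$ agrees with $\vartheta \circ \lambda(g) \circ \vartheta^{-1}$ on the half line $[N, \infty[$, that is, near $+\infty$.

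To conclude I would observe that $\vartheta \circ \lambda(g) \circ \vartheta^{-1}$ is itself an affine map, being a composition of the affine maps $\vartheta$, $\lambda(g)$ and $\vartheta^{-1}$ (explicitly, if $\lambda(g) \colon t \mapsto \sigma_\ell(g)\cdot t + c$, then $\vartheta \circ \lambda(g) \circ \vartheta^{-1} \colon t \mapsto \sigma_\ell(g) \cdot t - c$). Since by \eqref{eq:Definition-rho} the value $\rho(\vartheta \circ g \circ \vartheta^{-1})$ is the unique affine map coinciding with $\vartheta \circ g \circ \vartheta^{-1}$ near $+\infty$, the previous paragraph forces the equality $\rho(\vartheta \circ g \circ \vartheta^{-1}) = \vartheta \circ \lambda(g) \circ \vartheta^{-1}$, as asserted.

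There is no genuine obstacle here; the statement is a direct verification. The only point requiring a little care is to keep the bookkeeping of the two ends straight --- namely, that the half line $]-\infty, -N]$ on which $g$ is affine is carried by $\vartheta^{-1}$ precisely onto the half line $[N, \infty[$ on which we need to control the conjugate --- and to note that conjugating an affine map by the affine involution $\vartheta$ again yields an affine map, so that the germ computed above is legitimately the value of $\rho$.
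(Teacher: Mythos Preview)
Your proof is correct. It takes a somewhat different route from the paper's own argument: the paper observes that both sides of \eqref{eq:Formula-involving-lambda-rho-theta}, viewed as functions of $g$, are homomorphisms $\PL_o(\R) \to \Aff_o(\R)$ that vanish on $\ker\lambda$, and then checks equality on the complement $\Aff_o(\R)$, where it is immediate since $\lambda$ and $\rho$ both restrict to the identity on affine maps. Your approach is a direct pointwise verification on a half line $[N,\infty[$ together with the uniqueness of the affine germ at $+\infty$. The paper's argument is a touch more structural and avoids explicit coordinate computation; yours is more elementary and makes transparent exactly why conjugation by $\vartheta$ swaps the two ends. Both arrive at the same conclusion with comparable effort.
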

\begin{proof}
Let $\mu$ and $\nu$ denote the functions of $\PL_o(\R)$ into itself
given by the left hand and the right hand side of equation 
\eqref{eq:Formula-involving-lambda-rho-theta};
thus $\mu(g) = \rho(\vartheta \circ g \circ \vartheta^{-1})$ for $g \in PL_o(\R)$,
and similarly for $\nu$.
Both functions are homomorphisms of $\PL_o(\R)$ into $\Aff_o(\R)$
that vanish on $\ker \lambda$.
It suffices therefore to check equation \eqref{eq:Formula-involving-lambda-rho-theta}
on a complement of $\ker(\lambda \colon \PL_o(\R) \to \Aff(\R))$.
Such a complement is $\Aff_0(\R)$
and for affine maps $h$ the following calculation holds:
\[
\rho(\vartheta \circ h \circ \vartheta^{-1})
=
\vartheta \circ h \circ \vartheta^{-1}
=
\vartheta \circ  \lambda( h )\circ \vartheta^{-1}. \qedhere
\]
\end{proof}
\subsubsection{Some corollaries}
\label{sssec:Applications-Proposition-I-line-affine-near-infty}
%
The first corollary of Proposition \ref{prp:I-line-affine-near-infty}
deals with homomorphisms fixed by $\Aut_+$;
the corollary is an analogue of Theorem \ref{thm:Existence-psi-I-compact-increasing-autos}.
\begin{theorem}
\label{thm:Existence-psi-I-line-image-lambda-and-sigma-non-abelian-increasing}
Assume $G$ is a subgroup of $G(\R;A,P)$ that contains $B(\R;A,P)$.
If $\im \rho$ is not abelian 
then $\sigma_r$ is a non-zero homomorphism fixed by $\Aut_+ G$.
Similarly, 
$\sigma_\ell$ is a non-zero homomorphism fixed by $\Aut_+ G$ 
in case $\im \lambda$ is not abelian.
\end{theorem}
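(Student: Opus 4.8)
The plan is to mirror the argument already carried out for Theorem \ref{thm:Existence-psi-I-halfline-image-rho-non-abelian}; the only essential difference is that on the line $\R$ one can no longer force every automorphism to be increasing (decreasing automorphisms do occur here, unlike for a half line), which is exactly why the conclusion must be restricted to $\Aut_+ G$.

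First I would fix an increasing automorphism $\alpha \in \Aut_+ G$ and let $\varphi = \varphi_\alpha \colon \R \iso \R$ be the increasing auto-homeomorphism inducing it by conjugation (Theorem \ref{thm:TheoremE16.4}). Since $\im \rho$ is non-abelian by hypothesis and $\alpha$ is increasing, part (i) of Proposition \ref{prp:I-line-affine-near-infty} guarantees that $\varphi$ is affine on some half line $[t_*, \infty[$. In particular $\varphi'$ is a positive constant near $+\infty$, and the same holds for $(\varphi^{-1})'$. I would then compute $\sigma_r(\alpha(g))$ for $g \in G$ by the chain rule, exactly as in the half-line case: writing $\alpha(g) = \varphi \circ g \circ \varphi^{-1}$ and letting $t \to \infty$, the factors $\varphi'$ and $(\varphi^{-1})'$ are constant near $+\infty$ and cancel, while $g'(\varphi^{-1}(t)) \to \sigma_r(g)$ because $g$ is eventually affine of slope $\sigma_r(g)$. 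This yields $\sigma_r \circ \alpha = \sigma_r$, so $\sigma_r$ is fixed by every increasing automorphism.

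To see that $\sigma_r$ is non-zero I would use the same structural observation as before. The map $\sigma_r$ factors through $\rho$, and $\ker\sigma_r/\ker\rho$ corresponds to the translation subgroup of $\im\rho$, hence is abelian. Thus in the extension $\ker\sigma_r/\ker\rho \mono G/\ker\rho \epi G/\ker\sigma_r$ the middle term is $\im\rho$, which is non-abelian by hypothesis; were $\sigma_r$ the zero map we would have $G = \ker\sigma_r$ and $\im\rho \iso G/\ker\rho$ would be abelian, a contradiction. Hence $\sigma_r \neq 0$. The assertion about $\sigma_\ell$ is entirely symmetric: one replaces $\rho$ by $\lambda$ and $+\infty$ by $-\infty$, invoking part (ii) of Proposition \ref{prp:I-line-affine-near-infty} in place of part (i).

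As for the main obstacle, there is in fact no serious one at this stage: all the genuinely hard analytic work — showing that an increasing automorphism is induced by a homeomorphism that is truly \emph{affine} near the relevant end, rather than merely piecewise linear with breakpoints possibly accumulating at $\infty$ — has already been discharged in Proposition \ref{prp:I-line-affine-near-infty}, which itself rests on Proposition \ref{prp:Affine-near-infty-I-half-line}. The only point requiring care is conceptual rather than technical: the conclusion genuinely cannot be strengthened to all of $\Aut G$, because a decreasing automorphism swaps the behaviour near $-\infty$ and $+\infty$ and would therefore carry $\sigma_r$ to $\sigma_\ell$ rather than fix it. This is precisely why the statement is phrased for $\Aut_+ G$ and why the non-abelianness of $\im\rho$ (respectively $\im\lambda$) is invoked here only to trigger the affine-near-$\infty$ conclusion, not — as in the half-line Theorem \ref{thm:Existence-psi-I-halfline-image-rho-non-abelian} — to eliminate decreasing automorphisms altogether.
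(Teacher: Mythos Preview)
Your proposal is correct and follows essentially the same route as the paper: invoke part (i) of Proposition \ref{prp:I-line-affine-near-infty} to get $\varphi_\alpha$ affine near $+\infty$, apply the chain rule to obtain $\sigma_r\circ\alpha=\sigma_r$, argue non-triviality of $\sigma_r$ from the non-abelianness of $\im\rho$, and treat $\sigma_\ell$ symmetrically via part (ii). The paper's non-zero argument is the terse version of yours (it just notes that a non-abelian $\im\rho$ cannot consist solely of translations), and your added remarks on why the conclusion cannot extend to decreasing automorphisms are accurate context but not needed for the proof itself.
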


\begin{proof}
Let $\alpha$ be an increasing automorphism of $G$
and let $\varphi_\alpha$ be the increasing auto-homeomorphism of $\R$ 
inducing $\alpha$ by conjugation.
(The map exists thanks to Theorem \ref{thm:TheoremE16.4}.)
Assume first that $\im \rho$ is not abelian.
By part (i) of Proposition \ref{prp:I-line-affine-near-infty}
the map $\varphi_\alpha$ is then affine near $\infty$.
On the other hand,
the image of  $\rho$, being non-abelian,  
cannot consist merely of translations; 
so the homomorphism $\sigma_r \colon G \to P$ is non-zero.
The following calculation then reveals that $\sigma_r$ is fixed by $\alpha$:
\begin{align*}
\left(\sigma_r \circ \alpha\right)(g) 
&=
\sigma_r\left(\varphi_\alpha \circ g \circ \varphi_\alpha^{-1}\right)\\
&=
\lim\nolimits_{t \to \infty} 
\left(\varphi_\alpha \circ g \circ \varphi_\alpha^{-1}\right)'(t)\\
&=
\lim\nolimits_{t \to \infty}
\left( 
\varphi_\alpha'\left(g(\varphi_\alpha^{-1}(t))\right) \cdot g'(\varphi_\alpha^{-1}(t)) \cdot (\varphi_\alpha^{-1})'(t)
\right)\\
&=
\lim\nolimits_{t \to \infty}g'(\varphi_\alpha^{-1}(t))
= 
\sigma_r(g).
\end{align*}
In this calculation the facts
that the derivatives of $ \varphi_\alpha$ and of $g$ 
are constant on a half line of the form $[t_*, \infty[$
and that $\varphi_\alpha$ is an increasing homeomorphism,
have been used.

Assume next that $\im \lambda$ is not abelian.
By part (ii) of Proposition \ref{prp:I-line-affine-near-infty}
the map $\varphi_\alpha$ is then affine near $-\infty$.
and the homomorphism $\sigma_\ell \colon G \to P$ is non-zero.
Since the derivatives of every element $g \in G$ and of $\varphi_\alpha$ 
are constant near $-\infty$, 
a calculation similar to the preceding one will show 
that $\lambda$ is fixed by $\alpha$.
\end{proof}

As a second application of Proposition \ref{prp:I-line-affine-near-infty},
we present a result that furnishes a homomorphism $\psi$
that is fixed by every automorphism.
Note, however, that the hypotheses of the result do not imply
that $\psi$ is non-trivial.
\begin{theorem}
\label{thm:Existence-psi-I-line-image-rho-non-abelian}
Assume $G$ is a subgroup of $G(\R;A,P)$ containing $B(\R;A,P)$
and let $\psi \colon G \to P$ be the homomorphism $g \mapsto \sigma_\ell(g) \cdot \sigma_r(g)$.
If the images of $\lambda$ and of $\rho$ are both non-abelian,
the homomorphism $\psi\colon G \to P$ is fixed by $\Aut G$.
\end{theorem}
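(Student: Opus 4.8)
The plan is to fix an automorphism $\alpha$ of $G$, let $\varphi_\alpha \colon \R \iso \R$ be the auto-homeomorphism inducing it by conjugation (it exists by Theorem \ref{thm:TheoremE16.4}, and for $I = \R$ it is already defined on all of $\R$, so the extension issue of Remark \ref{remarks:varphi}a does not arise), and then to split into the increasing and the decreasing case according to Definition \ref{definition:Increasing-isomorphism}. The overall shape of the argument mirrors the proof of Theorem \ref{thm:Existence-psi-I-compact-all-autos} for the compact interval.

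The increasing case is immediate. Since $\im \rho$ is non-abelian, Theorem \ref{thm:Existence-psi-I-line-image-lambda-and-sigma-non-abelian-increasing} shows that $\sigma_r$ is fixed by $\alpha$; since $\im \lambda$ is non-abelian, the same theorem shows that $\sigma_\ell$ is fixed by $\alpha$. Hence the product $\psi = \sigma_\ell \cdot \sigma_r$ is fixed by every increasing automorphism, that is, by $\Aut_+ G$.

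The work lies in the decreasing case. Here I would first appeal to part (iii) of Proposition \ref{prp:I-line-affine-near-infty}: as $\im \rho$ is non-abelian (whence so is $\im \lambda$, by Lemma \ref{lem:Consequence-decreasing-auto-I-line}), the inducing homeomorphism $\varphi_\alpha$ is affine both near $-\infty$ and near $+\infty$. Because $\varphi_\alpha$ is \emph{decreasing}, it interchanges the two ends of the line, so I expect the computation to swap the roles of $\sigma_\ell$ and $\sigma_r$, exactly as the passage \eqref{eq:Calculation-alpha-decreasing} does in the proof of Proposition \ref{prp:Importance-of-differentiability-I-compact}. Concretely, I would compute $\sigma_r(\alpha(g)) = \lim_{t \to +\infty}(\varphi_\alpha \circ g \circ \varphi_\alpha^{-1})'(t)$ by the chain rule; since $\varphi_\alpha^{-1}(t) \to -\infty$ as $t \to +\infty$ and the derivatives of $\varphi_\alpha$ and of $\varphi_\alpha^{-1}$ are constant near the relevant ends, the slopes of $\varphi_\alpha$ and $\varphi_\alpha^{-1}$ cancel and one is left with $\lim_{s \to -\infty} g'(s) = \sigma_\ell(g)$. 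Thus $\sigma_r \circ \alpha = \sigma_\ell$, and a symmetric computation at $-\infty$ gives $\sigma_\ell \circ \alpha = \sigma_r$.

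Granting these two relations, the conclusion is a one-line computation: $(\psi \circ \alpha)(g) = \sigma_\ell(\alpha(g)) \cdot \sigma_r(\alpha(g)) = \sigma_r(g) \cdot \sigma_\ell(g) = \psi(g)$, so $\psi$ is fixed by every decreasing automorphism as well, and combining the two cases shows that $\psi$ is fixed by all of $\Aut G$. The main obstacle is therefore not the final algebra but securing the affineness of $\varphi_\alpha$ near both infinities in the decreasing case; this is precisely what Proposition \ref{prp:I-line-affine-near-infty}(iii) supplies, and the only remaining care is in tracking the cancellation of the end-slopes of $\varphi_\alpha$ and $\varphi_\alpha^{-1}$ at opposite ends under an end-swapping map.
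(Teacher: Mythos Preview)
Your proposal is correct and follows essentially the same route as the paper: split into increasing and decreasing automorphisms, dispose of the increasing case via Theorem~\ref{thm:Existence-psi-I-line-image-lambda-and-sigma-non-abelian-increasing}, and in the decreasing case invoke Proposition~\ref{prp:I-line-affine-near-infty}(iii) to secure affineness of $\varphi_\alpha$ near both ends, then use the chain rule to obtain the swap relations $\sigma_\ell \circ \alpha = \sigma_r$ and $\sigma_r \circ \alpha = \sigma_\ell$. Your remark about the cancellation of the end-slopes of $\varphi_\alpha$ and $\varphi_\alpha^{-1}$ at opposite ends is exactly the point that makes the computation go through.
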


\begin{proof}
Let $\alpha$ be an automorphism of $G$ 
and let $\varphi_\alpha$ be the auto-homeomorphism of $\R$ 
that induces $\alpha$ by conjugation.
If $\varphi_\alpha$ is \emph{increasing} both $\sigma_\ell$ and $\sigma_r$ are fixed by $\alpha$
(see Theorem \ref{thm:Existence-psi-I-line-image-lambda-and-sigma-non-abelian-increasing}) 
and hence so is $\psi$.

Assume now that $\alpha$ is \emph{decreasing}.
Part (iii) of Corollary \ref{crl:I-compact-summary-differentiability} then guarantees 
that $\varphi_\alpha$ is affine near $-\infty$ and also near $\infty$.
These facts imply the relations
\begin{equation}
\label{eq:Transformations}
\sigma_\ell \circ \alpha = \sigma_r
\quad\text{and}\quad 
\sigma_r \circ \alpha = \sigma_\ell
\end{equation} 
(see below) and so $\psi = \sigma_\ell \cdot \sigma_r$
is fixed by $\alpha$.

We are left with verifying relations \eqref{eq:Transformations}.
The following calculation uses the fact 
that both $\varphi_\alpha$ and $g$ have constant derivatives near $-\infty$ and $+\infty$:
\begin{align*}
\left(\sigma_\ell \circ \alpha\right)(g) 
&=
\sigma_\ell\left(\varphi_\alpha \circ g \circ \varphi_\alpha^{-1}\right)\\
&=
\lim\nolimits_{t \to -\infty} 
\left(\varphi_\alpha \circ g \circ \varphi_\alpha^{-1}\right)'(t)\\
&=
\lim\nolimits_{t \to -\infty}
\left( 
\varphi_\alpha'\left(g(\varphi_\alpha^{-1}(t))\right) \cdot g'(\varphi_\alpha^{-1}(t)) \cdot (\varphi_\alpha^{-1})'(t)
\right)\\
&=
\lim\nolimits_{t \to -\infty}g'(\varphi_\alpha^{-1}(t))
= 
\sigma_r(g).
\end{align*}
A similar calculation establishes the second relation in \eqref{eq:Transformations}.
\end{proof}

We continue with an easy consequence of Theorem 
\ref{thm:Existence-psi-I-line-image-rho-non-abelian}.
If the group $G$ is all of $G(I;A,P)$ the homomorphism 
$\psi \colon g \mapsto \sigma_\ell(g) \cdot \sigma_r(g)$ is surjective; 
in addition,
$\im \lambda$ and $\im \rho$ both coincide with $\Aff(A,P)$ 
and thus are non-abelian.
Theorem \ref{thm:Existence-psi-I-line-image-rho-non-abelian} 
implies therefore
\begin{crl}
\label{crl:G=G(R;A,P)-I-line}
If $G =G(\R;A,P)$ the homomorphism $\psi \colon G \to P$, 
taking $g \in G$ to $\sigma_\ell(g) \cdot \sigma_r(g)$, 
is non-zero and fixed by $\Aut G$.  
\end{crl}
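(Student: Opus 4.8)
The plan is to read the statement off as a direct corollary of Theorem \ref{thm:Existence-psi-I-line-image-rho-non-abelian}, whose only hypothesis---that both $\im \lambda$ and $\im \rho$ be non-abelian---becomes automatic once $G$ exhausts all of $G(\R;A,P)$. So the work splits into two elementary verifications: that the hypothesis of that theorem holds here, and that the resulting $\psi$ is non-zero.

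First I would recall from the discussion of $\lambda$ and $\rho$ in section \ref{ssec:lambda-rho} that, for $I = \R$, both homomorphisms surject onto the full affine group, so that $\im \lambda = \im \rho = \Aff(A,P) \iso A \rtimes P$. The next step is to confirm that this group is genuinely non-abelian. By the standing assumptions (Remark \ref{remarks:G(I;A,P)}b) neither $A$ nor $P$ is trivial, so we may pick $a \in A$ with $a \neq 0$ and $p \in P$ with $p \neq 1$. Writing $h \colon t \mapsto p\cdot t$ for the corresponding homothety and $\tau \colon t \mapsto t + a$ for the translation, one computes $(h \circ \tau)(t) = p\cdot t + p\cdot a$ whereas $(\tau \circ h)(t) = p \cdot t + a$; since $p \cdot a \neq a$ these differ, so $\Aff(A,P)$ is non-abelian. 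Hence both $\im \lambda$ and $\im \rho$ are non-abelian, and Theorem \ref{thm:Existence-psi-I-line-image-rho-non-abelian} applies verbatim to give that $\psi \colon g \mapsto \sigma_\ell(g)\cdot \sigma_r(g)$ is fixed by $\Aut G$.

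It then remains only to verify that $\psi$ is non-zero, which I would do by exhibiting surjectivity onto $P$. Given an arbitrary $p \in P$, the group $G(\R;A,P)$ contains a PL-homeomorphism $g$ that is the identity near $-\infty$ and has slope $p$ near $+\infty$: one interpolates affinely between the two ends, placing the finitely many break points in the dense subgroup $A$ and keeping all slopes in $P$. For such a $g$ one has $\sigma_\ell(g) = 1$ and $\sigma_r(g) = p$, whence $\psi(g) = p$. Since $P \neq \{1\}$, this shows $\psi \neq 0$, completing the proof.

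I do not anticipate any real obstacle. All of the substantive content is absorbed into Theorem \ref{thm:Existence-psi-I-line-image-rho-non-abelian}, and the two facts left to check---that $\Aff(A,P)$ is non-abelian and that $\psi$ surjects onto $P$---are immediate from the non-triviality of $A$ and $P$. The only point demanding a moment's care is making the surjectivity argument explicit, since the construction of $g$ tacitly uses that $A$ is dense and $P$-stable; both are guaranteed by the hypotheses on $A$ and $P$. One could close by noting that, via the observation in section \ref{ssec:Crucial-fact}, this fixed non-zero homomorphism forces $G(\R;A,P)$ to have property $R_\infty$.
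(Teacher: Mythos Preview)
Your proposal is correct and follows essentially the same route as the paper: observe that for $G = G(\R;A,P)$ both $\im\lambda$ and $\im\rho$ equal $\Aff(A,P)$, which is non-abelian, invoke Theorem~\ref{thm:Existence-psi-I-line-image-rho-non-abelian}, and note that $\psi$ is surjective. You supply more detail than the paper does---the explicit commutator witnessing non-abelianness and the construction of an element hitting a given $p \in P$---but the argument is the same.
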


Corollary \ref{crl:G=G(R;A,P)-I-line}
 is an analogue of Corollaries \ref{crl:G=G(I;A,P)} and  \ref{crl:Existence-psi-I-halfline-fixed-by-AutG}.
Groups of the form $G(\R;A, P)$ have been investigated, so far, less often
than groups with $I$ a compact interval;
they have, however, their own merits if it comes to finite generation.
There exists, first of all, a characterization of the finitely generated groups of the form $G(\R;A, P)$,
namely
\begin{prp}[Theorem B7.1 in \cite{BiSt14}]
\label{prp:Characterization-G-fg-I-line}
The group $G(\R;A,P)$ is finitely generated if, and only if, 
$P$ is finitely generated and $A$ is a finitely generated $\Z[P]$-module.
\end{prp}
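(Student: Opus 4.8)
The plan is to prove the two implications separately: from finite generation of $G = G(\R;A,P)$ I would extract finite generation of a metabelian quotient, and conversely I would reconstruct $G$ from its affine subgroup together with finitely many compactly supported ``seeds''. Throughout write $B = B(\R;A,P)$, and recall from Section~\ref{ssec:lambda-rho} that for $I = \R$ the homomorphism $\rho \colon G(\R;A,P) \to \Aff(A,P)$ is surjective, where $\Aff(A,P) \cong A \rtimes P$ with $P$ acting on $A$ by multiplication.

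For the necessity direction, suppose $G$ is finitely generated. A quotient of a finitely generated group is finitely generated, and $\rho$ is onto, so $A \rtimes P$ is finitely generated. I would then isolate the following standard module-theoretic fact as a lemma: \emph{for a $\Z[P]$-module $M$, the semidirect product $M \rtimes P$ is finitely generated as a group if and only if $P$ is finitely generated and $M$ is finitely generated as a $\Z[P]$-module}. Applied to $M = A$ this yields both asserted conditions. The lemma is routine: the projection $M \rtimes P \epi P$ shows $P$ is finitely generated, and if the pairs $(m_1,q_1),\dots,(m_k,q_k)$ generate $M \rtimes P$, then the normal closure of $\{m_i\}$ inside $M \rtimes P$ is precisely the $\Z[P]$-submodule they span and must exhaust $M$, so the $m_i$ generate $A$ over $\Z[P]$.

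For sufficiency, assume $P$ is finitely generated and $A$ is a finitely generated $\Z[P]$-module. The key structural observation is that the global affine maps $t \mapsto pt + a$ with $p \in P$ and $a \in A$ form a subgroup of $G$ isomorphic to $\Aff(A,P) \cong A \rtimes P$, which by the lemma is itself finitely generated by a finite set $S_0$. Since $(\lambda,\rho)$ has kernel $B$ and image $G/B$, and since both germs can be prescribed independently by an interpolation argument, one has $G/B \cong \Aff(A,P) \times \Aff(A,P)$, again finitely generated; I would pick finitely many $g_1,\dots,g_m \in G$ whose images generate $G/B$. It then suffices to show that $B$ is generated by the $\langle S_0, g_1,\dots,g_m\rangle$-conjugates of a suitable finite subset $F \subseteq B$, for then $S_0 \cup \{g_1,\dots,g_m\} \cup F$ generates $G$.

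This last claim about $B$ is the crux, and the step I expect to be the main obstacle. Here I would exploit two features recalled in Section~\ref{sec:Preliminaries-autos-G(I;A,P)}: that $A$ is dense in $\R$ and that $G$ acts approximately highly transitively on $\R$. Every element of $B$ is supported in a compact interval with endpoints in $A$ and has finitely many break points in $A$; decomposing it into finitely many ``one-bump'' pieces and using the translations and homotheties in $\Aff(A,P)$ to translate, rescale, and overlap their supports, one reduces to generating a fixed fundamental family of bump maps. Because $A$ is finitely generated over $\Z[P]$ and $P$ is finitely generated, the break-point data of these building blocks range over a set that is finite modulo the affine action, so finitely many seeds $F$ suffice. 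Making this reduction precise — controlling simultaneously the slopes in $P$, the break points in the $\Z[P]$-module $A$, and the combinatorics of the break pattern via an induction on the number of breaks — is the technical heart of the argument and is exactly where both finiteness hypotheses are consumed; this recovers Theorem~B7.1 of \cite{BiSt14}.
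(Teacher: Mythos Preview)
The paper does not supply its own proof of this proposition; it is quoted verbatim as Theorem~B7.1 of \cite{BiSt14} and left at that. There is therefore no in-paper argument to compare your proposal against, and it must be judged on its own terms.

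Your necessity direction is correct and complete. The sufficiency direction has the right architecture --- exhibit a finitely generated subgroup $H = \langle S_0, g_1,\dots,g_m\rangle$ mapping onto $G/B$, then show that $B$ is generated by $H$-conjugates of a finite set $F$ --- and your identification of $G/B$ with $\Aff(A,P)\times\Aff(A,P)$ via $(\lambda,\rho)$ is correct. But the step you yourself flag as ``the crux'' and ``the main obstacle'' is only a programme, not an argument. The sentence ``the break-point data of these building blocks range over a set that is finite modulo the affine action'' is not true as stated: a single one-bump element can carry arbitrarily many break points and arbitrarily many slopes from $P$, so no finite family of seeds covers all bumps up to affine conjugacy. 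An induction on the number of breaks is genuinely required, and you have announced it without performing it. Closing the gap means producing an explicit factorisation of an arbitrary element of $B$ as a product of $H$-conjugates of a fixed finite family of elements of $B$, and making visible precisely where finite generation of $P$ and finite $\Z[P]$-generation of $A$ are each consumed in bounding that family. Your outline is sound and is presumably close in spirit to the proof in \cite{BiSt14}, but as written it is a plan rather than a proof.
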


\begin{remark}
\label{remark:Continuously-many-non-isomorphic-groups}
Proposition \ref{prp:Characterization-G-fg-I-line} implies
that \emph{there are continuously many, pairwise non-isomorphic, 
finitely generated groups of the form $G(\R;A,P)$}.

To prove this assertion, we recall the following result:
\emph{if two groups of the form $G(\R; A, P)$ and $G(\R;\bar{A}, \bar{P})$ are isomorphic 
and if $P$ is not cyclic, then $P = \bar{P}$}.
\footnote{see Theorem E17.1 in \cite{BiSt14}.}

It suffices therefore 
to find a collection of finitely generated, pairwise distinct, subgroups 
$\{P_j \mid j \in J \}$ of $\R^\times_{>0}$ with $J$ an index set having the cardinality of $\R$,
and  to set $A_j = \Z[P_j]$ for each $j \in J$.
Such a collection of subgroups can be obtained as follows:
one constructs first  a family of irrational, real numbers $\{x_j \mid j \in J \}$
such that the extended family 
$\{1\} \cup \{x_j \mid j \in J \}$ 
is linearly independent (over $\Q$)
and then sets $P_j = \exp(\gp(\{1, x_j\})$.
Then each group $P_j$ is free abelian of rank two, hence not cyclic, 
and for indices $j_1 \neq j_2$ the groups $P_{j_1}$ and $P_{j_2}$ are distinct.
\end{remark}
%
\subsection{Construction of homomorphisms: part II}
\label{ssec:I-line-Construction-homomorphisms-II}
In this final part of Section \ref{sec:Homomorphisms-fixed-by-Aut-I-line},
we consider subgroups $G$ of $G(\R;A,P)$, containing $B(\R;A,P)$, 
with $\im \lambda$ and $\im \rho$ both abelian.
\footnote{If exactly one of $\im \lambda$ and $\im \rho$ is abelian, 
the group does not admit a decreasing automorphism
(by Lemma \ref{lem:Consequence-decreasing-auto-I-line})
and so Theorem \ref{thm:Existence-psi-I-line-image-lambda-and-sigma-non-abelian-increasing}
yields a non-zero homomorphism fixed by $\Aut G$.}
The most interesting subcase seems to be that 
where the images of $\lambda$ and $\rho$ consists only of translations.
Then two homomorphisms $\tau_\ell$ and $\tau_r$ of $G$ into $\R_{\add}$  can be defined:
they associate to $g \in G$ the amplitudes of the translations $\lambda(g)$ and $\rho(g)$, respectively.
One sees, as in section \ref{sssec:I-half-line-Groups-with-im-rho-translations},
that neither of these homomorphisms need be fixed by $\Aut_+ G$.

An exception occurs if the image of $\rho$ or of $\lambda$ is \emph{infinite cyclic}.
Suppose, for instance, that $\im \rho$ is infinite cyclic,
and let $f \in G$ be an element that maps onto the positive generator, say $x_f$, of $\im \tau_r$.
Consider an increasing automorphism $\alpha$ of $G$ 
and let $\varphi_\alpha$ be the homeomorphism of $\R$ that induces $\alpha$ by conjugation.
Then $\tau_r(\alpha(f))$ generates $\im \tau_r$, too,
and so $\tau_r(\alpha(f)) = \pm x_f$. 
Near $+\infty$, the map $f$ is a translation with positive amplitude,
hence so is $\alpha(f) = \varphi_\alpha \circ f \circ \varphi_\alpha^{-1}$,
and so $\tau_r(\alpha(f)) > 0$.
Thus $\tau_r(f) = (\alpha \circ \tau_r)(f)$.
We conclude  that $\tau_r$ is fixed by $\alpha$.
An analogous argument shows 
that $\tau_\ell$ is fixed by every increasing automorphism of $G$.

All taken together we have thus established
\begin{prp}
\label{prp:Images-tau-ell-and-tau-r-cylic}
Let $G$ be a subgroup of $G(\R;A,P)$ containing $B(\R;A,P)$.
Assume that the images of $\lambda$ and $\rho$ contain only translations 
and that these images are infinite cyclic.
Then $\tau_\ell$ and $\tau_r$ are both non-zero homomorphisms that are fixed by $\Aut_+ G$.
\end{prp}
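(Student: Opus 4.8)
The plan is to handle $\tau_r$ in full and obtain $\tau_\ell$ by the symmetric argument, reducing each assertion to the triviality that the only automorphisms of an infinite cyclic group are the identity and inversion, and then excluding inversion by a monotonicity argument on $\varphi_\alpha$.

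First I would record that the hypotheses make $\tau_\ell$ and $\tau_r$ genuine homomorphisms $G \to \R_{\add}$: since $\im\lambda$ and $\im\rho$ consist of translations, the maps $g \mapsto \lambda(g)$ and $g \mapsto \rho(g)$ attach to each $g$ a translation, whose amplitude is by definition $\tau_\ell(g)$, respectively $\tau_r(g)$. Because $\im\tau_\ell$ and $\im\tau_r$ are infinite cyclic, both homomorphisms are non-zero, which settles the first assertion. I also note that $\ker\tau_r = \ker\rho$ and $\ker\tau_\ell = \ker\lambda$, and that $G/\ker\rho \cong \im\rho \cong \im\tau_r$ is infinite cyclic.

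Now I would fix an increasing automorphism $\alpha$ of $G$, induced by the increasing auto-homeomorphism $\varphi_\alpha$ of $\R$. Applying Corollary~\ref{crl:alpha-and-ker-lambda} with $\bar G = G$ shows that $\alpha$ carries $\ker\rho$ onto $\ker\rho$ (and $\ker\lambda$ onto $\ker\lambda$). Hence $\tau_r \circ \alpha$ has the same kernel and the same image as $\tau_r$, so both descend to isomorphisms of the infinite cyclic group $G/\ker\rho$ onto $\im\tau_r$; the two therefore differ by an automorphism of $\Z$, which gives $\tau_r \circ \alpha = \pm\tau_r$, and likewise $\tau_\ell \circ \alpha = \pm\tau_\ell$.

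The heart of the matter is to rule out the minus sign, and this is where I expect the only real work to lie. Choose $f \in G$ with $\tau_r(f)$ equal to the positive generator $x_f$ of $\im\tau_r$, so that $f(t) = t + x_f > t$ for all sufficiently large $t$. Since $\varphi_\alpha$ is an increasing homeomorphism of $\R$ and $\alpha(f) = \varphi_\alpha \circ f \circ \varphi_\alpha^{-1}$, for large $s$ the point $\varphi_\alpha^{-1}(s)$ is large, whence $f(\varphi_\alpha^{-1}(s)) > \varphi_\alpha^{-1}(s)$, and applying the increasing map $\varphi_\alpha$ yields $\alpha(f)(s) > s$. As $\rho(\alpha(f))$ is again a translation, this forces its amplitude $\tau_r(\alpha(f))$ to be positive, so $\tau_r(\alpha(f)) = +x_f = \tau_r(f)$; evaluating the relation $\tau_r \circ \alpha = \pm\tau_r$ on $f$ then fixes the sign as $+1$, and $\tau_r \circ \alpha = \tau_r$ follows. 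The argument for $\tau_\ell$ is identical once one observes that a translation of positive amplitude near $-\infty$ also moves points to the right there, a property again preserved by conjugation by the increasing map $\varphi_\alpha$; thus $\tau_\ell \circ \alpha = \tau_\ell$. Since $\alpha$ was an arbitrary increasing automorphism, both $\tau_\ell$ and $\tau_r$ are fixed by $\Aut_+ G$, as claimed.
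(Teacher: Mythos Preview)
Your proof is correct and follows essentially the same route as the paper: pick a preimage $f$ of the positive generator of $\im\tau_r$, use that $\alpha$ preserves $\ker\rho$ (you invoke Corollary~\ref{crl:alpha-and-ker-lambda} explicitly, the paper leaves this implicit) to see that $\tau_r(\alpha(f))$ must be $\pm x_f$, and then rule out the minus sign by the monotonicity argument that conjugation by the increasing $\varphi_\alpha$ preserves the property of moving points to the right near $+\infty$. Your write-up is in fact slightly more careful than the paper's in spelling out why $\alpha(f)(s) > s$ for large $s$.
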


\begin{example}
\label{example:images-tau-ell-and-tau-r-cyclic}
Suppose $P$ is an infinite cyclic group,
$A$ a (non-zero) $\Z[P]$-module and $b$ a positive element of $A$.
Set $\bar{G} = G([0,b];A,P)$.
Then there exists a homeomorphism $\vartheta \colon ] 0, b[\, \iso \R$ that induces, by conjugation,
an embedding 
\[
\mu \colon G([0,b];A,P) \mono G(\R; A, P)
\] 
whose image contains $B(\R;A, P)$.
\footnote{In special cases, for instance if $\bar{G}$ is Thompson's group $F$,
this fact is well-known (see, \eg{} \cite[Proposition 3.1.1]{BeBr05}); 
the general claim is established in \cite{BiSt14} (see Lemma E18.4).} 
Let $G$ denote the image of $\mu$.
The images of $\lambda\restriction{G}$ and $\rho \restriction{G}$
are both infinite cyclic and consist of translations.
The images of $\tau_\ell$ and $\tau_r$ are therefore infinite cyclic, too,
and so the previous lemma applies.

Let's now consider the special case where $P$ is generated by an integer  $n \geq 2$,
where $A = \Z[P] = \Z[1/n]$ and $b = 1$.
For a suitably chosen homeomorphism $\vartheta$ 
the image $G$ of $\mu$ consists then of all elements $g  \in G(\R;\Z[1/n], \gp(n))$
fulfilling the conditions
\begin{equation}
\label{eq:Describing-image-mu}
\sigma_\ell(g) = \sigma_r(g) = 1
\quad\text{and }\quad
\tau_\ell(g) \in \Z(n-1), \quad \tau_r(g) \in \Z(n-1);
\end{equation}
see \cite[Lemma E18.4]{BiSt14}.
This group $G$ is called $F_{n, \infty}$ in \cite[p.\;298]{BrGu98}.

By relaxing conditions \eqref{eq:Describing-image-mu} 
one obtains supergroups of $F_{n, \infty}$,
in particular the group called $F_n$ in \cite[p.\;298]{BrGu98} 
and defined by the requirements
\begin{equation}
\label{eq:Describing-image-mu-2}
\sigma_\ell(g) = \sigma_r(g) = 1
\quad\text{and}\quad
\tau_\ell(g) \in \Z, \;\; \tau_r(g) \in \Z, \;\; \tau_r(g) -  \tau_\ell(g) \in \Z(n-1);
\end{equation}
see \cite[Proposition 2.2.6]{BrGu98}.
Proposition \ref{prp:Images-tau-ell-and-tau-r-cylic} applies to the groups 
$F_{n, \infty}$, but also to the larger groups $F_n$.
Now, the groups $F_n$ and $F_{n, \infty}$ both admit decreasing automorphisms, 
in particular the automorphism induced by the reflection in the origin.
The homomorphisms $\tau_\ell$ and $\tau_r$ are therefore not fixed 
by the full automorphism group of the groups $F_{n, \infty}$ and $F_n$,
but the difference $\tau_r - \tau_\ell$ is a non-zero homomorphism,  with infinite cyclic image,
that enjoys this property.
\end{example}

%
%
\section{Characters fixed by $\Aut G$ with $G$ a subgroup of $\PL_o([0,b])$}
\label{sec:Generalization-GoKo10}
%
 In this section we prove Theorem  \ref{thm:Generalization-GoKo10}.
 For the convenience of the reader we restate this result here as
 \begin{theorem}
\label{thm:Generalization-GoKo10-bis}
Suppose $I = [0,b]$ is a compact interval of positive length
and $G$ is subgroup of $PL_o(I)$ 
that satisfies the following conditions: 
\begin{enumerate}[(i)]
\item no interior point of the interval $I = [0, b]$ is fixed by $G$;
\item the characters $\chi_\ell$ and $\chi_r$ are both non-zero;
\item the quotient group  $G/(\ker \chi_\ell \cdot \ker \chi_r)$ is a torsion group, and
\item at least one of the group of units $U(\im\chi_\ell)$ and  $U(\im \chi_r)$ is reduced to $\{1,-1\}$.  
\end{enumerate}
Then there exists a non-zero homomorphism $\psi \colon G \to \R^\times_{>0}$
that is fixed by every automorphism of $G$.
The group  $G$ has therefore property $R_\infty$.
\end{theorem}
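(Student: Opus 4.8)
The plan is to run the Gonçalves--Kochloukova strategy announced in the introduction: locate the rays making up the complement $\Sigma^1(G)^c$, use the $\Aut$-invariance of the BNS-invariant to see that $\Aut G$ merely permutes them, and then upgrade the invariance of this \emph{pair} of rays to the pointwise fixity of a single explicit character by exploiting hypothesis (iv) on the groups of units. Throughout, $\chi_\ell$ and $\chi_r$ denote the logarithmic slope characters at the two ends of $I=[0,b]$, so that $\sigma_\ell=\exp\circ\chi_\ell$ and $\sigma_r=\exp\circ\chi_r$, and $\Aut G$ acts on $V=\Hom(G,\R)$ by $\alpha^*\chi=\chi\circ\alpha^{-1}$.

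First I would establish the geometric input: under hypotheses (i)--(iii) the complement of the BNS-invariant is exactly $\Sigma^1(G)^c=\{[\chi_\ell],[\chi_r]\}$, two \emph{distinct} rays, with $\chi_\ell$ and $\chi_r$ linearly independent in $V$. Here (i) guarantees that the action on $\Int(I)$ has no global fixed point, so that both ends genuinely contribute; (ii) that the two endpoint characters are non-zero; and (iii) that no further rays occur, equivalently that the image of $(\chi_\ell,\chi_r)\colon G\to\R^2$ meets each coordinate axis in a subgroup of finite index. This is the step I expect to be the main obstacle, since it rests on the $\Sigma^1$-theory of PL-homeomorphism groups of an interval (see \cite{BNS, BiSt14}) rather than on a short computation; note that the rays may well have rank greater than $1$, which is precisely what forces the use of (iv). Once this is in place, the $\Aut$-invariance of $\Sigma^1(G)^c$ recalled in Section~\ref{sec:Intro} shows that every $\alpha$ either fixes both rays (call $\alpha$ \emph{increasing}, these forming a subgroup $\Aut_+ G$ of index at most $2$) or swaps them (call $\alpha$ \emph{decreasing}).

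The arithmetic heart is an elementary image-scaling observation: since $\alpha^{-1}$ is an automorphism, $\alpha^{-1}(G)=G$, whence $\im(\alpha^*\chi)=\chi(\alpha^{-1}(G))=\im\chi$ for every $\chi\in V$. Applied to an increasing $\alpha$, for which $\alpha^*\chi_\ell=s_\ell\chi_\ell$ with $s_\ell>0$, this gives $s_\ell\cdot\im\chi_\ell=\im\chi_\ell$, i.e. $s_\ell\in U(\im\chi_\ell)\cap\R^\times_{>0}$, and symmetrically $s_r\in U(\im\chi_r)\cap\R^\times_{>0}$; applied to a decreasing $\alpha$, for which $\alpha^*\chi_\ell=s\chi_r$, it gives $s\cdot\im\chi_r=\im\chi_\ell$, so that $\im\chi_\ell$ and $\im\chi_r$ are proportional. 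Assume now, as permitted by (iv) and after possibly interchanging the two ends, that $U(\im\chi_\ell)=\{1,-1\}$. If $G$ admits \emph{no} decreasing automorphism, then every $s_\ell$ lies in $\{1,-1\}\cap\R^\times_{>0}=\{1\}$, so $\chi_\ell$ is fixed by all of $\Aut G$ and $\psi=\sigma_\ell\colon G\to\R^\times_{>0}$ already does the job; it is non-zero by (ii).

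Finally I would treat the case in which a decreasing automorphism $\alpha_0$ exists. Then the proportionality of $\im\chi_\ell$ and $\im\chi_r$, together with the scaling-invariance $U(cB)=U(B)$ of the group of units, forces $U(\im\chi_r)=U(\im\chi_\ell)=\{1,-1\}$ as well; hence \emph{every} increasing automorphism fixes both $\chi_\ell$ and $\chi_r$. Writing $\alpha_0^*\chi_\ell=s\chi_r$ and using that $(\alpha_0^2)^*=(\alpha_0^*)^2$ is increasing and so fixes $\chi_\ell$, one gets $\alpha_0^*\chi_r=s^{-1}\chi_\ell$. The character $\chi_0=\chi_\ell+s\chi_r$ is then fixed by $\Aut_+ G$ (which fixes $\chi_\ell$ and $\chi_r$) and satisfies $\alpha_0^*\chi_0=s\chi_r+s\cdot s^{-1}\chi_\ell=\chi_0$; since every decreasing automorphism has the form $\gamma\circ\alpha_0$ with $\gamma$ increasing, $\chi_0$ is fixed by all of $\Aut G$, and it is non-zero because $\chi_\ell,\chi_r$ are independent. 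Thus $\psi=\exp\circ\chi_0\colon g\mapsto\sigma_\ell(g)\cdot\sigma_r(g)^{s}$ is a non-zero homomorphism $G\to\R^\times_{>0}$ fixed by $\Aut G$. In either case $\psi$ has infinite image, so by the observation in Section~\ref{ssec:Crucial-fact} it is constant on twisted conjugacy classes, and every automorphism of $G$ therefore has infinitely many of them; that is, $G$ has property $R_\infty$.
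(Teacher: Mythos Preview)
Your proposal is correct and follows essentially the same route as the paper: invoke Theorem~\ref{thm:Generalization-BNS} to obtain $\Sigma^1(G)^c=\{[\chi_\ell],[\chi_r]\}$, split according to whether $\Aut G$ fixes or swaps these two rays, and in each case use the unit-group hypothesis to pin down an explicit fixed character. Your fixed character $\chi_0=\chi_\ell+s\chi_r$ in the swapping case is the additive form of the paper's $\psi=\sigma_\ell\cdot(\sigma_\ell\circ\alpha_-)$, and your derivation of $\alpha_0^*\chi_r=s^{-1}\chi_\ell$ via $\alpha_0^2\in\Aut_+G$ makes the verification slightly cleaner than the paper's direct computation.
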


We explain next the layout of Section \ref{sec:Generalization-GoKo10}.
We begin by recalling the definition of the invariant $\Sigma^1$
and stating some basic results concerning it.
In section \ref{ssec:Proof-Theorem-Generalization-GoKo10-bis}, 
we prove Theorem \ref{thm:Generalization-GoKo10-bis}.
The hypotheses of the theorem allow of variations 
that deserve some comments.
This topic is taken care of in sections 
\ref{ssec:Discussion-hypotheses-Generalization-GoKo10-bis}
through
\ref{ssec:Subgroups-of-finite-index}. 
%
\subsection{Review of $\Sigma^1$}
\label{ssec:Review-Sigma1}
%
Given an infinite group $G$, 
consider the real vector space $\Hom(G,\R)$ 
made up of all homomorphisms $\chi \colon G \to \R_{\add}$  into the additive group of $\R$.
These homomorphisms will be referred to as \emph{characters}.
Two non-zero characters $\chi_1$ and $\chi_2$ are called equivalent,
if one is a positive real multiple of the other.
Geometrically speaking,
the associated equivalence classes are (open) rays emanating from the origin.
The space of all rays is denoted by $S(G)$ 
and called the \emph{character sphere} of $G$.
In case the abelianization $G_{\ab} = G/[G,G]$ of $G$ is finitely generated,
the vector space $\Hom(G,\R)$ is finite dimensional 
and carries a unique topology, induced by its norms;
the sphere $S(G)$ equipped with the quotient topology 
is then homeomorphic to the spheres in a Euclidean vector space of dimension 
$\dim_\Q H_1(G, \Q) = \dim_\Q (G_{\ab} \otimes \Q)$.

The invariant $\Sigma^1(G) $ is a subset of $S(G)$.
It admits several equivalent definitions;
in the sequel, we use the definition in terms of Cayley graphs.
\footnote{See, \eg{}Chapter C in \cite{Str13} for alternate definitions.}
Fix a generating set $\XX$ of $G$ 
and define $\Gamma = \Gamma(G, \XX)$ to be the associated Cayley graph of $G$.
This graph can be equipped with $G$-actions;
as we want to work with  \emph{left} $G$-actions 
we define the set of positive edges of the Cayley graph like this: 
\[
E_+(\Gamma) =  \{(g, g \cdot x) \in  G \times G \mid (g, x) \in G \times \XX \}.
\]

We move on to the \emph{definition of} $\Sigma^1(G)$.
Given a non-zero character $\chi$,
consider the submonoid $G_\chi = \{ g \in G \mid \chi(g) \geq 0\}$ of $G$
and define $\Gamma_\chi = \Gamma(G, \XX)_\chi$ 
to be the full subgraph of $\Gamma(G; \XX)$ with vertex set $G_\chi$.
Both the submonoid $G_\chi$ and the subgraph $\Gamma_\chi$ remain the same 
if $\chi$ is replaced by a positive multiple;
so these objects depend only on the ray $[\chi] = \R_{>0} \cdot \chi$ represented by $\chi$.
The Cayley graph $\Gamma$ is connected,
but its subgraph $\Gamma_\chi$ may not be so;
the invariant $\Sigma^1(G)$ records the rays for which the subgraph 
$\Gamma_\chi = \Gamma(G, \XX)_\chi$ \emph{is connected}. 
In symbols,
\begin{equation}
\label{eq:Definition-Sigma1-XX}
\Sigma^1(G, \XX) = \{ [\chi] \in S(G) \mid \Gamma(G, \XX)_\chi \text{ is connected} \}.
\end{equation}
One now faces the problem,
familiar from Homological Algebra, 
that the definition of $\Sigma^1(G, \XX)$ involves an arbitrary choice
and that one wants to construct an object that does not depend on this choice.

Suppose, first, that $G$ is \emph{finitely generated}
and let $\XX_f$ be a \emph{finite} generating set.
Then the subgraph $\Gamma(G, \XX_f)_\chi$ is connected if, and only if,
all the subgraphs  $\Gamma(G, \XX)_\chi$, with $\XX$ a generating set, are connected
(see, \eg{}\cite[Lemma C2.1]{Str13}) 
and so the following definition is licit:
\begin{definition}
\label{definition:Sigma1}
Let $G$ be a finitely generated group and $\XX_f$ a \emph{finite} generating set of $G$.
Then $\Sigma^1(G)$ is defined to be the subset
\begin{equation}
\label{eq:Definition-Sigma1-fg}
\{ [\chi] \in S(G) \mid \Gamma(G, \XX_f)_\chi \text{ is connected} \}.
\end{equation}
\end{definition}
The fact 
that the set  \eqref{eq:Definition-Sigma1-fg}
does not depend on the choice of the finite set $\XX_f$, 
allows one to select $\XX_f$ in accordance with the problem at hand;
see \cite[Sections A2.3a and A2.3b]{Str13} for some consequences of this fact.

Suppose now that $G$ is an arbitrary group. 
A useful subset of $S(G)$
can then be obtained by defining
\begin{equation}
\label{eq:Definition-Sigma1}
\Sigma^1(G) = \{ [\chi] \in S(G) \mid \Gamma(G, \XX)_\chi \text{ is connected for every generating set } \XX\}
\end{equation}
(cf.\,\cite[Definition C2.2]{Str13}).
If $G$ happens to be finitely generated, 
the sets \eqref{eq:Definition-Sigma1-fg} and \eqref{eq:Definition-Sigma1} are equal;
for an arbitrary group,
the set  $\Sigma^1(G) $ coincides with the invariant  $\Sigma(G)$ defined 
by Ken Brown in \cite[p.\,489]{Bro87b} \emph{up to a sign};
in other words,
\begin{equation}
\label{eq:Relating-Brown-Cayley-graph}
\Sigma(G) = - \Sigma^1(G).
\end{equation}
The sign in this formula is caused by the fact 
that Brown uses right actions on $\R$-trees,
whereas \emph{left}  actions are employed in our definition of $\Sigma^1$.

The subset $\Sigma^1(G)$ of $S(G)$ is traditionally called  the $\Sigma^1$-\emph{invariant}.
The epithet ``invariant'' is justified by a fact that we explain next.
Suppose $\alpha \colon G \iso \bar{G}$ is an isomorphism of groups.
Then $\alpha$ induces, first of all,  a linear isomorphism of vector spaces  
$\Hom(\alpha, \R)  \colon \Hom(\bar{G}, \R) \iso \Hom(G, \R)$,
and so an isomorphism of spheres 
\begin{equation}
\label{eq:Invariance-Sigma1-isos}
\alpha^* \colon S(\bar{G}) \iso S(G), \qquad [\bar{\chi}] \mapsto [\bar{\chi} \circ \alpha]
\end{equation}
This second isomorphism maps the subset 
$\Sigma^1(\bar{G}) \subseteq S(\bar{G})$ onto $\Sigma^1(G) \subseteq S(G)$
(section B1.2a in \cite{Str13} has more details).

In the sequel,
the special case where $\alpha$ is an \emph{automorphism} will be crucial.
The assignment 
\begin{equation}
\label{eq:Representation-autos}
\alpha \longmapsto (\alpha^{-1})^* \colon \Sigma^1(G) \iso \Sigma^1(G)
\end{equation}
defines a homomorphism of the automorphism group of $G$  
into the group of bijections of $\Sigma^1(G)$,
and hence also one into that of its complement $\Sigma^1(G)^c$.

\subsection{$\Sigma^1$ of subgroups of $\PL_o([0,b])$}
\label{ssec:Sigma1-subgroups-PL(compact-interval)}
Given a subgroup $G$ of $\PL_o([0,b])$, 
let $\sigma_\ell$ be the homomorphism 
that assigns to a function $g \in G$ the value of its (right) derivative in the \emph{left} end point $0$;
similarly, 
define  $\sigma_r \colon G \to \R^\times_{>0}$ to be the homomorphism given by the formula
$\sigma_r(g) = \lim\nolimits_{t \to b} g'(t)$.
The homomorphisms $\sigma_\ell$ and $\sigma_r$ generalize the maps with the same names 
studied in Section \ref{sec:Homomorphisms-fixed-by-Aut-I-compact}.
By composing them with the natural logarithm function,
one obtains characters of $G$,
namely
\begin{equation}
\label{eq:Definition-chi-ell-chi-r}
\chi_\ell = \ln \circ \;\sigma_\ell 
\quad\text{and}\quad 
\chi_r = \ln \circ \;\sigma_r.
\end{equation}
The invariant $\Sigma^1(G)^c$ turns out to consist of precisely two points,
represented by the characters $\chi_\ell$ and $\chi_r$,
provided $G$ satisfies certain restrictions.
The first of them rules out
that $G$ is a direct product of subgroups $G_1$, $G_2$ 
with supports in two disjoint open subintervals $I_1$, $I_2$,
and more general decompositions;
the second requires that $\chi_\ell$ and $\chi_r$ be non-zero
and hence represent points of $S(G)$;
the third condition is natural in the sense
that it holds for all groups of the form $G([0;b];A,P)$ investigated in Section 
\ref{sec:Homomorphisms-fixed-by-Aut-I-compact}.
\begin{theorem} 
\label{thm:Generalization-BNS} 
Let $I$ be a compact interval of positive length and  $G$ a subgroup of $\PL_o(I)$.
Assume the following requirements are satisfied:
\begin{enumerate}[(i)]
\item no interior point of $I$ is fixed by $G$; 
\item the characters $\chi_\ell$ and $\chi_r$ are both non-zero, and
\item the quotient group $G/(\ker \chi_\ell \cdot \ker \chi)$ is a torsion-group.
\end{enumerate}
Then $\Sigma^1(G)^c=\{[\chi_\ell],[\chi_r]\}$.
\end{theorem}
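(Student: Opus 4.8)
The plan is to establish the two inclusions $\{[\chi_\ell],[\chi_r]\}\subseteq\Sigma^1(G)^c$ and $\Sigma^1(G)^c\subseteq\{[\chi_\ell],[\chi_r]\}$ separately, exploiting throughout that the slopes at the two end points are recorded by the germ characters $\chi_\ell,\chi_r$ and that conjugation in $\PL_o(I)$ converts the fixed-point dynamics at an end point into an algebraic, ascending-HNN statement. Since the reflection $x\mapsto b-x$ of $I$ interchanges the two end points and hence swaps $\chi_\ell$ and $\chi_r$, I would arrange matters so that it suffices to treat the right end point $b$, the left end point following by symmetry. Hypothesis (ii) enters at the outset: it guarantees that $[\chi_\ell]$ and $[\chi_r]$ are genuine points of $S(G)$.

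For the inclusion $\{[\chi_\ell],[\chi_r]\}\subseteq\Sigma^1(G)^c$ I would treat $\chi_r$ and choose $t\in G$ with $\chi_r(t)>0$, so that $t$ has slope $>1$ at $b$ and $b$ is a \emph{repelling} fixed point of $t$. Conjugation by $t$ and by $t^{-1}$ expands, respectively contracts, the neighbourhoods of $b$ on which elements of $G$ are the identity, so that the nested subgroups of elements supported near $b$ form a strictly ascending conjugation chain — the algebraic signature of a repelling end — where hypothesis (i) supplies the elements genuinely living in the intervals that open up. This is exactly the configuration that drives the connectivity criterion to fail on the expanding ray, placing $[\chi_r]$ in $\Sigma^1(G)^c$; it is the same phenomenon that puts the unique complement point of $\mathrm{BS}(1,2)$ on its expanding side, and in the present generality it is delivered by the computations of \cite{BiSt14}.

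For the reverse inclusion I must show that every $[\chi]$ with $[\chi]\neq[\chi_\ell],[\chi_r]$ lies in $\Sigma^1(G)$, \ie that the Cayley subgraph of \eqref{eq:Definition-Sigma1-fg} is connected for a conveniently chosen finite generating set. Here hypothesis (iii) plays two roles. First, read as ``no non-zero character of $G$ vanishes on $\ker\chi_\ell\cdot\ker\chi_r$'', it excludes the would-be complement points coming from characters that ignore both germs — a purely interior or translation-type character would otherwise threaten to produce a third point of $\Sigma^1(G)^c$. Second, it guarantees that any surviving $\chi$ restricts non-trivially to at least one of $\ker\chi_\ell$, $\ker\chi_r$, thereby furnishing elements $g$ with $\chi(g)>0$ that are germ-trivial at an end point. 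Combining such elements with the transitivity of the subgroup $B$ of those elements of $G$ that are the identity near both end points — transitivity on $\Int(I)$ being exactly what hypothesis (i) secures — I would exhibit, for any two vertices of the subgraph, a connecting path that never leaves the half-space $\{g\mid\chi(g)\geq0\}$, by flowing both vertices into a bounded interior region in which $B$ reconnects them.

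I expect the reverse inclusion to be the main obstacle. The lower bound is essentially a local, one-end computation that reduces to the familiar ascending-HNN phenomenon, whereas the upper bound is a genuinely global connectivity statement that must hold for \emph{all} characters simultaneously. The delicate points are, on the one hand, to verify that hypothesis (iii) is precisely strong enough to bar every extra complement point, and on the other hand to carry out the compression uniformly: one must produce the flowing element $g$ with $\chi(g)>0$ whose dynamics push a chosen finite generating set into the interior, and then check that the reconnecting elements of $B$ can themselves be chosen with $\chi\geq0$. I would organise this part by first using (iii) to reduce to characters that are non-trivial on a germ-kernel, and then translating the connectivity criterion \eqref{eq:Definition-Sigma1-fg} into the existence of such a compressing element, whose construction is where the PL-dynamics and the minimality furnished by (i) enter.
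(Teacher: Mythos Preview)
The paper does not prove this theorem at all: in Remark~\ref{remarks:thm-Generalization-BNS}(a) it simply cites \cite[Theorem 1.1]{Str15} for the proof, noting that the argument there is based on the Cayley-graph definition of $\Sigma^1$. There is thus no proof in the paper to compare your proposal against.

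Your outline is nonetheless the expected shape of the argument and is broadly consistent with how the finitely generated special case is handled in \cite[Theorem~8.1]{BNS}. One point you should tighten: the theorem does \emph{not} assume $G$ finitely generated, so the relevant definition is \eqref{eq:Definition-Sigma1} (connectivity for every generating set), not the finite-generating-set version \eqref{eq:Definition-Sigma1-fg} that you invoke. This matters precisely in the reverse inclusion, where you propose to work with ``a conveniently chosen finite generating set''; in the infinitely generated case no such choice is available, and the connectivity must be checked uniformly. Your reading of hypothesis~(iii) is correct (a non-zero character cannot vanish on $\ker\chi_\ell\cdot\ker\chi_r$, hence restricts non-trivially to one of the germ kernels), and your identification of the reverse inclusion as the substantive part is accurate; but the compressing-and-reconnecting step you sketch is exactly where the work of \cite{Str15} lies, and your proposal does not yet supply it.
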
  

\begin{remarks}
\label{remarks:thm-Generalization-BNS}

a) 
Theorem \ref{thm:Generalization-BNS} 
generalizes Theorem 8.1 in \cite{BNS};
there $G$ is assumed to be finitely generated 
and condition (iii) is sharpened to  $G = \ker \chi_\ell \cdot \ker \chi_r$.
The theorem improves also on a result stated in \cite[Remark on p.\,502]{Bro87b}.
A proof of Theorem \ref{thm:Generalization-BNS}, 
based on the Cayley graph definition of $\Sigma^1(G)$,
can be found in \cite{Str15}; see Theorem 1.1.

b) 
We continue with a comment that seems overdue.
In \cite{BNS} an invariant $\Sigma_{G'}(G)$ is introduced for finitely generated groups $G$; 
in the sequel,
this invariant will be called $\Sigma^{BNS}(G)$.
It is defined in terms of a generation property 
that uses \emph{right} conjugation, 
while left action is employed in the definition of $\Sigma^1(G)$.
There is, however, a close connection between the two invariants:
if $G$ is finitely generated then 
\begin{equation}
\label{eq:Relating-BNS-Cayley-graph}
\Sigma^{BNS}(G) = - \Sigma^1(G),
\end{equation}
similar to the what happens for Brown's invariant $\Sigma(G)$;
see formula \eqref{eq:Relating-Brown-Cayley-graph}.

Now, PL-homeomorphism groups are examples of groups made up of permutations,
and for such a group $G$ the underlying set can be equipped with two familiar compositions.
Suppose the composition in the group $G$ is the one familiar to analysts 
(and used in this paper); 
to emphasize this fact call the group temporarily $G_{ana}$.
The assignment $g \mapsto g^{-1}$ defines then an anti-automorphism of $G_{ana}$
and hence an isomorphism $\iota \colon G_{ana} \iso G_{gt}$
onto the group obtained by equipping the set underlying $G_{ana}$ 
 with the composition defined by $f \circ g \colon t \mapsto f(t) \mapsto g(f(t))$
and preferred by many \emph{g}roup \emph{t}heorists.
The invariants of the groups $G_{ana}$ and $G_{gt}$ are then related by the formulae
\[
\Sigma^1(G_{ana}) = - \Sigma^1(G_{gt}) 
\quad \text{and} \quad
\Sigma^{BNS}(G_{ana}) = - \Sigma^{BNS}(G_{gt}).
\]
The analogous formula holds for the invariant $\Sigma$ studied in \cite{Bro87b}.

The two parts of the comment, taken together, 
lead to the following formulae for groups made up of bijections:
\begin{align}
\text{$G_{gt}$ arbitrary } &\Longrightarrow \Sigma(G_{gt})= \Sigma^1(G_{ana}),
\label{eq:Sigma-related-to Sigma1}\\
\text{$G_{gt}$ is finitely generated } &\Longrightarrow \Sigma^{BNS}(G_{gt}) = \Sigma^1(G_{ana}).
\label{eq:SigmaBNS-related-to Sigma1} 
\end{align}
\end{remarks}
%
\subsection{Proof of Theorem \ref{thm:Generalization-GoKo10-bis}}
\label{ssec:Proof-Theorem-Generalization-GoKo10-bis}
Let $I = [0, b]$ be an interval of positive length 
and $G$ a subgroup of $\PL_o(I)$
that satisfies hypotheses (i) through (iv) stated in Theorem 
\ref{thm:Generalization-GoKo10-bis}.
Hypotheses (i), (ii) and (iii) allow one to invoke Theorem \ref{thm:Generalization-BNS}
and so $\Sigma^1(G)^c = \{[\chi_\ell], [\chi_r] \}$.
In view of the remarks made at the end of section \ref{ssec:Review-Sigma1},
every automorphism $\alpha$ of $G$ will therefore permute the set $\{[\chi_\ell], [\chi_r] \}$.
Two cases now arise, depending on whether or not the automorphism group of $G$
acts by the identity on $\Sigma^1(G)^c$.

Suppose first that $\Aut G$ \emph{acts trivially on} $\Sigma^1(G)^c$.
By hypothesis (iv),  one of the characters $\chi_\ell$ and $\chi_r$ , 
say $\chi_\ell$, 
has an image $B$ with $U(B) = \{1,-1\}$.
We assert that $\chi_\ell$ is fixed by $\Aut G$.
Consider an automorphism $\alpha$ of $G$.
It fixes the ray $\R_{>0} \cdot \chi_\ell$ 
and so $\chi_\ell \circ \alpha = s \cdot \chi_\ell$ for some positive real $s$.
The relation $\chi_\ell \circ \alpha = s\cdot \chi_\ell$ implies next
that
\[
 \im \chi_\ell= \im (\chi_\ell \circ \alpha) = s \cdot \im \chi_\ell. 
 \]
 So $s$ is a positive element of $U(\im \chi_\ell) = \{1, -1\}$ 
 and thus $s= 1$.

 So far we have assumed that $U(\chi_\ell)$ equals $\{1, -1\}$;
 if  $U(\im \chi_r)$ is so, one proves in the same way 
 that $\chi_r$ is fixed by $\Aut G$.
 The homomorphism $\psi \colon G \to \R^\times_{>0}$ 
 can thus be chosen to be $\sigma_\ell$ if $U(\im \chi_\ell) = \{1, -1\}$
 and  to be $\sigma_r$ if $U(\im \chi_r)= \{1, -1\}$.
 \smallskip
 
 Assume now that $\Aut G$ \emph{interchanges the points} $[\chi_\ell]$ and $[\chi_r]$.
 Let $\Aut_+G$ denote the subgroup of $\Aut G$ that fixes these points
 and pick an automorphism, say $\alpha_-$, that interchanges them.
 Then $\chi_r \circ \alpha_-= s \cdot \chi_\ell$ for some positive real $s$
 and so $\im \chi_r = s \cdot \im \chi_\ell$. 
 This relation implies that $U(\im \chi_\ell) = U(\im \chi_r) = \{1,-1\}$. 
 
 We claim that the homomorphism 
 \[
 \psi = \sigma_\ell \cdot (\sigma_\ell \circ \alpha_-) = \sigma_\ell \cdot (s \cdot \sigma_r)
 \]
 is fixed by $\Aut G$.
 Two cases arise.
 If $\alpha \in \Aut_+ G$ 
 then $\sigma_\ell$  is fixed by $\alpha$ in view of the first part of the proof.
 Moreover, $\alpha' = \alpha_- \circ \alpha \circ (\alpha_-)^{-1} \in \Aut_+ G$ 
 and so the calculation 
 \[
 \psi \circ \alpha 
 = 
 (\sigma_\ell \circ \alpha) \cdot (\sigma_\ell \circ \alpha_-) \circ \alpha
 =
 \sigma_\ell \cdot (\sigma_\ell \circ \alpha') \circ \alpha_- 
 =
 \sigma_\ell \cdot (\sigma_\ell \circ \alpha_-)  
 =\psi
 \]
 holds.
 If $\alpha = \alpha_-$ then $\alpha_-^2 \in \Aut_+ G$ 
 and so 
 $
 \psi \circ \alpha_- = (\sigma_\ell \circ \alpha_-) \cdot (\sigma_\ell \circ \sigma_-^2) = \psi.
 $
It follows that $\psi$ is fixed by $\Aut_+ G \cup \{\alpha_-\}$ and hence by $\Aut G$.
%
\subsection{Discussion of the hypotheses of Theorem \ref{thm:Generalization-GoKo10-bis}}
\label{ssec:Discussion-hypotheses-Generalization-GoKo10-bis}
This section and the next two
contain various remarks on the hypotheses of Theorem 
 \ref{thm:Generalization-GoKo10-bis}.
%
\subsubsection{Irreducibility}
\label{ssec:Discussion-irreducibility}
Let $G$ be a subgroup of $\PL_o([0,b])$.
The union of the supports of the elements of $G$ is then an open subset of $I = [0,b]$,
and hence a union of disjoint intervals $J_k $ for $k$ running over some index set $K$.
For each $k \in K$ 
the assignment $g \mapsto g \restriction{J_k}$ defines an epimorphism $\pi_k$ 
 onto a quotient group $G_k$ and so $G$ itself is isomorphic to a subgroup of the cartesian product
 $\prod \{G_k \mid k \in K\}$; 
 more precisely,
 $G$ is a subdirect product of the quotient groups $G_k$.
 Hypothesis  (i) requires that $K$ be a singleton,
and so the group $G$ does not admit such obvious decompositions.
 This fact prompted the authors of \cite{BNS} to call a group $G$ \emph{irreducible}
 if  $\card(K) = 1$. 
 
 If the group $G$ is not irreducible it may be a direct product $G_1 \times G_2$ 
 with each factor $G_k$ an irreducible subgroup of $\PL_o(I_k)$ where $I_k$ is the closure of $J_k$.
 Then $\Sigma^1(G)^c$ can contain more than 2 points
 (for more details, see \cite[Section 4.1]{Str15}).
 %
 
\subsubsection{Non-triviality of the characters $\chi_\ell$ and $\chi_r$}
\label{ssec:Discussion-non-triviality}
%
In Theorem \ref{thm:Generalization-GoKo10-bis} 
the characters  $\chi_\ell$ and $\chi_r$ are assumed to be non-zero.
They represent therefore points of $S(G)$;
the remaining hypotheses and Theorem \ref{thm:Generalization-BNS}
then guarantee that $\Sigma^1(G)^c = \{[\chi_\ell], [\chi_r]\}$ 
and so every automorphism of $G$ must permute the points $[\chi_\ell]$ and $[\chi_r]$.

There exists a variant of Theorem \ref{thm:Generalization-GoKo10-bis}
in which only one of the characters, say $\chi_\ell$, is non-zero, 
the remaining hypotheses being as before.
Then $\Sigma^1(G)^c = \{[\chi_\ell]\}$ (see \cite[Theorem 1.1]{Str15})
and so the argument in the first part of the proof of Theorem \ref{thm:Generalization-GoKo10-bis}
applies and shows that  $\psi = \chi_\ell$ is fixed by every automorphism of $G$.

Note that hypothesis (iii) holds automatically if $\chi_\ell$ or $\chi_r$ vanishes.
\subsubsection{Almost independence of $\chi_\ell$ and $\chi_r$}
\label{ssec:Discussion-independence}
Among the assumptions of Theorem 8.1 in \cite{BNS}, 
a sharper form of hypothesis (iii) is listed,
namely $G = \ker \chi_\ell \cdot \ker \chi_r$;
in addition, $G$ is assumed to be finitely generated.
The authors of \cite{BNS} refer to his stronger condition 
by saying that ``$\chi_\ell$ and $\chi_r$ are independent''.
In what follows,
we exhibit various versions of this stronger requirement 
and explain then the reason 
that led the authors of \cite{BNS} to adopt the mentioned locution.

We start out with a general result.
\begin{lem}
\label{lem:Equivalence-conditions}
Let $\psi_1 \colon G  \epi H_1$ and $\psi_2 \colon G \epi H_2$ be epimorphisms of groups.
Then the following statements imply each other:
\begin{align*}
&\text{(i) }H_1 = \psi_1(\ker \psi_2), &\quad
&(\text{ii) }H_2 = \psi_2(\ker \psi_1),\\
&\text{(iii) }G = \ker \psi_1 \cdot \ker \psi_2, &\quad
&\text{(iv) } (\psi_1, \psi_2) \colon G \to H_1 \times H_2 \text{ is surjective}.
\end{align*}
\end{lem}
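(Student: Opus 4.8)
The plan is to treat this as an element-wise diagram chase, valid for arbitrary (possibly non-abelian) groups, and to organize the four conditions around statement (iii), which is the most symmetric. Writing $K_i = \ker \psi_i$ for brevity, I would first prove the equivalence (i)$\Leftrightarrow$(iii), then obtain (ii)$\Leftrightarrow$(iii) for free by the symmetry that interchanges the indices $1$ and $2$, and finally close the circle by proving (iii)$\Rightarrow$(iv) and (iv)$\Rightarrow$(i). Since (i), (ii), (iii) will already be known to be mutually equivalent, these last two implications suffice to fold (iv) into the same equivalence class.

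For the core step (iii)$\Rightarrow$(i): given $h \in H_1$, I lift it to $g \in G$ via the epimorphism $\psi_1$, write $g = k_1 k_2$ with $k_i \in K_i$ using (iii), and observe that $\psi_1(g) = \psi_1(k_1)\psi_1(k_2) = \psi_1(k_2)$ because $k_1 \in \ker \psi_1$; hence $h \in \psi_1(K_2)$. Conversely, for (i)$\Rightarrow$(iii): given $g \in G$, condition (i) produces $k_2 \in K_2$ with $\psi_1(g) = \psi_1(k_2)$, so $g k_2^{-1} \in \ker \psi_1 = K_1$, and therefore $g = (g k_2^{-1}) k_2 \in K_1 K_2$. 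The symmetry argument then needs one small observation: taking inverses shows $K_1 K_2 = (K_2 K_1)^{-1}$, and since $K_i$ are subgroups this set equals $K_2 K_1$, so statement (iii) is insensitive to the order of the factors; swapping $1 \leftrightarrow 2$ therefore carries (i)$\Leftrightarrow$(iii) into (ii)$\Leftrightarrow$(iii).

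It then remains to bring in (iv). For (iii)$\Rightarrow$(iv) I would use that (iii) already yields both (i) and (ii): given $(h_1,h_2) \in H_1 \times H_2$, choose $k_2 \in K_2$ with $\psi_1(k_2) = h_1$ and $k_1 \in K_1$ with $\psi_2(k_1) = h_2$, and set $g = k_1 k_2$; then $\psi_1(g) = \psi_1(k_2) = h_1$ and $\psi_2(g) = \psi_2(k_1) = h_2$, so $(\psi_1,\psi_2)$ is onto. For (iv)$\Rightarrow$(i), I would simply lift the pair $(h_1, 1_{H_2})$ to some $g \in G$; then $\psi_2(g) = 1$ forces $g \in K_2$, whence $h_1 = \psi_1(g) \in \psi_1(K_2)$. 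I do not expect any genuine obstacle here, since the statement is a routine fact about epimorphisms; the only points requiring care are keeping the chain of implications non-redundant and noting explicitly that $K_1 K_2$ is being used as a subset of $G$ rather than as a subgroup (it need not be one a priori, though it automatically is once it equals $G$), so that no normality or permutability hypothesis is silently invoked.
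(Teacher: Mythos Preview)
Your proof is correct and follows essentially the same route as the paper's: both organize the argument around statement~(iii), obtain (ii)$\Leftrightarrow$(iii) from (i)$\Leftrightarrow$(iii) by the index swap, and handle (iv) via the two implications (iii)$\Rightarrow$(iv) and (iv)$\Rightarrow$(i) exactly as you do. The only stylistic difference is that the paper packages (i)$\Leftrightarrow$(iii) into the single induced isomorphism $G/(\ker\psi_1\cdot\ker\psi_2)\cong H_1/\psi_1(\ker\psi_2)$ rather than chasing elements.

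Two small remarks. First, your inverse argument for $K_1K_2=K_2K_1$ is circular as written: $(K_2K_1)^{-1}=K_1K_2$, not $K_2K_1$, so you have not yet shown the two sets coincide. What you actually need is only that $G=K_1K_2$ implies $G=G^{-1}=(K_1K_2)^{-1}=K_2K_1$, which is immediate; alternatively, since the $K_i$ are kernels and hence normal, $K_1K_2=K_2K_1$ holds outright. Second, and relatedly, your closing caveat that $K_1K_2$ ``need not be a subgroup a priori'' is over-cautious here: the product of two normal subgroups is always a (normal) subgroup, and the paper's proof opens by noting precisely this.
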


\begin{proof}
Note first that the product $\ker \psi_1 \cdot \ker \psi_2$ is a normal subgroup of $G$. 
Next, $\psi_1$ maps $G$ onto $H_1$ and $\ker \psi_1 \cdot \ker \psi_2$ onto $\psi_1(\ker \psi_2)$
and induces thus an isomorphism
\begin{equation}
\label{eq:Equivalence-i-and-iii}
(\psi_1)_*\colon G/(\ker \psi_1 \cdot \ker \psi_2) \iso H_1/\psi_1(\ker \psi_2).
\end{equation}
It follows, in particular,
that statements (i) and (iii) are equivalent.
By exchanging the rôles of the indices 1 and 2, 
one sees that statements (ii) and (iii) are equivalent.

Assume now that statements (i) and (ii) hold 
and consider $(h_1, h_2) \in H_1 \times H_2$.
Since $\psi_1$ is surjective, $h_1$ has a preimage $g_1 \in G$;
as statement (i) holds, this preimage can actually be chosen in $\ker \psi_2$.
If this is done, one sees that $(\psi_1,\psi_2)(g_1) = (h_1, 1)$.
One finds similarly 
that there exists $g_2 \in \ker \psi_1$ with $(\psi_1, \psi_2)(g_2) = (1, h_2)$.
The product $g_1 \cdot g_2$ is therefore a preimage of $(h_1, h_2)$ under $(\psi_1,\psi_2)$.

The preceding argument  proves that the conjunction of (i) and (ii) implies statement (iv).
Assume, finally, that (iv) holds.
Given $h_1 \in H_1$, there exists then $g_1 \in G$ with $(\psi_1,\psi_2)(g_1) = (h_1, 1)$;
so $g_1$ is a preimage of $h_1$ lying in $\ker \psi_2$.
Implication (iv) $\Rightarrow$ (i) is thus valid,
and so the proof is complete.
\end{proof}


\begin{remark}
\label{remark:Origin-independent}
Lemma  \ref{lem:Equivalence-conditions} allows one to understand 
why the locution ``$\chi_\ell$ and $\chi_r$ are independent'' is used in \cite{BNS}
to express the requirement that  $G = \ker \chi_\ell \cdot \ker \chi_r$,
the group $G$ being a finitely generated, irreducible subgroup of $\PL_o([0,b])$. 
Let $\psi_1$ denote the epimorphism  $ G \epi \im \chi_\ell$ 
obtained by restricting the domain of $\chi_\ell \colon G \to \R$ to $\im \chi_\ell$,
and let $\psi_2$ be defined analogously.
If  statement (iii) holds 
implication $(iii) \Rightarrow (iv)$ of Lemma \ref{lem:Equivalence-conditions} 
shows
that the image of  $(\chi_\ell, \chi_r) \colon G \to \R_{\add} \times \R_{\add}$ 
is $\im \chi_\ell \times \im \chi_r$.
This fact amounts to say 
that the values of the characters $\chi_\ell$ and $\chi_r$ can be prescribed \emph{independently}
(within $\im \chi_\ell \times \im \chi_r$),
in contrast to what happens, for instance,  
if the characters satisfy a relation like $\chi_2 = -\chi_1$.
\footnote{Example  \ref{Example2-for-main-result-I-compact} considers more general relations.}

By analyzing the proof of Theorem 8.1 in \cite{BNS} one finds 
that it suffices to require that the normal subgroup $\ker \chi_\ell \cdot \ker \chi_r$ 
has finite index in the finitely generated group $G$;
a condition that we shall paraphrase by saying 
that $\chi_\ell$  and $\chi_r$ are \emph{almost independent}.
Theorem \ref{thm:Generalization-BNS}  extends this result to possibly infinitely generated groups $G$;
the new form of hypothesis (iii) will likewise be referred to by saying 
that $\chi_\ell$  and $\chi_r$ are \emph{almost independent}.
This form of almost independence is used in the proof Theorem \ref{thm:Generalization-BNS}
to find commuting elements of a certain type (see, \eg{}\cite[Section 3.3]{Str15}).
It remains unclear what $\Sigma^1(G)^c$ looks like
if $\chi_\ell$ and $\chi_r$ are not almost independent.
\footnote{Sections 4.2 and 4.3 in \cite{Str15} have some preliminary results.}
\end{remark}
%
\subsection{Group of units}
\label{ssec:Group of units}
In section \ref{ssec:Generalization-approach-GoKo10},
the \emph{group of units} $U(B)$ of a subgroup $B$ of $\R_{\add}$ is introduced.
This notion allows one to state a very simple condition 
that implies,
in conjunction with the hypotheses of Theorem \ref{thm:Generalization-BNS},
that $\Aut G$ fixes the character $\chi_\ell$
if it fixes the ray $[\chi_\ell] = \R \cdot \chi_\ell$.

In this section,
we discuss the group of units of some concrete examples of subgroups $B$ of $\R_{\add}$
and study then two types of subgroups $B$ of $\R_{\add}$
where methods taken from the Theory of Transcendental Numbers allow one to establish 
that $B$ has only trivial units.
%
\subsubsection{Elementary examples}
\label{sssec:Group-of-units-elementary-examples}
%
We begin with an observation:
 \emph{a subgroup $B$ and a non-zero real multiple $s \cdot B$ of $B$ have the same group of units}.
If $B$ is not reduced to 0, we may therefore assume that  $1 \in B$.

a)
If $B$ is infinite cyclic,  it is a positive multiple of $\Z$. 
Clearly $U(\Z) = \{1,-1\}$.

b) If $B$ is free abelian of rank 2, we may assume that it is generated by 1 
and an irrational number $\vartheta$;
so $B = \Z \cdot 1 \oplus \Z \cdot \vartheta$.
If $u$ is a unit of $B$ then $u = u \cdot 1 \in B$, 
say $u = a  + b \cdot \vartheta$ with $(a,b) \in \Z^2$.
The condition $u \cdot B \subseteq B$ implies next 
that $u \cdot \vartheta = a \cdot \vartheta + b \cdot \vartheta^2$ lies in $B$.
If $b \neq 0$, the real $\vartheta $ is thus a quadratic algebraic number;
if $b = 0$, the condition that $u \cdot B = B$ 
forces $a$ to 1 or $-1$.
It follows that $U(B) = \{1, -1\}$ 
if $\vartheta$ is an irrational,
but not a quadratic algebraic number.

c) Let $B$ be the additive group of a subring $R$ of $\R$,
for instance the additive group of the ring $\Z[P]$ generated 
by a subgroup $P $ of $\R^\times_{>0}$ 
or of a ring of algebraic integers.
Then $U(B)$ is nothing but the group of units $U(R)$ of $R$;
if $R$ is a ring of the form $\Z[P]$ its group of units contains, of course, $P \cup -P$,
but it may be considerably larger;
moreover,
rings of algebraic integers have also often units of infinite order.
Note, however, that not every subring $R \neq \Z$ of $\R$ has non-trivial units,
an example being the polynomial ring $\Z[s]$ generated by a transcendental number $s$.
%
\subsubsection{Transcendental subgroups}
\label{sssec:Transcendental-subgoups}
%
Many familiar examples of subgroups of $\PL_o([0,b])$
consists of PL-homeomorphisms with rational slopes;
this is true for Thompson's group $F$, 
but also for its generalizations  $G_m =G([0,1];\Z[1/m], \gp(m))$ with $m \geq 3$ an integer
and for many of the groups studied in Stein's paper \cite{Ste92}.

The values of the characters $\chi_\ell$ are then natural logarithms of rational numbers,
and so transcendental numbers or 0
(see, \eg\ \cite[Theorem 9.11c]{Niv56}).
We are thus led to study the unit groups $U(B)$ of subgroups $B \subset \R$ 
that contain transcendental numbers;
in view of the fact that $U(B) = U(s \cdot B)$ for every $s \neq 0$,
it is not so much the nature of the elements of $B$ that is important,
but the nature of the quotients $b_1/b_2$ of non-zero elements in $B$.
The following definition singles out a class of subgroups $B$
that turn out to be significant.
\begin{definition}
\label{definition:Transcendental}
\begin{enumerate}[a)]
\item Let $B \neq \{0\}$ be a subgroup of the additive group $\R_{\add}$ of the reals.  
We say $B$ is \emph{transcendental} if, 
for each ordered pair $(b_1, b_2)$ of non-zero elements  in $B$, 
the quotient $b_1/b_2$ is either rational or transcendental.

\item We call a non-zero character $\chi \colon G\to \R$ \emph{transcendental} 
if its image in $\R_{\add}$ is transcendental. 
\end{enumerate} 
\end{definition}

The next result explains 
why transcendental subgroups are welcome in our study.
\begin{prp}
\label{prp:U-of-transcendental-B-1}
If $B$ is a non-trivial, finitely generated, transcendental subgroup of $\R_{\add}$ 
then $U(B) = \{1,-1\}$.
\end{prp}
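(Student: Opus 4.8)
The plan is to extract two properties of an arbitrary unit $u \in U(B)$ that together leave no room beyond $\pm 1$: first that $u$ is forced to be an \emph{algebraic integer}, and second that the transcendental hypothesis forces it to be \emph{rational}. A rational algebraic integer is an ordinary integer, and an integer whose inverse is also an integer can only be $\pm1$.

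First I would normalize the situation. As observed at the start of Section~\ref{sssec:Group-of-units-elementary-examples}, $U(B) = U(s\cdot B)$ for every nonzero real $s$, and rescaling visibly preserves transcendence of $B$; so I may assume $1 \in B$. Since $B$ is a finitely generated subgroup of $\R_{\add}$ it is torsion-free, hence free abelian of some finite rank $n \geq 1$; I fix a $\Z$-basis $e_1,\dots,e_n$ of $B$. Now take any $u \in U(B)$. Because $uB = B$ and $u^{-1}B = B$, multiplication by $u$ is a $\Z$-linear automorphism of $B$, so it is given in this basis by a matrix $M \in \GL_n(\Z)$ via $u\cdot e_j = \sum_i M_{ij} e_i$. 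Reading this as $M^{T}(e_1,\dots,e_n)^{T} = u\,(e_1,\dots,e_n)^{T}$ exhibits $u$ as an eigenvalue of the integer matrix $M^{T}$, with a nonzero eigenvector since the $e_i$ are $\Q$-linearly independent. Hence $u$ is a root of the characteristic polynomial of $M^{T}$, a monic polynomial over $\Z$, so $u$ is an algebraic integer; applying the same reasoning to $u^{-1}$ shows $u^{-1}$ is one as well.

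Finally I would invoke the transcendental hypothesis. Since $1 \in B$ we have $u = u\cdot 1 \in uB = B$, so $u$ is a nonzero element of $B$, and the quotient $u = u/1$ of two nonzero elements of $B$ is therefore rational or transcendental. As $u$ is algebraic it must be rational; being also an algebraic integer, it lies in $\Z$, and likewise $u^{-1}\in\Z$, forcing $u = \pm 1$. This gives $U(B)\subseteq\{1,-1\}$, and the reverse inclusion is immediate because $B$ is a subgroup, so $U(B) = \{1,-1\}$.

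The one step that carries the real content is the algebraic-integer step, and this is precisely where finite generation enters: a unit acts on the finite-rank lattice $B$ through $\GL_n(\Z)$ and so satisfies a monic integral equation, a conclusion that would fail for a group of infinite rank. Once that structural fact is secured, the transcendence condition disposes of the problem with essentially no further computation, and the argument also transparently recovers the rank~$1$ and rank~$2$ special cases treated in Section~\ref{sssec:Group-of-units-elementary-examples}.
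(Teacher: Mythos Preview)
Your proof is correct and follows essentially the same approach as the paper: show that any unit $u$ is an algebraic integer (using finite generation of $B$), then use the transcendence hypothesis to force $u$ to be rational, hence an integer, and likewise for $u^{-1}$. The only cosmetic difference is in the algebraic-integer step: the paper embeds $\Z[u]$ into $B$ via $1\mapsto b$ and invokes the characterization of algebraic integers by finite generation of $\Z[u]$, while you realize multiplication by $u$ as a matrix in $\GL_n(\Z)$ and read off $u$ as an eigenvalue; these are two standard packagings of the same fact.
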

\begin{proof}
Suppose $u$ is a unit of $B$. 
Then $u \cdot B = B$.
Pick $b \in B \smallsetminus {0}$; 
this is possible since $B$ is not reduced to 0. 
The assignment $1 \mapsto b$ extends to a homomorphism  $\Z[u] \to  B$ of $\Z[u]$-modules;
it is injective since $\R$ has no zero-divisors.
The fact that $B$ is finitely generated implies  next
that the additive group of the integral domain $\Z[u]$ is finitely generated 
and so $u$ is an algebraic integer;
as $B$ is transcendental by assumption,
$u$ must therefore be an algebraic integer and also a rational number,
hence an integer.
Finally,
$u^{-1}$ satisfies also the relation $u^{-1} \cdot B = B$,
and so $u^{-1}$ is an integer, too.
\end{proof}

We continue with a combination of Theorem \ref{thm:Generalization-GoKo10-bis}
and Proposition \ref{prp:U-of-transcendental-B-1}.
\begin{crl}
\label{crl:Generalization-GoKo10-bis}
Suppose $I = [0,b]$ is a compact interval of positive length
and $G$ is subgroup of $PL_o(I)$ 
that satisfies the following conditions: 
\begin{enumerate}[(i)]
\item  no interior point of the interval $I = [0, b]$ is fixed by $G$;
\item the characters $\chi_\ell$ and $\chi_r$ are both non-zero;
\item the quotient group  $G/(\ker \chi_\ell \cdot \ker \chi_r)$ is a torsion group $G$, and
\item the image of $\sigma_\ell$ or that of $\sigma_r$ is finitely generated and transcendental. 
\end{enumerate}
Then there exists a non-zero homomorphism $\psi \colon G \to \R^\times_{>0}$
that is fixed by every automorphism of $G$.
\end{crl}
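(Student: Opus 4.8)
The plan is to derive this corollary directly from Theorem \ref{thm:Generalization-GoKo10-bis} by checking that all four of its hypotheses are in force; no new geometric or combinatorial input is required. Conditions (i), (ii) and (iii) of the corollary are word for word conditions (i), (ii) and (iii) of Theorem \ref{thm:Generalization-GoKo10-bis}, so the whole task reduces to verifying that hypothesis (iv) of the corollary forces hypothesis (iv) of the theorem, namely that at least one of the unit groups $U(\im \chi_\ell)$ and $U(\im \chi_r)$ equals $\{1,-1\}$.

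To this end I would first translate the multiplicative data into additive data. Recall from \eqref{eq:Definition-chi-ell-chi-r} that $\chi_\ell = \ln \circ \sigma_\ell$ and $\chi_r = \ln \circ \sigma_r$; since $\ln \colon \R^\times_{>0} \iso \R_{\add}$ is an isomorphism of groups, one has $\im \chi_\ell = \ln(\im \sigma_\ell)$ and $\im \chi_r = \ln(\im \sigma_r)$, and this isomorphism carries finitely generated subgroups to finitely generated subgroups and back. The phrase in hypothesis (iv) that ``the image of $\sigma_\ell$ is finitely generated and transcendental'' is therefore to be read, through this dictionary, as the assertion that $\im \chi_\ell$ is a finitely generated subgroup of $\R_{\add}$ and that the character $\chi_\ell$ is transcendental in the sense of Definition \ref{definition:Transcendental}; the analogous reading applies with $r$ in place of $\ell$.

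With this in hand the verification is immediate. Assume, without loss of generality, that it is the image of $\sigma_\ell$ that is finitely generated and transcendental, and set $B = \im \chi_\ell$, so that $B$ is finitely generated and transcendental. Hypothesis (ii) guarantees $\chi_\ell \neq 0$, hence $B \neq \{0\}$, so Proposition \ref{prp:U-of-transcendental-B-1} applies and yields $U(\im \chi_\ell) = U(B) = \{1,-1\}$. Thus hypothesis (iv) of Theorem \ref{thm:Generalization-GoKo10-bis} holds, and, all four of its hypotheses being satisfied, the theorem produces a non-zero homomorphism $\psi \colon G \to \R^\times_{>0}$ fixed by every automorphism of $G$, which is exactly the claim.

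As for obstacles, there is essentially no genuine difficulty: the hard analytic core (Theorem \ref{thm:Generalization-BNS} together with the $\Sigma^1$-argument internal to Theorem \ref{thm:Generalization-GoKo10-bis}) and the number-theoretic core (Proposition \ref{prp:U-of-transcendental-B-1}) are already established. The only point demanding care is the bookkeeping across the isomorphism $\ln$ --- in particular, ensuring that the epithet ``transcendental'' is pinned to the additive image $\im \chi_\ell$, to which Definition \ref{definition:Transcendental} and Proposition \ref{prp:U-of-transcendental-B-1} literally apply, and confirming that the non-triviality of $B$ required by the proposition is supplied by hypothesis (ii).
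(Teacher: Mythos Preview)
Your proposal is correct and matches the paper's approach exactly: the paper simply states that the corollary is ``a combination of Theorem~\ref{thm:Generalization-GoKo10-bis} and Proposition~\ref{prp:U-of-transcendental-B-1}'' without spelling out any further details. Your write-up in fact supplies more care than the paper does, correctly handling the passage through $\ln$ needed to interpret ``transcendental'' for $\im\sigma_\ell$ via Definition~\ref{definition:Transcendental}, and noting that hypothesis~(ii) furnishes the non-triviality required by Proposition~\ref{prp:U-of-transcendental-B-1}.
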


\subsubsection{Examples of transcendental subgroups of $\R_{\add}$}
\label{sssec:Examples-transcendental-subgoups}
%
In order to make use of Proposition \ref{prp:U-of-transcendental-B-1},
one needs a supply of transcendental subgroups of $\R$.
The simplest ones are the cyclic subgroups;
non-cyclic subgroups are harder to come by.

Example \ref{example:logarithm-1} below describes a first collection of transcendental subgroups.
It is based on the following theorem, established independently
by A. O. Gelfond in 1934 and by T. Schneider in 1935:
\begin{theorem}[Gelfond-Schneider Theorem]
\label{thm:Gelfond-Schneider}
If $p_1$ and $p_2$ are non-zero (real or complex) algebraic numbers and if $p_2 \neq1$ 
then $\ln p_1/ \ln p_2$ is either a rational or a transcendental number.
\end{theorem}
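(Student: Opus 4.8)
The plan is to deduce the stated ``ratio of logarithms'' formulation from the classical \emph{power form} of the theorem and then to prove the latter by the auxiliary-function method of Gelfond and Schneider. First I would record the reduction. It suffices to prove the following power form: if $\alpha$ is algebraic with $\alpha \neq 0, 1$ and $\beta$ is an algebraic \emph{irrational} number, then $\alpha^{\beta}$ is transcendental. Granting this, let $p_1, p_2$ be non-zero algebraic numbers with $p_2 \neq 1$, so that $\ln p_2 \neq 0$, and set $\lambda = \ln p_1 / \ln p_2$; then $p_1 = p_2^{\lambda}$ for the chosen branches. If $\lambda$ were algebraic and irrational, the power form applied to $\alpha = p_2$ and $\beta = \lambda$ would force $p_2^{\lambda} = p_1$ to be transcendental, contradicting the algebraicity of $p_1$. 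Hence $\lambda$ is either rational or transcendental, which is the assertion.

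For the power form I would argue by contradiction: suppose $\gamma := \alpha^{\beta}$ is algebraic, fix a non-zero determination $\log \alpha$, and work inside the number field $K = \Q(\alpha, \beta, \gamma)$. The key object is the auxiliary entire function
\begin{equation*}
F(z) = \sum_{i=0}^{L} \sum_{j=0}^{L} c_{ij}\, \alpha^{i z}\, \gamma^{j z} = \sum_{i=0}^{L}\sum_{j=0}^{L} c_{ij}\, e^{(i + j\beta)(\log\alpha)\, z},
\end{equation*}
whose building blocks $e^{(i+j\beta)(\log\alpha)z}$ are pairwise distinct, and hence linearly independent, precisely because $\beta$ is irrational. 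The first step is the \emph{construction}: using Siegel's lemma (the pigeonhole principle for systems of linear equations with algebraic coefficients) I would choose rational integers $c_{ij}$, not all zero and of controlled size, so that $F$ together with its first several derivatives vanishes at the integer points $z = 1, \dots, m$. This is possible once the number $(L+1)^2$ of unknowns comfortably exceeds the number of vanishing conditions, since each condition is a $K$-linear relation among the $c_{ij}$ with coefficients of bounded height and degree.

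The heart of the proof is the \emph{extrapolation} step. Here I would use the maximum-modulus principle, in the form of a Schwarz-type estimate that exploits the many known zeros, to bound $|F|$ on progressively larger disks and thereby show that the derivative values $F^{(t)}(n)$ at further integers $n$ are extraordinarily small. But each such value lies in $K$ and, after clearing denominators, is an algebraic integer of bounded degree and height; by the Liouville-type \emph{fundamental inequality}, a non-zero algebraic number cannot be smaller in modulus than an explicit bound depending only on its height and degree. Matching the analytic upper bound against this arithmetic lower bound forces $F^{(t)}(n) = 0$, so the set of zeros of $F$, counted with multiplicity, can be enlarged; iterating the procedure eventually makes $F$ vanish identically. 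By the linear independence of the exponentials noted above, all $c_{ij}$ must then be zero, contradicting the construction, so $\gamma$ is transcendental and the theorem follows.

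The delicate point, and the step I expect to be the main obstacle, is the quantitative bookkeeping in the extrapolation: one must choose the parameters $L$, $m$, the number of derivatives, and the radii of the disks so that at \emph{every} stage the analytic smallness beats the Liouville lower bound, all the while keeping the heights of the values $F^{(t)}(n)$ under control as derivatives and exponential terms accumulate. Balancing these competing growth rates is exactly where the proof of the Gelfond--Schneider theorem acquires its genuine depth.
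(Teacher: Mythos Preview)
Your proposal is a correct and well-organized sketch of the classical auxiliary-function proof. The reduction from the ``ratio of logarithms'' statement to the power form is clean, and the outline of the power-form proof (Siegel's lemma to build the auxiliary function, Schwarz-type extrapolation, the Liouville lower bound forcing further vanishing, and the final contradiction via linear independence of the exponentials) is the standard Gelfond--Schneider architecture. You are also right that the real work lies in the parameter bookkeeping during extrapolation.

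By contrast, the paper does not prove this theorem at all: it simply cites Theorem~10.2 in Niven's \emph{Irrational Numbers} and moves on. The Gelfond--Schneider theorem is invoked purely as a black box in order to show that certain subgroups $B = \ln P$ of $\R_{\add}$ are transcendental (Example~\ref{example:logarithm-1}) and hence have trivial unit group. So your write-up supplies genuine content that the paper deliberately omits; if you are preparing this as supplementary material, it would serve well as an appendix or as a pointer for readers unfamiliar with transcendence methods, but it goes well beyond what the paper itself requires.
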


\begin{proof}
See, \eg{}Theorem 10.2 in \cite{Niv56}.
\end{proof}

\begin{example} 
\label{example:logarithm-1}
Let $P$ denote a subgroup of $\R^\times_{>0}$
generated by a set $\PP$ of algebraic numbers
and define $B = \ln P $ to be its image in $\R_{\add} $ under the natural logarithm.
Then every element in $P$ is a positive algebraic number
and so the Gelfond-Schneider Theorem implies 
that every quotient $\ln p_1 /\ln p_2$ of elements in 
$P \smallsetminus \{1\}$ is either rational or transcendental.  
\end{example}

In Example \ref{example:logarithm-1} the set $\PP$ is allowed to be infinite;
for such a choice,
the group $B = \ln(\gp(\PP))$ is not finitely generated 
and so neither Proposition \ref{prp:U-of-transcendental-B-1}
nor its Corollary \ref{crl:Generalization-GoKo10-bis} applies.
Now, in Proposition \ref{prp:U-of-transcendental-B-1} 
the finite generation of $B$ is only used to infer that a unit $u$ of $B$,
which, by the transcendence of $B$, is either rational or transcendental,
is also an algebraic integer, and hence a rational integer.

Proposition \ref{prp:U-of-transcendental-B-2} below
furnishes examples of infinitely generated, transcendental groups
that have only 1 and $-1$ as units.
Its proof makes use of the following result,
due to C. L. Siegel and rediscovered by S. Lang 
(see \cite[Theorem II.1]{Lan66} or \cite[Theorem (1.6)]{Lan71}):
\begin{theorem}[Siegel-Lang Theorem]
\label{thm:Siegel-Lang}
Suppose $\beta_1$, $\beta_2$ and $z_1$, $z_2$, $z_3$ are non-zero complex numbers.
If the subsets $\{\beta_1, \beta_2\}$ and $\{z_1, z_2, z_3\}$ are both $\Q$-linearly independent
then at least one of the six numbers 
\[
\exp(\beta_i \cdot z_j) \text{ with } (i,j) \in \{1,2\} \times \{1,2,3\}
\]
is transcendental.
\end{theorem}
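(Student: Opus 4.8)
The statement is the \emph{six exponentials theorem}; rather than merely cite it, I sketch the classical transcendence-theoretic proof, writing $x_i$ for $\beta_i$ and $y_j$ for $z_j$. The plan is to argue by contradiction: suppose all six numbers $\exp(x_i y_j)$ are algebraic, and let $K$ be the number field they generate, of degree $d = [K \colon \Q]$. The contradiction will emerge from weighing an analytic upper bound against an arithmetic lower bound for the values of a carefully constructed auxiliary entire function.

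First I would introduce
\[
F(z) = \sum_{\lambda_1=0}^{L-1}\sum_{\lambda_2=0}^{L-1} c(\lambda_1,\lambda_2)\,\exp\bigl((\lambda_1 x_1 + \lambda_2 x_2)z\bigr),
\]
where $L$ is a large integer parameter and the coefficients $c(\lambda_1,\lambda_2)$ are to be found. Evaluating at a lattice point $\eta = m_1 y_1 + m_2 y_2 + m_3 y_3$ gives
\[
F(\eta) = \sum_{\lambda_1,\lambda_2} c(\lambda_1,\lambda_2)\prod_{i,j}\exp(x_i y_j)^{\lambda_i m_j},
\]
an element of $K$. Requiring $F(\eta)=0$ for all $\eta$ with $0 \le m_1,m_2,m_3 < M$ is a homogeneous linear system of $M^3$ equations in the $L^2$ unknowns $c(\lambda_1,\lambda_2)$, with coefficients that are algebraic numbers of bounded height. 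Siegel's lemma then furnishes a nontrivial solution in algebraic integers of $\OO_K$ whose house and denominator I can control, as soon as there are more unknowns than equations, i.e. as soon as $L^2 > M^3$. The decisive numerology is $2 \cdot 3 > 2 + 3$: the two slopes $x_1, x_2$ index the frequencies while the three points $y_1, y_2, y_3$ index the vanishing conditions, and this is precisely the imbalance that lets the method close.

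The analytic core, and the step I expect to be hardest, is the \emph{extrapolation} that promotes vanishing on a small grid to vanishing on a far larger one. Suppose $F$ vanishes at every $\eta$ with $0 \le m_i < M$; I would estimate $|F(\eta_0)|$ at a fresh lattice point $\eta_0$ from above by a Schwarz-lemma (maximum-modulus) bound --- since $F$ is entire of finite order with many prescribed zeros, it is extremely small on the disc containing them, and this smallness propagates to $\eta_0$. On the other hand $F(\eta_0) \in K$, so if $F(\eta_0) \neq 0$ the product formula (Liouville's inequality) bounds $|F(\eta_0)|$ from below in terms of $d$, the height of the coefficients and the conjugate sizes. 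Tuning $L$ and $M$ so that the upper bound undercuts the lower bound forces $F(\eta_0)=0$, and iterating spreads the vanishing over grids of unbounded size.

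Finally I would extract the contradiction by a \emph{zero estimate}. The $\Q$-linear independence of $x_1, x_2$ makes the $L^2$ frequencies $\lambda_1 x_1 + \lambda_2 x_2$ pairwise distinct, so the exponential monomials in $F$ are $\C$-linearly independent functions; the $\Q$-linear independence of $y_1, y_2, y_3$ makes the lattice points $\eta$ pairwise distinct and genuinely three-dimensional. A zero estimate for exponential polynomials then guarantees that a nonzero $F$ cannot vanish on the whole grid once $M$ is large relative to $L$ --- equivalently, one can select $L^2$ of the lattice points at which the interpolation (generalized Vandermonde) determinant $\det\bigl(\exp((\lambda_1 x_1 + \lambda_2 x_2)\eta_k)\bigr)$ is nonzero, forcing every $c(\lambda_1,\lambda_2)$ to vanish. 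This contradicts the nontriviality supplied by Siegel's lemma, so the assumption that all six values are algebraic is untenable. Obtaining this zero estimate in a form compatible with the multidimensional grid, and making the Schwarz/Liouville balance quantitative, are where the real work lies; the clean packaging is Lang's, whom one may instead simply cite.
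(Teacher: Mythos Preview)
Your sketch is a faithful outline of the classical Siegel--Schneider--Lang argument for the six exponentials theorem: auxiliary function built via Siegel's lemma, Schwarz-type upper bound versus Liouville lower bound, extrapolation, and a zero estimate exploiting the $\Q$-linear independence hypotheses. The numerology $2 \cdot 3 > 2 + 3$ you highlight is exactly the point.

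The paper, however, does not prove this theorem at all: it is stated as a black box and supported only by references to Lang's book and survey. So your proposal is not so much a different route as a route where the paper takes none; you yourself note at the end that one may ``instead simply cite'' Lang, and that is precisely what the authors do. Within the paper's scope the theorem is an imported tool, used once in the proof of Proposition~\ref{prp:U-of-transcendental-B-2} and once in Lemma~\ref{lem:Non-existence-u}, and a self-contained transcendence proof would be out of proportion to its role here. Your sketch is correct as far as it goes, but for the purposes of this paper a citation suffices.
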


Here then is the announced result:
\begin{prp}
\label{prp:U-of-transcendental-B-2}
Let $\PP$ be a set of positive algebraic numbers
and set $B = \ln \gp(\PP)$.
If $B$ is free abelian of positive rank then $U(B) = \{1,-1\}$.
\end{prp}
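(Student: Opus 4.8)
The plan is to take an arbitrary unit $u \in U(B)$ and show $u = \pm 1$; since $1$ and $-1$ clearly lie in $U(B)$, this suffices. As $B$ has positive rank it is non-trivial, so I can fix a non-zero element $b_1 \in B$, and by definition of $B = \ln \gp(\PP)$ I may write $b_1 = \ln p_1$ with $p_1$ a positive algebraic number, $p_1 \neq 1$. Since $u \cdot B = B$, the element $u b_1$ again lies in $B$, say $u b_1 = \ln p_2$ with $p_2 \in \gp(\PP)$, and therefore $u = \ln p_2 / \ln p_1$. The Gelfond-Schneider Theorem (Theorem \ref{thm:Gelfond-Schneider}) then forces $u$ to be \emph{either rational or transcendental}. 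The proof splits into these two cases, and the substantive work lies in excluding a transcendental $u$; this is precisely the step where Proposition \ref{prp:U-of-transcendental-B-1} invoked finite generation (to make $u$ an algebraic integer), a route no longer available here.

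I would handle the transcendental case via the Siegel-Lang Theorem (Theorem \ref{thm:Siegel-Lang}). Iterating $u \cdot B = B$ produces elements $b_j = u^{j-1} b_1 \in B$ for $j = 1,2,3,4$, each of the form $b_j = \ln p_j$ with $p_j$ a positive algebraic number, and all non-zero since $u \neq 0$ and $b_1 \neq 0$. I apply Siegel-Lang to the data $\beta_1 = 1$, $\beta_2 = u$ and $z_j = b_j = \ln p_j$ for $j = 1,2,3$. The key point is that all six numbers $\exp(\beta_i z_j)$ are algebraic: indeed $\exp(\beta_1 z_j) = p_j$, while $\exp(\beta_2 z_j) = \exp(u \cdot u^{j-1} b_1) = \exp(b_{j+1}) = p_{j+1}$. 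The contrapositive of Siegel-Lang then asserts that $\{1, u\}$ is $\Q$-linearly dependent or $\{z_1, z_2, z_3\}$ is $\Q$-linearly dependent. In the first case $u \in \Q$; in the second, a relation $c_1 b_1 + c_2 b_2 + c_3 b_3 = 0$ with rational $c_i$ not all zero becomes $(c_1 + c_2 u + c_3 u^2) b_1 = 0$, whence $c_1 + c_2 u + c_3 u^2 = 0$ and $u$ is algebraic. Either conclusion contradicts the transcendence of $u$, so this case cannot occur. I expect this application -- in particular the choice $\beta_2 = u$ together with the observation that $u$-invariance of $B$ turns each $\exp(u z_j)$ back into a power $p_{j+1}$ of an algebraic number -- to be the main obstacle, as it is the one genuinely new idea replacing the finitely generated argument.

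It remains to treat a rational unit $u$ and show $u = \pm 1$. Write $u = m/n$ in lowest terms; using $\gcd(m,n) = 1$ one checks that $\Z[u, u^{-1}] = \Z[1/n, 1/m] = \Z[1/(mn)]$. Since $u \cdot B = B$ and $u^{-1} \cdot B = B$, the group $B$ is a module over $\Z[u, u^{-1}] = \Z[1/(mn)]$. If $u \neq \pm 1$ then $|mn| > 1$, so some prime $\ell$ divides $mn$ and the element $1/\ell \in \Z[1/(mn)]$ acts on $B$; this makes $B$ an $\ell$-divisible group, that is $B = \ell B$. But a non-trivial free abelian group is never $\ell$-divisible, contradicting the hypothesis that $B$ is free abelian of positive rank. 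Hence $mn = \pm 1$, so $u = \pm 1$, completing the proof. This last step is where the hypothesis \emph{free abelian} enters essentially: for a non-free group such as $\Z[1/2]$ the unit group would be strictly larger.
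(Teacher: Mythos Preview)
Your proof is correct and follows essentially the same route as the paper: split into rational and transcendental cases via Gelfond--Schneider, eliminate the transcendental case with the Six Exponentials Theorem (Siegel--Lang), and rule out rational $u \neq \pm 1$ using the free-abelian hypothesis. The only differences are cosmetic: the paper feeds Siegel--Lang the data $\beta_i = u^{i-1}\ln p$, $z_j = u^j$ where you use $\beta_i = u^{i-1}$, $z_j = u^{j-1}b_1$ (the products $\beta_i z_j$ agree up to a shift), and for the rational case the paper argues directly from $mB = nB$ rather than via $\ell$-divisibility of the $\Z[1/(mn)]$-module $B$.
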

\begin{proof}
Note first that every element of $P = \gp(\PP)$ is a positive algebraic number.
Consider now a unit $u$ of $B$.
Since $B$ has positive rank, it contains a non-zero element $b_1 = \ln q_1$.
Then $u \cdot b \in B\smallsetminus \{0\}$;  
so  $b_2 = u \cdot b_1$ has the form $\ln q_2$
and thus $u$ is either rational or transcendental (by the Gelfond-Schneider Theorem).

Assume first that $u$ is rational, say $u = m/n$ 
where $m$ and $n$ are relatively prime integers.
The hypothesis $(m/n) \cdot B = B$ implies then that $m B = n B$.
As $B$ is free abelian of positive rank
this equality can only hold if $|m| = |n| = 1$. 
So $u \in \{1, -1\}$.

Assume now
that $u$ is transcendental.
Fix $p \in P \smallsetminus \{1\}$.
Then $u \cdot \ln p \in B = \ln P$; so there exists $q \in P$ with $\ln q = u \cdot \ln p$;
put differently, $\exp (u \cdot \ln p)$ lies in $P$ and is thus an algebraic number.
As the powers of $u$ are again units of $B$ 
it follows that $\exp(u^\ell \cdot \ln p) \in P$ for every $\ell \in \N$.
Set
\[
\beta_1 = \ln p, \quad \beta_2  = u \cdot \ln p
\quad\text{and }\quad
z_j =u^j \text{ for } j = 1, 2, 3.
\]
Then the sets $\{\beta_1, \beta_2\}$ and $\{z_1, z_2, z_3\}$ 
fulfill the hypotheses of Theorem \ref{thm:Siegel-Lang};
its conclusion, however, is contradicted by the previous calculation.
This state of affairs shows that the unit $u$ cannot be transcendental.
\end{proof}

\begin{example} 
\label{example:logarithm-2}
Let $\PP$ be a non-empty set of (rational) prime numbers 
and let $P$ denote the subgroup of $\Q^\times_{>0}$ generated by $P$.
Then $P$ is free abelian with basis $\PP$ 
(by the unique factorization in $\N^\times_{>0}$)
and so $U(\ln P) = \{1,-1\}$.

More generally, every non-trivial subgroup $P$ of  $\Q^\times_{>0}$ is a free abelian group 
and hence $B = \ln P$ has only the units $1$ and $-1$.
\end{example}
%
%
\subsubsection{Some properties of transcendental subgroups and transcendental characters}
\label{ssec:Properties-transcendental-subgroups}
%
The transcendence of a character is a property 
that has not yet been discussed in the literature on the invariant $\Sigma^1$.  
In this section, we assemble therefore a few useful properties of this notion.

Assume $B \subset \R_{\add}$ is a transcendental subgroup.
Then 
\begin{enumerate}[a)]
\item every non-trivial subgroup $B' \subseteq B$ is transcendental 
(immediate from the definition);
\item if $\chi \colon G \to \R$ is a character whose image is a non-trivial subgroup of $B$ 
then $\chi$ is transcendental (by (a)),
and so are all the compositions $\chi \circ \pi$ 
with $\pi \colon \tilde{G} \epi G$ an epimorphism of groups
(immediate from the first part);
\item if $\chi$, $\chi'$ are characters of $G$ with images equal to $B$,
 the image of $\chi + \chi'$ is contained in $B$, 
 and so the character $\chi+ \chi'$ is transcendental, unless it is 0;
 \item if $\chi$ is transcendental and $\alpha_1$, \ldots, $\alpha_m$ are automorphisms of $G$
 the character
 \[
 \eta = \chi \circ \alpha_1 + \cdots + \chi \circ \alpha_m
 \]
 is transcendental, unless it is zero.
\end{enumerate}
A further property is discussed in part (iv) of Proposition \ref{prp:Passage-to-finite-index} below.
%
\subsection{Passage to subgroups of finite index}
\label{ssec:Subgroups-of-finite-index}
%
The next proposition shows that the hypotheses stated in Corollary
\ref{crl:Generalization-GoKo10-bis}
are inherited by subgroups of finite index.
\begin{prp}
\label{prp:Passage-to-finite-index}
Let $G$ be a subgroup of $\PL_o([0,b])$ and $H \subseteq G$ a subgroup of finite index.
Denote the restrictions of $\chi_\ell$ and $\chi_r$ to $H$ by $\chi'_\ell$ and $\chi'_r$.
Then the following statements are valid:
\begin{enumerate}[(i)]
\item  $G$ is irreducible if, and only if, $H$ is so;
\item $\chi_\ell$ is non-zero precisely if $\chi'_\ell$ has this property,
and similarly for $\chi_r$ and $\chi'_r$;
\item the characters $\chi_\ell$ and $\chi_r$ are almost independent 
if, and only if,  $\chi'_\ell$ and $\chi'_r$ have this property;
\item $\chi_\ell$ is transcendental exactly if $\chi'_\ell$ is so,
and similarly for $\chi_r$ and $\chi'_r$.
\end{enumerate}
\end{prp}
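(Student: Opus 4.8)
The plan is to reduce all four statements to a single elementary device. Since $H$ has finite index in $G$, its normal core $N=\bigcap_{g\in G}gHg^{-1}$ is a normal subgroup of finite index; if $m$ denotes the exponent of the finite group $G/N$, then $g^m\in N\subseteq H$ for every $g\in G$. I record at once the consequences for the characters. Because $\chi_\ell(g^m)=m\cdot\chi_\ell(g)$ and $g^m\in H$, one has $m\cdot\im\chi_\ell\subseteq\im\chi'_\ell\subseteq\im\chi_\ell$, and likewise $m\cdot\im\chi_r\subseteq\im\chi'_r\subseteq\im\chi_r$ and $m\cdot\im(\chi_\ell,\chi_r)\subseteq\im(\chi'_\ell,\chi'_r)\subseteq\im(\chi_\ell,\chi_r)$. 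Thus each primed image sits in the corresponding unprimed one with quotient of exponent dividing $m$, hence torsion, and so with the same $\Q$-linear span inside $\R_{\add}$, respectively inside $\R_{\add}\times\R_{\add}$. Statement (ii) is then immediate: if $\chi'_\ell=0$ then $m\cdot\im\chi_\ell\subseteq\im\chi'_\ell=0$ forces $\im\chi_\ell=0$, while the converse is trivial; the argument for $\chi_r$ is identical.

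For statement (i) I would show that the two open sets $\bigcup_{g\in G}\supp g$ and $\bigcup_{h\in H}\supp h$ actually coincide, so that one is connected exactly when the other is. The inclusion $\supseteq$ is clear. For $\subseteq$, let $t\in\supp g$, so that $g(t)\neq t$. The key point is that $g$ is an orientation-preserving homeomorphism of the interval, hence increasing, so $g(t)>t$ forces $g^k(t)>t$ for all $k\geq 1$ and $g(t)<t$ forces $g^k(t)<t$; in particular $g$ has no non-fixed periodic points, so $g^m(t)\neq t$. Since $g^m\in H$, this places $t$ in $\bigcup_{h\in H}\supp h$. Hence the support sets agree, and $\card(K)=1$ holds for $G$ precisely when it holds for $H$.

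Statement (iv) follows from the transcendence dichotomy together with its stability under non-zero rational rescaling. If $\chi_\ell$ is transcendental, then $\im\chi'_\ell$ is a non-trivial subgroup (non-trivial by (ii)) of the transcendental group $\im\chi_\ell$, hence transcendental by property (a) of Section \ref{ssec:Properties-transcendental-subgroups}. Conversely, assume $\chi'_\ell$ transcendental and take non-zero $b_1,b_2\in\im\chi_\ell$. By the first paragraph there are positive integers $m_1,m_2$ with $m_ib_i\in\im\chi'_\ell\smallsetminus\{0\}$, so $(m_1b_1)/(m_2b_2)$ is rational or transcendental; multiplying by the rational $m_2/m_1$ shows that $b_1/b_2$ is again rational or transcendental, whence $\im\chi_\ell$ is transcendental. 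The case of $\chi_r$ is the same.

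The main obstacle is statement (iii), where I must compare the torsion quotients $G/(\ker\chi_\ell\cdot\ker\chi_r)$ and $H/(\ker\chi'_\ell\cdot\ker\chi'_r)$. The plan is first to translate almost independence into a condition on images: adapting the proof of Lemma \ref{lem:Equivalence-conditions} to its torsion analogue, I would show that $\chi_\ell$ and $\chi_r$ are almost independent if and only if $\im(\chi_\ell,\chi_r)$ has torsion quotient in $\im\chi_\ell\times\im\chi_r$, equivalently if and only if these two subgroups of $\R_{\add}\times\R_{\add}$ have the same $\Q$-span; and similarly with primes for $H$. Given this translation, the result is immediate from the first paragraph, since $\im(\chi'_\ell,\chi'_r)$ and $\im(\chi_\ell,\chi_r)$ share a $\Q$-span, as do $\im\chi'_\ell\times\im\chi'_r$ and $\im\chi_\ell\times\im\chi_r$. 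The delicate point is the torsion analogue of Lemma \ref{lem:Equivalence-conditions} itself: one must track how $\ker\chi_\ell\cdot\ker\chi_r$ corresponds under $(\chi_\ell,\chi_r)$ to the subgroup $\bigl(\im(\chi_\ell,\chi_r)\cap(\R\times 0)\bigr)+\bigl(\im(\chi_\ell,\chi_r)\cap(0\times\R)\bigr)$, and verify that the resulting quotient is torsion exactly when $\im(\chi_\ell,\chi_r)$ has torsion quotient in the full product $\im\chi_\ell\times\im\chi_r$.
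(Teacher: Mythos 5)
Your proposal is correct. For parts (i), (ii) and (iv) it coincides in substance with the paper's own proof: the paper also disposes of (i) by the observation that a PL-homeomorphism and its positive powers have the same support, of (ii) by torsion-freeness of subgroups of $\R_{\add}$, and of (iv) by the identity $b_1/b_2=(mb_1)/(mb_2)$ combined with property a) of section \ref{ssec:Properties-transcendental-subgroups}; your normal-core exponent $m$ is just a uniform packaging of the multiples the paper takes one element at a time. The genuine difference is in part (iii). The paper never passes to the two-variable map $(\chi_\ell,\chi_r)$: using formula \eqref{eq:Equivalence-i-and-iii} it identifies $G/(\ker\chi_\ell\cdot\ker\chi_r)$ with $A_1=\im\chi_\ell/\chi_\ell(\ker\chi_r)$ and $H/(\ker\chi'_\ell\cdot\ker\chi'_r)$ with $A_2=\im\chi'_\ell/\chi_\ell(\ker\chi'_r)$, then sandwiches both into short exact sequences with the common middle term $\im\chi_\ell/\chi_\ell(\ker\chi'_r)$, whose edge terms $\im\chi_\ell/\im\chi'_\ell$ and $\chi_\ell(\ker\chi_r)/\chi_\ell(\ker\chi'_r)$ are \emph{finite} of order dividing $[G:H]$; torsionness of $A_1$, of the middle term, and of $A_2$ are therefore all equivalent. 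Your route instead recasts almost independence as the statement that $(\im\chi_\ell\times\im\chi_r)/\im(\chi_\ell,\chi_r)$ is torsion, i.e.\ equality of $\Q$-spans, after which invariance under passage to finite index is immediate from your first paragraph. The ``delicate point'' you flag does go through, the essential input being that both projections of $C=\im(\chi_\ell,\chi_r)$ onto the factors are surjective: writing $C_\ell=C\cap(\R\times 0)$ and $C_r=C\cap(0\times\R)$, the isomorphism theorems give $G/(\ker\chi_\ell\cdot\ker\chi_r)\iso C/(C_\ell+C_r)$; if $(\im\chi_\ell\times\im\chi_r)/C$ is torsion and $(x,y)\in C$, then $(nx,0)\in C_\ell$ and $(0,my)\in C_r$ for suitable $n,m\geq 1$, so $nm(x,y)\in C_\ell+C_r$; conversely, if $C/(C_\ell+C_r)$ is torsion and $x\in\im\chi_\ell$, lift $x$ to $(x,y')\in C$, choose $n$ with $n(x,y')\in C_\ell+C_r$ and compare first coordinates to get $(nx,0)\in C$, arguing symmetrically in the second factor. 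So your plan completes to a correct proof. What the paper's argument buys is sharper quantitative control (its two comparison quotients are finite of order dividing the index, not merely torsion); what yours buys is a symmetric, manifestly finite-index-invariant reformulation of almost independence that avoids privileging one of the two characters.
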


\begin{proof}
Claim (i) holds since the support of a PL-homeomorphism $f$ 
coincides with that of its positive powers $f^m$.
Assertion (ii) is valid 
since the image of a character is a subgroup of $\R_{\add}$ and hence torsion-free.
The fact that the quotient $b_1 /b_2$ of non-zero  real numbers coincides, 
for every positive integer $m$,
with the quotient $(mb_1)/(mb_2)$ allows one to see 
that  a non-zero character $\chi$ of $G$ is transcendental 
if its restriction to $H$ is so;
the converse is covered by property a) stated in section \ref{ssec:Properties-transcendental-subgroups}.
We are left with establishing statement (iii).

To achieve this goal,
we compare the quotient groups 
$G/(\ker \chi_\ell \cdot \ker \chi_r)$ and $H/(\ker \chi'_\ell \cdot \ker \chi'_r)$.
By formula \eqref{eq:Equivalence-i-and-iii},
the first of them is isomorphic to the quotient group $A_1 =\im \chi_\ell/\chi_\ell( \ker \chi_r)$,
the second one is isomorphic to $A_2=\im \chi'_\ell/\chi'_\ell(\ker \chi'_r)$.
Clearly $A_2 = \im \chi'_\ell/\chi_\ell( \ker \chi'_r)$.
The groups $A_1$ and $A_2$ fit into the short exact sequences
\begin{align}
A_2 =  \im \chi'_\ell/\chi_\ell( \ker \chi'_r)  
&\incl A = \im \chi_\ell/ \chi_\ell (\ker \chi'_r)
\epi 
\im \chi_\ell/ \im \chi'_\ell,
\label{eq:Extending-A2}\\
\chi_\ell(\ker \chi_r)/ \chi_\ell(\ker \chi'_r)  
&\incl A = \im \chi_\ell/ \chi_\ell (\ker \chi'_r)
\epi 
A_1  =\im \chi_\ell/\chi_\ell( \ker \chi_r).
\label{eq:Mapping-onto-A1}
\end{align}
The claim now follows from the fact 
that $\im \chi_\ell/ \im \chi'_\ell$ and $\chi_\ell(\ker \chi_r)/ \chi_\ell(\ker \chi'_r)$ are finite groups 
with orders that divide the index of $H$ in $G$. 
\end{proof}

A first application of Proposition \ref{prp:Passage-to-finite-index} is
\begin{crl} 
\label{crl:Commensurable-groups}
Let $G$ be a finitely generated, irreducible subgroup of $PL_o(I)$. 
If the characters $\chi_\ell$ and $\chi_r$ are almost independent 
and one of them is transcendental,
then any group $\Gamma$ commensurable
\footnote{Two groups $G_1$ and $G_2$ are called \emph{commensurable} 
if they contain subgroups $H_1$, $H_2$ 
that are isomorphic and of finite indices in $G_1$ and in $G_2$, respectively.}
 with $G$ has property $R_\infty$.  
\end{crl}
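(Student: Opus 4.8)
The plan is to reduce the statement for $\Gamma$ to an application of Corollary \ref{crl:Generalization-GoKo10-bis} carried out inside a well-chosen \emph{characteristic} subgroup of $\Gamma$, and then to propagate the resulting $\Aut$-fixed character up to $\Gamma$ itself. The point to keep in mind throughout is that property $R_\infty$ is \emph{not} in general inherited between commensurable groups, so a direct reduction is unavailable: the real task is to transport not the abstract property but an explicit non-zero character that is fixed by every automorphism. I expect this transport step to be the main obstacle; once it is set up correctly, everything else is bookkeeping, since the heavy lifting (upgrading a fixed \emph{ray} to a fixed \emph{character}) is already done by the transcendence hypothesis inside Corollary \ref{crl:Generalization-GoKo10-bis}, which I may invoke as a black box.

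First I would record that $\Gamma$ is finitely generated. By definition of commensurability there are finite-index subgroups $H_G \le G$ and $H_\Gamma \le \Gamma$ and an isomorphism $H_G \iso H_\Gamma$; since $G$ is finitely generated so is $H_G$, hence so is $H_\Gamma$, and therefore so is its finite-index overgroup $\Gamma$. Because $\Gamma$ is finitely generated, for the integer $n = [\Gamma : H_\Gamma]$ there are only finitely many subgroups of $\Gamma$ of index at most $n$; their intersection $K$ is thus a finite-index subgroup which is \emph{characteristic} in $\Gamma$ (any automorphism permutes the finite set of index-$\le n$ subgroups) and which satisfies $K \subseteq H_\Gamma$ (as $H_\Gamma$ is itself among the intersected subgroups).

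Next I would transport $K$ back into the PL-picture. Under the isomorphism $H_G \iso H_\Gamma$ the subgroup $K$ corresponds to a subgroup $K' \le H_G \le G$ of finite index in $G$, with $K \iso K'$. By Proposition \ref{prp:Passage-to-finite-index} the finite-index subgroup $K'$ of $G$ is again irreducible, has non-zero characters $\chi_\ell$ and $\chi_r$ that are almost independent, and inherits the transcendence of whichever of the two characters was transcendental for $G$. Since $K'$ is finitely generated, the relevant image of $\sigma_\ell$ or $\sigma_r$ is finitely generated, so $K'$ satisfies all hypotheses of Corollary \ref{crl:Generalization-GoKo10-bis}. That corollary produces a non-zero homomorphism $\psi' \colon K' \to \R^\times_{>0}$ fixed by every automorphism of $K'$; taking logarithms and transporting along $K \iso K'$ yields a non-zero character $\bar{\chi} \colon K \to \R$ fixed by every automorphism of $K$.

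Finally I would push $\bar{\chi}$ up to $\Gamma$. As $K$ is characteristic, every $\alpha \in \Aut \Gamma$ restricts to an automorphism $\alpha|_K$ of $K$, which fixes $\bar{\chi}$; specializing to inner automorphisms shows that $\bar{\chi}$ is invariant under the conjugation action of $\Gamma/K$. Since $\Gamma/K$ is finite and $\R$ is a $\Q$-vector space, restriction gives an isomorphism $\Hom(\Gamma,\R) \iso \Hom(K,\R)^{\Gamma/K}$, so $\bar{\chi}$ is the restriction of a unique non-zero character $\chi_0 \colon \Gamma \to \R$. For any $\alpha \in \Aut \Gamma$ one then has $(\chi_0 \circ \alpha)|_K = \bar{\chi}\circ(\alpha|_K) = \bar{\chi} = \chi_0|_K$, and because a character of $\Gamma$ trivial on the finite-index subgroup $K$ has image a finite subgroup of the torsion-free group $\R$ and hence vanishes, restriction to $K$ is injective; therefore $\chi_0 \circ \alpha = \chi_0$. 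Thus $\chi_0$ is a non-zero character of $\Gamma$ fixed by $\Aut \Gamma$, so $\exp \circ \chi_0 \colon \Gamma \to \R^\times_{>0}$ is a non-zero homomorphism onto an infinite abelian group fixed by every automorphism, and by the observation in section \ref{ssec:Crucial-fact} the group $\Gamma$ has property $R_\infty$.
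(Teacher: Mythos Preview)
Your proof is correct and follows the same overall architecture as the paper's: pass to a finite-index subgroup of $\Gamma$ that is \emph{characteristic}, transport it into $G$ via the commensuration, apply Proposition~\ref{prp:Passage-to-finite-index} and Corollary~\ref{crl:Generalization-GoKo10-bis} there, and then push the conclusion back up to $\Gamma$.

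The only genuine difference lies in the final step. The paper, having obtained property $R_\infty$ for the characteristic subgroup $\Gamma_1$, simply quotes an external lemma (\cite[Lemma 2.2(ii)]{MuSa14a}) stating that $R_\infty$ passes from a characteristic subgroup to the ambient group. You instead carry out this passage explicitly and in stronger form: you use the transfer isomorphism $\Hom(\Gamma,\R)\iso \Hom(K,\R)^{\Gamma/K}$ to extend the $\Aut K$-fixed character to a character $\chi_0$ of $\Gamma$, and then check directly that $\chi_0$ is $\Aut\Gamma$-fixed via injectivity of restriction. This buys you a sharper conclusion than the paper states---not merely $R_\infty$ for $\Gamma$, but an explicit non-zero $\Aut\Gamma$-invariant character---and keeps the argument self-contained, at the cost of a few extra lines. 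The paper's route is shorter but relies on an outside reference.
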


\begin{proof}  
Let $H_0\subset G$ be a finite index subgroup of $G$
that is isomorphic to a finite index subgroup $\Gamma_0$ of $\Gamma$.  
There exists then a \emph{finite index subgroup $\Gamma_1$ of $\Gamma_0$
that is characteristic in $\Gamma$}
(see, \eg{}\cite[Theorem IV.4.7]{LS77}).
Let $H_1$ be the subgroup of $H_0$ that corresponds to $\Gamma_1$ 
under an isomorphism  $H_0 \iso \Gamma_0$.
Then $H_1$ has finite index in $G$
and thus Proposition \ref{prp:Passage-to-finite-index} allows us to infer 
that $H_1$ inherits the properties enunciated for $G$ in the statement of 
Corollary \ref{crl:Generalization-GoKo10-bis}.
This corollary applies therefore to $H_1$ 
and shows 
that $H_1$ admits a non-zero homomorphism $\psi_1 \colon H_1 \to \R^\times_{>0}$
that is fixed by $\Aut H_1$.
So $H_1$, and hence $\Gamma_1$, satisfy property $R_\infty$.
Use now that $\Gamma_1$ is a characteristic subgroup of $\Gamma$
and apply \cite[Lemma 2.2(ii)]{MuSa14a} to infer 
that $\Gamma$ satisfies property $R_\infty$. 
\end{proof}  

\begin{remark}
\label{remark:Commensurability-and-Reidemeidster-number}
If the group $G_1$  has property $R_\infty$, 
then a group $G_2$ commensurable to $G_1$ need not have this property,
as is shown by the group $G_1$ of the Klein bottle and the group $G_2$ of a torus:
the group $G_1$ has property $R_\infty$ by \cite[Theorem 2.2]{GoWo09}),
while the automorphism $-\id$ of $G_2 = \Z^2$ has Reidemeister number 4.
\end{remark}

%
%
\section{Miscellaneous examples}
\label{sec:Miscellaneous-examples}
%
In this section
we illustrate the notions of irreducible subgroup,
of almost independence of $\chi_\ell$ and $\chi_r$
and of the group of units by various examples.
%
\subsection{Irreducible subgroups}
\label{ssec:Irreducible-subgroups}
Let $b$ be a positive real number and $G$ a subgroup of $\PL_o([0,b])$. 
Recall that $G$ is called \emph{irreducible} if no interior point of $I = [0,b]$ is fixed by all of $G$ 
(see section \ref{ssec:Discussion-irreducibility}
for the motive that led to this name).

The group is irreducible if, and only if, the supports of the elements of $G$ 
cover the interior $\Int(I)$ of $I$,
or, equivalently, 
if the supports of the elements in a generating set $\XX$ of $G$ cover $\Int(I)$;
these claims are easily verified.
If $G$ is cyclic, generated by $f$, say,
it is therefore irreducible if $f$ fixes no point in $\Int(I)$
or, equivalently if, $f^\varepsilon(t) < t$ for $t \in \Int(I)$ and some sign $\varepsilon$.
Such a function is often called a \emph{bump}.

\begin{example}
\label{eq:One-bump}
Here is a very simple kind of PL-homeomorphism bump.
Given a positive slope $s \neq 1$,
set
\begin{equation}
\label{eq:Bump-function}
f_s(t) = 
\begin{cases}
(1/s)t,                                         & \text{ if } 0\le t\le s/(s+1)\cdot b \\ 
s \left(t- \frac{s\cdot b}{s+1}\right) + \tfrac{b}{s+1} , & \text{ if } s/(s+1) \cdot b < t \leq b.
\end{cases}
\end{equation}
Then $f_s$ is continuous at $s/(s + 1) \cdot b$;
since $f_s(0)= 0$ and $f_s(b) = b$,
the function $f_s$ lies in $\PL_o([0,b])$.
Let $G_s$ denote the group generated by $f_s$
and let $\alpha$ be the automorphism that sends $f_s$ to its inverse $ f_s^{-1}$.
Then $(\chi_\ell\circ \alpha)(f_s) = \chi_\ell(f^{-1}_s) =- \chi_\ell(f_s)$; 
similarly $(\chi_r \circ \alpha) (f_s) = -\chi_r(f_s)$,
whence
\begin{equation}
\label{eq:Relation-chi-ell-chi-r}
\chi_\ell \circ \alpha = - \chi_\ell \text{ and } \chi_r \circ \alpha = - \chi_r. 
\end{equation}
So neither  $\chi_\ell$ nor $\chi_r$ is fixed by $\Aut(G_s)$.
Theorem \ref{thm:Generalization-BNS}  cannot be applied,
as requirement (iii) is violated; indeed, $\ker \chi_\ell = \ker \chi_r = \{\id\}$
and so $G_s/(\ker \chi_\ell \cdot \ker \chi_r)$ is infinite cyclic.
The conclusion of Theorem \ref{thm:Generalization-BNS} is likewise false,
for $\Sigma^1(G)^c = \emptyset$ (this follows, \eg\ from Example A2.5a in \cite{Str13}).
 Property $R_\infty$, finally, does not hold, either;
for the Reidemeister number of the automorphism $\alpha$ is 2,
as a simple calculation shows.
\end{example}

The groups in  the previous example are cyclic;
more challenging groups are considered in
 \begin{example}
\label{eq:Several-bumps}
Let $d > 1$ be an integer and $s_1$, \ldots, $s_d$ pairwise distinct, positive real numbers $\neq 1$.
For each index $i \in \{1, \ldots, d\}$, 
define $f_i$ by formula \eqref{eq:Bump-function} with $s = s_i$,
and set 
\[
G = G_{\{s_1, \ldots, s_d\}} = \gp(f_1, \ldots, f_d).
\]
The group $G$ inherits two properties from the group $G_s$ in the previous example:
it is irreducible (obvious),
and the assignment $f_i \mapsto f_i^{-1}$  extends to an automorphism $\alpha$;
indeed, the special form of the elements $f_i$ implies 
that conjugation by the reflection in the mid-point of $I = [0, b]$ sends $f_i$ to its inverse.
It follows, as before, that the relations \eqref{eq:Relation-chi-ell-chi-r} are valid;
so neither $\chi_\ell$ nor $\chi_r$ is fixed by $\Aut G $.

Now to another property of the automorphism $\alpha$.
The calculation
\begin{equation}
\label{eq:Relation-chi-ell-chi-r-bis}
(\chi_\ell \circ \alpha)(f_i) = \chi_\ell (f_i^{-1})  =- \chi_\ell(f_i)= \chi_r(f_i)
\end{equation}
is valid for every index $i$. 
It shows that $\alpha$ exchanges $\chi_\ell$ and $\chi_r$.
It follows, in particular,
that $\ker \chi_\ell = \ker \chi_r$ and so the quotient
\[
G/(\ker \chi_\ell \cdot \ker \chi_r) = G/\ker \chi_\ell \iso \im \chi_\ell = \gp(\ln s_1, \ldots, \ln s_d)
\]
is a non-trivial free abelian group of rank at most $d$.
Requirement (iii) in Theorem \ref{thm:Generalization-BNS}
is thus violated 
and so we cannot use that result to determine $\Sigma^1(G)^c$.
Actually, only the following meager facts are known about $\Sigma^1(G)^c$:
both $\chi_\ell$  and $\chi_r = - \chi_\ell$ represent points of $\Sigma^1(G)^c$
(by \cite[Proposition 2.5]{Str15});
moreover,
the existence and form of the automorphism $\alpha$ 
and formula \eqref{eq:Representation-autos} imply
that $\Sigma^1(G)^c$ is invariant under the antipodal map $[\chi] \mapsto [-\chi]$.

The computation \eqref{eq:Relation-chi-ell-chi-r-bis} shows
that $\chi_\ell \circ \alpha = - \chi_\ell$.
This conclusion holds, actually,  for every character $\chi \colon G \to \R$
and proves that no non-zero character of $G$ is fixed by $\alpha$.
\end{example}
%
\subsection{Independence of $\chi_\ell$ and $\chi_r$}
\label{ssec:Independence}
As before,
let $G$ be a subgroup of $\PL_o([0,b])$ with $b$ a positive real number.
Recall that the characters $\chi_\ell$ and $\chi_r$ are called \emph{independent} 
if $G = \ker  \chi_\ell \cdot \ker \chi_r$ (see section \ref{ssec:Discussion-irreducibility}).
It follows that $\chi_\ell$ and $\chi_r$ are independent  if, and only if, 
$G$ admits a generating set $\XX = \XX_\ell \cup \XX_r$ 
in which the elements of $\XX_\ell$ have slope 1 near $b$ 
and those of $\XX_r$ have slope 1 near $0$.

It is thus very easy to manufacture groups for which $\chi_\ell$ and $\chi_r$ are independent.
In the next example,
some very particular specimens are constructed.
\begin{example}
\label{example:Recipe-for-independence}
Choose a real number  $b_1 \in \; ]b/2,b[$.
Given positive real numbers $s_1$, \ldots, $s_{d_\ell}$ 
that are pairwise distinct and not equal to 1,
let $f_i$ be the bump defined by formula \eqref{eq:Bump-function}
but with $s =s_i$ and $b = b_1$.
Next let $s'_1$, \ldots, $s'_{d_r}$  be another sequence of positive reals
that are pairwise distinct and different from 1.
Use them to define bump functions $g_j$ with supports in $]b-b_1, b[$ like this:
let $h_j$ be the function given by formula  \eqref{eq:Bump-function}
but with $s = s'_j$ and $b = b_1$,
and define then $g_j $ to be $h_j$ conjugated by the translation with amplitude $b- b_1$.
Finally set
\begin{equation}
\label{eq:Construction-independent}
G 
= 
G_{\{s_1, \ldots, s_{d_\ell}, s'_1, \ldots s'_{d_r}; b_1 \}}
=
\gp(f_1, \ldots, f_{d_\ell}, g_1, \ldots, g_{d_r} ).
\end{equation}
In the sequel we assume that $d_\ell$ and $d_r$ are positive.
Then $G$ is irreducible (since $b_1 > b-b_1$),
the characters $\chi_\ell$, $\chi_r$ are non-zero and independent, 
and thus Theorem  \ref{thm:Generalization-BNS} allows us to conclude 
that $\Sigma^1(G)^c = \{[\chi_\ell], [\chi_r] \}$.

The character $\chi_\ell$ is transcendental 
if all the positive reals $s_1$, \ldots, $s_{d_\ell}$ are algebraic
(cf.\ Example \ref{example:logarithm-1}).
Then $G$ admits a non-zero homomorphism $\psi \colon G \to \R^\times_{>0}$
that is fixed by $\Aut G$ (see Theorem \ref{thm:Generalization-GoKo10-bis}).
If $G$ does not admit an automorphism $\alpha$ with $(\chi_\ell \circ \alpha) \in [\chi_r]$ 
the homomorphism $\psi$ can be chosen to be $\sigma_\ell$ 
(see the second paragraph of section \ref{ssec:Proof-Theorem-Generalization-GoKo10-bis}).
The stated condition holds, in particular, if there does not exists a number $s$ with 
$\im \chi_r  = s\cdot \im \chi_\ell$.
Similar remarks apply to $\chi_r$.
\end{example}

\subsubsection{Independence versus almost independence}
\label{sssec:Independence-versus-almost-indepndence}
%
The characters $\chi_\ell$ and $\chi_r$ are called almost independent
if  $G/(\ker \chi_\ell \cdot \ker \chi_r)$ is a torsion group
(see Remark \ref{remark:Origin-independent}).
Statement (iii) of Proposition \ref{prp:Passage-to-finite-index} shows 
that almost independence of $\chi_\ell$ and $\chi_r$ is inherited
by the restricted characters $\chi'_\ell  = \chi_\ell \restriction{H}$ and $\chi'_r  = \chi_r \restriction{H}$
whenever $H \subseteq G$ is a subgroup of finite index.
The next result characterizes those ordered pairs  $(G, H)$,
with $\chi_\ell$, $\chi_r$ independent
whose restrictions  $\chi'_\ell$ and $\chi'_r$ are again independent.
\begin{lem}
\label{lem:Independence-and-finite-index}
Let $G$ be a subgroup of $\PL_o([0,b])$ for which $\chi_\ell$ and $\chi_r$ are independent
and let $H\subset G$ be a subgroup of finite index.
Then the restrictions $\chi'_\ell$ and $\chi'_r$ of these characters are independent if,
and only if the homomorphism
\begin{equation}
\label{eq:Characterizing-homomorphism}
\zeta \colon \chi_\ell(\ker \chi_r)/ \chi_\ell(\ker \chi'_r)  
\longrightarrow 
\im \chi_\ell/ \im \chi'_\ell,
\end{equation}
induced by the inclusions, is injective.
\end{lem}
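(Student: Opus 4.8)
The plan is to read off $\zeta$ as the canonical projection between two quotients of the single group $\im \chi_\ell$, and then to identify its kernel with the obstruction to independence over $H$. First I would record the identifications forced by $\chi'_\ell$ and $\chi'_r$ being the restrictions of $\chi_\ell$ and $\chi_r$ to $H$: one has $\im \chi'_\ell = \chi_\ell(H)$ and $\ker \chi'_r = \ker \chi_r \cap H$, whence $\chi'_\ell(\ker \chi'_r) = \chi_\ell(\ker \chi'_r)$. Since $\ker \chi'_r \subseteq H$ and $\ker \chi'_r \subseteq \ker \chi_r$, the four subgroups entering the definition of $\zeta$ are nested inside $\im \chi_\ell$ as $\chi_\ell(\ker \chi'_r) \subseteq \im \chi'_\ell \subseteq \im \chi_\ell$ and $\chi_\ell(\ker \chi'_r) \subseteq \chi_\ell(\ker \chi_r) \subseteq \im \chi_\ell$, so $\zeta$ is well defined.

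Next I would use the hypothesis through Lemma \ref{lem:Equivalence-conditions}. Applied to the epimorphisms $\chi_\ell \colon G \epi \im \chi_\ell$ and $\chi_r \colon G \epi \im \chi_r$, the equivalence of its conditions (i) and (iii) turns the independence of $\chi_\ell$ and $\chi_r$ over $G$ into the equality $\chi_\ell(\ker \chi_r) = \im \chi_\ell$. Consequently the source of $\zeta$ is $\im \chi_\ell / \chi_\ell(\ker \chi'_r)$, and $\zeta$ is precisely the canonical surjection
\[
\im \chi_\ell / \chi_\ell(\ker \chi'_r) \longrightarrow \im \chi_\ell / \im \chi'_\ell
\]
attached to the inclusion $\chi_\ell(\ker \chi'_r) \subseteq \im \chi'_\ell$ of subgroups of $\im \chi_\ell$. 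Its kernel is therefore $\im \chi'_\ell / \chi_\ell(\ker \chi'_r)$.

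Finally I would translate injectivity back into independence. This kernel is trivial exactly when $\im \chi'_\ell = \chi_\ell(\ker \chi'_r) = \chi'_\ell(\ker \chi'_r)$; invoking the equivalence (i) $\Leftrightarrow$ (iii) of Lemma \ref{lem:Equivalence-conditions} a second time, now for the epimorphisms $\chi'_\ell \colon H \epi \im \chi'_\ell$ and $\chi'_r \colon H \epi \im \chi'_r$, this equality says exactly that $H = \ker \chi'_\ell \cdot \ker \chi'_r$, i.e. that $\chi'_\ell$ and $\chi'_r$ are independent. Both implications of the asserted equivalence follow at once.

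Equivalently, and perhaps more transparently, I could argue directly from the two short exact sequences \eqref{eq:Extending-A2} and \eqref{eq:Mapping-onto-A1} already built in the proof of Proposition \ref{prp:Passage-to-finite-index}: independence over $G$ makes $A_1$ trivial, so \eqref{eq:Mapping-onto-A1} identifies the source of $\zeta$ with $A$, while \eqref{eq:Extending-A2} identifies $\zeta$ with the quotient map $A \epi A/A_2$ and hence $\ker \zeta$ with $A_2 \cong H/(\ker \chi'_\ell \cdot \ker \chi'_r)$. There is no analytic content: the argument is a diagram chase, and the only point demanding care is the bookkeeping of the nested inclusions that makes $\zeta$ well defined and pins down its kernel.
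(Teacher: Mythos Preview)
Your proof is correct and follows essentially the same route as the paper: both arguments use Lemma~\ref{lem:Equivalence-conditions} to turn independence over $G$ into the equality $\chi_\ell(\ker\chi_r)=\im\chi_\ell$, then identify $\ker\zeta$ with $A_2=\im\chi'_\ell/\chi_\ell(\ker\chi'_r)$ and invoke Lemma~\ref{lem:Equivalence-conditions} once more to equate its vanishing with independence over $H$. Your alternative paragraph via the sequences \eqref{eq:Extending-A2} and \eqref{eq:Mapping-onto-A1} is in fact exactly the paper's presentation.
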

\begin{proof}
The justification will be an assemblage of facts extracted 
from the proof of Lemma \ref{lem:Equivalence-conditions}
and from that of Proposition \ref{prp:Passage-to-finite-index}.
Firstly, $\chi_\ell$ and $\chi_r$ are independent if, and only if, 
the abelian group
$A_1 =\im \chi_\ell/\chi_\ell( \ker \chi_r)$ is 0.
Similarly,  
$\chi'_\ell$ and $\chi'_r$ are independent  precisely 
if $A_2 =  \im \chi'_\ell/\chi_\ell( \ker \chi'_r) $ is the zero group.
The groups $A_1$ and $A_2$ occur among the groups in the short exact sequences
\eqref{eq:Extending-A2} and \eqref{eq:Mapping-onto-A1}.
Since $A_1 = 0$, these exact sequences lead to the short exact sequence
\[
\im \chi'_\ell/\chi_\ell( \ker \chi'_r)  
\incl \chi_\ell(\ker \chi_r)/ \chi_\ell(\ker \chi'_r) 
= \im \chi_\ell/ \chi_\ell (\ker \chi'_r)
\epi 
\im \chi_\ell/ \im \chi'_\ell,
\]
It shows that $A_2 = \im \chi'_\ell/\chi_\ell( \ker \chi'_r)$ is the kernel of the homomorphism $\zeta$.
\end{proof}

It is now easy to construct independent characters $\chi_\ell$ and $\chi_r$ of $G$
whose restrictions to a subgroup of finite index are no longer independent.
\begin{example} 
\label{example:Restrictions-need-not-be-independent}
Let $G$ be a subgroup of $\PL_o([0,b])$
and $H$ a subgroup of finite index.
Assume the characters $\chi_\ell$ and $\chi_r$ are independent.
According to Lemma \ref{lem:Independence-and-finite-index}
the restricted characters $\chi'_\ell$ and $\chi'_r$ of $H$ are independent,
if, and only if, the obvious homomorphism
\[
\zeta \colon \chi_\ell(\ker \chi_r)/ \chi_\ell(\ker \chi'_r)  
\longrightarrow 
\im \chi_\ell/ \im \chi'_\ell
\]
is injective.
The characters $\chi'_\ell$ and $\chi'_r$ of $H$ will therefore \emph{not} be independent
whenever
\begin{equation}
\label{eq:Sufficient-condition-not-independent}
\im \chi'_\ell = \im \chi_\ell
\quad \text{but} \quad
\chi_\ell(\ker \chi_r \cap H) \subsetneq \chi_\ell(\ker \chi_r).
\end{equation}

Now to some explicit examples.
We begin with quotients of the groups we shall ultimately be interested in.
Set $\bar{G} = \Z^2$, let $p \geq 2$ be an integer 
and set 
\[
\bar{H} = \Z(p,0) + \Z (1,1).
\]
Then $\bar{H}$ has index $p$ in $\bar{G}$.

Next, let $\chi_1$, $\chi_2$ denote the canonical projections of $\Z^2$ onto its factors.
Then
\[
\chi_1(\bar{G}) = \Z = \chi_1 (\bar{H}),\quad \ker \chi_2  = \Z (1,0)
\text{ and } \ker \chi_2 \cap \bar{H} = \Z(1,0) \cap \bar{H} = \Z(p,0),
\] 
and thus
\[
 \chi_1(\ker \chi_2 \cap \bar{H} ) =  \Z \cdot p 
 \subsetneq \
 \chi_1(\ker \chi_2) = \Z.
\]
The auxiliary groups $\bar{G}$ and $\bar{H}$ satisfy therefore the relations 
\eqref{eq:Sufficient-condition-not-independent}.

We are now ready to define the group $G$;
it will be of the kind considered in Example \ref{example:Recipe-for-independence}
with $d_\ell = d_r = 1$.
Fix $b > 0$ and $b_1 \in \;]b/2, b[$
and choose positive numbers $s_1$, $s'_1$, both different from 1.
Define $f_1$ and $g_1$ as in Example \ref{example:Recipe-for-independence}
and set
\[
G =\gp(f_1, g_1).
\]
Then $G$ is an irreducible subgroup of $\PL_0([0,b])$
and the characters $\chi_\ell$ and $\chi_r$ of $G$ are independent.
Moreover, $G_{\ab}$ is free abelian of rank 2, 
freely generated by the canonical images of $f_1$ and $g_1$.
Set $H = \gp( f_1^p, f_1 \circ g_1, [G,G] )$.
The above calculations then imply that
\[
\chi_\ell(G) = \Z \cdot \ln s_1 = \chi_\ell (H),
\text{ and }
\chi_\ell(\ker \chi_r \cap H ) =  \Z \cdot p \cdot \ln s_1
\subsetneq
\chi_\ell(\ker \chi_r) = \Z \ln s_1.
\] 
\end{example} 
%
\subsection{Eigenlines}
\label{ssec:Eigenlines}
Let $G$ be an irreducible subgroup of $\PL_o([0,b])$.
If the characters $\chi_\ell$ and $\chi_r$ are non-zero and almost independent,
then $\Sigma^1(G)^c$ consist of the two points $[\chi_\ell]$ and $[\chi_r]$
(by Theorem  \ref{thm:Generalization-BNS}).
Every automorphism $\alpha$ of $G$ either fixes or exchanges them.
Suppose we are in the first case.
Then $\chi_r \circ \alpha = s \cdot \chi_r$ for some positive real $s$,
and so $\R \cdot \chi_r$ is an eigenline, with eigenvalue $s$, in the vector space
$\Hom(G,\R)$ acted on by $\alpha^*$.
No example with $s \neq1$ has been found so far.

If the compact interval $[0, b]$ is replaced by the half line $[0, \infty[$, 
such examples exist,
provided $\chi_r$ is replaced by a suitable analogue $\tau_r$. 
In order to construct examples,
we return to the set-up of Section \ref{sec:Homomorphisms-fixed-by-Aut-I-half-line}.
So $P$ is a non-trivial subgroup of $\R^\times_{>0}$ 
and $A$ is a non-trivial $\Z[P]$ submodule of $\R_{\add}$.
Define $G$ to be the kernel of the homomorphism 
$\sigma_r \colon G([0,\infty[\,; A, P) \to \R^\times_{>0}$;
thus $G$ consists of all the elements of $G([0,\infty[\,; A, P)$ that are translations near $+\infty$.
The analysis in section \ref{sssec:I-half-line-Groups-with-im-rho-translations} shows
that conjugation by the PL-homeomorphism $f_p \colon \R \iso \R$,
given by $f_p(t) = p \cdot t$ for $t \geq 0$,
induces, for every $p \in P$, 
an automorphism $\alpha_p$ of $G$ that satisfies the relation
\begin{equation}
\label{eq:Transformation-tau-bis}
\tau_r \circ \alpha_p = p \cdot \tau_r;
\end{equation}
here $\tau_r \colon G \to \R_{\add}$ is the character 
that sends $g \in G$ to the amplitude of the translation that coincides with $g$ near $+\infty$.
This character $\tau_r$ shares an important property with the character $\chi_r$: 
the invariant $\Sigma^1(G)^c$ consists of two points,
one represented by $\chi_\ell$, the other by $\tau_r$ (see \cite[Theorem 1.2]{Str15}).

The image of $\tau_r$ in $\R_{\add}$ is a subgroup $B$ of $A$, 
namely 
\[
B = IP \cdot A  = \left\{\sum \, (p-1) \cdot  a \;\big | \; p \in P \text{ and } a \in A \right\}
\]
(cf.\ assertion (iii) of \cite[Corollary A5.3]{BiSt14}).
The group of units $U(B)$ of $B$ contains the group $P$
and so it is not reduced to $\{1, -1\}$.

The subgroup $B$ is typically infinitely generated; 
if so, $G$ is likewise infinitely generated.
Examples of finitely generated groups $G = \ker \sigma_r$ 
are harder to find, and they are so far rare.
Suppose the group $G([0,b]; A, P)$ is finitely generated for some $b \in A_{>0}$.
Then $G([b,2b];A,P)$ is a finitely generated subgroup of  
the group of bounded elements $B([0,\infty\,[;A,P)$. 
Pick now an element $g_0 \in G$ 
that moves every point of the open interval $]0, \infty[$ to the right
and satisfies the inequality $g_0(b) < 2b$.
Then translates of the interval $]b, 2b[$ under the powers of $g_0$ will then cover $]0, \infty[$.
It follows that the subgroup 
\[
N = \gp(\{g_0^j \circ G([b, 2 b];A,P) \circ g_0^{-j} \mid j \in \Z \})
\]
coincides with the bounded group $B([0, \infty[\,;A,P)$ (use \cite[Lemma E18.9]{BiSt14}).
So the group $B([0, \infty[\,;A,P) \rtimes \gp(g_0)$ is finitely generated.
The group $G$, finally,  is finitely generated 
if $G/N \iso \im \tau_r = IP \cdot A$ is finitely generated.

To show that finitely generated groups of the form $G = \ker \sigma_r$ exist
we need thus an example of a group $G([0,b];A, P)$ 
where both $G([0,b]; A, P )$ and the abelian group underlying $B = IP \cdot A$ are finitely generated.
The parameters
\[
P = \gp \left(\sqrt{2\,} +1\right), \quad A = \Z\left[\sqrt{2\,}\right] = \Z[P], \quad b = 1
\]
lead to such a group; see \cite{Cle95}.
\subsection{Variation on Theorem \ref{thm:Generalization-GoKo10-bis}}
\label{ssec:Complement-to-main-theorem}
%
Among the hypotheses of Theorems 
\ref{thm:Generalization-GoKo10-bis}
and \ref{thm:Generalization-BNS} 
figures the requirement that $G$ acts irreducibly on the interval $[0,b]$.
This requirement rules out, in particular, that $G$ is a product $G_1 \times G_2$
with $G_1$ acting irreducibly on some interval $I_1 = [0,b_1]$ and $G_2$ acting irreducibly 
on an interval $I_2= [b_2, b]$ that is disjoint from $I_1$.

Suppose now we are in this excluded case
and that the groups $G_1$, $G_2$ satisfy the assumptions of Theorem \ref{thm:Generalization-BNS}, 
suitably interpreted; 
more explicitly,
suppose the characters $\chi_{1, \ell}$ and $\chi_{1, r}$ of $G_1$ 
are non-zero and almost independent,
and similarly for the characters $\chi_{2, \ell}$ and $\chi_{2, r}$ of $G_2$.
The question then arises whether $G = G_1 \times G_2$ 
admits a non-zero homomorphism $\psi \colon G \to \R^\times_{>0}$ that is fixed by $\Aut G$.
We shall see that this is the case if at least one of the four groups 
$\im \chi_{1, \ell}$, $\im \chi_{1, r}$ and $\im \chi_{2, \ell}$, $\im \chi_{2, r}$ 
has a unit group that is reduced to $\{1,-1\}$.

 The following result is a variation on Theorem 3.2 in \cite{GoKo10}.
\begin{prp} 
\label{prp:Linearly-independent-characters} 
Let $G$ be a group for which $\Sigma^1(G)^c$ is a non-empty finite set with $m$ elements.
Assume the rays $[\chi] \in \Sigma^1(G)^c$ 
span a subspace of $\Hom(G, \R)$ having dimension $m$ over $\R$
and that $U(\im \chi_1) = \{1,-1\}$ for some point $[\chi_1] \in \Sigma^1(G)^c$.
Then $G$ admits a non-trivial homomorphism $\psi \colon G \to \R^\times_{>0}$ 
that is fixed by $\Aut G$.
\end{prp}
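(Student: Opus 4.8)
The plan is to let $\Aut G$ act on the finite set $\Sigma^1(G)^c$, to isolate the orbit of the distinguished point $[\chi_1]$, and to build a fixed character by summing the characters in that orbit after rescaling each by a factor that the unit-group hypothesis forces to be canonically determined. Exponentiating the resulting character then produces the homomorphism $\psi$.

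First I would fix the set-up. Write $\Sigma^1(G)^c = \{[\chi_1],\dots,[\chi_m]\}$ and choose representatives $\chi_1,\dots,\chi_m$. Since these $m$ rays span an $m$-dimensional subspace of $\Hom(G,\R)$, the characters $\chi_1,\dots,\chi_m$ are linearly independent over $\R$. By \eqref{eq:Representation-autos} every automorphism permutes $\Sigma^1(G)^c$; let $O = \{[\chi_j] \mid j \in J\}$, with $J \subseteq \{1,\dots,m\}$ and $1 \in J$, be the orbit of $[\chi_1]$. A first remark, obtained as in section \ref{sssec:Group-of-units-elementary-examples}, is that every point of $O$ again has trivial unit group: if $[\chi_k] \in O$ then $\im\chi_k$ is a positive real multiple of $\im\chi_1$, and scaling does not change the group of units, so $U(\im\chi_k) = U(\im\chi_1) = \{1,-1\}$.

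The heart of the argument --- and the step I expect to be the main obstacle --- is to attach to each ordered pair $(j,k)$ with $j,k \in J$ a well-defined positive scalar. Whenever $\alpha \in \Aut G$ satisfies $[\chi_j \circ \alpha] = [\chi_k]$ we have $\chi_j \circ \alpha = s\,\chi_k$ for some $s > 0$; I claim $s$ is \emph{independent of $\alpha$}. Indeed $\alpha$ is surjective, so $\im(\chi_j\circ\alpha) = \chi_j(\alpha(G)) = \im\chi_j$, whence $\im\chi_j = s\cdot\im\chi_k$; if $\alpha'$ gives $\chi_j\circ\alpha' = s'\chi_k$ then $s\cdot\im\chi_k = s'\cdot\im\chi_k$, so $s/s' \in U(\im\chi_k) = \{1,-1\}$ and therefore $s = s'$ as both are positive. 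Denote this common value $s_{jk}$. A routine computation using associativity of composition yields $s_{jj} = 1$ and $s_{jl} = s_{jk}\,s_{kl}$; fixing $j_0 \in J$ and putting $t_j = s_{j,j_0} > 0$ then gives $s_{jk} = t_j/t_k$.

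Finally I would assemble the invariant character. Set $\eta = \sum_{j \in J} t_j^{-1}\,\chi_j$. An automorphism $\alpha$ induces a permutation $\tau$ of $J$ with $[\chi_j\circ\alpha] = [\chi_{\tau(j)}]$, hence $\chi_j\circ\alpha = (t_j/t_{\tau(j)})\,\chi_{\tau(j)}$, and therefore
\[
\eta\circ\alpha = \sum_{j\in J} t_j^{-1}\frac{t_j}{t_{\tau(j)}}\chi_{\tau(j)} = \sum_{j\in J} t_{\tau(j)}^{-1}\chi_{\tau(j)} = \eta .
\]
Thus $\eta$ is fixed by every automorphism, and it is non-zero because the $\chi_j$ are linearly independent and the coefficients $t_j^{-1}$ are positive. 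Taking $\psi = \exp\circ\,\eta \colon G \to \R^\times_{>0}$ then produces a non-trivial homomorphism with $\psi\circ\alpha = \psi$ for all $\alpha \in \Aut G$. The only routine points left to check are the multiplicativity of the $s_{jk}$ and the fact that $\eta$, being an $\R$-linear combination of characters, is itself a character.
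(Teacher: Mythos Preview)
Your proof is correct and follows essentially the same strategy as the paper: isolate the $\Aut G$-orbit of $[\chi_1]$, use the unit-group hypothesis to pin down canonical character representatives of the rays in that orbit (so that $\Aut G$ permutes the characters themselves, not just the rays), and then sum them to get a fixed non-zero character. The paper phrases this slightly more directly: it picks automorphisms $\alpha_i$ with $[\chi_1\circ\alpha_i]=[\chi_i]$ and shows that $\Aut G$ already permutes the set $\{\chi_1\circ\alpha_1,\dots,\chi_1\circ\alpha_n\}$ itself (the scalar relating $\chi_1\circ\alpha_i\circ\alpha^{-1}$ to $\chi_1\circ\alpha_j$ is forced to be $1$ by applying the unit-group condition to $\chi_1$ alone), whereas you keep arbitrary representatives $\chi_j$ and encode the same information as a cocycle $s_{jk}=t_j/t_k$; your rescaled characters $t_j^{-1}\chi_j$ are exactly the paper's $\chi_1\circ\alpha_j$.
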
 

\begin{proof}  
The automorphism group $\Aut G$ acts on $\Sigma^1(G)^c$ 
via the assignment 
\[
(\alpha, [\chi]) \mapsto [\chi \circ \alpha^{-1}];
\]
let $\{ [\chi_1],  \ldots, [\chi_n] \}$ be the orbit in $\Sigma^1(G)^c$ containing $[\chi_1]$.
If $n = 1$, 
the point $[\chi_1]$ is fixed by $\Aut G$; 
hence $\chi_1$ itself is fixed by $\Aut G$ in view of the assumption 
that $U(\im \chi_1) = \{1, -1\}$, 
and so we can take $\psi = \exp \circ \chi_1$.

Suppose now that $n > 1$ and choose, for every $i \in \{1, \ldots, n\}$,
an automorphism $\alpha_i$ with $[\chi_i] = [\chi_1 \circ \alpha_i]$.
Let $\alpha$ be an automorphism of $G$.
For every index $i \in \{1, \ldots, n\}$ there exists then an index $j$ 
so that $[\chi_i \circ \alpha^{-1}] =  [(\chi_1 \circ \alpha_i) \circ \alpha^{-1}]$ 
is equal to $[\chi_j] = |\chi_1 \circ \alpha_j]$.
It follows that there exists a positive real number $s_{i,j}$ so that
\[
\chi_1 \circ \alpha_i \circ \alpha^{-1} = s_{i,j} \cdot \chi_1 \circ \alpha_j.
\]

But if so, $\beta =  \alpha_i \circ \alpha^{-1} \circ \alpha_j^{-1}$ 
is an automorphism with $\chi_1 \circ \beta = s_{i,j} \cdot \chi_1$.
The assumption that $U(\im \chi_1) = \{1,-1\}$ permits one then to deduce that $s_{i,j} = 1$.
So $\Aut G$ permutes the set of characters
\begin{equation}
\label{eq:Representative-characters}
\chi_1 \circ \alpha_1, \quad \chi_1 \circ\alpha_2, \ldots,  \chi_1 \circ\alpha_n.
\end{equation}
Their sum $\eta$ is therefore fixed by $\Aut G$.
It is non-zero since the characters displayed in \eqref{eq:Representative-characters}
are linearly independent over $\R$.
Set $\psi = \exp \circ \eta$.
\end{proof}  

\begin{crl}
\label{crl:Direct-product-of-irreducible-groups}
Let $G_1$ be a subgroup of $\PL_0([0,b_1])$ and let $G_2$ be a subgroup of $\PL_0([b_2, b])$
with $0 < b_1 < b_2 < b$.
Assume $G_1$ and $G_2$ are irreducible, 
the characters $\chi_{1,\ell}$ and $\chi_{1, r}$ of $G_1$ are non-zero and almost independent,
and that the characters $\chi_{2,\ell}$ and $\chi_{2, r}$ of $G_2$ have the same properties.
If the image of at least one of the four characters 
$\chi_{1,\ell}$, $\chi_{1, r}$ and $\chi_{2,\ell}$, $\chi_{2, r}$ has a unit group 
that is reduced to $\{1, -1\}$ 
then $G = G_1 \times G_2$ 
admits a non-trivial homomorphism $\psi \colon G \to \R^\times_{>0}$
that is fixed by $\Aut G$.
\end{crl}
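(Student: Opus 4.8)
The plan is to obtain the corollary as a direct application of Proposition \ref{prp:Linearly-independent-characters} to the group $G = G_1 \times G_2$. Since $G_1$ has support in $[0,b_1]$ and $G_2$ in $[b_2,b]$, the two factors commute and $G$ is their internal direct product, realized as a reducible subgroup of $\PL_o([0,b])$. The four characters $\chi_{1,\ell}$, $\chi_{1,r}$, $\chi_{2,\ell}$, $\chi_{2,r}$ of the factors pull back along the projections $\pi_1 \colon G \epi G_1$ and $\pi_2 \colon G \epi G_2$ to characters of $G$, and I intend to show that their rays are exactly the points of $\Sigma^1(G)^c$ and that they satisfy the hypotheses of Proposition \ref{prp:Linearly-independent-characters}.

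First I would compute $\Sigma^1(G)^c$. Each factor satisfies the hypotheses of Theorem \ref{thm:Generalization-BNS}: it is irreducible and its characters $\chi_{i,\ell}$, $\chi_{i,r}$ are non-zero and almost independent. Hence $\Sigma^1(G_i)^c = \{[\chi_{i,\ell}],[\chi_{i,r}]\}$ for $i \in \{1,2\}$. I would then invoke the description of $\Sigma^1$ of a direct product (see \cite[Section 4.1]{Str15}): a character $\chi = \pi_1^*\chi^{(1)} + \pi_2^*\chi^{(2)}$ of $G$ lies in $\Sigma^1(G)^c$ precisely when one of its two components vanishes while the surviving component represents a point of the corresponding factor's complement; a character with both components non-zero always lies in $\Sigma^1(G)$. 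Consequently $\Sigma^1(G)^c$ is the disjoint union of the images of $\Sigma^1(G_1)^c$ and $\Sigma^1(G_2)^c$ under the embeddings $S(G_i) \incl S(G)$, that is, the four rays $[\chi_{1,\ell}]$, $[\chi_{1,r}]$, $[\chi_{2,\ell}]$, $[\chi_{2,r}]$.

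Next I would verify that these four points are distinct and that their representing characters are linearly independent in $\Hom(G,\R)$, so that $m=4$ and the dimension hypothesis of Proposition \ref{prp:Linearly-independent-characters} holds. The characters $\chi_{1,\ell},\chi_{1,r}$ lie in the summand $\pi_1^*\Hom(G_1,\R)$ and $\chi_{2,\ell},\chi_{2,r}$ in the complementary summand $\pi_2^*\Hom(G_2,\R)$, so it suffices to see that $\chi_{i,\ell}$ and $\chi_{i,r}$ are linearly independent within each factor. This is forced by almost independence: were $\chi_{i,r}$ a scalar multiple of the non-zero character $\chi_{i,\ell}$, their kernels would coincide and $G_i/(\ker\chi_{i,\ell}\cdot\ker\chi_{i,r}) \cong \im\chi_{i,\ell}$ would be a non-trivial torsion-free group, contradicting the hypothesis that this quotient is torsion. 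The corollary's hypothesis furnishes one of the four characters, say $\chi_1$, with $U(\im\chi_1)=\{1,-1\}$; since pulling a character back along a projection leaves its image unchanged, this unit condition is inherited by the corresponding point of $\Sigma^1(G)^c$. All three hypotheses of Proposition \ref{prp:Linearly-independent-characters} are thereby met, and it delivers a non-trivial homomorphism $\psi \colon G \to \R^\times_{>0}$ fixed by $\Aut G$.

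The only genuinely delicate step is the computation of $\Sigma^1(G)^c$, and in particular the claim that a character with both components non-zero always lies in $\Sigma^1(G)$. The quickest route is a Cayley-graph argument: given a vertex $(g_1,g_2)$ of $\Gamma_\chi$, one first travels in the $G_2$-direction to raise the $\chi$-value by a pre-assigned amount, then contracts the $G_1$-coordinate along a finite path while remaining in $G_\chi$, and finally contracts the $G_2$-coordinate, exhibiting a path to the identity inside $\Gamma_\chi$. The care required here stems from the fact that $G$ need not be finitely generated, so the argument must be run for every generating set, as is done in \cite[Section 4.1]{Str15}; this bookkeeping, rather than any conceptual difficulty, is where the main work lies. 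The remaining points — distinctness of the four rays, their linear independence, and the transfer of the unit condition — are routine.
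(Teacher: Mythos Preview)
Your proposal is correct and follows essentially the same route as the paper's proof: apply Theorem~\ref{thm:Generalization-BNS} to each factor, use the product formula for $\Sigma^1$ to identify $\Sigma^1(G)^c$ with the four pulled-back rays, check their linear independence via almost independence and the direct-sum decomposition of $\Hom(G,\R)$, and conclude by Proposition~\ref{prp:Linearly-independent-characters}. The paper cites \cite[Proposition~C2.55]{Str13} for the product formula rather than \cite[Section~4.1]{Str15}, and does not spell out the Cayley-graph argument you sketch, but the structure is identical.
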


\begin{proof}
The hypothesis on $G_1$ and $G_2$ allow us to apply Theorem \ref{thm:Generalization-BNS} 
and so 
\[
\Sigma^1(G_1)^c = \{[\chi_{1,\ell}], [\chi_{1,r}] \}
\quad \text{and} \quad
\Sigma^1(G_2)^c = \{[\chi_{2,\ell}], [\chi_{2,r}] \}.
\]
The product formula for $\Sigma^1$ then implies that $\Sigma^1(G)^c$ consists of the four points represented by
\begin{equation}
\label{eq:Points-of-product}
\chi_{1,\ell} \circ \pi_1, \quad \chi_{1,r} \circ \pi_1, 
\qquad
\chi_{2,\ell} \circ \pi_2, \quad \chi_{1,\ell} \circ \pi_2; 
\end{equation}
here $\pi_i \colon G \epi G_i$ denotes the canonical projection onto the $i$-th factor $G_i$
(see, \eg\ \cite[Proposition C2.55]{Str13}).
These four characters are $\R$-linearly independent
since all are non-zero, 
as $\ker \chi_{1, \ell} \neq \ker \chi_{1,r}$  
by the almost independence of $\chi_{1,\ell}$  and $\chi_{1,r}$,
as $\ker \chi_{2, \ell} \neq \ker \chi_{2,r}$  
by the almost independence of $\chi_{2,\ell}$  and $\chi_{2,r}$,
and since $\pi_1^*(\Hom(G_1, \R))$ and $\pi_2^*(\Hom(G_2, \R))$ 
are complementary subspaces of $\Hom(G, \R)$.
Finally, at least one of the images of the four characters 
displayed in \eqref{eq:Points-of-product} has an image $B$ with $U(B) = \{1,-1\}$.
All the assumptions of Proposition \ref{prp:Linearly-independent-characters} are thus satisfied
and so the contention of the corollary follows from that proposition.
\end{proof}

\begin{remark}
\label{Direct-product-R-infty}
It is not known 
whether the direct product of groups $G_1$, $G_2$
each of which has property $R_\infty$ has again property $R_\infty$.
The previous corollary implies that this will be so
if the groups $G_1$ and $G_2$ satisfy the assumptions of the corollary.
\end{remark}

%
\section{Complements} 
\label{sec:Complements}
%
By Remark \ref{remark:Continuously-many-non-isomorphic-groups}
there exists continuously many pairwise non-isomorphic, 
finitely generated groups of the form $G(\R;A, P)$,
and by Corollary \ref{crl:G=G(R;A,P)-I-line}
each of these groups admits a non-zero homomorphism $\psi$ into $P$. 
These facts prompt the question
 whether there exist similarly large collections of finitely generated subgroups of $\PL_o(I)$
 with $I$ a compact interval, say $I = [0,1]$.
 Since only countably many \emph{finitely generated} 
 groups of the form $G([0,1]; A, P)$ have been found so far,
 we look for finitely generated groups 
 that satisfy the assumptions of Theorem \ref{thm:Generalization-GoKo10-bis}.
 
 In section \ref{ssec:Uncountable-class} 
 we exhibit a collection $\GG$ of 3-generator groups with the desired properties.
 Checking 
 that each group in $\GG$ satisfies the assumptions  of Theorem \ref{thm:Generalization-GoKo10-bis}
 is fairly easy;
 the verification that distinct groups in $\GG$ are not isomorphic,
 however,
 is more demanding.
 We shall succeed by exploiting properties of the $\Sigma^1$-invariant 
 of the groups in $\GG$ in a roundabout manner.
 In section \ref{ssec:Unexpected-isomorphisms}
 we describe then a collection of 2-generator groups
 which, despite appearances, turn out to be pairwise isomorphic.
 This indicates once more that criteria which allow one to prove 
 that two given, similarly looking, groups are not isomorphic, are very useful.
 In the final section, we give such a criterion.
%
\subsection{A large collection of groups $G$ with characters fixed by $\Aut G$} 
\label{ssec:Uncountable-class}
%
In this section we construct a collection $\GG$ of pairwise non-isomorphic groups $G_s$
with the following properties:
\begin{enumerate}[(i)]
\item each $G_s \in \GG$ is an irreducible subgroup of $\PL_o([0,1])$
generated by 3 elements;
\item the characters $\chi_\ell$, $\chi_r$ of $G_s$ 
are independent and have ranks 1, respectively 2;
\item for each $G_s \in \GG$ the character $\chi_\ell$ is fixed by $\Aut G_s$, and
\item the cardinality of $\GG$ is that of the continuum.
\end{enumerate}
%
\subsubsection{Construction of the groups $G_s$}
\label{sssec:Construction-groups}
The groups $G_s$ are obtained by the recipe described in 
Example \ref{example:Recipe-for-independence}.
Fix a triple $s = (s_1, s_2= s'_1, s_3= s'_2)$ of real numbers in $]1, \infty[$.
Let $f_s$ be the PL-homeomorphism defined by formula \eqref{eq:Bump-function} 
with $s = s_1$ and $b = \tfrac{3}{4}$. 
Next, let $g$ be the function obtained by putting $s = s_2$, $b =  \tfrac{3}{4}$ 
and by conjugating then the function so obtained by translation with amplitude  $\tfrac{1}{4}$.
Similarly, 
let $h_s$ be the function obtained by setting $s = s_3$, $b =  \tfrac{3}{4}$ 
and by conjugating the function so obtained by the translation $t \mapsto t + \tfrac{1}{4}$.
Finally, set
\begin{equation}
\label{eq:Dinition-G-sub-s}
G_s = G_{\{s_1, s_2, s_3\}} = \gp \left(f_s, g_s, h_s\right).
\end{equation}
The definition of $G_s$ shows
that it is an irreducible subgroup of $\PL_o([0,1])$ 
with non-zero and independent characters $\chi_\ell$ and $\chi_r$. 
By Theorem \ref{thm:Generalization-BNS},
the complement of $\Sigma^1(G_s)$ consists therefore of the two rays $[\chi_\ell]$ and $[\chi_r]$.

Consider now an automorphism $\alpha$ of $G_s$. 
It induces an auto-homeomorphism $\alpha^*$ 
of the sphere $S(G_s)$ 
that maps the subset $\Sigma^1(G_s)^c$ onto itself.
Suppose  $\alpha^*$ is the identity on $\Sigma^1(G_s)^c$.
Since $\chi_\ell$ has rank 1 and is thus transcendental,
the first two paragraphs of section \ref{ssec:Proof-Theorem-Generalization-GoKo10-bis} apply 
and show 
that $\alpha$ fixes the character $\chi_\ell$ and hence also 
the homomorphism $\sigma_\ell \colon G \to \R^\times_{>0}$.
This homomorphism $\sigma_\ell$ will therefore be fixed by all of $\Aut G_s$
whenever the images of $\chi_\ell$ and $\chi_r$ are not isomorphic.
%
\subsubsection{Additional assumptions}
\label{sssec:More-assumptions-groups}
Assume therefore that $s_2$ and $s_3$ are \emph{multiplicatively independent}.
Then the free abelian group
\[
\im \chi_r = \ln \gp(\{s_2, s_3\}) = \Z \ln s_2 + \Z \ln s_3.
\] 
has rank 2.

Consider now two triples $s$ and $s'$ where $ s'_2 = s_2$
and where  both pairs $\{s_2, s_3 \}$ and $\{s_2, s'_3\}$ are multiplicatively independent.
Suppose there exists an isomorphism $\beta \colon G_s \iso G_{s'}$.
Then $\beta$ induces a homeomorphism $\beta^* \colon S(G_{s'}) \iso S(G_s)$
that maps $\{[\chi'_\ell], [\chi'_r]\}$ onto $\{[\chi'_\ell], [\chi'_r]\}$.
The ranks of the involved characters imply
that $\beta^*[\chi'_r] = [\chi_r]$;
so there exists a positive real number $u$ with $\chi'_r \circ \beta = u \cdot  \chi_r$.
It follows that $\im \chi'_r = u \cdot \im \chi_r$ or, equivalently, that 
\[
\Z (\ln s'_3) + \Z (\ln s_2) = u \cdot  \left(\Z (\ln s_3) + \Z (\ln s_2) \right).
\]
This equality amounts to say that there exists a matrix 
$T = \left(\begin{smallmatrix} a&b\\c&d \end{smallmatrix}\right) \in \GL(2, \Z)$ 
such that
\[
\begin{pmatrix} \ln s'_3 \\ \ln s_2 \end{pmatrix} 
= 
u \cdot T \cdot \begin{pmatrix} \ln s_3 \\ \ln s_2 \end{pmatrix}
=
u \cdot \begin{pmatrix} a \cdot  \ln s_3 + b \cdot \ln s_2 \\c \cdot \ln s_3  + d \cdot \ln s_2\end{pmatrix}.
\]
It follows that
\[
\frac{\ln s'_3}{\ln s_2} =  \frac{a(\ln s_3/\ln s_2) + b}{c(\ln s_3/\ln s_2)  + d}\;;
\]
alternatively put, 
the numbers $\ln s_3$, $\ln s'_3$ lie in the same orbit of the group 
\begin{equation}
\label{eq:Definition-group-H-sub-s2}
H_{s_2} = \begin{pmatrix}\ln s_2&0\\0&1\end{pmatrix}
\cdot \GL(2,\Z) \cdot 
\begin{pmatrix}\ln s_2&0\\0&1\end{pmatrix}^{-1}
\end{equation}
acting on the extended real line $\R \cup \{\infty\}$ by fractional linear transformations.
%
\subsubsection{Consequences}
\label{sssec:Consequences}
%
It is now easy to exhibit a collection of groups $\GG$ 
that enjoy the properties stated at the beginning of section \ref{ssec:Uncountable-class}.
Choose first a number $s_1 > 1$; for instance $s_1 = 2$,
and select $s_2$ so that $\ln s_2$ is rational, for instance $s_2 =  \exp 1$.
The group $H_{s_2}$ is then a subgroup of $\GL(2, \Q)$;
it acts on $\R \cup \{\infty\}$ by fractional linear transformations.
The set $\Q \cup \{\infty\}$ is an orbit;
all other orbits are made up of irrational numbers.
Use the axiom of choice to find a set of representative $\TT$ of the orbits of $H_{s_2}$ 
contained in $\R \smallsetminus \Q$. 
For every $t \in \TT$ 
the numbers $\ln s_2$ and $t$ are then  $\Q$-linearly independent,
and hence $s_2$ and $\exp t$ are multiplicatively independent.
Since $\R \smallsetminus \Q$ has the cardinality of the continuum 
and $H_{s_2}$ is countable,
the set $\TT$ has likewise the cardinality of the continuum.
The collection
\begin{equation}
\label{eq:Definition-GG}
\GG = \{ G_{(s_1, s_2, \exp t)} \mid t \in \TT \}
\end{equation}
enjoys therefore properties (i) through (iv) 
stated at the beginning of section \ref{ssec:Uncountable-class}.
%
\subsection{Some unexpected isomorphisms}
\label{ssec:Unexpected-isomorphisms}
%
Let $t_1$, $t_2$ be distinct irrational numbers 
and consider the groups $G_1 = G_{(2, \exp 1, \exp t_1)}$ and $G_2= G_{(2, \exp 1, \exp t_2)}$.
We don't know under which conditions on $t_1$ and $t_2$ 
the groups $G_1$ and $G_2$ are isomorphic.
In the construction of the collection $\GG$, 
carried out in section \ref{ssec:Uncountable-class},
we proceeded therefore in a very cautious manner
and required that distinct elements in the parameter space $\TT$ fail to satisfy a certain condition.
The question now arises whether this approach is overly pessimistic.
The next example indicates that caution may have been appropriate.
We begin with a simple, but surprising, lemma.
\footnote{The third author got word of this result in discussions with Matt Brin and Matt Zaremsky.}
\begin{lem}
\label{lem:Ubiquity-of-F}
Suppose $G$ is a subgroup of $\PL_o([a, d])$ generated by two PL-homeo\-mor\-phisms $f$ and $g$
with the following properties:
\begin{enumerate}[(i)]
\item $\supp f = \,]a, c[$ and $f(t) < t$ for $t \in \supp f$,
\item $\supp g= \,]b, d[$ and $g(t) < t$ for $t \in \supp g$,
\item $a < b < c < d$ and $f(g(c)) \leq b$.
\end{enumerate}
Then $G$ is isomorphic to Thompson's group $F$.
\end{lem}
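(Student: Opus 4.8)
The plan is to exhibit an explicit isomorphism from Thompson's group $F$, presented by its standard two-generator, two-relator presentation
\[
F = \langle\, x_0, x_1 \mid [x_0x_1^{-1},\, x_0^{-1}x_1x_0],\ [x_0x_1^{-1},\, x_0^{-2}x_1x_0^2]\,\rangle,
\]
onto $G$, and then to prove it is injective. First I would change the generating set, putting $x_0 = f \circ g$ and $x_1 = g$, so that $x_0 x_1^{-1} = f$ and hence $\langle f,g\rangle = \langle x_0, x_1\rangle$. A direct check using (i)--(iii) shows that $x_0$ is a single bump with $\supp x_0 = \,]a,d[$ that moves every interior point strictly to the left: for $t \in \,]a,b]$ one has $g(t)=t$ and $f(t)<t$, for $t \in \,]b,c[$ one has $g(t) \in \,]b,t[\,\subseteq\,]a,c[$ so $f(g(t))<g(t)<t$, and for $t \in [c,d[$ one has $f(g(t)) \le g(t) < t$ since $f(s)\le s$ everywhere. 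Thus $\langle f,g\rangle = \langle x_0,x_1\rangle$ with $x_0$ a global bump and $x_1=g$ a bump sharing the right endpoint $d$.

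The key computation is that the two defining relators vanish, i.e. that $f = x_0 x_1^{-1}$ commutes with $x_2 := x_0^{-1}x_1x_0$ and with $x_3 := x_0^{-2}x_1x_0^2$. Since $\supp(x_0^{-1} h\, x_0) = x_0^{-1}(\supp h)$, one finds $\supp x_2 = x_0^{-1}(\,]b,d[\,) = \,]x_0^{-1}(b),\, d[$ and $\supp x_3 = \,]x_0^{-2}(b),\, d[$. Hypothesis (iii) gives $x_0(c) = f(g(c)) \le b$, whence $x_0^{-1}(b) \ge c$; and as $x_0^{-1}$ moves interior points to the right we also get $x_0^{-2}(b) \ge x_0^{-1}(b) \ge c$. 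Therefore both $\supp x_2$ and $\supp x_3$ lie in $]c,d[$, which is disjoint from $\supp f = \,]a,c[$, so $f$ commutes with each of $x_2, x_3$ and the relators hold. This yields a surjective homomorphism $\phi \colon F \twoheadrightarrow G$ with $\phi(x_0) = f\circ g$ and $\phi(x_1) = g$. (Note that the direction of conjugation matters here: it is essential that the standard presentation uses $x_0^{-1}x_1x_0$, whose support is pushed toward the repelling endpoint $d$ and away from $\supp f$.)

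The main obstacle is injectivity of $\phi$, which I would settle via the normal-subgroup structure of $F$ rather than any normal-form argument. Recall that $[F,F]$ is simple and $Z(F)=\{1\}$ (see \cite{CFP96}); from this it follows that every non-trivial normal subgroup of $F$ contains $[F,F]$. Indeed, if $N \trianglelefteq F$ satisfies $N \cap [F,F] = \{1\}$, then $[N,F] \subseteq N \cap [F,F] = \{1\}$, forcing $N \subseteq Z(F) = \{1\}$; otherwise $N\cap[F,F]\neq\{1\}$ is normal in the simple group $[F,F]$, so $[F,F]\subseteq N$. Consequently every proper quotient of $F$ is abelian, and it suffices to show that $G$ is \emph{non-abelian}. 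But $\supp f = \,]a,c[$ and $\supp g = \,]b,d[$ overlap in the non-empty interval $]b,c[$, and if $f$ and $g$ commuted then $g$ would preserve $\supp f$, forcing the orientation-preserving map $g$ to fix the endpoint $c$; this is impossible since $c \in \,]b,d[\, = \supp g$ gives $g(c) < c$. Hence $\ker\phi$ cannot contain $[F,F]$, so $\ker\phi = \{1\}$ and $\phi$ is an isomorphism $F \iso G$.
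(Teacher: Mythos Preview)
Your proof is correct and follows essentially the same strategy as the paper's: set $h=f\circ g$, verify the two defining relations of $F$ via a support-disjointness argument hinging on hypothesis (iii), and deduce injectivity from the simplicity of $[F,F]$ together with the non-abelianness of $G$. The only differences are cosmetic: the paper sends the second generator to $f$ rather than to $g$ (working with the presentation $\langle x,x_1\mid {}^{x^2}x_1={}^{x_1x}x_1,\ {}^{x^3}x_1={}^{x_1x^2}x_1\rangle$ and checking that $g$ commutes with ${}^{h}f$ and ${}^{h^2}f$, whose supports land in $]a,b]$), whereas you check that $f$ commutes with $x_0^{-1}x_1x_0$ and $x_0^{-2}x_1x_0^2$, whose supports land in $[c,d[$; and you spell out the normal-subgroup argument and the non-abelianness in more detail than the paper does.
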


\begin{proof}
Set $h = f \circ g$ and note that  $h(t) < t$ for every $t \in\; ]a,d[\,$.
Property (iii) then implies that  $h(c) \leq b$ 
and so the supports of $g$ and that of $\act{h}{-2}{ f }= h \circ f \circ  h ^{-1}$ 
are disjoint,
as are the supports of $g$ and that of $\act{h^2}{-2}{f}$.
The first fact implies that $g$ commutes with $\act{h}{-1}{ f}$  
and leads to the chain of equations
\begin{equation}
\label{eq:First-relator}
\act{h \circ h}{-2}{f} 
= 
\act{f}{-2}{\left(\act{g \circ h}{-1}{f} \right)} 
= 
\act{f}{-1}{\left( \act{h}{-1}{f}\right)} 
=
\act{f \circ h}{-1}{ f }.
\end{equation}
The second fact leads to the equations
\begin{equation}
\label{eq:Second-relator}
\act{h \circ h^2}{-2}{f} 
= 
\act{f}{-2}{\left(\act{g \circ h^2}{-1}{f} \right)} 
= 
\act{f}{-1}{\left( \act{h^2}{-1}{f}\right)} 
=
\act{f \circ h^2}{-1}{ f }.
\end{equation}
Thompson's group $F$, on the other hand,
 has the presentation 
 \[
 \langle x, x_1 \mid  \act{x^2}{-1}{x_1}=  \act{x_1 x}{-1}{x_1},
\act{x^3}{-1}{x_1} =  \act{x_1 x^2}{-1}{x_1} \rangle;
 \]
see, \eg\ \cite[Examples D15.11]{BiSt14}. 
 The assignments $x \mapsto h$, $x_1 \mapsto f$ extend 
 therefore to an epimorphism $\rho \colon F \epi G$.
 As the derived group of $F$ is simple (see, \eg\ \cite[Theorem 4.5]{CFP96})
 and as $G$ is non-abelian, $\rho$ must be injective, hence an isomorphism,
 and so the proof is complete.
 \end{proof}
 
 Our next result shows 
 that the assumptions of the previous lemma can be satisfied 
 by PL-homeomorphisms with pre-assigned values for the slopes in the end points.
 \begin{lem}
 \label{lem:Ubiquity-of-F-bis}
 Let $s_f$, $s_g$ be positive reals with $s_f< 1 < s_g$
 and let $a$, $b$, $c$, $d$, be real numbers with $a < b < c < d$.
 Then there exist PL-homeomorphisms $f$ and $g$
 that satisfy properties (i) through (iii) listed in Lemma \ref{lem:Ubiquity-of-F}
 and, in addition,
 \begin{enumerate}
 \item[(iv)] $f'(a) = s_f$ \quad \text{and} \quad $g'(d) = s_g$.
 \end{enumerate}
 \end{lem}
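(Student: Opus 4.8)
The plan is to build the two bumps essentially separately, exploiting the fact that all the required conditions except (iii) are local at a single endpoint. Condition (iv) pins only the right slope of $f$ at $a$ and the left slope of $g$ at $d$; conditions (i) and (ii) merely ask that each map be a strictly contracting bump on its stated support; and only (iii), the inequality $f(g(c))\le b$, couples the two maps. So I would construct $g$ first, read off the value $\gamma:=g(c)$, and then design $f$ so that it drags $\gamma$ down below $b$ while still having right derivative $s_f$ at $a$. Note that $c$ lies in $]b,d[$ and $g$ is a bump there, whence $\gamma=g(c)\in\,]b,c[$; in particular $b<\gamma<c$, which is exactly the room needed for the construction of $f$.

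First I would construct $g$ as a two-piece PL-homeomorphism of $[b,d]$, extended by the identity outside. Near $d$, put $g(t)=d-s_g(d-t)$ on an interval $[t_2,d]$ with $t_2$ chosen close to $d$; this realises the prescribed left slope $s_g$ and, because $s_g>1$, gives $g(t)<t$ there. On $[b,t_2]$ interpolate linearly from $(b,b)$ to $(t_2,g(t_2))$. Choosing $t_2$ close enough to $d$ makes $g(t_2)>b$, so this segment has positive slope and, being below the diagonal at both endpoints, stays below it throughout. Thus $g$ is an increasing PL-homeomorphism with $\supp g=\,]b,d[$, $g<\id$ on the support, and $g'(d)=s_g$, so (ii) and the second half of (iv) hold. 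Record $\gamma=g(c)\in\,]b,c[$.

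Next I would construct $f$ as a three-piece PL-homeomorphism of $[a,c]$, again extended by the identity. On a short interval $[a,t_1]$ set $f(t)=a+s_f(t-a)$, realising the right slope $s_f$; since $s_f<1$ this keeps $f(t)<t$. I would choose the breakpoint $t_1\in\,]a,\gamma[$ so close to $a$ that $f(t_1)=a+s_f(t_1-a)<b$. Then interpolate linearly from $(t_1,f(t_1))$ to $(\gamma,b)$ and from $(\gamma,b)$ to $(c,c)$. The first of these has positive slope because $f(t_1)<b$, and it lies below the diagonal since $f(t_1)<t_1$ and $b<\gamma$; the second has positive slope $(c-b)/(c-\gamma)>0$ and lies strictly below the diagonal on $[\gamma,c[$ because it is below at $\gamma$ and meets it only at $c$. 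Hence $f$ is an increasing PL-homeomorphism with $\supp f=\,]a,c[$, $f<\id$ on the support, $f'(a)=s_f$, and $f(\gamma)=b$. This gives (i) and the first half of (iv), while $f(g(c))=f(\gamma)=b\le b$ is precisely (iii), with $a<b<c<d$ holding by hypothesis.

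The construction is elementary, so there is no deep obstacle; the one point requiring care is the middle segment of $f$, which must have positive slope and stay below the diagonal while nonetheless forcing the value $b$ at $\gamma$. This is handled by pushing the first breakpoint $t_1$ close enough to $a$ that $f(t_1)<b$, which is possible precisely because $f(t_1)\to a<b$ as $t_1\to a$; the prescribed small slope $s_f$ at $a$ then costs nothing. Extending both maps by the identity yields $f,g\in\PL_o([a,d])$ satisfying (i)--(iv), completing the proof.
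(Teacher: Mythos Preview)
Your proof is correct and follows essentially the same approach as the paper: both construct $f$ and $g$ explicitly as affine interpolations with the prescribed end-slopes, arranging an interior interpolation point of $f$ so that $f$ sends $g(c)$ to (at most) $b$. The only differences are organizational --- you build $g$ first and then tailor $f$ to the value $\gamma=g(c)$, obtaining the equality $f(g(c))=b$, whereas the paper chooses all auxiliary points $t_1,\ldots,t_5$ simultaneously and uses one extra break point in $g$, obtaining $f(g(c))\le b$ via $f(t_2)\le f(t_3)=b$.
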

 
 \begin{proof}
 The generators $f$ and $g$ will both be affine interpolations of 5 interpolation points.
 To define them fix numbers $t_1$, $t_2$, $t_3$, $t_4$ so that
 \[
 a < t_1 < b < t_2 \leq t_3 < c < t_4 < d.
 \]
 Next choose $t_0 \in \; ]a, t_1[$ so that $(t_0-a)/(t_1-a) = s_f$.
 Then $a < t_1 < t_3 < c  < d$ and $a < t_0 < b < c <d$ 
 and so the affine interpolation, given by the 5 points
 \[
 (a,a), \quad (t_1, t_0),\quad  (t_3, b), \quad (c,c),\quad  (d,d),
 \]
exists and is an increasing PL-homeomorphism, say $f$, with $f'(a) = s_f$.
Next,  there exists a number $t_5 \in \; ]t_4, d[$ 
so that $(d-t_4)/(d-t_5) = s_g$.
Then $a < b <  c < t_5 < d$ and $a < b < t_2 < t_4 < d$
and so the affine interpolation, 
given by the 5 points
\[
(a,a), \quad (b, b), \quad (c, t_2), \quad (t_5, t_4) ,\quad  (d, d)
\]
exists and is an increasing PL-homeomorphism, say $g$;
the definition of $t_5$ implies, in addition, that $g'(d) = s_g$.
Finally, $f(g(c)) = f(t_2)  \leq f(t_3) = b$.
 \end{proof}
 
 \begin{remarks}
 \label{remarks:Ubiquity-of-F-bis}
 a) In the statement of Lemma  \ref{lem:Ubiquity-of-F-bis}
 the slopes $s_f$ and $s_g$ have been chosen 
 so that $s_f < 1 < s_g$.
 This requirement can be weakened to $s_f \neq 1$ and $s_g \neq1$;
 indeed  the four pairs $\{f,g \}$, $\{f, g^{-1}\}$ and $\{f^{-1},g\}$, $\{f^{-1}, g^{-1}\}$
 generate the same group.
 
 b) The generators $f_s$, $g_s$ and $h_s$ of the groups $G_s$, 
 constructed in section \ref{ssec:Uncountable-class}, 
 are simpler then those used in Lemma \ref{lem:Ubiquity-of-F-bis} 
 in that they are defined by affine interpolations of 3 rather than  of 5 points.
 But a variant of the Lemma \ref{lem:Ubiquity-of-F-bis}
 holds even in this more restricted set-up.
 
 Suppose $s_1 = s_2 = 2$ and $s_3 \geq 2$.
 The function $f_s$ is then given by the formula
\begin{equation}
\label{eq:Function-f}
f_s(t) = 
\begin{cases}
\tfrac{1}{2}t, & \text{ if }0\le t\le \tfrac{1}{2},\\ 
2 \left(t- \tfrac{1}{2}\right)+ \tfrac{1}{4}, &  \text{ if }\tfrac{1}{2} \leq t\leq \tfrac{3}{4},
\end{cases}
\end{equation}
and it is the identity outside of $]0, \tfrac{3}{4}[$,
while $g_s$ is defined by
\begin{equation}
\label{eq:Function-g}
g_s(t) = 
\begin{cases}
\tfrac{1}{2}\left(t- \tfrac{1}{4} \right)+\tfrac{1}{4}, 
& \text{ if } \tfrac{1}{4}\le t\le \tfrac{3}{4},\\ 
2 \left(t- \tfrac{3}{4}\right)+ \tfrac{1}{2}, 
&  \text{ if }\tfrac{3}{4} \leq t\leq 1,
\end{cases}
\end{equation}
and it is the identity outside of $]\tfrac{1}{4}, 1[$. 
The function $h_s$, finally, is defined by
\begin{equation}
\label{eq:Definition-h-sub-s}
h_s(t) = 
\begin{cases}
(1/s_3) \left(t- \tfrac{1}{4} \right)+\tfrac{1}{4}, 
& \text{ if } \tfrac{1}{4}\le t\leq \tfrac{3s_3}{4(s_3+1)} + \tfrac{1}{4},\\ 
s_3 \left(t- \tfrac{3s_3}{4(s_3+1)}- \tfrac{1}{4}\right)+ \tfrac{3}{4(s_3+1)} + \tfrac{1}{4}, 
&  \text{ if }\tfrac{3s_3}{4(s_3+1)} + \tfrac{1}{4} \leq t\leq 1,
\end{cases}
\end{equation}
and is the identity outside of \;$]\tfrac{1}{4}, 1[$. 
The function $g_s$ is not differentiable in $\tfrac{1}{4}$, $\tfrac{3}{4}$ and 1,
while the function $h_s$ has singularities in 
$\tfrac{1}{4}$, $t_* = \tfrac{3s_3}{4(s_3+1)} + \tfrac{1}{4}$ and in 1.
Since $s_3 \geq s_2 = 2 $, the inequality $t_* \geq \tfrac{3}{4}$ holds,
as one verifies easily.
The calculation
\[
f_s \left(h_s(\tfrac{3}{4}) \right)
= 
f_s\left( (1/s_3) \left(\tfrac{3}{4}- \tfrac{1}{4} \right)+\tfrac{1}{4}\right)
=
f_s\left( \tfrac{1}{2s_3} + \tfrac{1}{4} \right)
\leq 
f_s\left( \tfrac{1}{4} + \tfrac{1}{4} \right) \leq \tfrac{1}{4}.
\]
then shows that the functions $f_s$ and $h_s$ 
fulfill the assumptions imposed on the functions $f$ and $g$ 
in Lemma \ref{lem:Ubiquity-of-F}. 
It follows that the groups $\gp(f_s, h_s)$  
are isomorphic to each other for every $s_3 \geq s_2 = 2$.
\end{remarks}

\subsection{A criterion}
\label{ssec:Precluding-isomorphisms}
%
The groups $G_s$ studied in section \ref{ssec:Uncountable-class} are generated by 3 elements;
in addition the image of $\chi_\ell$ is infinite cyclic 
and that of $\chi_r$ is free abelian of rank 2.
Any isomorphism $\beta \colon G_s \iso G_{s'}$ between two such groups must therefore
induce an homeomorphism $\beta^* \colon S(G_{s'}) \iso S(G_s)$ 
with $\beta^*([\chi'_r]) = [\chi_r]$.
This consequence amounts to say 
that there exists a positive real number $u$ so that $\chi'_r \circ \beta = u \cdot \chi_r$
and this new condition implies the equality
\begin{equation}
\label{eq:Consequence-iso}
\im \chi'_r = u \cdot \im \chi_r.
\end{equation}
In section \ref{ssec:Uncountable-class}  
we did not study this condition in general;
we dealt only with the special case 
where 
\[
\im \chi_r = \Z (\ln s_3) + \Z (\ln s_2) 
\quad\text{and}\quad 
\im \chi'_r = \Z (\ln s'_3) + \Z (\ln s_2)
\]
and exploited then the fact
that, in this particular case,
condition \eqref{eq:Consequence-iso} involves basically only the two numbers $\ln s_3$ and $\ln s'_3$.
In this final section we shall investigate another special case.
It is reminiscent of a situation considered in \ref{ssec:Group of units}.

Let $B_1$ and $B_2$ be finitely generated subgroups of  $\R_{\add}$
and suppose there exists a positive real number $u$ with $B_2 = u \cdot B_1$.
If $B_2$ coincides with $B_1$, then $u$ is a unit of $B_1$ 
and the results of section \ref{ssec:Group of units} apply.
They show, in particular, that $u = 1$ 
whenever $B$ is the image under $\ln$ of a subgroup $P$ of  $\R^\times_{>0}$
that is generated by finitely many algebraic numbers.
The proof of this consequence relies on Theorem \ref{thm:Gelfond-Schneider},
the Gelfond-Schneider Theorem. 
Below we give an analogue of this criterion, 
but dealing with the equation $B_2 = u \cdot B_1$. 
In the proof, 
both the Gelfond-Schneider Theorem and the Siegel-Lang Theorem will be used.
\begin{lem}
\label{lem:Non-existence-u}
Let $P_1$ and $P_2$ be subgroups $\Q^\times_{>0}$
and set $B_1 = \ln P_1$ and $B_2 = \ln P_2$.
Suppose there exists a prime number $\pi$ 
that occurs with non-zero power in the factorization of an  element in $P_1$,
but not in that of an element of $P_2$.
If the rank of $B_1$ is at least 3,
then $B_2$ is distinct from $u \cdot B_1$ for every positive real number $u$.
\end{lem}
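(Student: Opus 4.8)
The plan is to argue by contradiction: assume that $B_2 = u \cdot B_1$ for some $u \in \R^\times_{>0}$ and derive a contradiction with the hypothesis on the prime $\pi$. The argument divides into the cases $u$ rational and $u$ irrational, and essentially all of the difficulty is concentrated in excluding the irrational case, which is exactly where the assumption $\rk B_1 \geq 3$ and the two transcendence theorems enter.

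First I would record the basic translation of the relation $B_2 = u B_1$. Since $B_i = \ln P_i$ with $P_i \subseteq \Q^\times_{>0}$, for every $p \in P_1$ one has $u \ln p \in u B_1 = B_2 = \ln P_2$, and hence $p^u = \exp(u \ln p) \in P_2 \subseteq \Q^\times_{>0}$. Thus $p^u$ is a positive \emph{rational} number for every $p \in P_1$. (Note that only the inclusion $uB_1 \subseteq B_2$ is used here, and this is all I will need below.)

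The key step is to show $u \in \Q$. Fixing some $p \in P_1$ with $p \neq 1$, the number $u = \ln(p^u)/\ln p$ is a ratio of logarithms of nonzero rational numbers, so Theorem~\ref{thm:Gelfond-Schneider} (Gelfond--Schneider) forces $u$ to be rational or transcendental. To exclude the transcendental case I would invoke Theorem~\ref{thm:Siegel-Lang} (Siegel--Lang), and this is where $\rk B_1 \geq 3$ is used. Assume $u$ is transcendental, hence irrational. Since $B_1 = \ln P_1$ has rank at least $3$, I may choose multiplicatively independent elements $p_1, p_2, p_3 \in P_1$; their logarithms $z_j = \ln p_j$ are then nonzero and $\Q$-linearly independent, while $\beta_1 = 1$ and $\beta_2 = u$ are nonzero and $\Q$-linearly independent. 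The six numbers $\exp(\beta_i z_j)$ are $p_j$ and $p_j^u$ for $j = 1,2,3$, all of which are rational, hence algebraic, by the second paragraph. This contradicts the Siegel--Lang Theorem, so $u$ must be rational.

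Finally, writing $u = m/n$ in lowest terms with $m, n \geq 1$, I would reach the contradiction by means of the $\pi$-adic valuation $v_\pi \colon \Q^\times_{>0} \to \Z$. For each $p \in P_1$ the relation $(m/n)\ln p \in B_2 = \ln P_2$ yields an element $q \in P_2$ with $q^n = p^m$; applying $v_\pi$ gives $n\, v_\pi(q) = m\, v_\pi(p)$. As no element of $P_2$ is divisible by $\pi$, we have $v_\pi(q) = 0$, whence $m\, v_\pi(p) = 0$ and so $v_\pi(p) = 0$ for every $p \in P_1$. This contradicts the hypothesis that $\pi$ occurs with nonzero exponent in some element of $P_1$, and the proof is complete. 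The main obstacle is the irrational case: excluding it is precisely what forces the rank hypothesis and requires the six-exponentials (Siegel--Lang) input, whereas the rational case is settled by the elementary valuation computation.
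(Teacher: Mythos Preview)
Your proof is correct and follows essentially the same route as the paper's: Gelfond--Schneider forces $u$ to be rational or transcendental, the Siegel--Lang theorem together with $\rk B_1 \geq 3$ rules out the transcendental case, and the $\pi$-adic valuation argument disposes of the rational case. Your treatment of the rational case via $v_\pi$ is in fact a bit more carefully written than the paper's, which speaks loosely of the ``prime factorization of $p_1^u$'', but the underlying idea is identical.
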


\begin{proof}
Let $p_1 \in P_1$ be an element with a prime factorization that involves the prime $\pi$, 
and let $u$ be positive real number.
Assume first that $u \in \Q$.
Then $\pi$ occurs in the prime factorization of $p_1^u$, 
so $q_1^u \notin P_2$,
and thus $u \cdot B_1 =u  \ln P_1 \neq \ln P_2 = B_2$.
Suppose now that $u$ is irrational and that $u \cdot \ln p_1 \in B_2$.
There exists then a rational number $p_2 \in P_2$ with $u = \ln p_2/\ln p_1$ 
and so $u$ is transcendental by the Gelfond-Schneider Theorem.
Choose, finally, three  $\Q$-linearly independent elements $z_1$, $z_2$ and $z_3$ in $B_1$
(this is possible as the rank of $B_1$ is at least 3)
and consider the six numbers
\[
\exp(1 \cdot z_j) \text{ with } j = 1,2, 3 
\quad \text{and} \quad
\exp(u \cdot z_j) \text{ with } j = 1,2, 3.
\]
The first three of them are in $P_1$, and hence rational.
As the subsets $\{1, u\}$ and $\{z_1, z_2, z_3 \}$ are both linearly independent over $\Q$,
Theorem \ref{thm:Siegel-Lang} implies therefore 
that at least one the remaining three numbers, say $\exp(u \cdot z_{j_*})$, is transcendental.
This number is therefore outside of $P_2$ 
and so $u \cdot z_{j_*} \in u B_1 \smallsetminus B_2$.
\end{proof}

We terminate with an application of the preceding lemma.
\begin{example}
\label{example:Non-isomorphic-groups}
Given a non-empty set of prime numbers $\PP$,
let $G_\PP$ be a subgroup of $PL_o([0,1])$ generated by a set  $\{f_p, g_p \mid p \in \PP \}$
of elements that satisfy the following two conditions
\begin{enumerate}[(i)]
\item $\sigma_\ell (f_p) = p$, $\sigma_\ell (g_p) = 1$ 
and  $\sigma_r (f_p) = 1$, $\sigma_\ell (g_p) = 1$  for every $p \in \PP$;
\item the union of the supports of the generators $f_p$ and $g_p$ is $]0,1[$.
\end{enumerate}
The group $G_\PP$ admits then an epimorphism $\psi \colon G_\PP  \epi \gp(\PP)$ 
that is fixed by every automorphism of $G_\PP$
(use Corollary \ref{crl:Generalization-GoKo10-bis}).
Moreover,
if $\PP_1$ and $\PP_2$ are distinct sets of primes of cardinality at least 3, 
the groups $G_{\PP_1}$ and $G_{\PP_2}$ are not isomorphic in view of Lemma
\ref{lem:Non-existence-u} and the considerations at the beginning of section \
\ref{ssec:Precluding-isomorphisms}.
\end{example}

\def\cprime{$'$} \def\cprime{$'$}

%

%
%
\end{document}